      \title[On the Adams isomorphism   for equivariant orthogonal spectra]
            {On the Adams isomorphism\\ for equivariant orthogonal spectra}
     \author{Holger Reich}
    \address{Institut f\"ur Mathematik, Freie Universit\"at Berlin, Germany}
      \email{\hemail{holger.reich@fu-berlin.de}}
    \urladdr{\hurl{mi.fu-berlin.de/math/groups/top/members/Professoren/reich.html}}
     \author{Marco Varisco}
    \address{Department of Mathematics and Statistics, University at Albany, SUNY, USA}
      \email{\hemail{mvarisco@albany.edu}}
    \urladdr{\hurl{albany.edu/~mv312143/}}
   \keywords{Adams isomorphism, equivariant stable homotopy theory}
  \subjclass[2010]{\MSC{55P42}, \MSC{55P91}}
       \date{September 22, 2015}
\renewcommand{\tocsection}[3]{%
  \indentlabel{\@ifnotempty{#2}{\makebox[2.25em][r]{\ignorespaces#1 #2.\quad}}}#3}
\newcommand*{\DOI}[1]{\href{http://dx.doi.org/#1}{#1}}
\newcommand*{\MSC}[1]{\href{http://www.ams.org/msc/msc2010.html?t=#1}{#1}}
\newcommand*{\hurl}[2][www.]{\href{http://#1#2}{\nolinkurl{#2}}}
\newcommand*{\hemail}[1]{\href{mailto:#1}{\nolinkurl{#1}}}
\setlist{leftmargin=*}
\setlist[enumerate]{label=(\roman*)}
\numberwithin{equation}{section}
\theoremstyle{plain}
  \newtheorem{addendum}    [equation]{Addendum}
  \newtheorem{corollary}   [equation]{Corollary}
  \newtheorem{keylemma}    [equation]{Key Lemma}
  \newtheorem{lemma}       [equation]{Lemma}
  \newtheorem{proposition} [equation]{Proposition}
  \newtheorem{main}        [equation]{Main Theorem}
  \newtheorem{theorem}     [equation]{Theorem}
\theoremstyle{definition}
  \newtheorem{construction}[equation]{Construction}
  \newtheorem{definition}  [equation]{Definition}
  \newtheorem{example}     [equation]{Example}
  \newtheorem{fact}        [equation]{Fact}
  \newtheorem{facts}       [equation]{Facts}
  \newtheorem{remark}      [equation]{Remark}
\newcommand*{\one}  {\text{\ding{192}}} \newcommand*{\bone}  {\text{\ding{202}}}
\newcommand*{\two}  {\text{\ding{193}}} \newcommand*{\btwo}  {\text{\ding{203}}}
\newcommand*{\three}{\text{\ding{194}}} 
\newcommand*{\four} {\text{\ding{195}}} 
\newcommand*{\five} {\text{\ding{196}}} 
\newcommand*{\six}  {\text{\ding{197}}}
\newcommand*\framed[1]%
\DeclareMathAlphabet{\matheurm}      {U}{eur}{m}{n}
    \SetMathAlphabet{\matheurm}{bold}{U}{eur}{b}{n}
\newcommand*{\define}[5]{%
  \ifstrequal{#2}{*}{\expandafter#1\expandafter*}{\expandafter#1}%
  \csname#4#5\endcsname{#3{#5}}
}
\newcommand*{\TO}[1][]{\stackrel{#1}{\longrightarrow}}
\newcommand*{\MOR}[4][]{#2\colon#3\TO[#1]#4}
\newcommand*{\AND}{\qquad\text{and}\qquad}
\newcommand*{\aND} {\quad\text{and}\quad}
\DeclarePairedDelimiterX\SET[2]{\{}{\}}{\,#1\;\delimsize\vert\;#2\,}
\newcommand*{\ds}{\displaystyle}
\newcommand*{\ts}{\textstyle}
\newcommand*{\op}{{\operatorname{op}}}
\newcommand*{\tr}{{\operatorname{tr}}}
\DeclareMathOperator*{\bigLan}{Lan}
\DeclareMathOperator*{\tensor}{\otimes}
\DeclareMathOperator*{\sma}   {\wedge}
\newcommand*{\ssma}{\!\sma\!} % small smash
\newcommand*{\indco}{\operatorname{(co)ind}} % bad notation?
\newcommand*{\adams}{A}
\newcommand*{\preadams}{\widetilde{\adams}}
\newcommand*{\spec}[1]{\mathbf{#1}} % formatting for spectra
\newcommand*{\LL}[2][]{\CL_{#2}^{\operatorname{#1}}}
\newcommand*{\shift}{\smash{\operatorname{sh}}\vphantom{\Omega}}
\newcommand*{\fr}{r}
\newcommand*{\Q}[2][]{Q^{#2}\!}
\newcommand*{\fd}{fin.\ dim.\@}
\newcommand*{\piiso}{$\underline{\pi}_*$\=/isomorphism}
\newcommand*{\iso}{\matheurm{iso}}
\newcommand*{\semi}{\rtimes}
\newcommand*{\modu}[3]{\vphantom{#2}_{#1}{#2}_{#3}}
\begin{document}

\begin{abstract}
We give a natural construction and a direct proof of the Adams isomorphism for equivariant orthogonal spectra.
More precisely, for any finite group~$G$, any normal subgroup $N$ of~$G$, and any orthogonal $G$-spectrum~$\spec{X}$, we construct a natural map $\adams$ of orthogonal $G/N$-spectra from the homotopy $N$-orbits of~$\spec{X}$ to the derived $N$-fixed points of~$\spec{X}$, and we show that $\adams$ is a stable weak equivalence if $\spec{X}$ is cofibrant and $N$-free.
This recovers a theorem of Lewis, May, and Steinberger in the equivariant stable homotopy category, which in the case of suspension spectra was originally proved by Adams.
We emphasize that our Adams map $\adams$ is natural even before passing to the homotopy category.
One of the tools we develop is a replacement-by-$\Omega$-spectra construction with good functorial properties, which we believe is of independent interest.
\end{abstract}

\maketitle

\tableofcontents

\thispagestyle{empty}

\pagebreak

%%%%%%%%%%%%%%%%%%%%%%%%%%%%%%%%%%%%%%%%%%%%%%%%%%%%%%%%%%%%%%%%%%%%%%%%%%%%%

\section{Introduction}

In his seminal paper~\cite{Adams} Frank Adams proved the following surprising result.\linebreak
Suppose that~$X$ is a finite pointed CW-complex on which a finite group~$G$ acts cellularly and freely away from the base point.
Think of~$X$ as an object in the equivariant stable homotopy category, i.e., consider the equivariant suspension spectrum~$\Sigma^\infty X$.
Then in the stable homotopy category there is an isomorphism between the derived $G$-orbits and the derived $G$-fixed points of~$\Sigma^\infty X$, even though unstably $X^G$ is just a point.
The derived $G$-orbit spectrum of~$\Sigma^\infty X$ is just $\Sigma^\infty(X/G)$; the definition of the derived $G$-fixed point spectrum is more delicate, but in the free case it is identified by Adams's result.
As a consequence there is an isomorphism between the $G$-equivariant stable homotopy groups of~$X$ and the non-equivariant stable homotopy groups of the orbit space~$X/G$.

Adams's theorem was generalized by Lewis, May, and Steinberger~\cite{LMS} from suspension spectra to arbitrary ones, and became known as the \emph{Adams isomorphism}.
A precise statement is formulated in Corollary~\ref{cor:LMS}.

The purpose of this paper is to lift the Adams isomorphism from the stable homotopy category of spectra to a \emph{natural} weak equivalence in the category of orthogonal spectra.
This is the content of our Main Theorem~\ref{thm:main}.
The need for a construction of the Adams isomorphism that is natural before passing to the homotopy category arises, for example, in connection with Farrell-Jones assembly maps for topological cyclic homology of group algebras, because assembly maps cannot be defined in the homotopy category.
This application is explained at the end of the introduction.

Equivariant orthogonal spectra are a modern model for equivariant stable homotopy theory.
They were introduced and studied extensively by Mandell and May in~\cite{MM} and play an important role, for example, in the work of Hill, Hopkins, and Ravenel~\cite{HHR} on the Kervaire invariant problem.
The paper~\cite{HHR} also contains extensive foundational material on equivariant orthogonal spectra, and Schwede's lecture notes~\cite{Schwede} are another excellent resource.
We review the foundations of the theory in Sections~\ref{sec:conventions} through~\ref{sec:change-of-groups}, emphasizing the point of view of orthogonal spectra as continuous functors.
We exploit this functorial point of view not only to clarify some fundamental constructions, but also to develop new tools.

The main new tool is a replacement-by-$\Omega$-spectra construction with good functorial properties.
In order to explain this we introduce some notation and terminology.
Here and throughout the paper we let $G$ denote a finite group.
We refer to real orthogonal $G$-representations of finite or countably infinite dimension simply as $G$-representations.
$G$-representations form a topological $G$-category~$\LL{G}$ with morphism spaces~$\CL(\CU,\CV)$ given by not necessarily equivariant linear isometries, equipped with the conjugation $G$-action; see Sections \ref{sec:conventions} and~\ref{sec:Thom}.
We denote by~$\Sp_G$ the topological $G$-category of orthogonal $G$-spectra, and define $G$-universes and \piiso s as usual; see Sections \ref{sec:orth} and~\ref{sec:homotopy-theory}.
An orthogonal $G$-spectrum is called good if the structure maps are closed embeddings (see Definition~\ref{def:good}), and it is a $G$-$\Omega$-spectrum if the adjoints of the structure maps are $G$-weak equivalences (see Definition~\ref{def:Omega}).

We emphasize that $\Sp_G$ is defined without reference to the choice of a universe.
However, we use universes to construct replacements by $G$-$\Omega$-spectra,
and this replacement construction depends functorially on the choice of a universe, before passing to the homotopy category.
This is the content of the following theorem, which is proved in Section~\ref{sec:Q}.

\begin{theorem}
\label{thm:Q}
For any orthogonal $G$-spectrum~$\spec{X}$ and any $G$-representation~$\CU$, there is an orthogonal $G$-spectrum~$\Q[h]{\CU}(\spec{X})$ together with a natural transformation
\(
\MOR{\fr}{\spec{X}}{\Q[h]{\CU}(\spec{X})}
\)
such that:
\begin{enumerate}

\item\label{i:fun}
The assignment
\[
(\CU,\spec{X})\longmapsto\Q[h]{\CU}(\spec{X})
\]
extends to a $G$-functor $\LL{G}\times\Sp_G\TO\Sp_G$ which is continuous in each variable;

\item\label{i:repl}
If $\spec{X}$ is good then
\(
\MOR{\fr}{\spec{X}}{\Q[h]{\CU}(\spec{X})}
\)
is a $\underline{\pi}_*$-equivalence;

\item\label{i:Omega}
If $\spec{X}$ is good and $\CU$ is a complete $G$-universe then $\Q[h]{\CU}(\spec{X})$ is a $G$-$\Omega$-spectrum;

\item\label{i:res-Q}
Given another finite group~$\Gamma$ and a group homomorphism $\MOR{\alpha}{\Gamma}{G}$ there is a natural $\Gamma$-isomorphism
\[
\res_\alpha\Q{\CU}(\spec{X})\TO[\cong]\Q{\res_\alpha\CU}(\res_\alpha\spec{X})
\,.
\]
\end{enumerate}
\end{theorem}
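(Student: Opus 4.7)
The plan is to model $\Q[h]{\CU}(\spec{X})$ level-wise as a homotopy colimit of the classical ``shifted loops'' diagram: for each $G$-representation $V$, informally
\[
\Q[h]{\CU}(\spec{X})(V) \;\simeq\; \operatorname*{hocolim}_{W} \Omega^{W}\spec{X}(V\oplus W)
\,,
\]
with $W$ ranging over finite-dimensional subrepresentations of $\CU$. To make this simultaneously continuous in $\CU$ and in $\spec{X}$ and $G$-functorial in both variables, I would implement the homotopy colimit as a two-sided bar construction, in which the left factor is the linear isometries space $\CL(-,\CU)_+$ (carrying the dependence on $\CU$) and the right module sends $W$ to $\Omega^{W}\spec{X}(V\oplus W)$ (carrying the dependence on $\spec{X}$). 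The structure maps of $\Q[h]{\CU}(\spec{X})$ as an orthogonal $G$-spectrum come from the canonical transformations $\Omega^{W}\spec{X}(V\oplus W) \TO \Omega^{V'}\Omega^{W}\spec{X}(V\oplus V'\oplus W)$, and the natural map $\fr$ is induced by the degenerate zero-simplex at $W=0$.

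With this description, property (i) is essentially built in from the functoriality of the bar construction and the continuity of $\CL(-,-)$ and of $V\mapsto\Omega^{W}\spec{X}(V\oplus W)$. Likewise property (iv) is formal: restriction along $\alpha$ commutes with geometric realisation and smash products, and $\res_{\alpha}\CL(W,\CU) = \CL(\res_{\alpha}W,\res_{\alpha}\CU)$ and $\res_{\alpha}\Omega^{W}\spec{X}(V\oplus W) = \Omega^{\res_{\alpha}W}(\res_{\alpha}\spec{X})(\res_{\alpha}(V\oplus W))$ as $\Gamma$-spaces, so the bar construction of the restricted data is the restriction of the bar construction.

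For property (ii), the strategy is to combine a cofinality argument with the goodness hypothesis. Goodness ensures that the structure maps of $\spec{X}$ are closed embeddings, so that $\Omega^{W}\spec{X}(V\oplus W)$ behaves well under the relevant sequential filtrations and the bar construction computes the correct homotopy type. The natural map $\fr$ corresponds to the inclusion of the $W=0$ term, which on equivariant stable homotopy groups becomes an isomorphism by the standard description of $\underline{\pi}_{*}$ of a good orthogonal $G$-spectrum as a filtered colimit of homotopy groups of $\Omega^{W}\spec{X}(V\oplus W)$ indexed over finite-dimensional subrepresentations of a complete $G$-universe.

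The main obstacle is property (iii). One must verify that for every pair of $G$-representations $V,V'$ the adjoint structure map
\[
\Q[h]{\CU}(\spec{X})(V) \TO \Omega^{V'}\,\Q[h]{\CU}(\spec{X})(V\oplus V')
\]
is a $G$-weak equivalence, and since $\Omega^{V'}$ does not in general commute with geometric realisation this requires a careful commutation argument. The plan is to identify the target, up to $G$-equivalence, with a re-indexed version of the bar construction in which $W$ is replaced by $V'\oplus W$, and then to use completeness of $\CU$ to produce an equivariant cofinal zig-zag between the indexing systems $\{W\}$ and $\{V'\oplus W\}$ inside $\CU$. This cofinality comparison is the technical heart of the argument, relying on the non-emptiness and contractibility of the $H$-fixed subspaces of $\CL(V'\oplus W,\CU)$ for all subgroups $H\leq G$, which is precisely the defining property of a complete $G$-universe.
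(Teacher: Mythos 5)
Your approach is genuinely different from the paper's. Where you implement $\Q[h]{\CU}(\spec{X})$ as a two-sided bar construction (a homotopy coend over $\LL[fin]{G}$ with left module $\CL(-,\CU)_+$ and right module $\Q{(-)}(\spec{X})$), the paper instead takes the \emph{strict} coend, i.e.\ the left Kan extension of the continuous $G$-functor $U\mapsto\Q{U}(\spec{X})=\Omega^U\shift^U\spec{X}$ along $\LL[fin]{G}\subset\LL{G}$, evaluated at $\CU$. The paper then proves (Proposition~\ref{prop:colim->Lan}) that this strict coend coincides with the plain filtered colimit over the poset $\CP(\CU)$ of finite-dimensional subrepresentations of~$\CU$, with transition maps $\tau_{U\subseteq V}$. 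The crucial observation that makes the strict construction homotopically acceptable is that for (almost) good $\spec{X}$ these transition maps are levelwise closed embeddings (Lemma~\ref{lem:good}), so the strict filtered colimit is already derived and one may pull $\Omega^U$ and fixed points through it (Facts~\ref{facts:clemb}). Your bar construction buys the robustness of a homotopy colimit that is correct for any input, but at the cost that you must control $\Omega^{V'}$ against geometric realisation; the paper avoids this entirely because $\Omega^{V'}$ commutes with sequential colimits along closed embeddings for free. Both approaches make parts~(i) and~(iv) essentially formal. For part~(iii), note that the paper does not in the end argue by a cofinal zig-zag between $\{W\}$ and $\{V'\oplus W\}$: it observes that the functor $U\mapsto U\oplus 0$ from $\CP(\CU)$ to $\CP(\CU\oplus V')$ is \emph{not} cofinal, and instead uses that $\Q{(-)}(\spec{X})$ is a continuous $G$-functor $\LL{G}\to\Sp_G$, so the $G$-homotopy in $\CL(\CU,\CU\oplus V')$ between the inclusion $\CU\hookrightarrow\CU\oplus V'$ and any equivariant isometric isomorphism (which exists and is homotopic through $G$-maps by Lemma~\ref{lem:LMS-II.1.5}) yields a $G$-homotopy between the induced maps $\Q{\CU}(\spec{X})\to\Q{\CU\oplus V'}(\spec{X})$. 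If you pursue the bar construction, you should expect to need a similar contractibility-of-$\CL$ argument rather than literal cofinality, and you should keep in mind that the left module $\CL(-,\CU)_+$ in your bar construction is precisely what encodes this homotopy invariance.
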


We refer to~$\Q[h]{}$ as a \emph{bifunctorial} replacement construction.
The notation is inspired by the traditional use of the letter~$Q$ for the $\Omega^\infty \Sigma^\infty$ construction for spaces; see e.g.~\cite{Carlsson}*{Section~2}.
Notice that part~\ref{i:fun} of Theorem~\ref{thm:Q} implies the following corollary; we think of the maps~\eqref{eq:fun-in-U} as \emph{change of universe} maps.

\begin{corollary}
\label{cor:fun-in-U}
For any orthogonal $G$-spectrum~$\spec{X}$ and for any pair of $G$\=/representations $\CU$ and~$\CV$ there is a $G$-map of orthogonal $G$-spectra
\begin{equation}
\label{eq:fun-in-U}
\MOR{\Phi_{\CU,\CV}}{\CL(\CU,\CV)_+\sma\Q[h]{\CU}(\spec{X})}{\Q[h]{\CV}(\spec{X})}
\end{equation}
such that, for all $G$-representations $\CU$, $\CV$, and~$\CW$, the diagram
\begin{equation}
\label{eq:fun-in-U-assoc}
\begin{tikzcd}[column sep=large]
\CL(\CV,\CW)_+\sma\CL(\CU,\CV)_+\sma\Q[h]{\CU}(\spec{X})
\arrow{r}{\id\sma\Phi_{\CU,\CV}}
\arrow{d}[swap]{\circ\sma\id}
&
\CL(\CV,\CW)_+\sma\Q[h]{\CV}(\spec{X})
\arrow{d}{\Phi_{\CV,\CW}}
\\
\CL(\CU,\CW)_+\sma\Q[h]{\CU}(\spec{X})
\arrow{r}[swap]{\Phi_{\CU,\CW}}
&
\Q[h]{\CW}(\spec{X})
\end{tikzcd}
\end{equation}
commutes, and the composition
\begin{equation}
\label{eq:fun-in-U-unit}
S^0\sma\Q[h]{\CU}(\spec{X})
\xrightarrow{i\sma\id}
\CL(\CU,\CU)_+\sma\Q[h]{\CU}(\spec{X})
\xrightarrow{\Phi_{\CU,\CU}}
\Q[h]{\CU}(\spec{X})
\end{equation}
is the canonical isomorphism, where $i$ is the inclusion of the identity.
\end{corollary}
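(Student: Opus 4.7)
The plan is to derive the corollary as a formal consequence of part~\ref{i:fun} of Theorem~\ref{thm:Q}, using the standard enriched adjunction between smash products and internal mapping spaces. First, I would fix the orthogonal $G$-spectrum~$\spec{X}$; by Theorem~\ref{thm:Q}\ref{i:fun} the assignment $\CU\mapsto\Q[h]{\CU}(\spec{X})$ then becomes a continuous $G$-functor $\LL{G}\to\Sp_G$. Its action on morphism spaces produces, for every pair of $G$-representations $\CU$ and $\CV$, a continuous $G$-equivariant map from the (unpointed) $G$-space $\CL(\CU,\CV)$ into the $G$-space of (not necessarily equivariant) morphisms of orthogonal spectra $\Q[h]{\CU}(\spec{X})\to\Q[h]{\CV}(\spec{X})$.

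Next, I would pass to the adjoint. Since $\Sp_G$ is tensored over pointed $G$-spaces, the standard adjunction identifies continuous maps from an unbased space $A$ into such a mapping space with $G$-maps of orthogonal $G$-spectra out of $A_+\sma\Q[h]{\CU}(\spec{X})$. Applied in our setting, this yields the desired map
\[
\Phi_{\CU,\CV}\colon\CL(\CU,\CV)_+\sma\Q[h]{\CU}(\spec{X})\to\Q[h]{\CV}(\spec{X}),
\]
whose $G$-equivariance is inherited from the $G$-equivariance of the functor's action on morphism spaces, combined with the conjugation $G$-action on~$\CL(\CU,\CV)$.

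Finally, after applying the adjunction, the commutativity of the associativity diagram~\eqref{eq:fun-in-U-assoc} becomes exactly the composition law $F(g\circ f)=F(g)\circ F(f)$, and the relation~\eqref{eq:fun-in-U-unit} becomes the unit law $F(\id_\CU)=\id_{F(\CU)}$, applied to the continuous functor $F\colon\CU\mapsto\Q[h]{\CU}(\spec{X})$. Both identities hold tautologically because $\Q[h]{}$ is a genuine (not lax) continuous functor, so no further verification is required.

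I do not anticipate a substantive obstacle. The only point requiring attention is the bookkeeping of the enriched $G$-equivariant structure: one should verify that the disjoint basepoint on $\CL(\CU,\CV)_+$ appears correctly under the adjunction, and that the diagonal $G$-action on the smash product matches the conjugation action built into the definition of~$\LL{G}$. Once part~\ref{i:fun} of Theorem~\ref{thm:Q} is in hand, the corollary is essentially a restatement of what it means for $(\CU,\spec{X})\mapsto\Q[h]{\CU}(\spec{X})$ to be a topologically enriched bifunctor.
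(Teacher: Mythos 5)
Your proposal is correct and follows exactly the route the paper takes: Corollary~\ref{cor:fun-in-U} is, after passing through the tensor\,--\,mapping-spectrum adjunction, nothing but the unraveling of what it means for $\CU\mapsto\Q[h]{\CU}(\spec{X})$ to be a continuous $G$-functor $\LL{G}\to\Sp_G$ (which is Theorem~\ref{thm:Q}\ref{i:fun}). The only cosmetic point is that the morphism spaces of $\LL{G}$ are already the pointed spaces $\CL(\CU,\CV)_+$, so the based map whose adjoint is $\Phi_{\CU,\CV}$ is exactly the functor's action on morphisms; as you anticipate, this is equivalent to your unbased formulation.
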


The bifunctorial replacement is a key step in our construction of transfer maps (Section~\ref{sec:transfer}) and of the Adams map (Section~\ref{sec:construction}).
To formulate our main theorem we need some more terminology.
Let $N$ be a normal subgroup of the finite group~$G$.
We denote by~$\CF(N)$ the family of subgroups $H$ of~$G$ such that $H\cap N=1$, and let $E\CF(N)$ be the universal space for~$\CF(N)$.
We say that an orthogonal $G$-spectrum~$\spec{X}$ is $N$-free if the projection
\begin{equation}
\label{eq:N-free}
E\CF(N)_+\sma\spec{X}\TO\spec{X}
\end{equation}
is a \piiso; compare~\cite{MM}*{Definitions IV.6.1(ii) on page~69 and~VI.4.2 on page~100}.
For an orthogonal $G$-spectrum~$\spec{X}$ the symbols $\spec{X}/N$ and~$\spec{X}^N$ denote the naive $N$-orbits and the naive $N$-fixed points, defined by applying levelwise the corresponding constructions for spaces; see Section~\ref{sec:change-of-groups}.
No fibrant nor cofibrant replacement is hidden in the notation and in our setup there are no change-of-universe functors.

\begin{main}
\label{thm:main}
Let $G$ be a finite group and $N$ be a normal subgroup of~$G$.
Let $\CU$ be a complete $G$-universe.
Then for any orthogonal $G$-spectrum~$\spec{X}$ there is an equivariant map of orthogonal $G/N$-spectra
\[
\MOR{\adams}{E\CF(N)_+\sma_N\spec{X}}{\Q[h]{\CU}(\spec{X})^N}
\]
such that:
\begin{enumerate}
\item\label{i:adams-natural}
$\adams$ is natural in~$\spec{X}$;
\item\label{i:adams-iso}
if $\spec{X}$ is good and $N$-free then $\adams$ is a \piiso.
\end{enumerate}
\end{main}

We call $\adams$ the \emph{Adams map}.
We emphasize that $\adams$ is natural even before passing to the stable homotopy category.
Moreover, $\adams$ is defined for all orthogonal $G$-spectra, even without the $N$-free and good assumptions, and without any fibrancy or cofibrancy assumptions.
Main Theorem~\ref{thm:main} is proved in Section~\ref{sec:proof}.

In the special case $N=G$, the universal space $E\CF(G)$ is just~$EG$, the universal cover of the classifying space of~$G$, and the Adams map is a natural map of non-equivariant orthogonal spectra
\begin{equation*}
\label{eq:adams-special}
\MOR{\adams}{EG_+\sma_G\spec{X}}{\Q[h]{\CU}(\spec{X})^G}
.
\end{equation*}
In the general case, after forgetting the $G/N$-action the source of the Adams map is the $N$-homotopy orbits of~$\spec{X}$ considered as an orthogonal $N$-spectrum; see also Lemma~\ref{lem:well-known}\ref{i:sma-F(N)}.
In fact, $\res_{N\leq G}E\CF(N) = EN$ and
\[
\res_{1\leq G/N}\Bigl(E\CF(N)_+\sma_N\spec{X}\Bigr)
\cong
\Bigl(\res_{N\leq G}\bigl(E\CF(N)_+\sma\spec{X}\bigr)\Bigr)/N
\cong
EN_+\sma_N\res_{N\leq G}\spec{X}
\,.
\]
If $\spec{X}$ is good then $\Q[h]{\CU}(\spec{X})$ is a $G$-$\Omega$-spectrum $\underline{\pi}_*$-isomorphic to~$\spec{X}$ by Theorem~\ref{thm:Q}\ref{i:repl} and~\ref{i:Omega}, and therefore the target of the Adams map are the derived (also known as categorical, as opposed to the naive or geometrical) fixed points of~$\spec{X}$.

From Main Theorem~\ref{thm:main} one recovers the classical Adams isomorphism as follows.
Mandell and May construct in~\cite{MM}*{Section~III.4 on pages~47--51} the stable model structure on the topological category of orthogonal $G$-spectra~$\Sp^G$.
The weak equivalences are the \piiso s,
the fibrant objects are the orthogonal $G$-$\Omega$-spectra,
and cofibrant implies good by Lemma~\ref{lem:good}\ref{i:cofib=>good}.
We denote by $\Ho\Sp^G$ the corresponding homotopy category and by $[-,-]^G$ the morphism sets in~it.
By~\cite{MM}*{Theorems IV.1.1 on page~59 and IV.2.9(iv) on page~63} $\Ho\Sp^G$ is equivalent to the stable homotopy category of $G$-spectra in the sense of~\cite{LMS}.

Now let $N$ be a normal subgroup of~$G$ and let $\CU$ be a complete $G$-universe.
Then there is also a different stable model structure on~$\Sp^G$, that we call the $N$-trivial stable model structure, where the weak equivalences are the $\CU^N$-\piiso s, i.e., the maps inducing isomorphisms of the homotopy groups defined as in~\eqref{eq:pi} but with $\CU$ replaced by~$\CU^N$.
Notice that we follow the point of view explained in~\cite{MM}*{paragraph before Theorem~V.1.7 on page~76}, and we consider two different model structures on the same category, instead of changing also the underlying category.

\begin{corollary}%[Adams, Lewis-May-Steinberger]
\label{cor:LMS}
Let $G$ be a finite group and $N$ be a normal subgroup of~$G$.
Let $\CU$ be a complete $G$-universe.
For any orthogonal $G$-spectrum~$\spec{X}$,
if $\spec{X}$ is cofibrant in the $N$-trivial stable model structure and the projection $\MOR{\pr}{E\CF(N)_+\sma\spec{X}}{\spec{X}}$ is a weak equivalence in the $N$-trivial stable model structure,
then in the stable homotopy category of $G/N$-spectra there is an isomorphism
\begin{equation}
\label{eq:LMS}
\spec{X}/N
\TO[\simeq]
\Q[h]{\CU}(\spec{X})^N
.
\end{equation}
Moreover, for every cofibrant $G/N$-spectrum~$\spec{Y}$ there is an isomorphism
\begin{equation}
\label{eq:LMS-adj}
\bigl[\spec{Y},\spec{X}/N\bigr]^{G/N}
\cong
\bigl[p^*\spec{Y},\spec{X}\bigr]^G
,
\end{equation}
where $\MOR{p}{G}{G/N}$ denotes the projection.
\end{corollary}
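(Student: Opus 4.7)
My plan is to apply Main Theorem~\ref{thm:main} not directly to~$\spec{X}$, which need not be $N$-free, but to the auxiliary orthogonal $G$-spectrum $\spec{X}'\coloneqq E\CF(N)_+\sma\spec{X}$, and then to transport the resulting \piiso{} back to~$\spec{X}$ using the hypothesis on~$\pr$. First I verify that $\spec{X}'$ is $N$-free, since the two projections $E\CF(N)_+\sma E\CF(N)_+\to E\CF(N)_+$ are $G$-equivariant homotopy equivalences, and that $\spec{X}'$ is good, because $\spec{X}$ is cofibrant and hence good by Lemma~\ref{lem:good}\ref{i:cofib=>good}, while smashing with a pointed $G$-CW complex preserves goodness. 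Main Theorem~\ref{thm:main}\ref{i:adams-iso} then yields that $\adams_{\spec{X}'}\colon E\CF(N)_+\sma_N\spec{X}'\to\Q[h]{\CU}(\spec{X}')^N$ is a \piiso{} of orthogonal $G/N$-spectra.

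To deduce~\eqref{eq:LMS}, I will assemble the zigzag
\[
\spec{X}/N\xleftarrow{\pr/N}\spec{X}'/N\xleftarrow{\;\epsilon\;}E\CF(N)_+\sma_N\spec{X}'\xrightarrow{\adams_{\spec{X}'}}\Q[h]{\CU}(\spec{X}')^N\xrightarrow{\Q[h]{\CU}(\pr)^N}\Q[h]{\CU}(\spec{X})^N
\]
in~$\Sp^{G/N}$, where $\epsilon$ is induced by the collapse $E\CF(N)_+\to S^0$, and argue that each unlabeled arrow is a \piiso. For $\pr/N$ I use that $(-)/N$ is left Quillen from the $N$-trivial stable model structure on~$\Sp^G$ to the stable model structure on~$\Sp^{G/N}$, together with the facts that $\spec{X}'$ is cofibrant in the $N$-trivial model structure (since $E\CF(N)_+$ is a pointed $G$-CW complex with isotropy in~$\CF(N)$) and that $\pr$ is a weak equivalence there by hypothesis. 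For~$\epsilon$ I rely on the fact that $\spec{X}'$ is $N$-free and cofibrant, so that strict $N$-orbits compute derived $N$-orbits. For the rightmost arrow, Theorem~\ref{thm:Q}\ref{i:Omega} ensures that $\Q[h]{\CU}(\spec{X})$ and $\Q[h]{\CU}(\spec{X}')$ are $G$-$\Omega$-spectra with respect to~$\CU$, hence also with respect to~$\CU^N$, i.e., fibrant in the $N$-trivial model structure; applying 2-out-of-3 to the naturality square of $\fr\colon\spec{X}\to\Q[h]{\CU}(\spec{X})$ then shows that $\Q[h]{\CU}(\pr)$ is a weak equivalence in the $N$-trivial model structure, and $(-)^N$, being right Quillen from the $N$-trivial to the stable model structure on~$\Sp^{G/N}$, preserves weak equivalences between fibrant objects.

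Finally, the adjunction~\eqref{eq:LMS-adj} will follow from~\eqref{eq:LMS} together with the Quillen adjunction $p^*\dashv(-)^N$ between the stable model structure on~$\Sp^{G/N}$ and the $N$-trivial stable model structure on~$\Sp^G$: for cofibrant~$\spec{Y}$ one obtains
\[
[\spec{Y},\spec{X}/N]^{G/N}\cong[\spec{Y},\Q[h]{\CU}(\spec{X})^N]^{G/N}\cong[p^*\spec{Y},\spec{X}]^G,
\]
where the last identification uses in addition that $p^*\spec{Y}$ has trivial $N$-action, so that derived morphism sets out of it agree in the $N$-trivial and in the complete stable model structure. The hard part will be verifying that $\Q[h]{\CU}(\pr)$ is a weak equivalence in the $N$-trivial model structure: this reduces to comparing $\CU$- and $\CU^N$-homotopy groups for $G$-$\Omega$-spectra, and is handled by observing that for $\CU$-$\Omega$-spectra both sets of homotopy groups can be read off any fixed level and hence coincide, so the required 2-out-of-3 argument goes through. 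Once this model-categorical comparison is in place, the rest of the argument is a routine assembly of the zigzag above with Main Theorem~\ref{thm:main}.
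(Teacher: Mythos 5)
Your high-level strategy — apply Main Theorem~\ref{thm:main} to $\spec{X}'=E\CF(N)_+\sma\spec{X}$ rather than directly to $\spec{X}$, and then transport the result to $\spec{X}$ via a zigzag — is a workable variant, but it has genuine gaps, and it misses the mechanism that makes the whole argument go through, namely \cite{MM}*{Corollary~V.1.8}: \emph{the identity functor is left Quillen from the $N$-trivial to the usual stable model structure on~$\Sp^G$}. The paper uses precisely this fact to conclude that $\spec{X}$ itself is cofibrant in the usual structure (hence good by Lemma~\ref{lem:good}\ref{i:cofib=>good}) and that $\pr$ is an honest \piiso{} (hence $\spec{X}$ is $N$-free as defined in~\eqref{eq:N-free}), after which the Main Theorem applies directly to $\spec{X}$ and the zigzag collapses to just two maps, \eqref{eq:zig} and~\eqref{eq:zag}. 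You never invoke this ingredient, and without it several steps break.

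Concretely: (a) You assert ``$\spec{X}$ is cofibrant and hence good by Lemma~\ref{lem:good}\ref{i:cofib=>good},'' but that lemma concerns cofibrancy in the usual stable model structure, whereas the hypothesis gives cofibrancy only in the $N$-trivial one; the bridge is again \cite{MM}*{Corollary~V.1.8}. (b) The $2$-out-of-$3$ argument for the rightmost arrow is carried out in the $N$-trivial model structure, which would require knowing that $\fr_{\spec{X}}$ and $\fr_{\spec{X}'}$ are $\CU^N$-\piiso s. Theorem~\ref{thm:Q}\ref{i:repl} gives that they are ordinary \piiso s, and your remark about $\Omega$-spectra having the same $\CU$- and $\CU^N$-homotopy groups applies only to the targets $\Q[h]{\CU}(\spec{X})$, $\Q[h]{\CU}(\spec{X}')$, not to the sources $\spec{X}$, $\spec{X}'$, so it does not yield the needed $N$-trivial weak equivalences. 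The correct route is to first upgrade $\pr$ to a \piiso{} (using the same Quillen adjunction), run $2$-out-of-$3$ in the \emph{usual} structure, and then note that $\Q[h]{\CU}(\pr)^N$ is a \piiso{} of $G/N$-spectra by Lemma~\ref{lem:Omega-fix}\ref{i:pi-iso-Omega-fix} (or by the right Quillen property of $(-)^N$ from the usual structure on $\Sp^G$, not the $N$-trivial one — your claim that $(-)^N$ is right Quillen from the $N$-trivial structure is not the statement cited from \cite{MM}*{Proposition~V.3.10}, and is not justified). Once these points are repaired, your zigzag does give \eqref{eq:LMS}, but the paper's proof is both shorter and avoids having to know $\Q[h]{\CU}(\pr)^N$ is an equivalence at all.
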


Corollary~\ref{cor:LMS} was originally proved by Lewis, May, and Steinberger~\cite{LMS}*{Theorem~II.7.1 on page~97}, and not just for finite but also for compact Lie groups.
In the special case when $G$ is finite and $\spec{X}=\Sigma^\infty X$ is the suspension spectrum of a finite $N$-free $G$-CW-complex~$X$, it was proved by Adams~\cite{Adams}*{Theorem~5.4 on page~500}.
Mandell and May~\cite{MM}*{Chapter~VI on pages 88--100} prove that the equivalence recalled above between the homotopy categories of orthogonal $G$-spectra and $G$-spectra in the sense of~\cite{LMS} respects orbits and fixed points, and then use this to \emph{``transport the Adams isomorphism, which is perhaps the deepest foundational result in equivariant stable homotopy theory, from $G$-spectra to [\ldots]~orthogonal $G$-spectra''}~\cite{MM}*{first paragraph of Section~VI.4 on page~99}.
We emphasize though that they transport the Adams isomorphism only into the stable homotopy category of orthogonal $G$-spectra, whereas we realize it as a  map of orthogonal $G$-spectra that is natural even before passing to the homotopy category, and that is defined for all spectra.

\begin{proof}[Proof of Corollary~\ref{cor:LMS} from Main Theorem~\ref{thm:main}]
The proof relies on three Quillen adjunctions proved in~\cite{MM}.
The assumptions imply that the projection $\pr$ is a weak equivalence between cofibrant objects in the $N$-trivial stable model structure.
It follows that applying any left Quillen adjoint to~$\pr$ produces a weak equivalence between cofibrant objects.
We use this observation twice.

First, by~\cite{MM}*{Proposition~V.3.12 on page~81}, taking $N$-orbits gives a left Quillen adjoint (to~$p^*$) from the $N$-trivial stable model structure on~$\Sp^G$ to the usual stable model structure on~$\Sp^{G/N}$.
Therefore $\pr$ induces a \piiso\ 
\begin{equation}
\label{eq:zig}
E\CF(N)_+\sma_N\spec{X}\TO\spec{X}/N
\end{equation}
of orthogonal $G/N$-spectra.

Second, by~\cite{MM}*{Corollary~V.1.8 on page~77}, the identity is a left Quillen adjoint (to itself) from the $N$-trivial to the usual stable model structure on~$\Sp^G$.
Therefore $\spec{X}$ is $N$-free according to the definition in~\eqref{eq:N-free}, and $\spec{X}$ is cofibrant also in the usual stable model structure, and so in particular $\spec{X}$ is good by Lemma~\ref{lem:good}\ref{i:cofib=>good}.
Then Main Theorem~\ref{thm:main} applies and the Adams map
\begin{equation}
\label{eq:zag}
\MOR{\adams}{E\CF(N)_+\sma_N\spec{X}}{\Q[h]{\CU}(\spec{X})^N}
\end{equation}
is a \piiso\ of orthogonal $G/N$-spectra.
The isomorphism~\eqref{eq:LMS} is then given in the stable homotopy category of $G/N$-spectra by % the zig-zag consisting of
\eqref{eq:zig} and~\eqref{eq:zag}.

Finally, by~\cite{MM}*{Proposition~V.3.10 on page~81}, taking $N$-fixed points gives a right Quillen adjoint (to~$p^*$) from the usual stable model structure on~$\Sp^G$ to the usual stable model structure on~$\Sp^{G/N}$.
This adjunction, together with \eqref{eq:LMS} and the \piiso\ $\spec{X}\TO\Q[h]{\CU}(\spec{X})$ from~Theorem~\ref{thm:Q}\ref{i:repl}, implies~\eqref{eq:LMS-adj}.
\end{proof}

We end this introduction by explaining our main motivation for this work.
The Adams isomorphism plays an important role in the theory of topological cyclic homology, as we proceed to explain.
Suppose that $\spec{A}$ is a ring or a ring spectrum, and fix a prime number~$p$.
Topological cyclic homology at~$p$ is defined as
\[
TC(\spec{A};p) = \holim_{R,F} THH(\spec{A})^{C_{p^n}}
,
\]
where $THH(\spec{A})$, topological Hochschild homology, is an orthogonal $S^1$-spectrum, and for any integer $n\geq1$ there are maps
\[
\MOR{F}{THH(\spec{A})^{C_{p^{n}}}}{THH(\spec{A})^{C_{p^{n-1}}}}
\aND
\MOR{R}{THH(\spec{A})^{C_{p^{n}}}}{THH(\spec{A})^{C_{p^{n-1}}}}
\]
between the underlying fixed point spectra, with $C_{p^n}$ denoting the finite cyclic group of order~$p^n$ seen as a subgroup of~$S^1$.
Moreover, there is the cyclotomic trace map $\MOR{trc}{K(\spec{A})}{TC(\spec{A};p)}$ from the algebraic $K$-theory spectrum of~$\spec{A}$ to topological cyclic homology.
The cyclotomic trace has been used very successfully for computations of algebraic $K$-theory; see for example~\citelist{\cite{BHM} \cite{HM-annals}}.

An essential input for $TC$ computations is the fundamental fibration sequence, that identifies the homotopy fiber of~$R$ with
\[
{EC_{p^n}}_+\sma_{C_{p^n}} THH(\spec{A})
\,;
\]
see for example~\cite{HM-top}*{Proposition~2.1 on page~33}.
The key step in this identification is the Adams isomorphism.
Using the results of this paper we can lift the fundamental fibration sequence from the stable homotopy category to the category of orthogonal spectra.

In joint work with L\"uck and Rognes~\cite{LRRV} we compare via the cyclotomic trace the Farrell-Jones assembly map in algebraic $K$-theory to the assembly map in~$TC$, and prove new rational injectivity results for both assembly maps.
We refer to~\cite{LR} for a comprehensive survey of assembly maps and isomorphism conjectures.
Since the assembly maps are constructed by taking homotopy colimits over the orbit category, it is essential that the fundamental fibration sequence, and therefore the Adams isomorphism, is natural before passing to the homotopy category, because homotopy colimits and hence assembly maps cannot be defined in the homotopy category.

%%%%%%%%%%%%%%%%%%%%%%%%%%%%%%%%%%%%%%%%%%%%%%%%%%%%%%%%%%%%%%%%%%%%%%%%%%%%%

\subsection*{Structure of the article}
The main new results are contained in Sections~\ref{sec:Q-fin} through~\ref{sec:N=G}.
The bifunctorial replacement Theorem~\ref{thm:Q} is proved in Section~\ref{sec:Q} after some preparations in Section~\ref{sec:Q-fin}, and some important consequences of it are established in Sections~\ref{sec:asbl-coasbl} and~\ref{sec:free}.
Sections~\ref{sec:transfer} and~\ref{sec:wirth} discuss the construction and properties of transfer maps, using the bifunctorial replacements of the previous sections.
All these tools come together in the final three sections, which are devoted to the Adams isomorphism.
The Adams map is constructed in Section~\ref{sec:construction}, where also part~\ref{i:adams-natural} of Main Theorem~\ref{thm:main} is established.
Part~\ref{i:adams-iso} is then proved in Section~\ref{sec:proof}, and a shortcut of the proof in the special case when $N=G$ is explained in Section~\ref{sec:N=G}.

In Sections~\ref{sec:conventions} through~\ref{sec:change-of-groups} we review the foundations of equivariant orthogonal spectra.
Experienced readers might skip these sections and refer back to them only when needed; we draw their attention in particular to Remark~\ref{rem:notation}, where our departures from the notations and conventions of~\cites{MM, HHR} are explained, and to Definition~\ref{def:good} and Lemma~\ref{lem:good} about good and almost good orthogonal spectra. 
For less experienced readers, on the other hand, in these initial sections we make explicit and collect in one place several facts that we rely upon and that are scattered (sometimes less explicitly) in the existing literature.

%%%%%%%%%%%%%%%%%%%%%%%%%%%%%%%%%%%%%%%%%%%%%%%%%%%%%%%%%%%%%%%%%%%%%%%%%%%%%

\subsection*{Acknowledgments}
We would like to thank Stefan Schwede and Irakli Patchkoria for insightful conversations and comments, the referee for helping us improve the exposition, and CRC~647 in Berlin for financial support.

%%%%%%%%%%%%%%%%%%%%%%%%%%%%%%%%%%%%%%%%%%%%%%%%%%%%%%%%%%%%%%%%%%%%%%%%%%%%%

\section{Conventions}
\label{sec:conventions}

We begin by fixing some notation and terminology.
We denote by $\CT$ the category of pointed, compactly generated, and weak Hausdorff spaces (see e.g.~\cite{tD-top}*{Section~7.9 on pages~186--195} and~\cite{Strickland}), which from now on we simply refer to as \emph{pointed spaces}.
Given pointed spaces $X$ and~$Y$ we write $\CT(X,Y)$ or $\map(X,Y)$ for the pointed space of pointed maps from~$X$ to~$Y$.
With respect to the smash product, $\CT$ is a closed symmetric monoidal category, with unit object~$S^0$.
We refer to categories and functors enriched over~$\CT$ as \emph{topological categories} and \emph{continuous functors}.
For the basic definitions about enriched categories see for example~\cite{MacLane}*{Section VII.7 on pages~180--181}.

Everywhere in this paper $G$ denotes a finite group.
A \emph{pointed $G$-space} is a pointed %(compactly generated weak Hausdorff)
space equipped with a continuous left action of~$G$ fixing the base point.
We denote by~$\CT^G$ the topological category whose objects are all pointed $G$-spaces and with morphism spaces $\CT^G(X,Y)$ the subspaces of~$\CT(X,Y)$ consisting of the $G$-equivariant maps.
Endowing the smash product with the diagonal $G$-action, $S^0$ with the trivial action, and $\CT(X,Y)$ with the conjugation action, the category $\CT^G$ is closed symmetric monoidal.
A \emph{topological $G$-category} is for us a category enriched over~$\CT^G$, and a \emph{continuous $G$-functor} is a $\CT^G$-enriched functor.
Pointed $G$-spaces also form a topological $G$-category, that we denote~$\CT_G$, with morphisms the pointed $G$-spaces
\(
\CT_G(X,Y)=\CT(X,Y)
\)
of not necessarily equivariant maps.
Notice that $\obj\CT^G=\obj\CT_G$ and $\CT^G(X,Y)=\CT_G(X,Y)^G$.

%%%%%%%%%%%%%%%%%%%%%%%%%%%%%%%%%%%%%%%%%%%%%%%%%%%%%%%%%%%%%%%%%%%%%%%%%%%%%

\section{Thom spaces of complement bundles}
\label{sec:Thom}

Throughout this paper the expression \emph{$G$-representation} refers to a real, orthogonal $G$-representation of finite or countably infinite dimension, always equipped with the colimit topology from the finite dimensional subrepresentations.
Given $G$-representations $U$ and~$V$, we denote by $\CL(U,V)$ the space of all linear isometric maps, endowed with the conjugation action of~$G$ and with the subspace topology $\CL(U,V)\subset\CL(U,V)_+\subset\CT(U,V)$.
We denote by $\LL{G}$ the topological $G$-category with objects the $G$\=/representations and with morphisms the pointed $G$-spaces $\CL(U,V)_+$.
We let $\LL[fin]{G}$ be the full subcategory of the finite dimensional representations, and $\iso\LL[fin]{G}$ the subcategory of all isomorphisms in~$\LL[fin]{G}$.
We often use the symbols $U$, $V$, \ldots\ for \fd\ representations, and $\CU$, $\CV$, \ldots\ for infinite dimensional ones.
Given a $G$-representation~$\CU$ we denote by~$\CP(\CU)$ the poset of all finite dimensional $G$-subrepresentations of~$\CU$ ordered by inclusion, and we consider it as a discrete subcategory of~$\LL[fin]{G}$.

The \fd\ $G$-representations also form another topological $G$-category with respect to bigger morphism $G$-spaces, the Thom spaces of certain bundles that we now proceed to explain.
Let $U$ and~$V$ be \fd\ $G$-representations and set $k=\dim V - \dim U$.
If $k\geq0$ let~$\Gr_k(V)$ be the Grassmannian of $k$-dimensional subspaces of~$V$, equipped with the obvious $G$-action.
Consider the map
\[
\MOR{c}{\CL(U,V)}{\Gr_k(V)}
\,,\quad
f\mapsto V-f(U)
\,,
\]
where $V-f(U)$ denotes the orthogonal complement of~$f(U)$ in~$V$.
Clearly, $c$ is $G$-equivariant.
Now define~$\xi(U,V)$ to be the pullback along~$c$ of the tautological $k$-dimensional $G$-vector bundle over~$\Gr_k(V)$.
More explicitly,
\[
\xi(U,V) =
\SET{(f,v)\in\CL(U,V)\times V}{f(U)\perp v}
\,,
\]
where the projection $\xi(U,V)\TO\CL(U,V)$ is just the projection onto the first factor, and the $G$-action sends $(f,v)$ to~$(gf(g^{-1}-),gv)$.

\begin{definition}
Given \fd\ $G$-representations $U$ and~$V$, define
\[
\Th(U,V)=\Th\bigl(\xi(U,V)\TO\CL(U,V)\bigr)
\]
to be the Thom space of~$\xi(U,V)$, i.e, the quotient of the fiberwise one-point compactification of~$\xi(U,V)$ where all the points at infinity are identified and taken as the base point.
\end{definition}

Notice that $\Th(U,V)$ is a pointed $G$-space.
The zero section of~$\xi(U,V)$ gives a pointed $G$-map
\begin{equation}
\label{eq:L-to-Th}
\MOR{z=z_{U,V}}{\CL(U,V)_+}{\Th(U,V)}
\,,
\end{equation}
which is a homeomorphism if and only if~$k=0$.
In~the case where $k<0$, then $\CL(U,V)=\emptyset$ and $\Th(U,V)=\pt$.

In the special case where $U\subseteq V$ is a $G$-subrepresentation of~$V$ the fiber of~$\xi(U,V)$ over the inclusion~$\MOR{i}{U}{V}$ is~$V-U$, and therefore we obtain a pointed $G$-map
\begin{equation}
\label{eq:S->Th-sub}
\MOR{a=a_{U\subseteq V}}{S^{V-U}}{\Th(U,V)}
\end{equation}
with $a(0)=z(i)$, where $S^{V-U}$ denotes the one-point compactification of~$V-U$ pointed at infinity.
More generally, given \fd\ $G$-representations $U$ and~$V$, the fiber of~$\xi(V,U\oplus V)$ over the $G$-isometry~$\MOR{i}{V}{U\oplus V}$, $v\mapsto(0,v)$, is naturally isomorphic to~$U$ via the map $u\mapsto(i,(u,0))$, and therefore we obtain a pointed $G$-map
\begin{equation}
\label{eq:S->Th-plus}
\MOR{a=a_{U,V}}{S^U}{\Th(V,U\oplus V)}
\,.
\end{equation}

Given \fd\ $G$-representations $U$, $V$, and $W$, composition of isometries induces a $G$-map of bundles
\[
\MOR{\circ}{\xi(V,W)\times\xi(U,V)}{\xi(U,W)}
\,,\quad
\bigl((g,w),(f,v)\bigr)\mapsto(gf,w+g(v))
\]
and therefore a $G$-map
\begin{equation}
\label{eq:Th-comp}
\MOR{\circ}{\Th(V,W)\sma\Th(U,V)}{\Th(U,W)}
\,.
\end{equation}
The composition maps in~\eqref{eq:Th-comp} are associative and unital, where the identity in $\Th(U,U)$ is given by~$(\id_U,0)=z(\id_U)$.

Thus we get a topological $G$-category $\Th_G$ whose objects are the \fd\ $G$\=/representations, i.e., $\obj\Th_G=\obj\LL[fin]{G}$, and morphisms pointed $G$-spaces $\Th(U,V)$.
The identity on objects together with the zero section maps~$z$ in~\eqref{eq:L-to-Th} defines a continuous $G$-functor $\MOR{z}{\LL[fin]{G}}{\Th_G}$.
We write~$\Th^G$ for the topological category with $\obj\Th^G=\obj\Th_G$ and morphism spaces $\Th^G(U,V)=\Th(U,V)^G$.

\begin{remark}
\label{rem:notation}
A warning about notation is in order here.
Our category $\iso\LL[fin]{G}$ is denoted~$\mathscr{I}_G$ in~\cite{MM}*{Definition~II.2.1 on page~32},
whereas $\Th_G$ is denoted~$\mathscr{J}_G$ in~\cite{MM}*{Definition~II.4.1 on page~35} and in~\cite{HHR}*{Definition~A.10 on page~123}.
Notice also that in the notation $\mathscr{I}_G$ or~$\mathscr{J}_G$ of~\cite{MM} there is an implicit choice of indexing $G$-representations; see~\cite{MM}*{Definition~II.1.1 on page~30 and Variant~II.2.2 on page~32}.
For us, $\iso\LL[fin]{G}=\mathscr{I}_G^{\mathscr{A}\ell\ell}$ and $\Th_G=\mathscr{J}_G^{\mathscr{A}\ell\ell}$.
\end{remark}

In the special case where $G$ is the trivial group, we omit~$G$ from the notation, and write $\LL{}$ and $\Th$ for the corresponding topological categories.
By considering every finite dimensional inner product space as a trivial $G$-representation, we obtain a continuous functor
\begin{equation}
\label{eq:Th->Th^G}
\MOR{\iota}{\Th}{\Th^G}
,
\end{equation}
which can be thought of as the inclusion of the full subcategory of the trivial $G$\=/representations.
We introduce the notation $\Th^\tr_G$ for the category $\Th$ when viewed as a topological $G$-category with trivial $G$-action on the morphism spaces.
Then the functor in~\eqref{eq:Th->Th^G} becomes a continuous $G$-functor
\begin{equation}
\label{eq:Th->Th_G}
\MOR{\iota}{\Th^\tr_G}{\Th_G}
\,.
\end{equation}
This notation may seem overkill, but it is useful later when discussing naive and non-naive orthogonal $G$-spectra, see Theorem~\ref{thm:naive}, and change of group constructions, see Section~\ref{sec:change-of-groups}.

We conclude this section with some important properties of the category~$\Th_G$ that are often used later.

\begin{lemma}[Untwisting and composition]
\label{lem:untw-comp}
For all \fd\ $G$-representations $U$ and~$V$ there are \emph{untwisting} $G$\=/homeomorphisms
\[
\untw\colon
\Th(U,V)\sma S^U
\TO[\cong]
S^V\sma\CL(U,V)_+
\,,
\]
which are compatible with composition, i.e., for all \fd\ $G$-representations $U$, $V$, and~$W$, the diagram
\begin{equation}
\label{eq:untw-comp}
\begin{tikzcd}[column sep=large]
\Th(V,W)\sma\Th(U,V)\sma S^U
\arrow{d}[swap]{\id\sma\untw}
\arrow{r}{\circ\sma\id}
&
\Th(U,W)\sma S^U
\arrow{dd}{\untw}
\\
\Th(V,W)\sma S^V\sma\CL(U,V)_+
\arrow{d}[swap]{\untw\sma\id}
\\
S^W\sma\CL(V,W)_+\sma\CL(U,V)_+
\arrow{r}[swap]{\id\sma\circ}
&
S^W\sma\CL(U,W)_+
\end{tikzcd}
\end{equation}
commutes.
\end{lemma}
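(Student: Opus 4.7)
The plan is to construct $\untw$ from a fiberwise linear isomorphism of vector bundles over $\CL(U,V)$ and then verify~\eqref{eq:untw-comp} by chasing a general element through both routes.

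For the construction, observe that over each $f \in \CL(U,V)$ the orthogonal decomposition $V = f(U) \oplus (V - f(U))$ yields a linear isomorphism $(V - f(U)) \oplus U \to V$, $(v,u) \mapsto v + f(u)$. As $f$ varies, these assemble into a $G$-equivariant isomorphism of vector bundles over $\CL(U,V)$ from the Whitney sum $\xi(U,V) \oplus \bigl(\CL(U,V) \times U\bigr)$ to the trivial $V$-bundle $\CL(U,V) \times V$, sending $\bigl((f,v), u\bigr) \mapsto \bigl(f, v + f(u)\bigr)$; continuity of the inverse follows from continuity of the orthogonal projection onto $f(U)$ in~$f$. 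Passing to fiberwise one-point compactifications, and using that $\Th$ of a Whitney sum is the fiberwise smash product and $\Th$ of a trivial $W$-bundle over~$B$ is $B_+ \sma S^W$, this bundle isomorphism induces the desired $G$-homeomorphism $\untw\colon \Th(U,V) \sma S^U \to S^V \sma \CL(U,V)_+$, sending the class of $\bigl((f,v), u\bigr)$ to the class of $\bigl(v + f(u), f\bigr)$.

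For the compatibility diagram~\eqref{eq:untw-comp}, I would chase a representative $\bigl((g,w),(f,v),u\bigr)$, where $(g,w) \in \xi(V,W)$ and $(f,v) \in \xi(U,V)$, around both routes. Going along the top, $\circ \sma \id$ yields $\bigl((gf, w + g(v)), u\bigr)$ by~\eqref{eq:Th-comp}, and then $\untw$ gives $\bigl(w + g(v) + gf(u), gf\bigr)$. Going down the left, $\id \sma \untw$ produces $\bigl((g,w), v + f(u), f\bigr)$, then $\untw \sma \id$ yields $\bigl(w + g(v + f(u)), g, f\bigr) = \bigl(w + g(v) + gf(u), g, f\bigr)$, and finally $\id \sma \circ$ collapses $(g,f)$ to $gf$. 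Both routes give $\bigl(w + g(v) + gf(u), gf\bigr)$, proving~\eqref{eq:untw-comp}.

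I do not anticipate any serious obstacle: the underlying identity is the elementary fact that adding a vector in $f(U)$ to a vector in $V - f(U)$ reconstructs an arbitrary vector of~$V$, and all continuity and equivariance statements are routine. The only slight subtlety is the interaction between Whitney sums, trivial factors, and Thom spaces, which is standard.
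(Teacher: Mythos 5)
Your proposal is correct and follows essentially the same route as the paper: the untwisting homeomorphism is built from the vector bundle isomorphism $\xi(U,V)\oplus\underline{U}\to\underline{V}$, $\bigl((f,v),u\bigr)\mapsto v+f(u)$, yielding the explicit formula $\untw\bigl((f,v)\sma u\bigr)=(v+f(u))\sma f$, and the compatibility square is verified by a direct element chase. The paper merely states that the chase "is easy to check"; you have carried it out correctly, so there is nothing to add.
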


\begin{proof}
The untwisting homeomorphism comes from well-known properties of Thom spaces, because
\[
\Th(U,V)\sma S^U
\cong
\Th\bigl(\xi(U,V)\oplus\underline{U}\bigr)
\cong
\Th\bigl(\underline{V})
\cong
S^V\sma\CL(U,V)_+
\,.
\]
Here $\underline{U}$ and~$\underline{V}$ denote the trivial bundles with fibers $U$ and~$V$ over~$\CL(U,V)$.
The second isomorphism is induced from the vector bundle isomorphism
\[
\xi(U,V)\oplus\underline{U} \TO[\cong] \underline{V}
\,,\quad
\bigl((f,v),u\bigr)\mapsto v+f(u)
\,.
\] 
% which sends $(v,u) \in \bigl(V-f(U)\bigr)\oplus U$ in the fiber over~$f\in\CL(U,V)$ to $v+f(u)$.
Thus the untwisting homeomorphism is given by
\[
\MOR{\untw}{\Th(U,V)\sma S^U}{S^V\sma\CL(U,V)_+}
\,,\quad
(f,v)\sma u
\mapsto
\bigl(v+f(u)\bigr)\sma f
\,.
\]
Using this explicit formula it is easy to check that diagram~\eqref{eq:untw-comp} commutes.
\end{proof}

\begin{lemma}
\label{lem:plus}
The assignment
\[
\MOR{-\oplus-}{\Th_G\times\Th_G}{\Th_G}
\,,\quad
(U,W)\longmapsto U\oplus W
\]
extends to a continuous $G$-functor given on morphisms by
\[
\Th(U,V)\sma\Th(W,Z)\TO\Th(U\oplus W,V\oplus Z)
\,,\quad
(f,v)\sma(g,z)\longmapsto(f\oplus g, (v,z))
\,.
\]
Moreover, when $U=0$ and $W=Z$, the diagram
\begin{equation}
\label{eq:plus-a}
\begin{tikzcd}[row sep=tiny, column sep=large]
\Th(0,V)
\arrow[equal]{d}{\wr}
\arrow{r}{-\oplus\id_W}
&
\Th(0\oplus W, V\oplus W)
\arrow[equal]{d}{\wr}
\\
S^V
\arrow{r}[swap]{a_{V,W}}
&
\Th(W, V\oplus W)
\end{tikzcd}
\end{equation}
commutes, where $a_{V,W}$ is defined in~\eqref{eq:S->Th-plus}.
If $U$ is a $G$-subrepresentation of~$V$, then the diagram
\begin{equation}
\label{eq:plus-a-sub}
\begin{tikzcd}[column sep=large]
\Th(U,V)
\arrow{r}{-\oplus\id_W}
&
\Th(U\oplus W, V\oplus W)
\\
S^{V-U}
\arrow{u}{a_{U\subseteq V}}
\arrow{r}[description]{\cong}
&
S^{V\oplus W-U\oplus W}
\arrow{u}[swap]{a_{U\oplus W\subseteq V\oplus W}}
\end{tikzcd}
\end{equation}
commutes, where the vertical maps are defined in~\eqref{eq:S->Th-sub} and the bottom horizontal isomorphism is the evident one.
\end{lemma}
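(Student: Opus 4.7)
The plan is to carry out the three parts of the lemma by direct unpacking of the definitions, since at this level of the paper the statement is essentially a compatibility check between the formulas for $\xi(U,V)$, the basepoint maps $a$, and the direct sum of isometries. The main thing to manage is the bookkeeping of the various identifications; no deeper input is required.

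First, I would justify that the formula on morphisms is well defined. Given $(f,v)\in\xi(U,V)$ and $(g,z)\in\xi(W,Z)$, one has $(f\oplus g)(U\oplus W)=f(U)\oplus g(W)\subseteq V\oplus Z$, and the vector $(v,z)$ is orthogonal to this subspace because $v\perp f(U)$ in~$V$ and $z\perp g(W)$ in~$Z$. Hence $(f\oplus g,(v,z))\in\xi(U\oplus W,V\oplus Z)$, and the assignment is fiberwise linear in the fiber coordinate, so it induces a continuous map of Thom spaces. Equivariance follows because $g\cdot(f\oplus g',(v,z'))=(g(f\oplus g')g^{-1},g(v,z'))=((gfg^{-1})\oplus(gg'g^{-1}),(gv,gz'))$.

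Next I would verify functoriality. For the identity this is the equality $(\id_U,0)\oplus(\id_W,0)=(\id_{U\oplus W},0)$. For composition, take morphisms $(f,v),(h,w)$ in $\Th$ composable in the first factor and $(f',v'),(h',w')$ composable in the second; applying $\oplus$ after composition gives
\[
\bigl(hf\oplus h'f',\,(w+h(v),\,w'+h'(v'))\bigr),
\]
and applying composition after $\oplus$ gives
\(
\bigl((h\oplus h')\circ(f\oplus f'),\,(w,w')+(h\oplus h')(v,v')\bigr),
\)
which is the same expression. Continuity in each of the four inputs is immediate from the formula, so $-\oplus-$ is a continuous $G$-functor.

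For diagram~\eqref{eq:plus-a}, the identification $\Th(0,V)\cong S^V$ sends the unique $(f,v)$ to~$v$, so $-\oplus\id_W$ sends $v\in S^V$ to $(f\oplus\id_W,(v,0))$, and $f\oplus\id_W$ is precisely the canonical inclusion $i\colon W\to V\oplus W$, $w\mapsto(0,w)$. By the definition~\eqref{eq:S->Th-plus}, $a_{V,W}(v)=(i,(v,0))$, so the two composites agree. For diagram~\eqref{eq:plus-a-sub}, if $U\subseteq V$ and we write $i_{U\subseteq V}$ for the inclusion, then $i_{U\subseteq V}\oplus\id_W$ is the inclusion $i_{U\oplus W\subseteq V\oplus W}$, and under the evident identification $V-U\cong V\oplus W-U\oplus W$ (via $v\mapsto(v,0)$) both composites send~$v$ to $(i_{U\oplus W\subseteq V\oplus W},(v,0))$ by~\eqref{eq:S->Th-sub}. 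The only potential obstacle is keeping the direct-sum conventions and orthogonal-complement identifications consistent; once these are spelled out as above, both squares commute on the nose.
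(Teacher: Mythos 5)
Your proof is correct and, as the paper simply says ``This is immediate after unraveling the definitions,'' your verification by direct unpacking is exactly the approach the authors have in mind. The only cosmetic issue is that in your equivariance check you overload the symbol $g$ for both a group element and an isometry in $\CL(W,Z)$; otherwise the bookkeeping of the $\xi$-bundles, the maps $a_{V,W}$ and $a_{U\subseteq V}$, and the direct-sum identifications is carried out accurately.
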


\begin{proof}
This is immediate after unraveling the definitions.
\end{proof}

%%%%%%%%%%%%%%%%%%%%%%%%%%%%%%%%%%%%%%%%%%%%%%%%%%%%%%%%%%%%%%%%%%%%%%%%%%%%%

\section{Orthogonal \texorpdfstring{$G$}{G}-spectra}
\label{sec:orth}

\begin{definition}[Orthogonal spectra]
\label{def:orth-G-spectrum}
An \emph{orthogonal $G$-spectrum~$\spec{X}$} is a continuous $G$-functor
\[
\MOR{\spec{X}}{\Th_G}{\CT_G}
\,.
\]
\end{definition}

We emphasize that a continuous $G$-functor $\MOR{\spec{X}}{\Th_G}{\CT_G}$ consists of a pointed $G$-space $\spec{X}(U)$ for any \fd\ $G$-representation~$U$, together with pointed $G$-maps
\begin{equation}
\label{eq:CT^G-functor}
\Th(U,V)\TO\CT\bigl(\spec{X}(U),\spec{X}(V)\bigr),
\quad
(f,v)\mapsto\spec{X}(f,v)
\end{equation}
for every pair of \fd\ $G$-representations, satisfying associativity and unitality axioms.
Notice that by adjunction the maps~\eqref{eq:CT^G-functor} can equivalently be described by pointed $G$-maps
\begin{equation}
\label{eq:CT^G-functor-adj}
\MOR{\chi_{U,V}}{\Th(U,V)\sma\spec{X}(U)}{\spec{X}(V)},
\quad
(f,v)\sma x\mapsto\bigl(\spec{X}(f,v)\bigr)(x)
\end{equation}
and in this adjoint form the associativity and unitality axioms state that, for every \fd\ $G$-representations $U$, $V$, and~$W$, the diagram
\begin{equation}
\label{eq:spec-assoc}
\begin{tikzcd}[column sep=large]
\Th(V,W)\sma\Th(U,V)\sma\spec{X}(U)
\arrow{r}{\id\sma\chi_{U,V}}
\arrow{d}[swap]{\circ\sma\id}
&
\Th(V,W)\sma\spec{X}(V)
\arrow{d}{\chi_{V,W}}
\\
\Th(U,W)\sma\spec{X}(U)
\arrow{r}[swap]{\chi_{U,W}}
&
\spec{X}(W)
\end{tikzcd}
\end{equation}
commutes, and the composition
\[
S^0\sma\spec{X}(U)
\xrightarrow{a\sma\id}
\Th(U,U)\sma\spec{X}(U)
\xrightarrow{\chi_{U,U}}
\spec{X}(U)
\]
is the canonical isomorphism, where $a=a_{U\subseteq U}$ is the inclusion of the identity defined in~\eqref{eq:S->Th-sub}.

\begin{example}[Sphere and suspension spectra]
The \emph{sphere $G$-spectrum} is the orthogonal $G$-spectrum $\MOR{\spec{S}}{\Th_G}{\CT_G}$ that sends a \fd\ $G$-representation~$U$ to its one-point compactification~$S^U$, and with the maps~\eqref{eq:CT^G-functor-adj} defined as the composition
\[
\Th(U,V)\sma S^U
\xrightarrow{\untw}
S^V\sma\CL(U,V)_+
\TO[\pr]
S^V
.
\]
The commutativity of diagram~\eqref{eq:spec-assoc} follows from the compatibility of untwisting and composition; see diagram~\eqref{eq:untw-comp} in Lemma~\ref{lem:untw-comp}.
More generally, for any pointed $G$-space~$X$ the \emph{suspension $G$-spectrum} $\Sigma^\infty X$ is defined analogously, sending $U$ to~$S^U\sma X$.
\end{example}

\begin{definition}[Structure maps]
Let $\spec{X}$ be an orthogonal $G$-spectrum.
Given \fd\ $G$\=/representations $U$ and~$V$ define the \emph{structure map}~$\sigma_{U,V}$ to be the composition
\begin{equation}
\label{eq:structure-map}
\sigma_{U,V}\colon
S^U\sma\spec{X}(V)
\xrightarrow{a_{U,V}\sma\id}
\Th(V,U\oplus V)\sma\spec{X}(V)
\xrightarrow{\chi_{V,U\oplus V}}
\spec{X}(U\oplus V)
\,,
\end{equation}
where $a_{U,V}$ is the map defined in~\eqref{eq:S->Th-plus}.
Moreover, if $U$ is a $G$-subrepresentation of~$V$ define the \emph{internal} structure map~$\sigma_{U\subseteq V}$ to be the composition
\begin{equation}
\label{eq:structure-map-sub}
\sigma_{U\subseteq V}\colon
S^{V-U}\sma\spec{X}(U)
\xrightarrow{a_{U\subseteq V}\sma\id}
\Th(U,V)\sma\spec{X}(U)
\xrightarrow{\chi_{U,V}}
\spec{X}(V)
\,,
\end{equation}
where $a_{U\subseteq V}$ is the map defined in~\eqref{eq:S->Th-sub}.
\end{definition}

\begin{remark}[Alternative definition of orthogonal $G$-spectra]
\label{rem:alternative}
Given an orthogonal $G$-spectrum~$\spec{X}$, the internal structure maps~\eqref{eq:structure-map-sub} satisfy the following conditions:
\begin{enumerate}

\item\label{i:ax-assoc}
for all \fd\ $G$-representations $U\subseteq V\subseteq W$, the diagram
\begin{equation*}
\begin{tikzcd}[column sep=large]
S^{W-V}\sma S^{V-U}\sma\spec{X}(U)
\arrow{r}{\id\sma\sigma_{U\subseteq V}}
\arrow{d}[swap]{\cong\sma\id}
&
S^{W-V}\sma\spec{X}(V)
\arrow{d}{\sigma_{V\subseteq W}}
\\
S^{W-U}\sma\spec{X}(U)
\arrow{r}[swap]{\sigma_{U\subseteq W}}
&
\spec{X}(W)
\end{tikzcd}
\end{equation*}
commutes;

\item\label{i:ax-unit}
for all \fd\ $G$-representations $U$ the map
\begin{equation*}
\MOR{\sigma_{U\subseteq U}}{S^0\sma\spec{X}(U)}{\spec{X}(U)}
\end{equation*}
is the canonical isomorphism.

\end{enumerate}
Notice also that, by restriction along
\[
\iso\LL[fin]{G}\subseteq\LL[fin]{G}\TO[z]\Th_G
,
\]
$\spec{X}$ yields a continuous $G$-functor
\[
\MOR{\spec{X}}{\iso\LL[fin]{G}}{\CT_G}
,
\]
and
\begin{enumerate}[resume*]
\item\label{i:ax-compat}
for every commutative diagram of \fd\ $G$-representations
\begin{equation*}
\begin{tikzcd}
U
\arrow[hookrightarrow]{r}
\arrow{d}[swap]{f}
&
V
\arrow{d}{g}
\\
U'
\arrow[hookrightarrow]{r}
&
V'
\end{tikzcd}
\end{equation*}
where the horizontal maps are $G$-equivariant inclusions of subrepresentations and the vertical ones are not necessarily equivariant isomorphisms, the diagram
\begin{equation*}
\begin{tikzcd}[column sep=large]
S^{V-U}\sma\spec{X}(U)
\arrow{r}{\sigma_{U\subseteq V}}
\arrow{d}[swap]{S^{g_{|V-U}}\sma\spec{X}(f)}
&
\spec{X}(V)
\arrow{d}{\spec{X}(g)}
\\
S^{V'-U'}\sma\spec{X}(U')
\arrow{r}[swap]{\sigma_{U'\subseteq V'}}
&
\spec{X}(V')
\end{tikzcd}
\end{equation*}
commutes.
\end{enumerate}
On the other hand, suppose that we are given a continuous $G$-functor
\begin{equation*}
\MOR{\spec{X}}{\iso\LL[fin]{G}}{\CT_G}
\end{equation*}
together with pointed $G$-maps
\begin{equation*}
\MOR{\sigma_{U\subseteq V}}{S^{V-U}\sma\spec{X}(U)}{\spec{X}(V)}
\end{equation*}
for any \fd\ $G$-representation~$V$ and any $G$-subrepresentation $U\subseteq V$, satisfying conditions \ref{i:ax-assoc}, \ref{i:ax-unit}, and~\ref{i:ax-compat} above.
This data is taken as the definition of an orthogonal $G$-spectrum in~\cite{MM}*{Definition~II.2.6 on page~32}, and it can be extended to a continuous $G$-functor
\begin{equation*}
\MOR{\spec{X}}{\Th_G}{\CT_G}
\,,
\end{equation*}
i.e., an orthogonal $G$-spectrum according to our definition.
The equivalence of these two definitions is stated in~\cite{MM}*{Theorem~II.4.3 on page~36}, where it is based on more general results from~\cite{MMSS}*{Section~2 on pages~449--450}.
\end{remark}

\begin{definition}[Maps]
A \emph{$G$-map} $\MOR{f}{\spec{X}}{\spec{Y}}$ of orthogonal $G$-spectra is a $G$-equivariant and continuous natural transformation.
We denote by $\Sp^G$ the topological category of orthogonal $G$-spectra and $G$-maps.
Since the category $\Th_G$ has a small skeleton, there are no set-theoretic issues here.
Similarly we can define not necessarily $G$-equivariant maps of orthogonal $G$-spectra as continuous natural transformations, and we denote by $\Sp_G$ the corresponding topological $G$-category.
Notice that $\Sp^G(\spec{X},\spec{Y})=\Sp_G(\spec{X},\spec{Y})^G$.
% When~$G$ is the trivial group we omit $G$ from the notation and write $\Sp$ for the topological category of orthogonal spectra.
\end{definition}

\begin{definition}[Naive orthogonal spectra]
A \emph{naive} orthogonal $G$-spectrum is a continuous functor
\[
\MOR{\spec{X}}{\Th}{\CT^G}
.
\]
Equivalently, using the notation from~\eqref{eq:Th->Th_G}, a naive orthogonal $G$-spectrum can be thought of as a continuous $G$-functor
\(
{\Th^\tr_G}\TO{\CT_G}
\).
We denote by~$\NSp_G$ the topological $G$-category of naive orthogonal $G$-spectra.
\end{definition}

Any orthogonal $G$-spectrum $\MOR{\spec{X}}{\Th_G}{\CT_G}$ has an underlying continuous functor $\MOR{\spec{X}}{\Th^G}{\CT^G}$, which we can compose with $\MOR{\iota}{\Th}{\Th^G}$ from~\eqref{eq:Th->Th^G} to obtain a continuous functor
\begin{equation*}
\MOR{\iota^*\spec{X}}{\Th}{\CT^G}
,
\end{equation*}
the underlying naive orthogonal $G$-spectrum of~$\spec{X}$.
The following important result states that the converse is also true.

\begin{theorem}[Naive is not naive]
\label{thm:naive}
The functor
\[
\MOR{\iota^*}{\Sp_G}{\NSp_G}
\]
is an equivalence of topological $G$-categories, with inverse
\[
\MOR{\Lan_\iota}{\NSp_G}{\Sp_G}
\,,
\]
the left Kan extension along~$\MOR{\iota}{\Th^\tr_G}{\Th_G}$.
In particular, if two orthogonal $G$-spectra have isomorphic underlying naive orthogonal $G$-spectra, then they are isomorphic.
\end{theorem}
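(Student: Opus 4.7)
The plan is to exhibit the pair $\Lan_\iota \dashv \iota^*$ as a $\CT^G$-enriched adjunction whose unit and counit are both natural isomorphisms; this at once implies that $\iota^*$ is an equivalence with inverse $\Lan_\iota$. Since $\iota\colon\Th^\tr_G\to\Th_G$ is a continuous $G$-functor between small topological $G$-categories and $\CT_G$ is cocomplete, general enriched category theory produces both the adjunction and a pointwise coend description of the left Kan extension:
\[
(\Lan_\iota\spec{Y})(V) = \int^{U\in\Th^\tr_G}\Th_G(\iota U, V)\sma\spec{Y}(U)
\,.
\]
It therefore suffices to verify that both unit and counit are pointwise isomorphisms.

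I would first dispose of the unit $\spec{Y}\to\iota^*\Lan_\iota\spec{Y}$ by invoking the enriched co-Yoneda lemma, which reduces the claim to $\iota$ being fully faithful in the $\CT^G$-enriched sense. For trivial $G$\=/representations $U$ and $W$ the conjugation $G$-action on $\Th(U,W)$ is itself trivial, so $\Th_G(\iota U,\iota W)=\Th(U,W)=\Th^\tr_G(U,W)$ as pointed $G$-spaces; this yields the required full faithfulness and hence the unit is an isomorphism.

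The substantial content lies in the counit $\Lan_\iota\iota^*\spec{X} \to \spec{X}$. For a $G$-representation $V$ of dimension $n$, the key observation is that any linear isometry $f\in\CL(\IR^n,V)$ determines an isomorphism in $\Th_G$ via the zero section: since $V-f(\IR^n)=0$, the morphism $(f,0)\in\Th(\IR^n,V)$ admits the inverse $(f^{-1},0)\in\Th(V,\IR^n)$ (as one checks using the composition formula from Section~\ref{sec:Thom}), so $\MOR{\spec{X}(f)}{\spec{X}(\IR^n)}{\spec{X}(V)}$ is a (non-equivariant) homeomorphism. I would construct the inverse to the counit at $V$ by the rule $x\mapsto[f,\spec{X}(f^{-1})(x)]$ for any chosen $f\in\CL(\IR^n,V)$, which is nonempty because $V$ and $\IR^n$ are non-equivariantly isomorphic inner product spaces. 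Independence of the choice of $f$ reduces to the coend relation applied to the transitive $O(n)$-action on $\CL(\IR^n,V)$, and both triangular identities between the counit and its inverse reduce to functoriality of $\spec{X}$ together with the coend relation.

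I expect the main technical hurdle to be the $G$-equivariance of the inverse: it requires unpacking the $\CT^G$-enrichment of $\spec{X}$, concretely the $G$-equivariance of the structure map $\MOR{\spec{X}}{\Th_G(\IR^n,V)}{\CT_G(\spec{X}(\IR^n),\spec{X}(V))}$, and matching it via an application of the coend relation against the diagonal $G$-action on the representative $[f,\spec{X}(f^{-1})(x)]$; the key identity to exploit is that $f^{-1}\rho_V(g)f\in O(n)$ provides the required morphism in $\Th^\tr_G$ that witnesses the coend equivalence. Once the counit is shown to be a pointwise $G$-isomorphism, continuity and naturality in $V$ are routine checks using associativity of the structure maps~\eqref{eq:spec-assoc}, and the concluding assertion that isomorphic underlying naive spectra force isomorphic orthogonal spectra follows immediately from $\iota^*$ being an equivalence.
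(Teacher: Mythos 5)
Your proof is correct, and it is essentially the argument behind the sources the paper cites (the paper itself only refers to \cite{HHR}*{Proposition~A.19} and \cite{MM}*{Theorem~V.1.5} and gives no proof of its own). Your decomposition into (i) full faithfulness of $\iota$ giving an invertible unit, (ii) the $O(n)$-quotient description $(\Lan_\iota\spec{Y})(V)\cong\CL(\IR^n,V)_+\sma_{O(n)}\spec{Y}(\IR^n)$ implicit in your representative-picking, and (iii) the explicit inverse $x\mapsto[f,\spec{X}(f^{-1})(x)]$ to the counit, is exactly the standard prolongation argument; I verified the two-sided-inverse and $G$-equivariance checks you sketch do go through, the latter using precisely the identity $\spec{X}(f^{-1}\rho_V(g)f)\circ\gamma_{\spec{X}(\IR^n)}(g)\circ\spec{X}(f^{-1})=\spec{X}(f^{-1})\circ\gamma_{\spec{X}(V)}(g)$ that falls out of the $\CT^G$-enrichment and functoriality.
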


\begin{proof}
See \cite{HHR}*{Proposition~A.19 on page~126} and also~\cite{MM}*{Theorem~V.1.5 on pages~75--76}.
Left Kan extensions are recalled in the proof of Proposition~\ref{prop:colim->Lan}.
\end{proof}

\begin{remark}
\label{rem:naive}
If $\spec{X}$ is a naive orthogonal $G$-spectrum and $U$ is an $n$-dimensional $G$-representation, then there is a $G$-homeomorphism
\[
\bigl(\Lan_\iota\spec{X}\bigr)(U)
\cong
\CL(\IR^n,U)_+\sma_{O(n)}\spec{X}(\IR^n)
\,.
\]
The choice of an isomorphism~$f\in\CL(\IR^n,U)$ gives a non-equivariant homeomorphism $\spec{X}(\IR^n)\cong(\Lan_\iota\spec{X})(U)$. % $x\mapsto[f,x]$.
Therefore the underlying non-equivariant space of $(\Lan_\iota\spec{X})(U)$ depends up to homeomorphism only on the dimension of~$U$.
\end{remark}

We now introduce some basic constructions.

\begin{definition}[Smash products with spaces and mapping spectra]
\label{def:sma-map}
Given an orthogonal $G$-spectrum~$\spec{X}$ and a pointed $G$-space~$Z$ we define new orthogonal $G$-spectra
\[
Z\sma\spec{X}
\AND
\map(Z,\spec{X})
\]
as the compositions of~$\MOR{\spec{X}}{\Th_G}{\CT_G}$ and the continuous $G$-functors
\begin{equation}
\label{eq:sma-map}
\MOR{Z\sma-}{\CT_G}{\CT_G}
\AND
\MOR{\map(Z,-)}{\CT_G}{\CT_G}
\,.
\end{equation}
The adjunction between the functors in~\eqref{eq:sma-map} gives an adjunction between the continuous $G$-functors
\[
\MOR{Z\sma-}{\Sp_G}{\Sp_G}
\AND
\MOR{\map(Z,-)}{\Sp_G}{\Sp_G}
\,.
\]
\end{definition}

\begin{definition}[Homotopies]
\label{def:homotopy}
A $G$-equivariant homotopy of orthogonal $G$\=/spectra is a $G$-map
\(
I_+\sma\spec{X}\TO\spec{Y}
\),
where $I=[0,1]$ has the trivial $G$-action.
We then define as usual $G$-homotopic maps and $G$-homotopy equivalences. 
\end{definition}

\begin{definition}[Suspensions, loops, shifts, and~$Q$]
\label{def:slsQ}
Given a \fd\ $G$\=/representation~$U$ we define
\[
\Sigma^U\spec{X}=S^U\sma\spec{X}
\AND
\Omega^U\spec{X}=\map(S^U,\spec{X})
\]
and thus obtain adjoint continuous $G$-functors $\MOR{\Sigma^U,\Omega^U}{\Sp_G}{\Sp_G}$.
We also define the \emph{shift}
\[
\shift^U\spec{X}
\]
as the composition of~$\spec{X}$ and the continuous $G$-functor
\[
\MOR{U\oplus-}{\Th_G}{\Th_G}
\,,
\quad
V\mapsto U\oplus V
\,,
\]
defined on morphisms as
\[
\Th(V,W)\TO\Th(U\oplus V,U\oplus W)
\,,
\quad
\bigl(f,w\bigr)\mapsto\bigl(\id_U\oplus f,(0,w)\bigr)
\,;
\]
see Lemma~\ref{lem:plus}.
Finally we define
\[
\Q{U}(\spec{X})=\Omega^U\shift^U\spec{X}
\,,
\]
a construction that plays a major role in this paper.
\end{definition}

\begin{definition}[Spectrum-level structure maps]
\label{def:spec-structure-maps}
The structure maps~\eqref{eq:structure-map} give a $G$-map of orthogonal $G$-spectra
\begin{equation}
\label{eq:sigma}
\MOR{\sigma_U}{\Sigma^U\spec{X}}{\shift^U\spec{X}}
\,,
\end{equation}
that we call \emph{spectrum-level structure map} of~$\spec{X}$ at the $G$-representation~$U$.
To verify that $\sigma_U$ is a map of spectra we need to show that for all \fd\ $G$-representations $V$ and~$W$ and all $(f,w)\in\Th(V,W)$ the diagram
\[
\begin{tikzcd}[column sep=large]
S^U\sma\spec{X}(V)
\arrow{r}{\sigma_{U,V}}
\arrow{d}[swap]{\id\sma\spec{X}(f,w)}
&
\spec{X}(U\oplus V)
\arrow{d}{\spec{X}((\id\oplus f),(0,w))}
\\
S^U\sma\spec{X}(W)
\arrow{r}[swap]{\sigma_{U,W}}
&
\spec{X}(U\oplus W)
\end{tikzcd}
\]
commutes, and this follows from the definitions.
By adjunction, from \eqref{eq:sigma} we also get a $G$-map of orthogonal $G$-spectra
\begin{equation}
\label{eq:sigma-adj}
\MOR{\widetilde{\sigma}_U}{\spec{X}}{\Q{U}(\spec{X})}
\,.
\end{equation}
Similarly, given a \fd\ $G$-representation~$V$ and a $G$-subrepresentation~$U$, the internal structure maps~\eqref{eq:structure-map-sub} give $G$-maps of orthogonal $G$-spectra
\begin{equation}
\label{eq:sigma-sub}
\MOR{\sigma_{U\subseteq V}}{\Sigma^{V-U}\shift^U\spec{X}}{\shift^V\spec{X}}
\AND
\MOR{\widetilde{\sigma}_{U\subseteq V}}{\shift^U\spec{X}}{\Omega^{V-U}\shift^V\spec{X}}
\,.
\end{equation}
Finally we define
\begin{equation}
\label{eq:tau}
\MOR{\tau_{U\subseteq V}}{\Q{U}(\spec{X})}{\Q{V}(\spec{X})}
\end{equation}
as the composition
\[
\tau_{U\subseteq V}\colon
\Q{U}(\spec{X})
=
\Omega^U\shift^U\spec{X}
\xrightarrow{\Omega^U\widetilde{\sigma}_{U\subseteq V}}
\Omega^U\Omega^{V-U}\shift^V\spec{X}
\cong
\Omega^V\shift^V\spec{X}
=
\Q{V}(\spec{X})
\,
\]
where the isomorphism in the target is induced by the natural $G$-homeomorphism $\Omega^U\Omega^{V-U}\cong\Omega^V$ of functors $\CT_G\TO\CT_G$.
\end{definition}

\begin{remark}
\label{rem:tau}
Let $\CU$ be a $G$-representation, not necessarily finite dimensional.
Recall from the beginning of Section~\ref{sec:conventions} the notation~$\CP(\CU)$ for the poset of \fd\ $G$-subrepresentations of~$\CU$ ordered by inclusion.
Proposition~\ref{prop:fun-in-U-fin} implies that
\[
\MOR{\Q{(-)}(\spec{X})}{\CP(U)}{\Sp^G}
,\quad
U\longmapsto\Q{U}(\spec{X})
\,,\quad
U\subseteq V\longmapsto\tau_{U\subseteq V}
\]
is a functor.
The composition
\begin{equation}
\label{eq:Q0}
\spec{X}\cong\Q{0}(\spec{X})\xrightarrow{\tau_{0\subseteq V}}\Q{V}(\spec{X})
\end{equation}
is equal to~$\widetilde{\sigma}_V$.
Moreover, the diagram
\begin{equation}
\label{eq:triangle-Q}
\begin{tikzcd}[column sep=tiny]
&\spec{X}
\arrow{ld}[swap]{\widetilde{\sigma}_U}
\arrow{rd}{\widetilde{\sigma}_V}
\\
\Q{U}(\spec{X})
\arrow{rr}[swap]{\tau_{U\subseteq V}}
&&
\Q{V}(\spec{X})
\end{tikzcd}
\end{equation}
commutes, by the functoriality of~$\Q{(-)}(\spec{X})$ and~\eqref{eq:Q0}.
\end{remark}

%%%%%%%%%%%%%%%%%%%%%%%%%%%%%%%%%%%%%%%%%%%%%%%%%%%%%%%%%%%%%%%%%%%%%%%%%%%%%

\section{Homotopy theory of orthogonal \texorpdfstring{$G$}{G}-spectra}
\label{sec:homotopy-theory}

In this section we discuss homotopy groups, \piiso s, good and almost good spectra, $G$-$\Omega$-spectra, and their properties.
In the definition of homotopy groups we choose a complete $G$-universe in the sense of the following definition, but the homotopy groups are independent of this choice.
We emphasize that the choice of a universe plays no role in the definition of the category of orthogonal $G$-spectra.

\begin{definition}[Universes]
\label{def:universe}
A \emph{$G$-universe} is a countably infinite-dimensional $G$-representation~$\CU$ such that $\CU^G\neq0$ and for every \fd\ $G$-representation~$U$, if $U$ embeds in~$\CU$, then also $\oplus_\IN U$ embeds in~$\CU$.
A~$G$\=/universe~$\CU$ is \emph{complete} if every \fd\ $G$-representation embeds in~$\CU$.
Complete $G$-universes exist and are unique up to equivariant linear isometric isomorphisms.
Uniqueness follows immediately from the isotypical decomposition; see e.g.~\cite{BtD}*{Proposition~III.1.7 on page~128}.
\end{definition}

\begin{definition}[Homotopy groups]
\label{def:pi_*}
Let $\spec{X}$ be an orthogonal $G$-spectrum, let $H$ be a subgroup of~$G$, and let $n\in\IZ$ be an integer.
Choose a complete $G$-universe~$\CU$.
Define
\begin{equation}
\label{eq:pi}
\pi_n^H(\spec{X})=
\begin{cases}
\ds\colim_{U\in\CP(\CU)}
\pi_n
\Bigl(\bigl(\Omega^U\spec{X}(U)\bigr)^H\Bigr)
&\text{if $n\geq0$\,;}
\\
\ds\colim_{U\in\CP(\CU)}
\pi_{\mathrlap{0}\phantom{n}}
\Bigl(\bigl(\Omega^U\spec{X}(U\oplus\IR^{-n})\bigr)^H\Bigr)
&\text{if $n<0$\,.}
\end{cases}
\end{equation}
In order to describe the structure maps used for the above colimits, observe that
\[
\Omega^U\spec{X}(U)
\cong
\Q{U}(\spec{X})(0)
\AND
\Omega^U\spec{X}(U\oplus\IR^{-n})
=
\Q{U}(\spec{X})(\IR^{-n})
\,.
\]
By Remark~\ref{rem:tau}, $\MOR{\Q{(-)}(\spec{X})}{\CP(\CU)}{\Sp^G}$ is a functor, and so for any \fd\ $G$-representation~$V$ (e.g., $V=0$ or $V=\IR^{-n}$) and any~$m\geq0$ we can consider the composition
\[
\begin{tikzcd}[column sep=large]
\CP(\CU)
\arrow{r}{\Q{(-)}(\spec{X})}
&
\Sp^G
\arrow{r}{\ev_V}
&
\CT^G
\arrow{r}{(-)^H}
&
\CT
\arrow{r}{\pi_m}
&
\Ab
\end{tikzcd}
\]
and then take its colimit.
Here $\ev_V$ is given by evaluating an orthogonal $G$-spectrum at~$V$. 
Since any two complete $G$-universes are equivariantly and isometrically isomorphic, it follows that the abelian groups defined in~\eqref{eq:pi} do not depend on the choice of the universe~$\CU$.
\end{definition}

\begin{definition}[Weak equivalences, \piiso s, level equivalences]
\label{def:piiso}
Let $\MOR{f}{X}{Y}$ be a $G$-map of pointed or unpointed $G$-spaces.
\begin{enumerate}
\item 
\label{i:piiso}
We say that $f$ is a \emph{$G$-weak equivalence} if for all subgroups $H$ of~$G$ the induced map
\(
\MOR{f^H}{X^H}{Y^H}
\)
is a weak equivalence of unpointed non-equivariant spaces.
\end{enumerate}
Let $\MOR{f}{\spec{X}}{\spec{Y}}$ be a $G$-map of orthogonal $G$-spectra.
\begin{enumerate}[resume]
\item We say that $f$ is a \emph{\piiso} if for all integers~$n$ and all subgroups $H$ of~$G$ the induced map
\(
\MOR{\pi_n^H(f)}{\pi_n^H(\spec{X})}{\pi_n^H(\spec{Y})}
\)
is an isomorphism.
\item We say that $f$ is a \emph{level equivalence} if for all \fd\ $G$-representations~$U$ the map
\(
\MOR{f(U)}{\spec{X}(U)}{\spec{Y}(U)}
\)
is a $G$-weak equivalence. 
\end{enumerate}
Notice that $G$-homotopy equivalences are in particular level equivalences.
\end{definition}

In the setup of Lewis-May-Steinberger, the following definition of good spectra appears in~\cite{HM-top}*{Appendix~A on pages~96--98}.

\begin{definition}[Good and almost good spectra]
\label{def:good}
An orthogonal $G$-spectrum~$\spec{X}$ is \emph{good} if for all \fd\ $G$-representations $U$ the spectrum-level structure map $\MOR{\sigma_U}{\Sigma^U\spec{X}}{\shift^U\spec{X}}$ from~\eqref{eq:sigma} is a levelwise closed embedding, i.e., if for all \fd\ $G$-representations $U$ and~$V$, the structure map
\[
\MOR{{\sigma}_{U,V}}{\Sigma^U\spec{X}(V)}{\spec{X}(U\oplus V)}
\]
from~\eqref{eq:structure-map} is a closed embedding.
We say that $\spec{X}$ is \emph{almost good} if the adjoints of the structure maps are closed embeddings.
\end{definition}

In Lemma~\ref{lem:good} we show that cofibrant implies good, and that good implies almost good.
We also show that being good is preserved by certain left adjoint functors, and almost good by certain right adjoints; see Lemma~\ref{lem:good} and Theorem~\ref{thm:H-in-G}.
In Addendum~\ref{add:almost} we prove that Theorem~\ref{thm:Q} remains true for almost good spectra.
However, our proof of Main Theorem~\ref{thm:main} does not generalize to almost good spectra.

We now collect some properties of closed embeddings in the category of compactly generated weak Hausdorff spaces, referring to~\cite{Strickland} for a thorough account and proofs.
Notice that closed embeddings are called closed inclusions in \cite{Strickland} and~\cite{HM-top}.
Given a commutative square
\begin{equation}
\label{eq:square}
\begin{tikzcd}
W
\arrow{r}{i}
\arrow{d}[swap]{f}
&
X
\arrow{d}{g}
\\
Y
\arrow{r}[swap]{j}
&
Z\mathrlap{\,,}
\end{tikzcd}
\end{equation}
we say that the square is \emph{admissible} if all maps are closed embeddings and also the map $Y\cup_W X\TO Z$ is a closed embedding.
Notice that if \eqref{eq:square} is a pullback and all maps are closed embeddings, then it is admissible.

\begin{facts}[Properties of closed embeddings]
\label{facts:clemb}
\ \newline\vspace{-3.5ex}
\begin{enumerate}

\item
\label{i:comp-clemb}
Given $X\TO[i]Y\TO[j]Z$, if $i$ and $j$ are closed embeddings, then so is~$j\circ i$;\linebreak if~$j\circ i$ is a closed embedding, then so is~$i$.

\item
\label{i:push-pull-clemb}
Consider the commutative square~\eqref{eq:square}.
If the square is a pullback and $j$ is a closed embedding, then so is~$i$.
If the square is a pushout and $i$ is a closed embedding, then so is~$j$, and the square is a pullback.

\item
\label{i:cube-clemb}
Consider a commutative cube
\begin{equation*}
\begin{tikzcd}[row sep=tiny, column sep=tiny]
W'
\arrow[rr]
\arrow[dd, swap]
\arrow[dr]
&&
X'
\arrow[dd]
\arrow[dr]
\\
&
W
\arrow[rr, crossing over]
&&
X
\arrow[dd]
\\
Y'
\arrow[rr, swap]
\arrow[dr]
&&
Z'
\arrow[dr]
\\
&
Y
\arrow[rr, swap, "j"]
\arrow[uu, leftarrow, crossing over]
&&
Z
\mathrlap{\,.}
\end{tikzcd}
\end{equation*}
Assume that the back face is admissible and that both the left and the right faces are pushouts.
Then the front face is also admissible, and in particular $j$ is a closed embedding.

\item
\label{i:adjoint-clemb}
If $\MOR{i}{X\sma Y}{Z}$ is a closed embedding, then its adjoint $\MOR{\widetilde{i}}{Y}{\map(X,Z)}$ is also a closed embedding.

\item
\label{i:(co)prod-clemb}
Products and coproducts of closed embeddings are closed embeddings.

\item
\label{i:sma-map-clemb}
Given any based space~$Z$, if $\MOR{i}{X}{Y}$ is a closed embedding then so are $\MOR{\id_Z\sma i}{Z\sma X}{Z\sma Y}$ and $\MOR{i_*}{\map(Z,X)}{\map(Z,Y)}$.

\item
\label{i:colim-clemb}
If $X_0\TO[i_0]X_1\TO[i_1]X_2\TO[i_2]\dotsb$ is a sequence of closed embeddings, then the maps $X_m\TO\colim_n X_n$ are closed embeddings, and for every compact based space~$Z$ the natural map $\colim_n\map(Z,X_n)\TO\map(Z,\colim_n X_n)$ is a homeomorphism.
In particular, $\colim_n\pi_*(X_n)\cong\pi_*(\colim_n X_n)$.

\item
\label{i:colim-ladder-clemb}
Given a commutative diagram
\begin{equation}
\label{eq:ladder}
\begin{tikzcd}
X_0
\arrow{r}{i_0}
\arrow{d}[swap]{f_0}
&
X_1
\arrow{r}{i_1}
\arrow{d}[swap]{f_1}
&
X_2
\arrow{r}{i_2}
\arrow{d}[swap]{f_2}
&
\dotsb
\\
Y_0
\arrow{r}[swap]{j_0}
&
Y_1
\arrow{r}[swap]{j_1}
&
Y_2
\arrow{r}[swap]{j_2}
&
\dotsb
\end{tikzcd}
\end{equation}
where all maps are closed embeddings and all squares are admissible, then
\begin{equation}
\label{eq:colim-ladder}
\begin{tikzcd}
X_m
\arrow{r}
\arrow{d}[swap]{f_m}
&
\colim_n X_n
\arrow{d}{\colim_n f_n}
\\
Y_m
\arrow{r}
&
\colim_n Y_n
\end{tikzcd}
\end{equation}
is admissible, and in particular $\MOR{\colim_n f_n}{\colim_n X_n}{\colim_n Y_n}$ is a closed embedding.
If additionally all squares in~\eqref{eq:ladder} are pullbacks, then \eqref{eq:colim-ladder} is also a pullback.

\item
\label{i:fix-clemb}
If $\MOR{i}{X}{Y}$ is a $G$-equivariant closed embedding and $H$ is a subgroup of~$G$, then $\MOR{i^H}{X^H}{Y^H}$ is also a closed embeddings.

\item
\label{i:fix-colim-clemb}
If $X_0\TO[i_0]X_1\TO[i_1]X_2\TO[i_2]\dotsb$ is a sequence of $G$-equivariant closed embeddings and $H$ is a subgroup of~$G$, then the natural map $\colim_n(X_n^H)\TO(\colim_n X_n)^H$ is a homeomorphism.

\end{enumerate}
\end{facts}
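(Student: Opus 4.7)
The plan is to reduce each item to elementary properties of the category~$\CT$ of compactly generated weak Hausdorff spaces, for which the systematic reference is Strickland's note~\cite{Strickland}, where our ``closed embeddings'' appear under the name ``closed inclusions''. The purely point-set items \ref{i:comp-clemb}, \ref{i:adjoint-clemb}, \ref{i:(co)prod-clemb}, \ref{i:sma-map-clemb}, and~\ref{i:fix-clemb} are immediate from the definitions: \ref{i:comp-clemb} rephrases the analogous statement for injective continuous maps with closed image; \ref{i:adjoint-clemb} follows because the adjoint of a closed embedding remains injective with closed image, using the weak Hausdorff property to identify the image; \ref{i:(co)prod-clemb} and~\ref{i:sma-map-clemb} follow from the cartesian closed structure of~$\CT$ together with the distributivity of $Z\sma-$ over colimits; and \ref{i:fix-clemb} holds because $X^H$ is a closed subspace cut out by equalizer conditions that are preserved by any closed embedding.

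For the pullback-pushout items \ref{i:push-pull-clemb} and~\ref{i:cube-clemb} I would invoke the key property of~$\CT$ that a pushout along a closed embedding remains a closed embedding and is simultaneously a pullback. The pullback half of~\ref{i:push-pull-clemb} is the classical observation that preimages of closed subspaces under continuous maps are closed. The cube statement~\ref{i:cube-clemb} then follows by a formal diagram chase: the front face is identified as a pushout of the admissible back face along the four closed embeddings connecting the primed and unprimed vertices, and admissibility is preserved by such pushouts, so in particular the bottom arrow~$j$ of the front face is a closed embedding.

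For the sequential colimit items \ref{i:colim-clemb}, \ref{i:colim-ladder-clemb}, and~\ref{i:fix-colim-clemb}, the key technical input is that a countable colimit of closed embeddings in~$\CT$ is computed at the underlying set level with the final topology, so that each $X_n$ includes as a closed subspace of the colimit and every compact subset of the colimit lies in some finite stage. The latter yields the homeomorphism $\colim_n\map(Z,X_n)\TO\map(Z,\colim_n X_n)$ for compact~$Z$ in~\ref{i:colim-clemb}; applied with $Z=G/H_+$ it gives~\ref{i:fix-colim-clemb}. For~\ref{i:colim-ladder-clemb}, admissibility at each stage combined with~\ref{i:push-pull-clemb} propagates through the colimit by rewriting $Y_m\cup_{X_m}\colim_n X_n$ as the colimit of the admissible gluings $Y_m\cup_{X_m}X_n$; and if the squares are additionally pullbacks the colimit square inherits this by the same compactness argument, exchanging filtered colimits along closed embeddings with finite limits.

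The main obstacle is the interplay of pushouts, pullbacks, and closed embeddings in~$\CT$ underlying \ref{i:push-pull-clemb}, \ref{i:cube-clemb}, and~\ref{i:colim-ladder-clemb}; this interplay fails in general topological spaces and is precisely what makes the compactly generated weak Hausdorff formalism the correct setting. In the $G$-equivariant version no additional subtlety arises, since the forgetful functor $\CT^G\TO\CT$ creates colimits and detects closed embeddings, so every argument reduces to its non-equivariant counterpart and the equivariance is carried along for free.
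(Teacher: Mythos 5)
Your proposal reduces most items to Strickland's compendium, which is also what the paper does for items~\ref{i:comp-clemb}, \ref{i:push-pull-clemb}, \ref{i:adjoint-clemb}, \ref{i:(co)prod-clemb}, \ref{i:sma-map-clemb}, and~\ref{i:colim-clemb}; your reasoning there is broadly aligned with the paper, and item~\ref{i:fix-clemb} by ``$X^H\to X$ is closed and compose'' matches the paper. But two items contain genuine gaps.

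For the cube statement~\ref{i:cube-clemb}, your claim that the front face is ``a pushout of the admissible back face along the four closed embeddings connecting the primed and unprimed vertices'' is wrong on two counts. First, the connecting maps $W'\to W$, $X'\to X$, $Y'\to Y$, $Z'\to Z$ are \emph{not} assumed to be closed embeddings; only the back face is admissible and the left and right faces are pushouts. Second, ``pushing out an admissible square along maps'' is not an operation that tautologically preserves admissibility: the subtle part is precisely that the canonical map $Y\cup_W X\to Z$ stays a closed embedding after the cobase change, and this requires a diagrammatic argument, not a one-line transport. The paper instead observes that $\CT$ together with the class of closed embeddings is a category with cofibrations in Waldhausen's sense---using exactly items \ref{i:comp-clemb} and \ref{i:push-pull-clemb} plus the fact that points are closed---and then invokes Waldhausen's Lemma~1.1.1, which is the formal cube/gluing lemma valid in any such category. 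Your proof needs this (or an explicit direct argument) to be complete.

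For item~\ref{i:fix-colim-clemb}, invoking item~\ref{i:colim-clemb} with $Z=G/H_+$ only gives a homeomorphism $\colim_n\map(G/H_+,X_n)\cong\map(G/H_+,\colim_n X_n)$ of underlying spaces. To identify $X_n^H$ with $\map(G/H_+,X_n)^G$ and conclude, you would then need to interchange the sequential colimit with $G$-fixed points of the mapping spaces---but that is exactly the content of item~\ref{i:fix-colim-clemb}, so the argument is circular. The paper avoids this by a different route: it first uses \ref{i:fix-clemb} and \ref{i:colim-ladder-clemb} to show that $\colim_n(X_n^H)\to\colim_n X_n$ is a closed embedding, then factors this map through $\colim_n(X_n^H)\to(\colim_n X_n)^H$ and applies item~\ref{i:comp-clemb} to deduce that the latter is a closed embedding, and finally checks surjectivity by inspection. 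Separately, in item~\ref{i:colim-ladder-clemb} the reduction of the merely admissible case to the pullback case is more delicate than your sketch suggests: the paper builds an explicit zig-zag $X_n\to Y_0\cup_{X_0}X_1\cup_{X_1}\cdots\to Y_n$ whose squares are pushouts (hence pullbacks by \ref{i:push-pull-clemb}), applies the already-established pullback version rowwise, and then uses item~\ref{i:colim-clemb}; you should make this explicit.
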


\begin{proof}
For~\ref{i:comp-clemb} see~\cite{Strickland}*{Proposition~2.31 on page~8}, and 
for~\ref{i:push-pull-clemb}~\cite{Strickland}*{Propositions 2.33 and~2.35 on page~9}.

These two facts, and the fact that points are closed in weak Hausdorff spaces, imply that the category~$\CT$ together with the class of closed embeddings is a category with cofibrations in the sense of Waldhausen~\cite{Waldhausen}*{Section~1.1 on page~320}.
Then \ref{i:cube-clemb} is a formal consequence true in any category with cofibrations; see~\cite{Waldhausen}*{Lemma~1.1.1 on page~322}.

For~\ref{i:adjoint-clemb} see~\cite{Strickland}*{Corollary~5.11 on page~19};
for~\ref{i:(co)prod-clemb}~\cite{Strickland}*{Theorem~3.1 on page~11};
for~\ref{i:sma-map-clemb}~\cite{Strickland}*{Corollary~5.4 on page~18, Corollary~5.8(b) on page~19 and Theorem~4.8(b) on page~17};
and for~\ref{i:colim-clemb}~\cite{Strickland}*{Lemmas 3.3, 3.6, and~3.8 on pages~12--13}.

For~\ref{i:colim-ladder-clemb}, under the additional assumption that all squares in~\eqref{eq:ladder} are pullbacks, it is proved in~\cite{Strickland}*{Lemma~3.9 on page~14} that \eqref{eq:colim-ladder} is a pullback square of closed embeddings.
If the squares in~\eqref{eq:ladder} are only admissible, consider the following diagram.
\[
\begin{tikzcd}[row sep=small]
X_0
\arrow[r]
\arrow[d]
&
X_1
\arrow[r]
\arrow[d]
&
X_2
\arrow[r]
\arrow[d]
&
\dotsb
\\
Y_0
\arrow[r]
&
Y_0\cup_{X_0}\!X_1
\arrow[r]
\arrow[d]
&
(Y_0\cup_{X_0}\!X_1)\cup_{X_1}\!X_2
\arrow[r]
\arrow[d]
&
\dotsb
\\
&
Y_1
\arrow[r]
&
Y_1\cup_{X_1}\!{X_2}
\arrow[r]
\arrow[d]
&
\dotsb
\\
&
&
Y_2
\arrow[r]
&
\dotsb
\end{tikzcd}
\]
All maps are closed embeddings and all squares are pushouts, and therefore pullbacks by~\ref{i:push-pull-clemb}.
So the already established part of~\ref{i:colim-ladder-clemb} applies to any two consecutive rows, and by taking their colimits we get a sequence of closed embeddings
\(
\colim_n X_n
\TO
\colim_n (Y_0\cup\ldots)
\TO
\colim_n (Y_1\cup\ldots)
\TO
\dotsb
\)\,.
Now apply~\ref{i:colim-clemb}.

Fact~\ref{i:fix-clemb} follows from~\ref{i:comp-clemb}, since $X^H\TO X$ is a closed embedding.
For~\ref{i:fix-colim-clemb}, we first use \ref{i:fix-clemb} and~\ref{i:colim-ladder-clemb} to conclude that $\MOR{f}{\colim_n(X_n^H)}{\colim_n X_n}$ is a closed embedding.
Since $f$ factors through $\MOR{g}{\colim_n(X_n^H)}{(\colim_n X_n)^H}$, $g$~is also a closed embedding by~\ref{i:comp-clemb}.
By inspection, $g$ is surjective, thus proving~\ref{i:fix-colim-clemb}.
\end{proof}

\begin{lemma}[Good and almost good spectra]
\label{lem:good}
Let $\spec{X}$ be an orthogonal $G$-spectrum.
\begin{enumerate}

\item
\label{i:cofib=>good}
If $\spec{X}$ is cofibrant in the stable model category structure on~$\Sp_G$ then $\spec{X}$ is good.

\item
\label{i:good-tau}
If $\spec{X}$ is good then $\spec{X}$ is almost good, and
$\spec{X}$ is almost good if and only if for all \fd\ $G$-representations $U\subseteq V$ the map $\MOR{\tau_{U\subseteq V}}{\Q{U}(\spec{X})}{\Q{V}(\spec{X})}$ defined in~\eqref{eq:tau} is a levelwise closed embedding.

\item
\label{i:pi-good}
If $\spec{X}$ is almost good then we can interchange the colimit and the homotopy groups in Definition~\ref{def:pi_*}, i.e., for all subgroups $H$ of~$G$ and for all~$n\in\IZ$ we have
\[
\pi_n^H(\spec{X})\cong
\begin{cases}
\ds\pi_n
\Bigl(\colim_{U\in\CP(\CU)}
\bigl(\Omega^U\spec{X}(U)\bigr)^H\Bigr)
&\text{if $n\geq0$\,;}
\\
\ds\pi_{\mathrlap{0}\phantom{n}}
\Bigl(\colim_{U\in\CP(\CU)}
\bigl(\Omega^U\spec{X}(U\oplus\IR^{-n})\bigr)^H\Bigr)
&\text{if $n<0$\,.}
\end{cases}
\]

\item
\label{i:(co)prod-good}
Coproducts of good spectra are good.
Products of almost good spectra are almost good.

\item
\label{i:sma-map-good}
Let $Z$ be a pointed $G$-space.
If $\spec{X}$ is good then so is~$Z\sma\spec{X}$.
If $\spec{X}$ is almost good then so is~$\map(Z,\spec{X})$.

\item
\label{i:good-naive}
$\spec{X}$ is (almost) good if and only if its underlying naive spectrum~$\iota^*\spec{X}$ is (almost) good.

\item
\label{i:good-and-Q}
If $\spec{X}$ is almost good, then so is $\Q{\CU}(\spec{X})$, and $\MOR{r}{\spec{X}}{\Q{\CU}(\spec{X})}$ is a levelwise closed embedding.

\end{enumerate}
\end{lemma}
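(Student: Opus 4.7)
The plan is to establish the seven claims in order, using the closed-embedding calculus collected in Facts~\ref{facts:clemb} together with the unwinding of $\Q{U}$ provided by Remark~\ref{rem:tau}.

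For part~\ref{i:cofib=>good}, the standard model-categorical argument reduces the claim to showing that the generating cofibrations of the stable model structure on $\Sp_G$ yield good spectra, and that goodness is preserved by cobase change along levelwise closed embeddings (Facts~\ref{facts:clemb}\ref{i:push-pull-clemb},~\ref{i:cube-clemb}) and by sequential colimits of ladders of closed embeddings (Facts~\ref{facts:clemb}\ref{i:colim-ladder-clemb}). The generating cofibrations are free spectra of the form $\Th_G(V,-)_+ \sma f$ with $V \in \iso\LL[fin]{G}$ and $f$ a standard cell inclusion, and their structure maps are closed embeddings by direct computation using Lemma~\ref{lem:untw-comp}. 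For part~\ref{i:good-tau}, that good implies almost good follows by taking adjoints of the structure maps and invoking Facts~\ref{facts:clemb}\ref{i:adjoint-clemb}. The characterization in terms of $\tau_{U\subseteq V}$ comes from two observations: on the one hand, $\tau_{0\subseteq V}$ at level~$W$ is, by~\eqref{eq:Q0}, the adjoint structure map $\widetilde{\sigma}_{V,W}$, so $\spec{X}$ is almost good whenever each $\tau_{U\subseteq V}$ is levelwise a closed embedding; on the other hand, at level~$W$, $\tau_{U\subseteq V}$ is $\Omega^U$ applied to the map $\widetilde{\sigma}_{U\subseteq V}$ at level~$W$, which is an adjoint structure map, so by Facts~\ref{facts:clemb}\ref{i:sma-map-clemb} it is a closed embedding whenever $\spec{X}$ is almost good.

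Part~\ref{i:pi-good} combines Facts~\ref{facts:clemb}\ref{i:colim-clemb} and~\ref{i:fix-colim-clemb}: along a sequence of levelwise closed embeddings, $(-)^H$ and $\pi_n$ both commute with the colimit, which is precisely the stated interchange. Parts~\ref{i:(co)prod-good} and~\ref{i:sma-map-good} follow level by level from Facts~\ref{facts:clemb}\ref{i:(co)prod-clemb} and~\ref{i:sma-map-clemb}, since the structure maps of a coproduct, a smash product with a space, or a mapping spectrum from a space are the corresponding operations applied to the structure maps of~$\spec{X}$. For part~\ref{i:good-naive}, Remark~\ref{rem:naive} identifies the pointed spaces of $\Lan_\iota \iota^*\spec{X}$ with those of $\spec{X}$ in a manner compatible with the structure maps, so one spectrum is (almost) good if and only if the other is.

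Part~\ref{i:good-and-Q} is the most delicate and is where I expect the main obstacle. The strategy is to describe $\Q{\CU}(\spec{X})$ as the colimit of the functor $\MOR{\Q{(-)}(\spec{X})}{\CP(\CU)}{\Sp^G}$ from Remark~\ref{rem:tau}, with $r$ the canonical map from $\spec{X} \cong \Q{0}(\spec{X})$ into that colimit. Choosing a cofinal nested exhaustion $0 = U_0 \subseteq U_1 \subseteq \dotsb$ of~$\CU$ by \fd\ $G$-subrepresentations, part~\ref{i:good-tau} identifies this as a sequential colimit along a ladder of levelwise closed embeddings. Facts~\ref{facts:clemb}\ref{i:colim-ladder-clemb} then yields that each map $\Q{U_k}(\spec{X}) \to \Q{\CU}(\spec{X})$ is a levelwise closed embedding, and the case $k=0$ delivers the second claim of part~\ref{i:good-and-Q}. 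For the first claim, each $\Q{U_k}(\spec{X}) = \Omega^{U_k} \shift^{U_k} \spec{X}$ is almost good, since $\shift^{U_k}$ merely reindexes the structure maps of~$\spec{X}$ and $\Omega^{U_k}$ preserves almost goodness by part~\ref{i:sma-map-good}; Facts~\ref{facts:clemb}\ref{i:colim-clemb} then lets one interchange the adjoint structure maps of $\Q{\CU}(\spec{X})$ with the filtered colimit, transferring almost goodness from the $\Q{U_k}(\spec{X})$ to the colimit.
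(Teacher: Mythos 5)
Your overall approach matches the paper's in every part, but there is a genuine gap in part~\ref{i:cofib=>good}. You assert that \emph{``goodness is preserved by cobase change along levelwise closed embeddings''} and cite Facts~\ref{facts:clemb}\ref{i:push-pull-clemb} and~\ref{i:cube-clemb}; but to apply the cube lemma \ref{facts:clemb}\ref{i:cube-clemb} you must assume the back face --- namely the commuting square
\[
\begin{tikzcd}
\Sigma^U\spec{A}(V)\arrow[r]\arrow[d] & \spec{A}(U\oplus V)\arrow[d]\\
\Sigma^U\spec{B}(V)\arrow[r] & \spec{B}(U\oplus V)
\end{tikzcd}
\]
built from the structure maps and from $f$ --- is \emph{admissible}, not merely a square of closed embeddings. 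That condition does not follow from $\spec{A}$, $\spec{B}$ being good and $f$ being a levelwise closed embedding. The paper handles this by strengthening the inductive invariant: it propagates the conjunction of (a) goodness, (b) $g$ is a levelwise closed embedding, and (c) for every $U$ the square $\sigma_U\colon\Sigma^U g\to\shift^U g$ is levelwise admissible; this stronger package is preserved under cobase change and sequential colimits, and is verified directly for $FI$-cells using that $\sigma_U\colon\Sigma^U F_V i\to\shift^U F_V i$ is levelwise a pullback of closed embeddings. Without tracking (c), the inductive step for pushouts does not go through. You also omit the final retract step: cofibrant spectra are only \emph{retracts} of $FI$-cell complexes, and one must invoke Fact~\ref{facts:clemb}\ref{i:comp-clemb} to conclude that retracts of good spectra are good.

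The remaining parts essentially agree with the paper. Two small imprecisions worth fixing: for part~\ref{i:good-naive} the precise mechanism is the non-equivariant commutative squares in Remark~\ref{rem:alternative}\ref{i:ax-compat} identifying the structure map at $(U,V)$ with the one at $(\IR^{\dim U},\IR^{\dim V})$, rather than just the identification of level spaces in Remark~\ref{rem:naive}; and in part~\ref{i:good-and-Q} the fact you want for the maps $\Q{U_k}(\spec{X})\to\Q{\CU}(\spec{X})$ is Fact~\ref{facts:clemb}\ref{i:colim-clemb} (a single sequence), not \ref{facts:clemb}\ref{i:colim-ladder-clemb}.
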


\begin{proof}
For~\ref{i:cofib=>good}, we first establish a general claim about pushout squares
\begin{equation*}
\begin{tikzcd}
\spec{A}
\arrow[r]
\arrow[d, swap, "f"]
&
\spec{W}
\arrow[d, "g"]
\\
\spec{B}
\arrow[r]
&
\spec{Y}
\end{tikzcd}
\end{equation*}
of orthogonal $G$-spectra.
Assume that $\spec{A}$, $\spec{B}$, and $\spec{W}$ are good, that $f$ is a levelwise closed embedding, and that for every \fd\ $G$-representation~$U$ the square $\MOR{\sigma_U}{\Sigma^U f}{\shift^U f}$ is levelwise admissible as defined in~\eqref{eq:square}.
Then $\spec{Y}$ is good, $g$~is a levelwise closed embedding, and for every~$U$ the square $\MOR{\sigma_U}{\Sigma^U g}{\shift^U g}$ is levelwise admissible.
This claim follows at once from Fact~\ref{facts:clemb}\ref{i:cube-clemb}.
(In fact, the full subcategory of good orthogonal $G$-spectra, together with the class of levelwise closed embeddings~$f$ such that for every~$U$ the square $\MOR{\sigma_U}{\Sigma^U f}{\shift^U f}$ is levelwise admissible, is a category with cofibrations.)

Next we show that $FI$-cell complexes in the sense of~\citelist{\cite{MM}*{Definitions III.2.2 and~III.1.1 on pages 42 and~38--39} \cite{MMSS}*{Definition~5.4 on page~457}} are good.
For any \fd\ $G$-representation~$V$, the free spectrum~$F_V=\Th(V,-)$ is good by inspection.
Then for any pointed $G$-space $A$ also $F_V A = A \sma \Th(V,-)$ is good by part~\ref{i:sma-map-good}, and for any closed embedding $\MOR{i}{A}{B}$
and any \fd\ $G$-representation $U$ 
the square $\MOR{\sigma_U}{\Sigma^U F_V i}{\shift^U F_V i}$ is levelwise a pullback square of closed embeddings, and so in particular levelwise admissible.
It follows that for any coproduct $f=\bigvee_\lambda F_{V_\lambda}i_\lambda$ such that each $i_\lambda$ is a closed embedding, the source and target of~$f$ are good by part~\ref{i:(co)prod-good}, and the square $\MOR{\sigma_U}{\Sigma^U f}{\shift^U f}$ is levelwise admissible.
Now recall that an $FI$-cell complex is a sequential colimit $\spec{X}=\colim_n\spec{X}_n$\,, where $\spec{X}_0=\spec{\pt}$ and each $\MOR{g_n}{\spec{X}_n}{\spec{X}_{n+1}}$ is obtained by cobase change from a map of the form $f=\bigvee_\lambda F_{V_\lambda}i_\lambda$, with $\MOR{i_\lambda}{(G/H_\lambda\times S^{n_\lambda-1})_+}{(G/H_\lambda\times D^{n_\lambda})_+}$.
The claim from the beginning of the proof then implies inductively that all $\spec{X}_n$ are good and all squares $\MOR{\sigma_U}{\Sigma^U g_n}{\shift^U g_n}$ are levelwise admissible.
Hence we can apply Fact~\ref{facts:clemb}\ref{i:colim-ladder-clemb} to conclude that each $FI$-cell complex $\spec{X}$ is good.
Since cofibrant spectra are retracts of $FI$-cell complexes by~\cite{MM}*{Theorem~III.2.4(iii) on page~42}, and retracts of good spectra are good by Fact~\ref{facts:clemb}\ref{i:comp-clemb}, the result follows.

For \ref{i:good-tau}, the fact that good implies almost good follows from Fact~\ref{facts:clemb}\ref{i:adjoint-clemb}.
If~$\spec{X}$ is almost good, then also the adjoints of the internal structure maps are closed embeddings, and so $\tau_{U\subseteq V}$ is a levelwise closed embedding by~Fact~\ref{facts:clemb}\ref{i:sma-map-clemb}.
The other implication is obvious from~\eqref{eq:Q0}.

\ref{i:pi-good} follows from~\ref{i:good-tau} and Facts~\ref{facts:clemb}\ref{i:fix-clemb} and~\ref{i:colim-clemb}, since for any complete $G$-universe~$\CU$ the poset~$\CP(\CU)$ contains a cofinal sequence.

\ref{i:(co)prod-good} and \ref{i:sma-map-good} follow from Facts~\ref{facts:clemb}\ref{i:(co)prod-clemb} and~\ref{i:sma-map-clemb}.

For the non-trivial direction of~\ref{i:good-naive}, notice that if $U$ and~$V$ are $G$\=/representations of dimensions $m$ and~$n$, then there are commutative squares
\[
\begin{tikzcd}[column sep=large]
S^m\sma\spec{X}(\IR^n)
\arrow{r}{\sigma_{\IR^m,\IR^n}}
\arrow{d}[swap]{\cong}
&
\spec{X}(\IR^{m+n})
\arrow{d}{\cong}
&
\spec{X}(\IR^n)
\arrow{r}{\widetilde{\sigma}_{\IR^m,\IR^n}}
\arrow{d}[swap]{\cong}
&
\Omega^m\spec{X}(\IR^{m+n})
\arrow{d}{\cong}
\\
S^U\sma\spec{X}(U)
\arrow{r}[swap]{\sigma_{U,V}\vphantom{\widetilde{\sigma}}}
&
\spec{X}(U\oplus V)
&
\spec{X}(U)
\arrow{r}[swap]{\widetilde{\sigma}_{U,V}}
&
\Omega^U\spec{X}(U\oplus V)
\mathrlap{\,,}
\end{tikzcd}
\]
whose vertical maps are not necessarily equivariant homeomorphisms; compare Remark~\ref{rem:alternative}\ref{i:ax-compat}.

Finally, for~\ref{i:good-and-Q} use Facts~\ref{facts:clemb}\ref{i:adjoint-clemb} and~\ref{i:colim-clemb}.
\end{proof}

\begin{definition}[$G$-$\Omega$-spectra]
\label{def:Omega}
An orthogonal $G$-spectrum~$\spec{X}$ is a \emph{$G$-$\Omega$-spectrum} if for all \fd\ $G$-representations $U$ the adjoint $\MOR{\widetilde{\sigma}_U}{\spec{X}}{\Q{U}(\spec{X})}$ from~\eqref{eq:sigma-adj} of the spectrum-level structure map is a level equivalence, i.e., if for all \fd\ $G$-representations $U$ and~$V$, the adjoint
\[
\MOR{\widetilde{\sigma}_{U,V}}{\spec{X}(V)}{\Omega^U\spec{X}(U\oplus V)}
\]
of the structure map~\eqref{eq:structure-map} is a $G$-weak equivalence.
\end{definition}

\begin{lemma}[$G$-$\Omega$-spectra]
\label{lem:Omega}
Let $\spec{X}$ be an orthogonal $G$-spectrum.
\begin{enumerate}

\item
\label{i:fib=>Omega}
$\spec{X}$ is fibrant in the stable model category structure on~$\Sp_G$ if and only if $\spec{X}$ is a $G$-$\Omega$-spectrum.

\item
\label{i:Omega-tau}
$\spec{X}$ is a $G$-$\Omega$-spectrum if and only if for all \fd\ $G$-representations $U\subseteq V$ the map $\MOR{\tau_{U\subseteq V}}{\Q{U}(\spec{X})}{\Q{V}(\spec{X})}$ defined in~\eqref{eq:tau} is a level equivalence.

\item
\label{i:pi-Omega}
If $\spec{X}$ is a $G$-$\Omega$-spectrum then we can omit the colimit in Definition~\ref{def:pi_*}, i.e., for all subgroups $H$ of~$G$ and for all~$n\in\IZ$ we have
\[
\pi^H_n(\spec{X}) \cong
\begin{cases}
\pi_n\bigl(\spec{X}(0)^H\bigr)
&\text{if $n\geq0$\,;}
\\[\smallskipamount]
\pi_n\bigl(\spec{X}(\IR^{-n})^H\bigr)
&\text{if $n<0$\,.}
\end{cases}
\]

\item
\label{i:product-Omega}
Products of $G$-$\Omega$-spectra are $G$-$\Omega$-spectra.

\item
\label{i:map-Omega}
Let $Z$ be a pointed $G$-CW-complex.
If $\spec{X}$ is a $G$-$\Omega$-spectrum then so is $\map(Z,\spec{X})$.

\end{enumerate}
\end{lemma}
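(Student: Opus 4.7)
Part~\ref{i:fib=>Omega} will be cited directly from~\cite{MM}*{Theorem~III.4.2 on page~47}, where the stable model structure on~$\Sp_G$ is set up so that its fibrant objects are by construction the $G$-$\Omega$-spectra. For part~\ref{i:Omega-tau} the plan is to exploit the commutative triangle~\eqref{eq:triangle-Q}, which asserts that $\widetilde{\sigma}_V = \tau_{U\subseteq V}\circ\widetilde{\sigma}_U$, together with the identification $\widetilde{\sigma}_V = \tau_{0\subseteq V}$ from~\eqref{eq:Q0}. In one direction, if~$\spec{X}$ is a $G$-$\Omega$-spectrum then both~$\widetilde{\sigma}_U$ and~$\widetilde{\sigma}_V$ are level equivalences, and 2-out-of-3 applied to the triangle forces~$\tau_{U\subseteq V}$ to be one as well. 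Conversely, if every~$\tau_{U\subseteq V}$ is a level equivalence, specializing to $U=0$ yields that $\widetilde{\sigma}_V = \tau_{0\subseteq V}$ is a level equivalence for every~$V$.

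Part~\ref{i:pi-Omega} will then follow formally from~\ref{i:Omega-tau}. By~\ref{i:Omega-tau} every transition map~$\tau_{U\subseteq V}$ in the colimit diagram that defines~$\pi^H_n(\spec{X})$ evaluates at the relevant level to a $G$-weak equivalence and thus induces an isomorphism after taking $H$-fixed points and~$\pi_n$ (respectively~$\pi_0$ for $n<0$). Since~$\CP(\CU)$ is a directed poset in which $0$ maps into every other object, the colimit of such a directed diagram of abelian groups all of whose transition maps are isomorphisms collapses to the value at~$0$, which gives precisely the claimed formulas.

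Parts~\ref{i:product-Omega} and~\ref{i:map-Omega} are formal consequences of standard adjunctions. For~\ref{i:product-Omega} I would observe that products in~$\Sp_G$ are computed levelwise, that~$\Omega^U$ and~$\shift^U$ commute with arbitrary products, and that~$(-)^H$ and~$\pi_n$ of pointed spaces commute with arbitrary products; hence the adjoint structure map of~$\prod\spec{X}_i$ is the product of the individual~$\widetilde{\sigma}_U$'s, which is a $G$-weak equivalence on every level because each factor is. For~\ref{i:map-Omega} the plan is to note that $\map(Z,-)$ commutes with~$\shift^U$ trivially (shift acts on the indexing category) and with~$\Omega^U$ via the smash–mapping adjunction, so the adjoint structure map of~$\map(Z,\spec{X})$ at level~$V$ is~$\map(Z,\widetilde{\sigma}_{U,V})$, which on $H$-fixed points becomes~$\map^H(Z,\widetilde{\sigma}_{U,V})$. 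The one nontrivial step, and what I expect to be the main obstacle, is showing that $\map^H(Z,-)$ preserves $G$-weak equivalences when~$Z$ is a pointed $G$-CW-complex; this will be handled by the equivariant Whitehead theorem applied after restricting~$Z$ to a pointed $H$-CW-complex.
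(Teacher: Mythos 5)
Your proposal reproduces the paper's proof essentially verbatim, only filling in the details that the paper compresses into a citation or a one-line remark: part~(i) is the same citation to Mandell--May; part~(ii) is exactly the two-out-of-three argument applied to the triangle~\eqref{eq:triangle-Q}; part~(iii) is the same collapse of the colimit; and part~(iv) is the same observation that the adjoint structure map of a product is the product of the adjoint structure maps, together with the (levelwise, after $H$-fixed points and $\pi_n$) compatibility of products with weak equivalences.

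The only place worth a caveat is part~(v). The equivariant Whitehead theorem is not quite the right tool here: it upgrades a $G$-weak equivalence \emph{between $G$-CW-complexes} to a $G$-homotopy equivalence, but the level spaces of a $G$-$\Omega$-spectrum $\spec{X}$ are not $G$-CW-complexes in general, so Whitehead does not apply to $\widetilde{\sigma}_{U,V}$ directly. What you actually need is the more elementary (and more general) fact that, for a pointed $G$-CW-complex $Z$ and \emph{any} $G$-weak equivalence $f$ of pointed $G$-spaces, the map $\map(Z,f)^H$ is a weak equivalence for every $H\leq G$. This is proved by induction on the skeletal filtration of $Z$ (each attaching step contributes a factor of the form $\Omega^n f^K$, and passage to the infinite skeleton uses a $\lim^1$ argument), and it is exactly what the Schwede reference cited in the paper's proof supplies. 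With that substitution your argument goes through.
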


\begin{proof}
For~\ref{i:fib=>Omega} see~\cite{MM}*{Theorem III.4.2 on page~47}.
\ref{i:Omega-tau} follows from the commutative triangle~\eqref{eq:triangle-Q}, and \ref{i:Omega-tau} directly implies~\ref{i:pi-Omega}.
For~\ref{i:product-Omega} use Fact~\ref{facts:clemb}\ref{i:(co)prod-clemb} and that the adjoint structure maps of the product are the product of the adjoint structure maps of the factors.
For~\ref{i:map-Omega} see for example~\cite{Schwede}*{last paragraph on page~47}.
\end{proof}

We conclude this section with the following well-known facts.

\begin{lemma}
\label{lem:well-known}
Let $\MOR{f}{\spec{X}}{\spec{Y}}$ be a $G$-map of orthogonal $G$-spectra, and let $Z$ be a pointed $G$-CW-complex.
\begin{enumerate}

\item
\label{i:le-pi-iso}
If $f$ is a level equivalence, then $f$ is a \piiso, and the converse is true provided that $\spec{X}$ and $\spec{Y}$ are $G$-$\Omega$-spectra.

% \item
% \label{i:Omega-le}
% If $f$ is a level equivalence, then $\spec{X}$ is a $G$-$\Omega$-spectrum if and only if~$\spec{Y}$~is one.

\item
\label{i:map-pi-iso}
If $f$ is a \piiso\ and $Z$ has only finitely many $G$-cells, then\linebreak $\MOR{\map(Z,f)}{\map(Z,\spec{X})}{\map(Z,\spec{Y})}$ is a \piiso.

\item
\label{i:sma-pi-iso}
If $f$ is a \piiso, then $\MOR{\id_Z\sma f}{Z\sma\spec{X}}{Z\sma\spec{Y}}$ is a \piiso.

\item
\label{i:sma-F(N)}
If $N$ is a normal subgroup of~$G$, $\pi_*^H(f)$ is an isomorphism for all $H\in\CF(N)$, and $Z$ is a $G$-$\CF(N)$-CW-complex, then $\MOR{\id_Z\sma f}{Z\sma\spec{X}}{Z\sma\spec{Y}}$ is a \piiso\ of orthogonal $G$-spectra, and $\MOR{\id_Z\sma_N f}{Z\sma_N\spec{X}}{Z\sma_N\spec{Y}}$ is a \piiso\ of orthogonal $G/N$-spectra.

\end{enumerate}
\end{lemma}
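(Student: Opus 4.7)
The plan is to prove part~\ref{i:le-pi-iso} directly and parts~\ref{i:map-pi-iso}--\ref{i:sma-F(N)} by a common skeletal induction on~$Z$, with the principal technical input being a Wirthm\"uller-type identification in the base case.

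For part~\ref{i:le-pi-iso}, level equivalence implies \piiso\ immediately from Definition~\ref{def:pi_*}, as the colimit of isomorphisms is an isomorphism. For the converse, Lemma~\ref{lem:Omega}\ref{i:pi-Omega} identifies $\pi^H_n$ of a $G$-$\Omega$-spectrum with the homotopy groups of a single level; applying the same identification to $\shift^V\spec{X}$ and~$\shift^V\spec{Y}$, which remain $G$-$\Omega$-spectra, promotes the resulting weak equivalences at levels $0$ and~$\IR^{-n}$ to weak equivalences at every finite-dimensional level~$V$.

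For parts~\ref{i:map-pi-iso}, \ref{i:sma-pi-iso}, and~\ref{i:sma-F(N)}, I would induct on the cells of~$Z$. An attaching pushout $(G/H)_+\sma(S^{n-1}\hookrightarrow D^n)$ gives, after applying $-\sma\spec{X}$ or $\map(-,\spec{X})$, a cofiber sequence of orthogonal $G$-spectra, and hence a long exact sequence of~$\pi^K_*$ to which the five-lemma applies. This reduces everything to the base case $Z=(G/H)_+\sma S^n$, where one must compute $\pi^K_*$ of $(G/H)_+\sma\spec{X}$ and $\map((G/H)_+,\spec{X})$ in terms of the $H$-equivariant homotopy of $\res_{H\leq G}\spec{X}$ via a Wirthm\"uller-type change-of-groups isomorphism. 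This base-case identification is the step I expect to be the main obstacle, and is presumably developed in Section~\ref{sec:wirth}. The finiteness hypothesis in~\ref{i:map-pi-iso} is needed so that $\map(-,\spec{X})$ commutes with the colimit over skeleta; in~\ref{i:sma-pi-iso} no such restriction is required, because smash products commute with colimits and Facts~\ref{facts:clemb}\ref{i:colim-clemb} and~\ref{i:fix-colim-clemb} let one commute $\pi^K_*$ past sequential colimits along closed embeddings.

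For part~\ref{i:sma-F(N)}, since $Z$ has cells of isotropy only in~$\CF(N)$, the base case of the induction only ever needs $\pi^H_*(f)$ with $H\in\CF(N)$, which is precisely what is assumed; for $K\notin\CF(N)$ the $K$-fixed points of each cell vanish automatically, so the $\pi^K_*$-isomorphism is free of charge. Finally, to pass from the $G$-map $\id_Z\sma f$ to the $G/N$-map $\id_Z\sma_N f$, I would exploit the free $N$-action on $Z$ away from the basepoint to identify $(Z\sma_N\spec{X})^{K/N}$ with $(Z\sma\spec{X})^K$ for every $N\leq K\leq G$, so that the $G$-equivariant \piiso\ just established implies the $G/N$-equivariant one.
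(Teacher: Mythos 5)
Your overall strategy—cellular induction on $Z$, with the base case $Z=G/H_+$ handled via a change-of-groups identification and the Wirthm\"uller isomorphism—matches the structure of the paper's argument (which for parts~\ref{i:le-pi-iso}--\ref{i:sma-pi-iso} simply cites \cite{MM} and \cite{Schwede}, and for~\ref{i:sma-F(N)} sketches exactly this kind of induction). Your treatment of part~\ref{i:le-pi-iso} and of the induction steps for~\ref{i:map-pi-iso} and~\ref{i:sma-pi-iso} is fine in outline.

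There is, however, a genuine gap in your treatment of the last claim in part~\ref{i:sma-F(N)}. You propose to deduce that $\id_Z\sma_N f$ is a \piiso\ of orthogonal $G/N$-spectra from the already-established fact that $\id_Z\sma f$ is a \piiso\ of orthogonal $G$-spectra, by ``identifying $(Z\sma_N\spec{X})^{K/N}$ with $(Z\sma\spec{X})^K$ for $N\leq K\leq G$.'' This identification is false, and in fact fails for the very reason you invoke: since $N$ acts freely on~$Z$ away from the basepoint, for $N\neq1$ and $K\supseteq N$ every non-basepoint $z\in Z$ has $G_z\cap N=1$, so $Z^K=\{*\}$ and hence $(Z\sma\spec{X})^K$ is the point spectrum levelwise. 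By contrast $(Z\sma_N\spec{X})^{K/N}$ is certainly not trivial (take $K=N$, so $K/N=1$). Even setting this aside, $\pi_*^{K/N}$ of a $G/N$-spectrum is defined via a colimit over $G/N$-representations and is not the homotopy of the $K/N$-fixed-point space, so such a fixed-point identification alone would not give the isomorphism of homotopy groups you need. What the paper does instead is handle the $G/N$-statement separately at the base case: since $H\cap N=1$, one may regard $H$ as a subgroup of $G/N$, and then $G/H_+\sma_N\spec{X}\cong\ind_{H\leq G/N}\res_{H\leq G}\spec{X}$; Theorem~\ref{thm:H-in-G}\ref{i:res-co-ind-piiso} (induction preserves \piiso s, via the Wirthm\"uller isomorphism and~\eqref{eq:pi-coind}) then applies directly to this $G/N$-induction, giving the base case for $\id_{G/H_+}\sma_N f$ in parallel with, rather than as a corollary of, the base case for $\id_{G/H_+}\sma f$.
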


\begin{proof}
For \ref{i:le-pi-iso} see \cite{MM}*{Lemma~III.3.3 and Theorem~III.3.4 on page~45} or~\cite{Schwede}*{Propositions~5.18(ii) and~5.19(ii) on page~55}.
% \ref{i:Omega-le}~follows at once from Definition~\ref{def:Omega} since the functor $\MOR{\Omega^U}{\CT^G}{\CT^G}$ preserves $G$-weak equivalences.
For \ref{i:map-pi-iso} and~\ref{i:sma-pi-iso} see \cite{MM}*{Theorem~III.3.9 and Proposition~III.3.11 on pages~46--47} or \cite{Schwede}*{Proposition~5.4 on page~48}.
For~\ref{i:sma-F(N)} we use arguments similar to those given in~\cite{Schwede} for~\ref{i:sma-pi-iso}, and proceed by induction on the cellular filtration of~$Z$.
The induction begin is the case $Z=G/H_+$ with $H\cap N=1$.
By Example~\ref{eg:ind-res} there is a $G$-isomorphism $G/H_+\sma\spec{X}\cong\ind_{H\leq G}\res_{H\leq G}\spec{X}$.
Because $H\cap N=1$, we can identify $H$ with a subgroup of~$G/N$ and get a $G/N$-isomorphism~$G/H_+\sma_N\spec{X}\cong\ind_{H\leq G/N}\res_{H\leq G}\spec{X}$.
Since each subgroup $K$ of~$H$ also belongs to~$\CF(N)$, the assumption on~$f$ implies that $\MOR{\res_{H\leq G}f}{\res_{H\leq G}\spec{X}}{\res_{H\leq G}\spec{Y}}$ is a \piiso\ of orthogonal $H$-spectra; compare~\eqref{eq:pi-res}.
By Theorem~\ref{thm:H-in-G}\ref{i:res-co-ind-piiso}, induction preserves \piiso s, and therefore both $\id_{G/H_+}\sma f$ and~$\id_{G/H_+}\sma_N f$ are \piiso s.
The induction steps are then carried out exactly as at the end of Section~\ref{sec:proof}.
\end{proof}

%%%%%%%%%%%%%%%%%%%%%%%%%%%%%%%%%%%%%%%%%%%%%%%%%%%%%%%%%%%%%%%%%%%%%%%%%%%%%

\section{Change of groups}
\label{sec:change-of-groups}

We first recall some basic space-level constructions.
Let $\MOR{\alpha}{\Gamma}{G}$ be a group homomorphism.
The \emph{restriction} functor
\[
\MOR{\res_\alpha=\alpha^*}{\CT^G}{\CT^\Gamma}
\]
has both a left adjoint called \emph{induction}
\begin{alignat}{2}
\label{eq:ind}
\MOR{\ind_\alpha}{&\CT^\Gamma}{\CT^G}
,
&\quad
X&\mapsto G_+\sma_\Gamma X
\,,
\shortintertext{and a right adjoint called \emph{coinduction}}
\label{eq:coind}
\MOR{\coind_\alpha}{&\CT^\Gamma}{\CT^G}
,
&
X&\mapsto \map(G_+,X)^\Gamma
,
\end{alignat}
where in~\eqref{eq:ind} we consider $G$ as right $\Gamma$-set and left $G$-set,
and in~\eqref{eq:coind} we consider $G$ as left $\Gamma$-set and right $G$-set.
Restriction, induction, and coinduction are all continuous functors.
If $\alpha$ is surjective, then $G\cong \Gamma/N$ with $N=\ker\alpha$, and
\begin{alignat}{3}
\label{eq:ind-orb-coind-fix}
\ind_\alpha(X)
&\cong
X/N
&\quad
&\text{and}&
\coind_\alpha(X)
&\cong
X^N
.
\shortintertext{For all pointed $G$-spaces~$X$ there are natural $G$-equivariant homeomorphism}
\label{eq:indco-res}
\qquad
\ind_\alpha\res_\alpha X
&\cong
G/\alpha(\Gamma)_+ \sma X
&
&\text{and}&
\coind_\alpha\res_\alpha X
&\cong
\map(\alpha(\Gamma)\backslash G _+ , X)
\shortintertext{given by the following formulas:}
[g\sma x]&\mapsto g\alpha(\Gamma)\sma gx
&&&
f&\mapsto\bigl(\alpha(\Gamma)g\mapsto g^{-1}f(g)\bigr)
\\
[g\sma g^{-1}x]&\mapsfrom g\alpha(\Gamma)\sma x
&&&
\bigl(g\mapsto gf(\alpha(\Gamma)g)\bigr)&\mapsfrom f
\end{alignat}
Under the isomorphisms~\eqref{eq:indco-res},
the counit of the adjunction $\ind_\alpha\dashv\res_\alpha$ is induced by the projection $G/\alpha(\Gamma)\TO\pt$,
and the unit of the adjunction $\res_\alpha\dashv\coind_\alpha$ is induced by $\alpha(\Gamma)\backslash G\TO\pt$.

\begin{construction}[Change of groups for naive orthogonal spectra]
\label{constr:irc-naive}
We define restriction, induction, and coinduction for naive orthogonal spectra by composing with the corresponding continuous functors for spaces.
Explicitly, if $\MOR{\spec{X}}{\Th}{\CT^G}$ is a naive orthogonal $G$-spectrum and $\MOR{\alpha}{\Gamma}{G}$ a group homomorphism, then $\res_\alpha\spec{X}$ is the composition
\[
\Th\TO[\spec{X}]\CT^G\xrightarrow{\res_\alpha}\CT^\Gamma
,
\]
and analogously for induction and coinduction.
We thus get the following continuous functors and adjunctions.
\begin{equation}
\label{eq:irc-naive}
\begin{tikzcd}[row sep=6.5ex]
\NSp^G
\arrow{d}[pos=.53, description]{\dashv\ \,\res_\alpha\ \dashv}
\\
\NSp^\Gamma
\arrow[bend left =60]{u}[pos=.47      ]{  \ind_\alpha}
\arrow[bend right=60]{u}[pos=.47, swap]{\coind_\alpha}
\end{tikzcd}
\end{equation}
\end{construction}

\begin{construction}[Change of groups for non-naive orthogonal spectra]
\label{constr:irc-non-naive}
We use the equivalence of categories in Theorem~\ref{thm:naive} to extend the functors and adjunctions in~\eqref{eq:irc-naive} to non-naive orthogonal spectra.
Explicitly, if $\MOR{\spec{X}}{\Th_G}{\CT_G}$ is an orthogonal $G$-spectrum, then the orthogonal $\Gamma$-spectrum $\res_\alpha\spec{X}$ is defined by first considering the restriction
\begin{equation}
\label{eq:naive-res}
\Th\TO[\iota]\Th^G\TO[\spec{X}]\CT^G\xrightarrow{\res_\alpha}\CT^\Gamma
\end{equation}
of the underlying naive orthogonal $G$-spectrum of~$\spec{X}$,
then view~\eqref{eq:naive-res} as a continuous $\Gamma$-functor~$\Th^\tr_\Gamma\TO\CT_\Gamma$,
and finally take the left Kan extension along~$\MOR{\iota}{\Th^\tr_\Gamma}{\Th_\Gamma}$.
Induction and coinduction are defined analogously.
Thus we obtain continuous functors and adjunctions as in~\eqref{eq:irc-naive} for $\Sp$ instead of~$\NSp$.
Notice that $\iota^*\res_\alpha\spec{X}=\res_\alpha\iota^*\spec{X}$, and analogously for induction and coinduction.

For induction and coinduction, but not for restriction, this process simplifies.
Given an orthogonal $\Gamma$-spectrum~$\spec{X}$, the underlying continuous functor \mbox{$\Th^G\TO\CT^G$} of the orthogonal $G$-spectrum $\MOR{\indco_\alpha\spec{X}}{\Th_G}{\CT_G}$ is isomorphic to the composition
\begin{equation}
\label{eq:non-naive-indco}
\Th^G\xrightarrow{\res_\alpha}
\Th^\Gamma\TO[\spec{X}]\CT^\Gamma\xrightarrow{\indco_\alpha}\CT^{G}
.
\end{equation}
In fact, since $\res_{\alpha}\circ\res_{G\to1}=\res_{\Gamma\to1}$, the underlying naive orthogonal $G$-spectra of $\indco_\alpha\spec{X}$ and~\eqref{eq:non-naive-indco} agree, and so the claim follows from the last sentence in Theorem~\ref{thm:naive}.
Explicitly, this means that for any \fd\ $G$-representation~$U$ we have
\begin{equation}
\label{eq:indco}
\bigl(\indco_\alpha\spec{X}\bigr)(U)
\cong
\indco_\alpha\bigl(\spec{X}(\res_\alpha U)\bigr)
,
\end{equation}
and this applies in particular to the case $G=\Gamma/N$, $\ind_\alpha(-)=(-)/N$ and $\coind_\alpha(-)=(-)^N$.
Notice that an analogous simplification does not exist for restriction, because there is no continuous functor $\MOR{f}{\Th^\Gamma}{\Th^G}$ such that $f\circ\res_{\Gamma\to1}=\res_{G\to1}$.
However, given an orthogonal $G$-spectrum~$\spec{X}$ the underlying continuous functor \mbox{$\Th^\Gamma\TO\CT^\Gamma$} of the orthogonal $\Gamma$-spectrum $\MOR{\res_\alpha\spec{X}}{\Th_\Gamma}{\CT_\Gamma}$ is isomorphic to the left Kan extension of
\begin{equation*}
\Th^G\TO[\spec{X}]\CT^G\xrightarrow{\res_\alpha}\CT^\Gamma
\end{equation*}
along $\MOR{\res_\alpha}{\Th^G}{\Th^\Gamma}$, by a similar argument.
Explicitly, for any \fd\ $G$-representation~$U$ we have
\begin{equation}
\label{eq:res-res}
\bigl(\res_\alpha\spec{X}\bigr)(\res_\alpha U)
\cong
\res_\alpha\bigl(\spec{X}(U)\bigr)
.
\end{equation}
\end{construction}

\begin{remark}[Naive is not naive]
\label{rem:naive-not-naive}
Going back and forth between non-naive and naive orthogonal spectra it is easy to turn change of group results for spaces into corresponding results for orthogonal spectra.
Essentially every natural formula or construction involving change of groups that we are used to for spaces remains valid for orthogonal spectra.
We illustrate this in two examples. 
\end{remark}

\begin{example}
\label{eg:ind-res}
For all naive orthogonal $G$-spectra~$\spec{X}$ there are natural $G$\=/isomorphisms
\[
\ind_\alpha\res_\alpha\spec{X}
\cong
G/\alpha(\Gamma)_+ \sma\spec{X}
\AND
\coind_\alpha\res_\alpha\spec{X}
\cong
\map(\alpha(\Gamma)\backslash G _+ ,\spec{X})
\,,
\]
induced by the space-level isomorphisms in~\eqref{eq:indco-res}.
The same isomorphisms then also hold for non-naive orthogonal $G$-spectra $\spec{X}$, even though the definitions of (co)induction and in particular of restriction are more complicated.
This follows at once from the last sentence in Theorem~\ref{thm:naive}, since the desired isomorphisms hold for the underlying naive orthogonal spectra.
\end{example}

\begin{example}
\label{eg:res-smash}
For spaces restriction is a strict monoidal functor, i.e., for all pointed $G$-spaces $W$ and~$X$,
\(
\res_\alpha W\sma\res_\alpha X
=
\res_\alpha(W\sma X)
\).
So it follows that for all pointed $G$-spaces~$W$ and naive orthogonal $G$-spectra~$\spec{X}$ there are natural $\Gamma$-isomorphisms
\[
\res_\alpha W\sma\res_\alpha\spec{X}
\cong
\res_\alpha(W\sma\spec{X})
\,,
\]
and by the same argument as above the same is also true for non-naive orthogonal $G$-spectra~$\spec{X}$.
\end{example}

\begin{construction}[Underlying non-equivariant fixed point spectra]
Let $H$ be a subgroup of~$G$, not necessarily normal.
For any orthogonal $G$-spectrum~$\spec{X}$ we denote by~$\res_1\spec{X}^H$ the underlying non-equivariant orthogonal spectrum given by the composition 
\[
\res_1\spec{X}^H
\colon
\Th\TO[\iota]\Th^G\TO[\spec{X}]\CT^G\xrightarrow{(-)^H}\CT
\,.
\]
\end{construction}

The next result collects some important properties of the change of group functors in the special case where $\alpha$ is the inclusion of a subgroup $H\leq G$.

\begin{theorem}
\label{thm:H-in-G}
Let $H$ be a subgroup of~$G$.
\begin{enumerate}
\item
\label{i:Wirth}
For any orthogonal $H$-spectrum $\spec{X}$ there is a natural \piiso
\[
\MOR{W}{\ind_{H\leq G}\spec{X}}{\coind_{H\leq G}\spec{X}}
\,,
\]
called \emph{Wirthm\"uller isomorphism}.

% \item
% \label{i:pi-res}
% If $\spec{X}$ is an orthogonal $G$-spectrum then $\pi^H_*(\spec{X})\cong\pi^H_*(\res_{H\leq G}\spec{X})$.

% \item
% \label{i:pi-coind}
% If $\spec{Y}$ is an orthogonal $H$-spectrum then $\pi^H_*(\spec{Y})\cong\pi^G_*(\coind_{H\leq G}\spec{Y})$.

\item
\label{i:res-co-ind-piiso}
Restriction, induction, and coinduction
\[
\MOR{\res_{H\leq G}}{\Sp^G}{\Sp^H}
\,,\qquad
\MOR{\indco_{H\leq G}}{\Sp^H}{\Sp^G}
\]
all preserve \piiso s.

\item
\label{i:res-good}
If $\spec{X}$ is an orthogonal $G$-spectrum, then $\spec{X}$ is (almost) good if and only if $\res_{H\leq G}\spec{X}$ is (almost) good.

\item
\label{i:co-ind-good}
Induction preserves good spectra, and coinduction preserves almost good spectra.

\end{enumerate}
\end{theorem}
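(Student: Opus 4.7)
The plan is to prove the four parts from easiest to hardest, exploiting the equivalence between naive and non-naive orthogonal spectra (Theorem~\ref{thm:naive}) whenever possible.

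For (iii) and (iv) I would use Lemma~\ref{lem:good}\ref{i:good-naive} to reduce (almost) goodness to the underlying naive spectrum. Under Construction~\ref{constr:irc-naive}, restriction of a naive spectrum is just post-composition with~$\res_\alpha$, which leaves the underlying pointed spaces and the structure maps of the functor $\Th\to\CT$ untouched; so the closed-embedding conditions of Definition~\ref{def:good} pass back and forth along $\res_{H\leq G}$. This proves~(iii). For~(iv), since $G$ is finite, the underlying pointed space of $\ind_{H\leq G}\spec{Y}$ is a finite wedge $\bigvee_{G/H}\spec{Y}$ with twisted $G$-action, and that of $\coind_{H\leq G}\spec{Y}$ is a finite product; both operations preserve closed embeddings by Facts~\ref{facts:clemb}\ref{i:(co)prod-clemb}. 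Since $S^n$ has trivial group action at the naive level, $\ind_{H\leq G}$ commutes with $S^n\sma(-)$, and hence sends the non-adjoint structure maps of $\spec{X}$ to those of $\ind_{H\leq G}\spec{X}$, yielding preservation of \emph{good}. Dually, $\coind_{H\leq G}$ commutes with $\Omega^n$, and so sends adjoint structure maps to adjoint structure maps, yielding preservation of \emph{almost good}. The asymmetry is essential: $\map(G_+,-)^H$ does not commute with smashing in general, which is why $\coind$ need not preserve goodness.

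For (ii) applied to restriction, a complete $G$-universe $\CU$ restricts to a complete $H$-universe; combined with formula~\eqref{eq:res-res} this gives, for every $K\leq H\leq G$, a natural isomorphism
\begin{equation}
\label{eq:pi-res}
\pi_n^K\bigl(\res_{H\leq G}\spec{X}\bigr)\cong\pi_n^K(\spec{X})
\,,
\end{equation}
so $\res_{H\leq G}$ preserves \piiso s. For induction, the non-equivariant splitting $\ind_{H\leq G}\spec{X}\cong\bigvee_{gH\in G/H}\spec{X}$ together with the standard double coset decomposition expresses $\pi_n^K(\ind_{H\leq G}\spec{X})$, for any $K\leq G$, as a finite direct sum of homotopy groups of conjugates of $\spec{X}$ at subgroups of the form $K\cap gHg^{-1}\leq H$, all of which are preserved by \piiso s by~\eqref{eq:pi-res}. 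Preservation under $\coind_{H\leq G}$ will then follow formally from~(i).

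The construction of $W$ and the proof of~(i) will be carried out in Section~\ref{sec:wirth}. The underlying idea is that, because $[G:H]$ is finite, the canonical natural transformation from a finite wedge of orthogonal spectra to their finite product is a \piiso, and this is what ultimately compares the non-equivariant models for $\ind_{H\leq G}\spec{X}$ and $\coind_{H\leq G}\spec{X}$. The main obstacle, and the reason (i) is deferred to its own section, is not the \piiso\ statement but the spectrum-level realization of $W$ as a map of orthogonal $G$-spectra that is natural in $\spec{X}$ \emph{before} passing to the homotopy category. This will use in an essential way the bifunctorial replacement $\Q[h]{}$ of Theorem~\ref{thm:Q} together with the transfer maps constructed in Section~\ref{sec:transfer}.
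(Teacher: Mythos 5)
Your treatment of parts~\ref{i:res-good} and~\ref{i:co-ind-good} is essentially the paper's: reduce to the underlying naive/non-equivariant spectrum via Lemma~\ref{lem:good}\ref{i:good-naive}, then use the finite wedge/product decompositions and Facts~\ref{facts:clemb}\ref{i:(co)prod-clemb}, observing that $\ind$ commutes with $\Sigma^n$ and $\coind$ with $\Omega^n$. Your observation about the asymmetry between good and almost good is apt. Likewise your treatment of restriction in~\ref{i:res-co-ind-piiso} matches the paper's use of~\eqref{eq:res-res}.

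However, your handling of induction versus coinduction in part~\ref{i:res-co-ind-piiso} has a genuine gap, and you have the order of deductions reversed. You claim to prove induction directly: the double coset formula applied to $\res_{K\leq G}\ind_{H\leq G}\spec{X}$ expresses it as a finite wedge, but the wedge summands are themselves \emph{induced} spectra $\ind_{K\cap gHg^{-1}\leq K}(-)$, so identifying their $\pi_n^K$ with $\pi_n^{K\cap gHg^{-1}}(-)$ is precisely the Wirthm\"uller isomorphism again --- the step is circular. There is no adjunction-type formula for $\pi_n^K$ of an induced spectrum without~\ref{i:Wirth}. The paper instead proves \emph{coinduction} directly, because for coinduction the clean adjunction formula $\pi_*^H(\spec{Y})\cong\pi_*^G(\coind_{H\leq G}\spec{Y})$ is available (Schwede's~(4.6)), and combined with~\eqref{eq:pi-res} and the double coset formula this closes the argument without~\ref{i:Wirth}. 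Only \emph{then} does the paper deduce induction from coinduction via the Wirthm\"uller isomorphism. You should swap the order.

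Your description of part~\ref{i:Wirth} also misrepresents what the paper does. The paper does not prove the Wirthm\"uller isomorphism; it cites Wirthm\"uller, tom Dieck, Lewis--May--Steinberger, and in particular Schwede's lecture notes for a proof in the orthogonal setting. It only recalls the explicit and elementary space-level formula~\eqref{eq:preWirth} for the map~$W$, which is then promoted to orthogonal spectra via Theorem~\ref{thm:naive} --- no replacement $\Q[h]{}$ and no transfer maps are involved in the \emph{definition} of $W$, and naturality before passage to the homotopy category is immediate from the formula. Section~\ref{sec:wirth} proves something different, namely Theorem~\ref{thm:transfer-Wirth} relating the transfer~$p^!$ to~$W$, which is a later ingredient of the Adams isomorphism proof. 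Finally, the ``underlying idea'' you offer --- that the finite wedge $\to$ finite product comparison is the content and the only obstacle is natural spectrum-level realization --- understates the difficulty: the non-equivariant statement on $\pi_*^1$ is indeed easy, but the claim is that $W$ induces isomorphisms on $\pi_*^K$ for \emph{all} $K\leq G$, and this equivariant strengthening is the actual theorem, proved by a Pontryagin--Thom/duality argument that the paper deliberately outsources.
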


\begin{proof}
For~\ref{i:Wirth} see~\cite{Wirth}, \cite{tD-transf}*{Proposition~II.6.12 on page~148}, \cite{LMS}*{Theorem~II.6.2 on page~89}, and in particular~\cite{Schwede}*{Theorem~4.9 on page~29} for a proof in the setting of orthogonal spectra.
Here we only recall the definition of the Wirthm\"uller map~$W$, because its explicit description is needed in Section~\ref{sec:wirth}.
On the space level there is a natural transformation~$W$ between the functors
\[
\ind_{H\leq G}=G_+\sma_H-
\AND
\coind_{H\leq G}=\map(G_+,-)^H
\]
from $\CT^H$ to~$\CT^G$ introduced in \eqref{eq:ind} and~\eqref{eq:coind}.
For any pointed $H$-space~$X$, $W$ is defined as
\begin{align}
\label{eq:preWirth}
\MOR{W_X=W_X^{H\leq G}}{G_+\sma_H X}{&\map(G_+,X)^H}
,
\\
(g\sma x)\xmapsto{\quad}&\left(
\gamma\mapsto
\begin{cases}
(\gamma g) x
&\text{if $\gamma g\in H$}
\\
\pt
&\text{if $\gamma g\not\in H$}
\end{cases}
\,\right)
.
\end{align}
If $\MOR{\spec{X}}{\Th}{\CT^H}$ is a naive orthogonal $H$-spectrum, $W$ induces a $G$-map of naive orthogonal $G$-spectra $G_+\sma_H\spec{X}\TO\map(G_+,\spec{X})^H$.
Using Theorem~\ref{thm:naive}, as explained in Remark~\ref{rem:naive-not-naive}, we get for any non-naive orthogonal $H$-spectrum a $G$-map of orthogonal $G$-spectra
\begin{equation*}
\label{eq:Wirth}
\MOR{W_{\spec{X}}}{G_+\sma_H\spec{X}}{\map(G_+,\spec{X})^H}
% {  \ind_{H\leq G}\spec{X}=G_+\sma_H\spec{X}}%
% {\coind_{H\leq G}\spec{X}=\map(G_+,\spec{X})^H}
,
\end{equation*}
% that we call the \emph{Wirthm\"uller map}, and
that for each \fd\ $G$-representation~$U$ is given by
\[
\MOR{W_{\spec{X}(\res_{H\leq G}U)}}%
{G_+\sma_H\spec{X}(\res_{H\leq G}U)}%
{\map\bigl(G_+,\spec{X}(\res_{H\leq G}U)\bigr)^H}
\]
as in~\eqref{eq:preWirth}.

% \ref{i:pi-res} follows from the definitions, using~\eqref{eq:res-res} and the fact that $\res_{H\leq G}\CU$ is a complete $H$-universe if $\CU$ is a complete $G$-universe.

% For~\ref{i:pi-coind} see for example~\cite{Schwede}*{(4.6) on page~28}.

\ref{i:res-co-ind-piiso} For restriction notice that, if $\spec{X}$ is an orthogonal $G$-spectrum, then
\begin{equation}
\label{eq:pi-res}
\pi^H_*(\spec{X})\cong\pi^H_*(\res_{H\leq G}\spec{X})
\,.
\end{equation}
This follows easily from the definitions, using~\eqref{eq:res-res} and the fact that $\res_{H\leq G}\CU$ is a complete $H$-universe if $\CU$ is a complete $G$-universe.

For coinduction we use that, if $\spec{Y}$ is an orthogonal $H$-spectrum, then
\begin{equation}
\label{eq:pi-coind}
\pi^H_*(\spec{Y})\cong\pi^G_*(\coind_{H\leq G}\spec{Y})
\,,
\end{equation}
see for example~\cite{Schwede}*{(4.6) on page~28}.
The statement then follows from~\eqref{eq:pi-coind}, \eqref{eq:pi-res}, and the double coset formula; see for example~\cite{Schwede}*{first display on page~31}.

The statement that also induction preserves \piiso s follows then from the Wirthm\"uller isomorphism~\ref{i:Wirth}.

\ref{i:res-good} follows immediately from Lemma~\ref{lem:good}\ref{i:good-naive}.

\ref{i:co-ind-good} is implied by \ref{i:res-good} and Lemma~\ref{lem:good}\ref{i:(co)prod-good}, as for any orthogonal $H$-spectrum~$\spec{Y}$ we have $\res_{1\leq G}\ind_{H\leq G}\spec{Y}\cong\bigvee_{G/H}\spec{Y}$ and $\res_{1\leq G}\coind_{H\leq G}\spec{Y}\cong\prod_{H\backslash G}\spec{Y}$.
\end{proof}

We close this section with the following well-known lemma.
It says that for orthogonal $G$-$\Omega$-spectra the naive fixed points are well-behaved homotopically.
Notice that analogous results for orbits exist only under very strong cofibrancy assumptions; compare the proof of Corollary~\ref{cor:LMS}.
Since in the construction and proof of the Adams isomorphism we need to control homotopically both orbits and fixed points at the same time, we are led to use equivariant homotopy equivalences in the sense of Definition~\ref{def:homotopy}; see for example Proposition~\ref{prop:univchange}.

\begin{lemma}
\label{lem:Omega-fix}
Let $\spec{X}$ be an orthogonal $G$-$\Omega$-spectrum.
\begin{enumerate}

\item\label{i:pi-Omega-fix}
For all subgroups $H$ of~$G$ and for all~$n\in\IZ$ we have 
\[
\pi^H_n(\spec{X}) \cong \pi^1_n\bigl(\res_1\spec{X}^H\bigr) \cong
\begin{cases}
\pi_n\bigl(\spec{X}(0)^H\bigr)
&\text{if $n\geq0$\,;}
\\[\smallskipamount]
\pi_n\bigl(\spec{X}(\IR^{-n})^H\bigr)
&\text{if $n<0$\,.}
\end{cases}
\]

\item\label{i:Omega-fix}
If $N$ is a normal subgroup of~$G$ then $\spec{X}^N$ is a $G/N$-$\Omega$-spectrum, and for all subgroups $H$ of~$G$ containing~$N$ and for all~$n\in\IZ$ we have
\[
\pi^{H/N}_n\bigl(\spec{X}^N\bigr)
\cong
\pi^H_n(\spec{X})
\,.
\]

\item\label{i:pi-iso-Omega-fix}
If $\MOR{f}{\spec{X}}{\spec{Y}}$ is a \piiso\ of $G$-$\Omega$-spectra and $N$ is a normal subgroup of~$G$, then $\MOR{f^N}{\spec{X}^N}{\spec{Y}^N}$ is a \piiso\ of $G/N$-$\Omega$-spectra.
\end{enumerate}

\end{lemma}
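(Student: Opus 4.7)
My plan is to derive all three parts from Lemma~\ref{lem:Omega}\ref{i:pi-Omega} together with the structure of $G$-weak equivalences under the $N$-fixed point functor. None of the parts should require much machinery beyond what is already established.

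For part~\ref{i:pi-Omega-fix}, the second isomorphism is immediate from Lemma~\ref{lem:Omega}\ref{i:pi-Omega}. For the first, I would argue that $\res_1\spec{X}^H$ is a non-equivariant $\Omega$-spectrum: given a trivial finite-dimensional representation $U$, the adjoint structure map of $\res_1\spec{X}^H$ from level $V$ to level $U\oplus V$ factors as $\spec{X}(V)^H\TO(\Omega^U\spec{X}(U\oplus V))^H$, and since $U$ carries the trivial $G$-action the loop and $H$-fixed point functors commute. The underlying unpointed map $\spec{X}(V)\TO\Omega^U\spec{X}(U\oplus V)$ is a $G$-weak equivalence by assumption, hence an $H$-weak equivalence, so its $H$-fixed points are a non-equivariant weak equivalence. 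Applying Lemma~\ref{lem:Omega}\ref{i:pi-Omega} now to $\res_1\spec{X}^H$ identifies $\pi^1_n(\res_1\spec{X}^H)$ with the same $\pi_n(\spec{X}(0)^H)$ (respectively $\pi_n(\spec{X}(\IR^{-n})^H)$), giving the desired chain of isomorphisms.

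For part~\ref{i:Omega-fix}, let $\alpha\colon G\TO G/N$ denote the projection. By~\eqref{eq:indco} we have $(\spec{X}^N)(U)\cong\spec{X}(\res_\alpha U)^N$ for every $G/N$-representation~$U$. Since $N$ acts trivially on $\res_\alpha U$, the functors $\Omega^U$ and $(-)^N$ commute, so the adjoint structure map of $\spec{X}^N$ at $U,V$ is canonically identified with the $N$-fixed points of the adjoint structure map of $\spec{X}$ at $\res_\alpha U,\res_\alpha V$. The latter is a $G$-weak equivalence, and taking $N$-fixed points of a $G$-weak equivalence yields a $G/N$-weak equivalence because $(f^N)^{H/N}=f^H$ for every $H\supseteq N$. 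Thus $\spec{X}^N$ is a $G/N$-$\Omega$-spectrum. Applying part~\ref{i:pi-Omega-fix} to $\spec{X}^N$ and then using $\spec{X}^N(V)^{H/N}=\spec{X}(V)^H$ gives the homotopy group identification.

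Part~\ref{i:pi-iso-Omega-fix} is now a direct consequence: by part~\ref{i:Omega-fix}, both $\spec{X}^N$ and $\spec{Y}^N$ are $G/N$-$\Omega$-spectra, and the isomorphism $\pi^{H/N}_n(\spec{X}^N)\cong\pi^H_n(\spec{X})$ established in part~\ref{i:Omega-fix} is natural in~$\spec{X}$, so $\pi^{H/N}_n(f^N)$ is identified with $\pi^H_n(f)$, which is an isomorphism by hypothesis. There is no real obstacle anywhere in this argument; the only point requiring genuine care is the commutation of $\Omega^U$ with $(-)^N$ in part~\ref{i:Omega-fix}, which depends on the fact that $N$ acts trivially on $\res_\alpha U$.
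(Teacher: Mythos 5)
Your proposal is correct and takes essentially the same route as the paper: part~(i) via Lemma~\ref{lem:Omega}\ref{i:pi-Omega} (with the added useful observation, which the paper implicitly uses, that $\res_1\spec{X}^H$ is a non-equivariant $\Omega$-spectrum since taking $H$-fixed points of the $G$-weak equivalence $\widetilde{\sigma}_{U,V}$ commutes with $\Omega^U$ for $U$ trivial); part~(ii) by identifying the adjoint structure map of $\spec{X}^N$ with the $N$-fixed points of the corresponding adjoint structure map of $\spec{X}$ via~\eqref{eq:indco}, using $(f^N)^{H/N}=f^H$; and part~(iii) as an immediate consequence. The only stylistic quibble is that your labeling of "first" and "second" isomorphism in part~(i) is slightly muddled — what Lemma~\ref{lem:Omega}\ref{i:pi-Omega} gives directly for $\spec{X}$ is the composite isomorphism $\pi^H_n(\spec{X})\cong\pi_n(\spec{X}(0)^H)$, and your $\Omega$-spectrum argument for $\res_1\spec{X}^H$ supplies $\pi^1_n(\res_1\spec{X}^H)\cong\pi_n(\spec{X}(0)^H)$; these two together pin down all three terms — but the substance is sound.
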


\begin{proof}
\ref{i:pi-Omega-fix} follows immediately from Lemma~\ref{lem:Omega}\ref{i:pi-Omega}.
% \cite{MM}*{Proposition~V.3.2 on page~79} \cite{Schwede}*{Proposition~3.19 on page~20}.

To prove~\ref{i:Omega-fix}, let $\MOR{p}{G}{G/N}$ be the projection.
We have to show that for all \fd\ $G/N$-representations~$U$ and $V$ the adjoint
$\widetilde{\sigma}^{\spec{X}^N}_{U, V}$ of the structure map \eqref{eq:structure-map} is a $G/N$-weak equivalence.
Using \eqref{eq:indco}, \eqref{eq:ind-orb-coind-fix}, and the isomorphism
\[
\Omega^U \spec{X}( p^*U \oplus p^*V)^N
\cong
\bigl( \Omega^{p^* U} \spec{X}( p^*U \oplus p^*V) \bigr)^N
,
\]
the map~$\widetilde{\sigma}^{\spec{X}^N}_{U, V}$ can be identified with
\[
\MOR{\bigl(\widetilde{\sigma}^{\spec{X}}_{p^*U, p^*V}\bigr)^N}%
{\spec{X}(p^*U)^N}%
{\bigl( \Omega^{p^* U} \spec{X}( p^*U \oplus p^*V) \bigr)^N}
.
\]
Since $\widetilde{\sigma}^{\spec{X}}_{p^*U, p^*V}$ is by assumption a $G$-weak equivalence, the claim follows.
So $\spec{X}^N$ is an orthogonal $G/N$-$\Omega$-spectrum, and using~\ref{i:pi-Omega-fix} we conclude that for all $n\in\IZ$
\[
\pi^{H/N}_n\bigl(\spec{X}^N\bigr)
\cong
\pi_n\bigl(\res_1(\spec{X}^N)^{H/N}\bigr)
=
\pi_n\bigl(\res_1\spec{X}^H\bigr)
\cong
\pi^H_n(\spec{X})
.
\]

Finally, \ref{i:pi-iso-Omega-fix} is an immediate consequence of \ref{i:pi-Omega-fix} and~\ref{i:Omega-fix}.
\end{proof}

%%%%%%%%%%%%%%%%%%%%%%%%%%%%%%%%%%%%%%%%%%%%%%%%%%%%%%%%%%%%%%%%%%%%%%%%%%%%%

\section{Functoriality of \texorpdfstring{$\Omega\operatorname{sh}$}{Omega-shift}}
\label{sec:Q-fin}

This section is devoted to the proof of the following proposition, on which the construction of~$\Q[h]{\CU}(\spec{X})$ in Theorem~\ref{thm:Q} is based.

\begin{proposition}
\label{prop:fun-in-U-fin}
Let $\spec{X}$ be an orthogonal $G$-spectrum.
Then the assignment
\[
U\longmapsto\Q{U}(\spec{X})=\Omega^U\shift^U\spec{X}
\]
from Definition~\ref{def:slsQ}
extends to a continuous $G$-functor
\[
\MOR{\Q{(-)}(\spec{X})}{\LL[fin]{G}}{\Sp_G}
\,,\]
i.e., for any pair of \fd\ $G$\=/representations $U$ and~$V$ there is  a $G$-map of orthogonal $G$-spectra
\begin{equation}
\label{eq:fun-in-U-fin}
\MOR{\varphi_{U,V}}{\CL(U,V)_+\sma\Q{U}(\spec{X})}{\Q{V}(\spec{X})}
\end{equation}
such that, for all \fd\ $G$-representations $U$, $V$, and~$W$, the diagram
\begin{equation}
\label{eq:fun-in-U-fin-assoc}
\begin{tikzcd}[column sep=large]
\CL(V,W)_+\sma\CL(U,V)_+\sma\Q{U}(\spec{X})
\arrow{r}{\id\sma\varphi_{U,V}}
\arrow{d}[swap]{\circ\sma\id}
&
\CL(V,W)_+\sma\Q{V}(\spec{X})
\arrow{d}{\varphi_{V,W}}
\\
\CL(U,W)_+\sma\Q{U}(\spec{X})
\arrow{r}[swap]{\varphi_{U,W}}
&
\Q{W}(\spec{X})
\end{tikzcd}
\end{equation}
commutes, and the composition
\begin{equation}
\label{eq:fun-in-U-fin-unit}
S^0\sma\Q{U}(\spec{X})
\xrightarrow{i\sma\id}
\CL(U,U)_+\sma\Q{U}(\spec{X})
\xrightarrow{\varphi_{U,U}}
\Q{U}(\spec{X})
\end{equation}
is the canonical isomorphism, where $i$ is the inclusion of the identity.
\end{proposition}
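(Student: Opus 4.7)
The plan is to define the maps $\varphi_{U,V}$ as the adjoints of an explicit composition built from the untwisting homeomorphism of Lemma~\ref{lem:untw-comp}, the continuous $G$-functor $-\oplus W\colon\Th_G\TO\Th_G$ furnished by Lemma~\ref{lem:plus}, and the structure maps $\chi$ of~$\spec{X}$. Concretely, for every \fd\ $G$-representation~$W$ I would take $\varphi_{U,V}$ at level~$W$ to be the adjoint of the composition
\begin{align*}
S^V\sma\CL(U,V)_+\sma\Q{U}(\spec{X})(W)
&\xrightarrow{\untw^{-1}\sma\id}
\Th(U,V)\sma S^U\sma\Q{U}(\spec{X})(W)
\\
&\xrightarrow{\id\sma\ev}
\Th(U,V)\sma\spec{X}(U\oplus W)
\\
&\xrightarrow{(-\oplus\id_W)\sma\id}
\Th(U\oplus W,V\oplus W)\sma\spec{X}(U\oplus W)
\\
&\xrightarrow{\chi_{U\oplus W,V\oplus W}}
\spec{X}(V\oplus W),
\end{align*}
where $\ev$ is the counit $S^U\sma\Omega^U\spec{X}(U\oplus W)\TO\spec{X}(U\oplus W)$ of the adjunction $\Sigma^U\dashv\Omega^U$, and $-\oplus\id_W$ denotes the map $\Th(U,V)\TO\Th(U\oplus W,V\oplus W)$ induced on morphism spaces by Lemma~\ref{lem:plus}. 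Each ingredient is $G$-equivariant and depends continuously on its data, so $\varphi_{U,V}$ is a $G$-map that is continuous in the pair $(U,V)$.

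The verification that $\varphi_{U,V}$ defines a map of orthogonal $G$-spectra, i.e., is natural in~$W$, is straightforward: among the four arrows in the display only $\chi$ and $-\oplus\id_W$ involve~$W$, and their naturality in~$W$ follows respectively from the associativity~\eqref{eq:spec-assoc} of the structure maps of~$\spec{X}$ and from the functoriality of~$-\oplus-$ in its second variable provided by Lemma~\ref{lem:plus}.

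The main obstacle is checking the associativity diagram~\eqref{eq:fun-in-U-fin-assoc}. After taking adjoints, this reduces to a large diagram of pointed $G$-spaces that can be decomposed into commutative pieces: the compatibility of untwisting with composition (diagram~\eqref{eq:untw-comp}), the functoriality on~$\Th_G$ of the assignment $-\oplus\id_W$ from Lemma~\ref{lem:plus}, the naturality of the evaluation maps~$\ev$, and the associativity~\eqref{eq:spec-assoc} of the structure maps of~$\spec{X}$. For the unit axiom~\eqref{eq:fun-in-U-fin-unit}, specializing $f$ to~$\id_U$ the untwisting sends $u\sma\id_U$ to $z(\id_U)\sma u=(\id_U,0)\sma u$, the functor $-\oplus\id_W$ carries $(\id_U,0)$ to~$(\id_{U\oplus W},0)$, and the spectrum unit axiom (the composition $S^0\sma\spec{X}(U\oplus W)\TO\Th(U\oplus W,U\oplus W)\sma\spec{X}(U\oplus W)\TO\spec{X}(U\oplus W)$ is the canonical isomorphism) then identifies the whole composite with the canonical isomorphism, as required.
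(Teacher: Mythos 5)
Your proposal is correct and takes essentially the same approach as the paper: you define $\varphi_{U,V}$ via the same adjoint composition (untwist, evaluate, then use the $\oplus$-functoriality from Lemma~\ref{lem:plus} together with the structure maps $\chi$), and you check associativity and unitality by the same decomposition into commuting pieces. The only cosmetic difference is that the paper first packages the last two steps into a continuous $G$-functor $\shift^{(-)}\spec{X}\colon\Th_G\TO\Sp_G$ (Lemma~\ref{lem:fun-sh}), whereas you unfold that map into its space-level ingredients explicitly.
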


We first illustrate the idea behind the definition of the maps in~\eqref{eq:fun-in-U-fin}.
Let $U$ and~$V$ be \fd\ $G$-representations and let $f\in\CL(U,V)$ be a linear isometry.
Then $\varphi_{U,V}$ yields a map
\begin{equation*}
\MOR{f_*}{\Q{U}(\spec{X})}{\Q{V}(\spec{X})}
\,,
\end{equation*}
and $f_*$ is $G$-equivariant if~$f$~is, i.e., if $f\in\CL(U,V)^G$.
Here are two examples.

\begin{example}
\label{eg:fun-in-U-iso}
If $\dim U=\dim V$ then~$f_*$ is induced by ``conjugation by~$f$'', i.e., for any \fd\ $G$-representation~$Z$, the map
\begin{equation*}
\MOR{f_*}%
{\Omega^U\spec{X}(U\oplus Z)=\Q{U}(\spec{X})(Z)}%
{\Q{V}(\spec{X})(Z)=\Omega^V\spec{X}(V\oplus Z)}
\end{equation*}
sends $\MOR{\omega}{S^U}{\spec{X}(U\oplus Z)}$ to the composition
\[
S^V
\xrightarrow{S^{f^{-1}}}
S^U
\TO[\omega]
\spec{X}(U\oplus Z)
\xrightarrow{\spec{X}(f\oplus\id)}
\spec{X}(V\oplus Z)
\,.
\]
\end{example}

\begin{example}
\label{eg:fun-in-U-incl}
If $U$ is a $G$-subrepresentation of~$V$ and $f$ is the $G$-equivariant inclusion, then $f_*$ is the map
\[
\MOR{\tau_{U\subseteq V}}{\Q{U}(\spec{X})}{\Q{V}(\spec{X})}
\]
defined in~\eqref{eq:tau}.
In particular, if $U=0$ and $f$ is the zero map to $V$, then the composition of~$f_*$ with the natural isomorphism $\spec{X}\cong\Q{0}(\spec{X})$ is
$\widetilde{\sigma}_V$,
the adjoint of the spectrum-level structure map defined in~\eqref{eq:sigma-adj}; compare Remark~\ref{rem:tau}.
\end{example}

For the proof of Proposition~\ref{prop:fun-in-U-fin} we need the following observation about the functoriality in~$U$ of~$\shift^U\spec{X}$, which follows at once from Lemma~\ref{lem:plus}.
Notice the difference in the source categories between Proposition~\ref{prop:fun-in-U-fin} and Lemma~\ref{lem:fun-sh}.

\begin{lemma}
\label{lem:fun-sh}
Let $\spec{X}$ be an orthogonal $G$-spectrum.
Then the assignment
\[
U\longmapsto\shift^U\spec{X}=\spec{X}\circ(U\oplus-)
\]
extends to a continuous $G$-functor
\[
\MOR{\shift^{(-)}\spec{X}}{\Th_G}{\Sp_G}
\,,\]
i.e., for any pair of \fd\ $G$\=/representations $U$ and~$V$ there is $G$-map of orthogonal $G$-spectra
\begin{equation*}
\label{eq:fun-sh}
\MOR{\psi_{U,V}}{\Th(U,V)\sma\shift^U\spec{X}}{\shift^V\spec{X}}
\end{equation*}
satisfying the associativity and unitality axioms.
\end{lemma}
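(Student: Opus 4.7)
The plan is to define $\psi_{U,V}$ levelwise by combining the sum $G$\=/functor of Lemma~\ref{lem:plus} with the structure maps of $\spec{X}$, and then to check naturality, associativity, and unitality by unwinding definitions. Concretely, for each f.d.\ $G$-representation~$Z$, smashing the continuous $G$-functor $-\oplus-:\Th_G\times\Th_G\TO\Th_G$ with the identity $(\id_Z,0)=z(\id_Z)\in\Th(Z,Z)$ supplies a $G$-map
\[
-\oplus\id_Z\colon\Th(U,V)\TO\Th(U\oplus Z,V\oplus Z),\quad (f,v)\mapsto(f\oplus\id_Z,(v,0)).
\]
Postcomposing with the adjoint structure map $\chi^{\spec{X}}_{U\oplus Z,V\oplus Z}$ of~$\spec{X}$ produces the desired $G$-map
\[
\psi_{U,V}(Z)\colon\Th(U,V)\sma\spec{X}(U\oplus Z)\TO\spec{X}(V\oplus Z),\quad(f,v)\sma x\mapsto\spec{X}(f\oplus\id_Z,(v,0))(x).
\]

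Next I would check that the collection $\{\psi_{U,V}(Z)\}_Z$ is natural in~$Z$, i.e., that it defines a map of orthogonal $G$-spectra $\Th(U,V)\sma\shift^U\spec{X}\TO\shift^V\spec{X}$. For any $(g,z)\in\Th(Z,Z')$ the two ways around the naturality square send $(f,v)\sma x$ to $\spec{X}(f\oplus g,(v,z))(x)$; this is immediate from the functoriality of $-\oplus-$ in Lemma~\ref{lem:plus} together with the associativity diagram~\eqref{eq:spec-assoc} for~$\spec{X}$. Similarly, $G$-equivariance and continuity are inherited directly from those of $-\oplus-$ and of~$\spec{X}$.

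For associativity of $\psi$ in the source category, I would chase a general element $(g,w)\sma(f,v)\sma x$ around the square. Both routes produce $\spec{X}\bigl((gf)\oplus\id_Z,(w+g(v),0)\bigr)(x)$: one route uses the compatibility of $-\oplus\id_Z$ with composition (again Lemma~\ref{lem:plus}) and then a single~$\chi$, the other uses two $\chi$'s and the associativity axiom~\eqref{eq:spec-assoc}. Unitality is the statement that $\psi_{U,U}$ at $(\id_U,0)=z(\id_U)$ is the identity, which holds because $\id_U\oplus\id_Z=\id_{U\oplus Z}$ and by the unit axiom for~$\spec{X}$ the map $\chi^{\spec{X}}_{U\oplus Z,U\oplus Z}$ at $z(\id_{U\oplus Z})$ is the canonical isomorphism.

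I do not expect a substantive obstacle: the entire lemma is a formal consequence of Lemma~\ref{lem:plus}, which was set up precisely so that $-\oplus-$ is a continuous $G$-functor on $\Th_G\times\Th_G$. The only real work is notational bookkeeping among the Thom-space coordinates $(f,v)$ and keeping straight the adjoint versus non-adjoint structure maps of~$\spec{X}$; no additional geometric input beyond the functoriality of $\spec{X}$ on $\Th_G$ is needed.
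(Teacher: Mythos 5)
Your proof is correct and follows exactly the route the paper intends: the paper states that the lemma "follows at once from Lemma~\ref{lem:plus}", and your argument is precisely the verification that the levelwise maps $(f,v)\sma x\mapsto\spec{X}(f\oplus\id_Z,(v,0))(x)$, obtained from $-\oplus\id_Z$ and $\chi^{\spec{X}}$, assemble into a continuous $G$-functor. Both your naturality and associativity checks reduce to the same coordinate computation $\spec{X}(f\oplus g,(v,z))$ via the functoriality of $\spec{X}$ and the $\oplus$-compatibilities in Lemma~\ref{lem:plus}, which is the intended content.
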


\begin{remark}
From the commutativity of diagram~\eqref{eq:plus-a} in Lemma~\ref{lem:plus} it follows that when $U=0$ the composition
\begin{equation*}
S^V\sma\spec{X}
\cong
\Th(0,V)\sma\shift^0\spec{X}
\xrightarrow{\psi_{0,V}}
\shift^V\spec{X}
\end{equation*}
is the spectrum-level structure map $\sigma_V$ defined in~\eqref{eq:sigma}, where the isomorphism in the source is the evident one.
Similarly, from the commutativity of diagram~\eqref{eq:plus-a-sub} it follows that when $U$ is a $G$-subrepresentation of~$V$ the composition
\begin{equation*}
S^{V-U}\sma\spec{X}
\xrightarrow{a_{U\subseteq V}\sma\id}
\Th(U,V)\sma\shift^U\spec{X}
\xrightarrow{\psi_{U,V}}
\shift^V\spec{X}
\end{equation*}
is the internal spectrum-level structure map $\sigma_{U\subseteq V}$ defined in~\eqref{eq:sigma-sub}, where $a_{U\subseteq V}$ is defined in~\eqref{eq:S->Th-sub}.
\end{remark}

\newcommand*{\varphiadj}{\widehat{\varphi}}

\begin{proof}[Proof of Proposition~\ref{prop:fun-in-U-fin}]
In order to construct $G$-maps of orthogonal $G$-spectra
\[
\MOR{\varphi_{U,V}}{\CL(U,V)_+\sma\Q{U}(\spec{X})}{\Q{V}(\spec{X})}
\]
it is enough, by adjunction, to construct $G$-maps
\[
\MOR{\varphiadj_{U,V}}{S^V\sma\CL(U,V)_+\sma\Omega^U\shift^U\spec{X}}{\shift^V\spec{X}}
\,.
\]
We define~$\varphiadj_{U,V}$ as the following composition:
\begin{equation}
\label{eq:phi-adj}
\begin{tikzcd}[row sep=scriptsize]
S^V\sma\CL(U,V)_+\sma\Omega^U\shift^U\spec{X}
\arrow{d}{\ \untw^{-1}\sma\id}[swap]{\cong}
\\
\Th(U,V)\sma S^U\sma\Omega^U\shift^U\spec{X}
\arrow{d}{\ \id\sma\ev}
\\
\Th(U,V)\sma\shift^U\spec{X}
\arrow{d}{\ \psi_{U,V}}
\\
\shift^V\spec{X}
\mathrlap{\,,}
\end{tikzcd}
\end{equation}
where $\psi_{U,V}$ is the map from Lemma~\ref{lem:fun-sh}.
Since all the maps above are $G$-equivariant, the same is true for $\varphiadj_{U,V}$ and $\varphi_{U,V}$.

From the definition it is clear that the composition in~\eqref{eq:fun-in-U-fin-unit} is the canonical isomorphism.
It thus remains to show that diagram~\eqref{eq:fun-in-U-fin-assoc} commutes, or equivalently, that the adjoint diagram
\begin{equation}
\label{eq:adjoint-diagram}
\begin{tikzcd}[column sep=huge]
S^W \sma \CL(V,W)_+\sma\CL(U,V)_+\sma\Omega^U\shift^U\spec{X}
\arrow{r}{\id\ssma\id\ssma\varphi_{U,V}}
\arrow{d}[swap]{\id\sma\circ\sma\id}
&
S^W\sma\CL(V,W)_+\sma\Omega^V\shift^V\spec{X}
\arrow{d}{\varphiadj_{V,W}}
\\
S^W\sma\CL(U,W)_+\sma\Omega^U\shift^U\spec{X}
\arrow{r}[swap]{\varphiadj_{U,W}}
&
\shift^W\spec{X}
\end{tikzcd}
\end{equation}
commutes.

Now consider the diagram~\eqref{eq:huge-diagram} on page~\pageref{eq:huge-diagram}, where we use the abbreviations $\CL_{U,V}=\CL(U,V)_+$ and $\Th_{U,V}=\Th(U,V)$.
The diagrams labeled \framed{C}, \framed{E}, and \framed{H} commute by definition of~$\varphiadj$, and so the outer boundary of~\eqref{eq:huge-diagram} is just diagram~\eqref{eq:adjoint-diagram}.
Moreover:
\framed{A} and~\framed{F} evidently commute;
\framed{B}~commutes because untwisting is compatible with composition by Lemma~\ref{lem:untw-comp};
\framed{D}~commutes because $\ev$ is the counit of the adjunction;
\framed{G}~commutes because $\shift^{(-)}\spec{X}$ is a functor $\Th_G\TO\Sp_G$ by Lemma~\ref{lem:fun-sh}.

This proves the commutativity of diagrams \eqref{eq:huge-diagram} and~\eqref{eq:adjoint-diagram}, and therefore also of~\eqref{eq:fun-in-U-fin-assoc}.
\end{proof}

\begin{landscape}
\begin{equation}
\label{eq:huge-diagram}
\end{equation}
\[
\begin{tikzcd}[column sep=.85em]
S^W \ssma \CL_{V,W}\ssma\CL_{U,V}\ssma\Q{U}\,\spec{X}
\arrow{rrrrrr}{\id\sma\id\sma\varphi_{U,V}}
\arrow{dddddd}[swap]{\id\sma\circ\sma\id}
\arrow{rrdd}[description]{\untw^{-1}\sma\id\sma\id}
&
&
&
&
&
&
S^W\ssma\CL_{V,W}\ssma\Q{V}\,\spec{X}
\arrow{dd}[swap]{\untw^{-1}\sma\id}
\arrow[rounded corners,
       to path={    (\tikztostart.east)
                 -| +(1,-4) [at end]\tikztonodes
                 |- (\tikztotarget.east)}
      ]{dddddd}{\varphiadj_{V,W}}
\\
&&&&{\framed{A}}
\\
&
&
\Th_{V,W}\ssma S^V\ssma\CL_{U,V}\ssma\Q{U}\,\spec{X}
\arrow{rrrr}{\id\sma\id\sma\varphi_{U,V}}
\arrow{dd}[swap]{\id\sma\untw^{-1}\sma\id}
\arrow{ddrrrr}[description]{\id\sma\varphiadj_{U,V}}
&
&
&
&
\Th_{V,W}\ssma S^V\ssma\Q{V}\,\spec{X}
\arrow{dd}[swap]{\id\sma\ev}
\\
&\mathllap{\framed{B}\quad}&&{\framed{C}}&&{\framed{D}}&&\mathllap{\framed{E}\ \quad}
\\
&
&
\Th_{V,W}\ssma\Th_{U,V}\ssma S^U\ssma\Q{U}\,\spec{X}
\arrow{rr}[swap]{\id\ssma\id\ssma\ev}
\arrow{dd}[swap]{\circ\sma\id\sma\id}
&
&
\Th_{V,W}\ssma\Th_{U,V}\ssma\shift^U\!\spec{X}
\arrow{rr}[swap]{\id\sma\psi_{U,V}}
\arrow{dd}[swap]{\circ\sma\id}
&
&
\Th_{V,W}\ssma\shift^V\!\spec{X}
\arrow{dd}[swap]{\psi_{V,W}}
\\
&&&{\framed{F}}&&{\framed{G}}
\\
S^W\ssma\CL_{U,W}\ssma\Q{U}\,\spec{X}
\arrow{rr}[swap]{\untw^{-1}\sma\id}
\arrow[rounded corners,
       to path={    (\tikztostart.south)
                 |- +(8,-2) [at end]\tikztonodes
                 -| (\tikztotarget.south)}
      ]{rrrrrr}[swap]{\varphiadj_{U,W}}
&
&
\Th_{U,W}\ssma S^U\ssma\Q{U}\,\spec{X}
\arrow{rr}[swap]{\id\sma\ev}
&
&
\Th_{U,W}\ssma\shift^U\!\spec{X}
\arrow{rr}[swap]{\psi_{U,W}}
&
&
\shift^W\!\spec{X}
\\
&&&{\framed{H}}
\end{tikzcd}
\]
\end{landscape}

We close this section with the following observation.

\begin{lemma}
\label{lem:iso-bifun}
For all \fd\ $G$-representations $U$ and~$W$ there is a $G$\=/isomorphism
\[
\MOR[\cong]{\vartheta_{U,W}}{\Q{W}\Q{U}(\spec{X})}{\Q{U\oplus W}(\spec{X})}
\]
which is a natural isomorphism of bifunctors, i.e., the following diagram commutes.
\begin{equation}
\label{eq:iso-bifun}
\begin{tikzcd}
\CL(U,V)_+\sma\CL(W,Z)_+\sma\Q{W}\Q{U}(\spec{X})
\arrow{r}
\arrow{d}[swap]{-\oplus-\sma\vartheta_{U,W}}
&
\Q{Z}\Q{V}(\spec{X})
\arrow{d}{\vartheta_{V,Z}}
\\
\CL(U\oplus W,V\oplus Z)_+\sma\Q{U\oplus W}(\spec{X})
\arrow{r}
&
\Q{V\oplus Z}(\spec{X})
\end{tikzcd}
\end{equation}
\end{lemma}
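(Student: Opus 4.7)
The plan is to construct $\vartheta_{U,W}$ from standard adjunction isomorphisms and then verify the bifunctorial naturality~\eqref{eq:iso-bifun} in adjoint form. First I would observe that on orthogonal $G$-spectra one has strict equalities
\[
\shift^W\Omega^U\spec{Y}=\Omega^U\shift^W\spec{Y}\AND \shift^W\shift^U\spec{X}=\shift^{U\oplus W}\spec{X},
\]
since $\shift^U$ is precomposition with $U\oplus(-)\colon\Th_G\to\Th_G$ while $\Omega^U$ is postcomposition with $\map(S^U,-)\colon\CT_G\to\CT_G$, and $\oplus$ is strictly associative on objects of $\Th_G$. Consequently
\[
\Q{W}\Q{U}(\spec{X})=\Omega^W\Omega^U\shift^{U\oplus W}\spec{X}\AND\Q{U\oplus W}(\spec{X})=\Omega^{U\oplus W}\shift^{U\oplus W}\spec{X}.
\]
I would then define $\vartheta_{U,W}$ as the levelwise natural isomorphism induced by the composite adjunction isomorphism
\[
\Omega^W\Omega^U Y=\map\bigl(S^W,\map(S^U,Y)\bigr)\cong\map(S^U\sma S^W,Y)\cong\map(S^{U\oplus W},Y)=\Omega^{U\oplus W}Y,
\]
the last step being induced by the canonical $G$-homeomorphism $S^U\sma S^W\cong S^{U\oplus W}$.

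That $\vartheta_{U,W}$ is a $G$-map of orthogonal $G$-spectra follows immediately: the underlying adjunction isomorphism is $G$-equivariant for the conjugation actions on both sides, and it is natural in the target, hence it commutes with the maps induced by morphisms in~$\Th_G$, which govern both the structure maps and the functoriality in $V$ of $\Q{W}\Q{U}(\spec{X})$ and $\Q{U\oplus W}(\spec{X})$. Naturality in $\spec{X}$ is obvious from the definition.

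The main work is the commutativity of~\eqref{eq:iso-bifun}. I would pass to the adjoint form of both routes using the explicit formula~\eqref{eq:phi-adj} for $\varphi$, so that each composition becomes built from untwisting homeomorphisms, evaluation maps, and the shift-functoriality maps $\psi$ from Lemma~\ref{lem:fun-sh}. The key geometric compatibility is that, restricted along
\[
-\oplus-\colon\CL(U,V)\times\CL(W,Z)\TO\CL(U\oplus W,V\oplus Z),
\]
the untwisting homeomorphism $\Th(U\oplus W,V\oplus Z)\sma S^{U\oplus W}\cong S^{V\oplus Z}\sma\CL(U\oplus W,V\oplus Z)_+$ becomes the smash product of the untwisting homeomorphisms for $(U,V)$ and $(W,Z)$, modulo the canonical identifications $S^U\sma S^W\cong S^{U\oplus W}$ and $S^V\sma S^Z\cong S^{V\oplus Z}$. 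This is verified directly from the formula in the proof of Lemma~\ref{lem:untw-comp}, using the evident bundle isomorphism $\xi(U,V)\oplus\xi(W,Z)\cong\xi(U\oplus W,V\oplus Z)$ over $\CL(U,V)\times\CL(W,Z)$. Together with the bifunctoriality of $\shift^{(-)}\spec{X}$ on $\Th_G$ (Lemma~\ref{lem:fun-sh}) applied to the direct-sum functor $-\oplus-$ of Lemma~\ref{lem:plus}, this gives the commutativity of the adjoint diagram.

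The main obstacle I anticipate is bookkeeping rather than any genuine conceptual difficulty: the adjoint diagram involves many smash factors of spheres, morphism spaces, Thom spaces, and values of $\spec{X}$, and one must arrange them so that each cell becomes an instance of either associativity, the untwisting-and-$\oplus$ compatibility, or the bifunctoriality of $\shift^{(-)}\spec{X}$. A large commutative diagram analogous to~\eqref{eq:huge-diagram} in the proof of Proposition~\ref{prop:fun-in-U-fin} is the cleanest way to organize the verification.
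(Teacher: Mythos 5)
Your proposal matches the paper's proof: the paper defines $\vartheta_{U,W}$ from the same interchange $\shift^W\Omega^U=\Omega^U\shift^W$ together with $\Omega^W\Omega^U\cong\Omega^{U\oplus W}$ and $\shift^W\shift^U\cong\shift^{U\oplus W}$, and then checks \eqref{eq:iso-bifun} by passing to the adjoint diagram and observing that all three maps in \eqref{eq:phi-adj} are compatible with direct sums. You supply a bit more detail than the paper does about what that compatibility amounts to for the untwisting map (the bundle isomorphism $\xi(U,V)\oplus\xi(W,Z)\cong\xi(U\oplus W,V\oplus Z)$), but the argument is the same; the only cosmetic difference is that the paper phrases $\shift^W\shift^U\cong\shift^{U\oplus W}$ as a canonical isomorphism rather than a strict equality.
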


\begin{proof}
The isomorphism~$\vartheta_{U,W}$ comes from the fact $\shift^W\Omega^U=\Omega^U\shift^W$ combined with the natural isomorphisms $\Omega^W\Omega^U\cong\Omega^{U\oplus W}$ and $\shift^W\shift^U\cong\shift^{U\oplus W}$.
To check that \eqref{eq:iso-bifun} commutes, we consider the adjoint diagram, as in the proof of Proposition~\ref{prop:fun-in-U-fin}.
This adjoint diagram commutes because all three maps in~\eqref{eq:phi-adj} satisfy an appropriate compatibility with direct sums.
\end{proof}

%%%%%%%%%%%%%%%%%%%%%%%%%%%%%%%%%%%%%%%%%%%%%%%%%%%%%%%%%%%%%%%%%%%%%%%%%%%%%

\section{Bifunctorial replacements by \texorpdfstring{$\Omega$}{Omega}-spectra}
\label{sec:Q}

Let $\spec{X}$ be an orthogonal $G$-spectrum and let $\CU$ be a $G$-representation, not necessarily finite dimensional.
This section is devoted to the construction of~$\Q{\CU}(\spec{X})$ and to the proof of Theorem~\ref{thm:Q}.
The key ingredient for the construction is given by Proposition~\ref{prop:fun-in-U-fin} in the previous section, which says that $\MOR{\Q{(-)}(\spec{X})}{\LL[fin]{G}}{\Sp_G}$
is a continuous $G$-functor.

\begin{definition}
\label{def:Q}
Given an orthogonal $G$-spectrum~$\spec{X}$ and a $G$-representation~$\CU$, define
\[
\Q{\CU}(\spec{X})
=
\biggl(\bigLan_{\LL[fin]{G}\subset\LL{G}}\Q{(-)}(\spec{X})\biggr) (\CU)
\]
to be the evaluation at~$\CU$ of the left Kan extension of $\MOR{\Q{(-)}(\spec{X})}{\LL[fin]{G}}{\Sp_G}$ along the inclusion $\LL[fin]{G}\subset\LL{G}$.
The isomorphism $\spec{X}\cong\Q{0}(\spec{X})$ together with the inclusion $0\subseteq\CU$ gives a natural $G$-map
\(
\MOR{\fr}{\spec{X}}{\Q{\CU}(\spec{X})}
\).
\end{definition}

Left Kan extensions are reviewed in the proof of Proposition~\ref{prop:colim->Lan}.
Since Kan extensions of continuous $G$-functors are again continuous $G$-functors, Definition~\ref{def:Q} immediately implies part~\ref{i:fun} of Theorem~\ref{thm:Q} and Corollary~\ref{cor:fun-in-U}.
Here is another consequence of continuity.

\begin{corollary}
\label{cor:Q-2-universes}
If $\MOR{f}{\CU}{\CV}$ is a $G$-equivariant isometry between complete $G$\=/universes, then the induced map $\MOR{f_*}{\Q{\CU}(\spec{X})}{\Q{\CV}(\spec{X})}$ is $G$-homotopic to an isomorphism.
\end{corollary}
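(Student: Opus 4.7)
The plan is to exploit the bifunctoriality of $(\CU,\spec{X})\mapsto\Q{\CU}(\spec{X})$ delivered by Definition~\ref{def:Q}, together with the classical fact that the $G$\=/fixed subspace of linear isometries between two complete $G$\=/universes is path-connected. As in Corollary~\ref{cor:fun-in-U}, this bifunctoriality supplies a $G$\=/map
\[
\Phi\colon \CL(\CU,\CV)_+\sma\Q{\CU}(\spec{X})\TO\Q{\CV}(\spec{X})
\]
satisfying the associativity~\eqref{eq:fun-in-U-assoc} and unitality~\eqref{eq:fun-in-U-unit} conditions.

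First, using Definition~\ref{def:universe}, I would pick a $G$\=/equivariant linear isometric isomorphism $g\colon\CU\TO\CV$. Feeding the pair $(g,g^{-1})$ into \eqref{eq:fun-in-U-assoc} and~\eqref{eq:fun-in-U-unit} immediately exhibits $g_*$ and $(g^{-1})_*$ as mutually inverse $G$\=/maps, so $g_*\colon\Q{\CU}(\spec{X})\TO\Q{\CV}(\spec{X})$ is a $G$\=/equivariant isomorphism of orthogonal $G$\=/spectra.

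Second, I would establish path-connectedness (indeed, contractibility) of $\CL(\CU,\CV)^G$. Using the isotypical decomposition invoked in Definition~\ref{def:universe}, each complete universe splits as a Hilbert direct sum indexed by the irreducible $G$\=/representations, with isotypical components that are countably infinite direct sums of copies of the corresponding irreducible. A $G$\=/equivariant linear isometry $\CU\TO\CV$ is therefore the same data as a family of non-equivariant isometric linear embeddings between the associated multiplicity Hilbert spaces, one per irreducible. Because both $\CU$ and $\CV$ are complete, all of these multiplicity spaces are separable infinite-dimensional real Hilbert spaces, and the space of isometric embeddings between two such is an infinite Stiefel manifold, hence contractible; in particular, $\CL(\CU,\CV)^G$ is path-connected.

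Third, I would choose a continuous path $H\colon I\TO\CL(\CU,\CV)^G$ with $H(0)=f$ and $H(1)=g$. Since $I$ carries the trivial $G$\=/action and $H$ lands in the fixed subspace, the composite
\[
I_+\sma\Q{\CU}(\spec{X})
\xrightarrow{H_+\sma\id}
\CL(\CU,\CV)_+\sma\Q{\CU}(\spec{X})
\xrightarrow{\Phi}
\Q{\CV}(\spec{X})
\]
is $G$\=/equivariant, i.e., a $G$\=/homotopy in the sense of Definition~\ref{def:homotopy}. Evaluating at the endpoints, this homotopy restricts to $f_*$ at $t=0$ and to the isomorphism $g_*$ at $t=1$, proving the corollary. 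The main obstacle is the second step: everything else is a formal manipulation of the bifunctorial structure already in place, whereas the contractibility of $\CL(\CU,\CV)^G$ is where the completeness hypothesis on both universes is genuinely used (it is what forces every isotypical multiplicity space to be infinite-dimensional, and hence makes the relevant Stiefel manifolds contractible). Fortunately this is a standard and well-documented fact in the equivariant literature.
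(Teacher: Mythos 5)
Your proof is correct and follows the same strategy as the paper: pick a $G$-equivariant isomorphism $g$ (available because both universes are complete), note $g_*$ is an isomorphism by functoriality, and use continuity of the bifunctorial structure~$\Phi$ together with the path-connectedness of $\CL(\CU,\CV)^G$ to produce a $G$-homotopy from $f_*$ to $g_*$. The only difference is that the paper obtains the contractibility of $\CL(\CU,\CV)^G$ by citing Lemma~\ref{lem:LMS-II.1.5} (Lewis--May--Steinberger's Lemma~II.1.5), whereas you reprove it directly via isotypical decomposition and contractibility of infinite Stiefel manifolds --- the same underlying fact, argued rather than cited.
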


\begin{proof}
Since $\CU$ and~$\CV$ are complete $G$-universes, there is also a $G$-isomorphism $\MOR{f'}{\CU}{\CV}$; compare Definition~\ref{def:universe}.
By Lemma~\ref{lem:LMS-II.1.5} there is a $G$-homotopy between $f$ and~$f'$.
The fact that $\Q{(-)}(\spec{X})$ is a continuous $G$-functor implies that $f_*$ is $G$-homotopic to the isomorphism $f'_*$.
\end{proof}

The remaining parts of Theorem~\ref{thm:Q} rely on the following result.
Recall that $\CP(\CU)$ denotes the poset of all \fd\ $G$-subrepresentations of~$\CU$ ordered by inclusion, and consider $\CP(\CU)$ as a discrete subcategory of~$\LL[fin]{G}$.

\begin{proposition}
\label{prop:colim->Lan}
For any orthogonal $G$-spectrum~$\spec{X}$ and any $G$-representation~$\CU$ there is a natural $G$-isomorphism
\[
{\colim_{\CP(\CU)}\Q{(-)}(\spec{X})}
\TO[\cong]
{\biggl(\bigLan_{\LL[fin]{G}\subset\LL{G}}\Q{(-)}(\spec{X})\biggr) (\CU)
=\Q{\CU}(\spec{X})}
\,.
\]
\end{proposition}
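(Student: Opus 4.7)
The plan is to exhibit mutually inverse $G$-maps between $\colim_{\CP(\CU)} \Q{(-)}(\spec{X})$ and $\Q{\CU}(\spec{X})$. By the standard enriched coend formula for left Kan extensions,
\[
\Q{\CU}(\spec{X}) \cong \int^{U \in \LL[fin]{G}} \CL(U, \CU)_+ \sma \Q{U}(\spec{X}).
\]
One direction is immediate: for each $V \in \CP(\CU)$ the inclusion $V \subseteq \CU$ is a $G$-fixed point $i_V$ of $\CL(V, \CU)$, giving a map
\[
\Q{V}(\spec{X}) \cong \{i_V\}_+ \sma \Q{V}(\spec{X}) \hookrightarrow \CL(V, \CU)_+ \sma \Q{V}(\spec{X}) \longrightarrow \Q{\CU}(\spec{X}).
\]
These are compatible with the inclusions $V \subseteq W$ in $\CP(\CU)$: the coend relation applied to $i_V = i_W \circ (V \hookrightarrow W)$ together with the identification from Example~\ref{eg:fun-in-U-incl} makes them assemble into a natural $G$-map
\(
\MOR{\Phi}{\colim_{\CP(\CU)} \Q{(-)}(\spec{X})}{\Q{\CU}(\spec{X})}
\).

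For the inverse, the starting observation is that every linear isometric embedding $f \colon U \to \CU$ has finite-dimensional image, which generates a finite-dimensional $G$-subrepresentation of $\CU$ since $G$ is finite; concretely, $\CL(U, \CU) = \colim_{V \in \CP(\CU)} \CL(U, V)$ as a pointed $G$-space, indexed by the filtered poset $\CP(\CU)$. Using the structure maps $\MOR{\varphi_{U,V}}{\CL(U,V)_+ \sma \Q{U}(\spec{X})}{\Q{V}(\spec{X})}$ from Proposition~\ref{prop:fun-in-U-fin}, for each $V$ I can form the composite
\[
\CL(U, V)_+ \sma \Q{U}(\spec{X}) \xrightarrow{\varphi_{U, V}} \Q{V}(\spec{X}) \longrightarrow \colim_{\CP(\CU)} \Q{(-)}(\spec{X}).
\]
Compatibility under $V \subseteq W$ is the coherence diagram~\eqref{eq:fun-in-U-fin-assoc} applied to the composite $U \to V \hookrightarrow W$ combined with Example~\ref{eg:fun-in-U-incl}, so passing to the filtered colimit in $V$ yields a map $\CL(U, \CU)_+ \sma \Q{U}(\spec{X}) \to \colim_{\CP(\CU)} \Q{(-)}(\spec{X})$. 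The compatibility in the $U$-variable needed to factor through the coend is again~\eqref{eq:fun-in-U-fin-assoc}, now applied to an arbitrary morphism $g \colon U \to U'$ in $\LL[fin]{G}$. This defines the inverse map $\Psi$.

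The principal point of care is this two-step compatibility verification for $\Psi$ (once in $V \in \CP(\CU)$, once in $U \in \LL[fin]{G}$), both instances being applications of the associativity~\eqref{eq:fun-in-U-fin-assoc} of Proposition~\ref{prop:fun-in-U-fin}. Once $\Phi$ and $\Psi$ are in hand, both composites are straightforward to identify with the identity: for $V \in \CP(\CU)$ and $y \in \Q{V}(\spec{X})$, $\Phi[y] = (i_V, y)$ and then $\Psi(i_V, y) = [\varphi_{V, V}(i_V, y)] = [y]$ by the unit axiom~\eqref{eq:fun-in-U-fin-unit}; conversely, $\Psi$ sends $(f, x) \in \CL(U, V)_+ \sma \Q{U}(\spec{X})$ with $V \in \CP(\CU)$ to $[\varphi_{U, V}(f, x)] \in \Q{V}(\spec{X})$, and $\Phi$ brings this back to $(i_V, \varphi_{U, V}(f, x))$, which equals the original $(i_V \circ f, x)$ by the coend relation. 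Naturality of the resulting isomorphism in $\spec{X}$ is automatic, since both $\Phi$ and $\Psi$ are built from the functor $\Q{(-)}(\spec{X})$.
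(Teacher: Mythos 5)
Your proof is correct. It reaches the same conclusion as the paper's, and both proofs ultimately rest on the same key fact, namely that for finite-dimensional $U$ the natural map $\colim_{V\in\CP(\CU)}\CL(U,V)\to\CL(U,\CU)$ is a $G$-homeomorphism (any isometry from a finite-dimensional space into $\CU$ factors through some finite-dimensional $G$-subrepresentation, and $\CP(\CU)$ is filtered). But the route is different in flavor. The paper abstracts the situation into a general Yoneda/coend principle: it sets up a square of topological $G$-categories with a natural transformation $\nu$ filling a lax square of inclusions, shows that this induces a comparison $\nu_*\colon\Lan_\alpha\res_\gamma\spec{Y}\to\res_\beta\Lan_\delta\spec{Y}$, and identifies $\nu_*$ with $\overline\nu\sma_\CC\spec{Y}$ where $\overline\nu$ is a map of bimodules of hom-spaces. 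The whole question then reduces to checking that $\overline\nu$ is an isomorphism, which is exactly the colimit identity above. Your proof instead constructs the two mutually inverse $G$-maps $\Phi$ and $\Psi$ directly on the coend presentation of $\Q{\CU}(\spec{X})$, checks that they are well-defined using the associativity~\eqref{eq:fun-in-U-fin-assoc} and unitality~\eqref{eq:fun-in-U-fin-unit} of Proposition~\ref{prop:fun-in-U-fin}, and verifies the inverse relations on representatives. The upshot is the same; the paper's version is more abstract and reusable (the general principle is packaged once and could apply to other squares), while yours is more transparent and lets the reader see exactly which coend relations and which unitality/associativity instances are being used. One small stylistic point: when you write $\Psi(i_V,y)=[\varphi_{V,V}(i_V,y)]$, the symbol $i_V$ there is really $\id_V\in\CL(V,V)$, i.e., the preimage of $i_V\in\CL(V,\CU)$ under the colimit structure map $\CL(V,V)\to\CL(V,\CU)$; it is worth stating this explicitly so the appeal to the unit axiom is cleanly a statement about $\varphi_{V,V}(\id_V,-)$.
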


\begin{proof}
This is based on the following general principle.
Consider a diagram of topological $G$-categories and continuous $G$-functors
\begin{equation*}
\begin{tikzcd}
\CA
\arrow{r}{\gamma}
\arrow{d}[swap]{\alpha}
&
\CC
\arrow{rr}{\spec{Y}}
\arrow{d}{\delta}
\arrow[Rightarrow, shorten <=1em, shorten >=1em]{ld}%[swap]{\nu}
&
&
\Sp_G
\\
\CB
\arrow{r}[swap]{\beta}
&
\CD
% \arrow{rru}[swap]{\hoLan X}
\end{tikzcd}
\end{equation*}
and let $\nu$ be a natural transformation from $\delta\gamma$ to $\beta\alpha$.
Then $\nu$ induces a natural transformation
\[
\MOR{\nu_*}{\Lan_\alpha\res_\gamma\spec{Y}}{\res_\beta\Lan_\delta\spec{Y}}
\]
as we proceed to explain.
First we need some notation.
Given continuous $G$-functors
\(
\MOR{W}{\CC^\op}{\CT_G}
\)
and
\(
\MOR{Y}{\CC}{\CT_G}
\)
we write $W\sma_\CC Y$ for their coend; compare for example~\cite{HV}*{Definition~2.3 on page~116}.
If $\MOR{\spec{Y}}{\CC}{\Sp_G}$ is a continuous $G$-functor, we can think of~$\spec{Y}$ as a continuous $G$-functor $\MOR{\spec{Y}}{\CC\times\Th_G}{\CT_G}$, and so
\(
W\sma_\CC\spec{Y}
\)
is a continuous $G$-functor $\Th_G\TO\CT_G$, i.e., an orthogonal $G$-spectrum.
If $\CD$ is another topological $G$-category and $\MOR{W}{\CC^\op\times\CD}{\CT_G}$ is a continuous $G$-functor, then we get a continuous $G$-functor
\(
\MOR{W\sma_\CC\spec{Y}}{\CD}{\Sp_G}
\).
For a continuous $G$-functor $\MOR{\delta}{\CC}{\CD}$ define % the continuous $G$-functors
\begin{align*}
\MOR{\modu{}{\CD}{\delta}}{\CC^\op\times\CD}{\CT_G}
,\quad
(c,d)&\mapsto\CD\bigl(\delta(c),d\bigr)
\,,
\\
\MOR{\modu{\delta}{\CD}{}}{\CD^\op\times\CC}{\CT_G}
,\quad
(d,c)&\mapsto\CD\bigl(d,\delta(c)\bigr)
\,.
\end{align*}
% If $\delta$ is the identity functor we omit it from the notation.
Notice that by Yoneda Lemma, for any continuous $G$-functor $\MOR{\spec{Z}}{\CD}{\Sp_G}$, we have $\res_\delta\spec{Z}\cong\modu{\delta}{\CD}{}\sma_\CD\spec{Z}$.
Recall that
\(
\Lan_\delta\spec{Y}=\CD_\delta\sma_\CC\spec{Y}
\).

Therefore we have
\begin{align*}
\Lan_\alpha\res_\gamma\spec{Y}
&\cong
\modu{}{\CB}{\alpha} \sma_\CA \bigl(\modu{\gamma}{\CC}{} \sma_\CC\spec{Y}\bigr)
\cong
\bigl(\modu{}{\CB}{\alpha} \sma_\CA \modu{\gamma}{\CC}{}\bigr) \sma_\CC\spec{Y}
,\AND
\\
\res_\beta\Lan_\delta\spec{Y}
&\cong
\modu{\beta}{\CD}{} \sma_\CD \bigl(\modu{}{\CD}{\delta} \sma_\CC\spec{Y}\bigr)
\cong
\bigl(\modu{\beta}{\CD}{} \sma_\CD \modu{}{\CD}{\delta}\bigr) \sma_\CC\spec{Y}
\cong
\modu{\beta}{\CD}{\delta} \sma_\CC\spec{Y}
.
\end{align*}
Now let $\overline{\nu}$ be the composition
\begin{equation}
\label{eq:overline-nu}
\overline{\nu}
\colon
\modu{}{\CB}{\alpha} \sma_\CA \modu{\gamma}{\CC}{}
\TO
\modu{\beta}{\CD}{\beta\alpha} \sma_\CA \modu{\beta\alpha}{\CD}{\delta}
\TO
\modu{\beta}{\CD}{} \sma_\CD \modu{}{\CD}{\delta}
\cong
\modu{\beta}{\CD}{\delta}
\end{equation}
where the first map is induced by the functors $\beta$ and~$\delta$ together with the natural transformation~$\nu$.
Then define~$\nu_*$ by applying $-\sma_\CC\spec{Y}$ to~$\overline{\nu}$ and using the isomorphisms above.
It is then clear that if $\overline{\nu}$ is a natural $G$-isomorphism of continuous $G$-functors $\CC^\op\times\CB\TO\CT_G$, then $\nu_*$ is a natural $G$-isomorphism of continuous $G$-functors $\CB\TO\Sp_G$, natural in~$\spec{Y}$.

We now apply this general principle to the diagram
\begin{equation*}
\label{eq:colim->Lan}
\begin{tikzcd}
\CP(\CU)
\arrow[hookrightarrow]{r}
\arrow{d}[swap]{\pr}
&
\LL[fin]{G}
\arrow{rr}{\Q{(-)}(\spec{X})}
\arrow[hookrightarrow]{d}
\arrow[Rightarrow, shorten <=1em, shorten >=1em]{ld}
&
&
\Sp_G
\\
\{\CU\}
\arrow[hookrightarrow]{r}
&
\LL{G}
\arrow{rru}[swap]{\Lan\Q{(-)}(\spec{X})}
\end{tikzcd}
\end{equation*}
where $\{\CU\}$ denotes the trivial category with only one object and one morphism, and the hooked arrows denote the obvious inclusion functors.
Notice that the left-hand square in this diagram does not commute, but there is an obvious natural transformation~$\nu$ in the indicated direction, given by the inclusions $U\subset\CU$.

Since $\Lan_{\pr}=\colim_{\CP(\CU)}$ we get a natural $G$-map of orthogonal $G$-spectra
\[
\MOR{\nu_*}%
{\colim_{\CP(\CU)}\Q{(-)}(\spec{X})}
{\biggl(\bigLan_{\LL[fin]{G}\subset\LL{G}}\Q{(-)}(\spec{X})\biggr) (\CU)
=\Q{\CU}(\spec{X})}
\,.
\]
In the case at hand the map~$\overline{\nu}$ from~\eqref{eq:overline-nu} is
\[
\MOR{\overline{\nu}}{\colim_{U\in\CP(\CU)}\CL(V,U)}{\CL(V,\CU)}
\]
for any \fd\ $G$-representation~$V$.
Since $\overline{\nu}$ is a $G$-isomorphism, natural in~$V$, the result follows.
\end{proof}

In the proofs below of parts \ref{i:repl}, \ref{i:Omega}, and~\ref{i:res-Q} of Theorem~\ref{thm:Q} we use Proposition~\ref{prop:colim->Lan} and identify $\Q{\CU}(\spec{X})\cong\colim_{\CP(\CU)}\Q{(-)}(\spec{X})$.
We begin with a well-known lemma.

\begin{lemma}
\label{lem:Omega-sh}
For any orthogonal $G$-spectrum~$\spec{X}$ and any \fd\ $G$\=/representation~$U$ the spectrum-level structure map and its adjoint
\[
\MOR{\sigma_U}{\Sigma^U\spec{X}}{\shift^U\spec{X}}
\AND
\MOR{\widetilde{\sigma}_U}{\spec{X}}{\Q{U}(\spec{X})}
\]
defined in \eqref{eq:sigma} and~\eqref{eq:sigma-adj} are both \piiso s.
For any \fd\ $G$-representation~$V$ such that $U\subseteq V$, the map
\[
\MOR{\tau_{U\subseteq V}}{\Q{U}(\spec{X})}{\Q{V}(\spec{X})}
\]
defined in~\eqref{eq:tau} is also a \piiso.
\end{lemma}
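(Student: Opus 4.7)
The plan is to establish the three assertions in the order (2), (3), (1). For~(2), I would choose a complete $G$-universe~$\CU$ containing $U$ as a subrepresentation (possible by Definition~\ref{def:universe}) and use the equalities $(\Q{U}\spec{X})(V) = \Omega^U \spec{X}(U \oplus V)$ together with the natural isomorphism $\Omega^V \Omega^U \cong \Omega^{U \oplus V}$ (induced by the swap $S^V \sma S^U \cong S^{U \oplus V}$) to rewrite
\[
\pi_n^H(\Q{U}(\spec{X})) \cong \colim_{V \in \CP(\CU)} \pi_n\bigl((\Omega^{U \oplus V} \spec{X}(U \oplus V))^H\bigr)
\]
for $n \geq 0$, with an entirely analogous formula for $n < 0$. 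Unwinding the definition of $\widetilde{\sigma}_U$ and of the $\Sigma^U \dashv \Omega^U$ adjunction, the map $\pi_n^H(\widetilde{\sigma}_U)$ is identified with the map of colimits induced by the structural transition maps $V \hookrightarrow U \oplus V$ of the system defining $\pi_n^H(\spec{X})$. Because the subsystem indexed by $\{U \oplus V : V \in \CP(\CU)\}$ is cofinal in $\CP(\CU)$ (any $W \in \CP(\CU)$ embeds into $U \oplus W$, which in turn embeds into $\CU$ by completeness), this map is an isomorphism, proving~(2).

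Assertion~(3) follows at once from~(2): by Remark~\ref{rem:tau} and the commutative triangle~\eqref{eq:triangle-Q}, we have $\widetilde{\sigma}_V = \tau_{U \subseteq V} \circ \widetilde{\sigma}_U$, so both $\widetilde{\sigma}_U$ and $\widetilde{\sigma}_V$ being \piiso s forces $\tau_{U \subseteq V}$ to be a \piiso\ by two-out-of-three. For~(1), I would factor $\sigma_U$ via the triangle identity of the $\Sigma^U \dashv \Omega^U$ adjunction as
\[
\Sigma^U \spec{X} \xrightarrow{\Sigma^U \widetilde{\sigma}_U} \Sigma^U \Omega^U \shift^U \spec{X} \xrightarrow{\epsilon} \shift^U \spec{X}.
\]
The first map is a \piiso\ because $\widetilde{\sigma}_U$ is one by~(2) and $\Sigma^U = S^U \sma -$ preserves \piiso s by Lemma~\ref{lem:well-known}\ref{i:sma-pi-iso}; it then suffices to show that the counit $\MOR{\epsilon}{\Sigma^U \Omega^U \spec{Y}}{\spec{Y}}$ is a \piiso\ for every orthogonal $G$-spectrum~$\spec{Y}$, which is applied here with $\spec{Y} = \shift^U \spec{X}$. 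This remaining claim can be verified by a cofinality calculation in exactly the same spirit as the one in~(2).

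The main obstacle lies in setting up the cofinality argument of~(2) correctly: one must identify the induced map on homotopy groups with the structural transition maps of the original colimit defining $\pi_n^H(\spec{X})$, which amounts to carefully tracking the $\Sigma^U \dashv \Omega^U$ adjunction together with the swap $S^U \sma S^V \cong S^{U \oplus V}$, and one must verify the cofinality of the subposet $\{U \oplus V\}_V$ in $\CP(\CU)$, which ultimately rests on the completeness of~$\CU$.
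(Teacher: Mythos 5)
The paper's own proof is essentially a citation: it invokes Schwede's lecture notes for the claim that $\sigma_U$ and $\widetilde{\sigma}_U$ are \piiso s, and then derives the claim about $\tau_{U\subseteq V}$ from the triangle~\eqref{eq:triangle-Q} by two-out-of-three, just as you do. Your attempt to reprove the cited result from scratch is therefore a genuinely different route, and your high-level strategy is reasonable, but two steps are not correct as stated.

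First, your cofinality claim in part~(2) is not well posed. The representations $U \oplus V$ for $V \in \CP(\CU)$ are abstract direct sums and not subrepresentations of~$\CU$, so $\{U \oplus V : V \in \CP(\CU)\}$ is not a subposet of~$\CP(\CU)$ and ``cofinal in $\CP(\CU)$'' has no literal meaning; the parenthetical remark that $U\oplus W$ ``embeds into~$\CU$'' only gives isomorphic copies depending on non-canonical choices, which is not enough. The correct version is an interleaving argument between the two systems $V \mapsto \pi_n\bigl((\Omega^V\spec{X}(V))^H\bigr)$ and $V \mapsto \pi_n\bigl((\Omega^{U\oplus V}\spec{X}(U\oplus V))^H\bigr)$: for a cofinal family of $V$ one constructs backward maps into the first system at a larger index, compatible with the transition maps, e.g.\ after choosing a $G$-equivariant isometry $\CU\oplus\CU\cong\CU$. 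This can be made to work, but it is more than a subposet statement.

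Second, and more seriously, your reduction of the $\sigma_U$-claim to the assertion that the counit $\MOR{\epsilon}{\Sigma^U\Omega^U\spec{Y}}{\spec{Y}}$ is a \piiso\ for all orthogonal $G$-spectra $\spec{Y}$ leaves that assertion unproved, and the claim that it ``can be verified by a cofinality calculation in exactly the same spirit'' does not hold up. The level-$V$ term of $\pi_n^H(\Sigma^U\Omega^U\spec{Y})$ is $\pi_n\bigl((\Omega^V(S^U\sma\Omega^U\spec{Y}(V)))^H\bigr)$, and the outer $\Omega^V$ does not absorb $S^U\sma-$ the way $\Omega^V\Omega^U$ collapses to $\Omega^{U\oplus V}$ in~(2); there is no reindexing of the system into a cofinal subsystem of the one defining $\pi_n^H(\spec{Y})$. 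The counit claim is exactly the statement that $\Sigma^U$ is an equivalence on the stable homotopy category, which is of comparable depth to the lemma itself and in the literature is established by a separate argument (stable model structure, or a compactness/Spanier--Whitehead duality argument), not by repeating the computation from~(2).
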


\begin{proof}
For the first statement, see~\cite{Schwede}*{Proposition~3.17(ii) on page~19}.
The second statement then follows from the commutativity of diagram~\eqref{eq:triangle-Q}.
\end{proof}

\begin{proof}[Proof of Theorem~\ref{thm:Q}\ref{i:repl}]
By Lemmas \ref{lem:good}\ref{i:good-tau} and~\ref{lem:Omega-sh}, for all \fd\ $G$\=/representations $U\subseteq V$ the map~$\MOR{\tau_{U\subseteq V}}{\Q{U}(\spec{X})}{\Q{V}(\spec{X})}$ is a levelwise closed embedding and a \piiso.
The conclusion then follows from the following more general claim: If $\CP$ is a poset containing a cofinal sequence and $\MOR{\spec{F}}{\CP}{\Sp^G}$ is a functor such that for all $p\leq q$ in~$\CP$ the map $\spec{F}_p\TO\spec{F}_q$ is a levelwise closed embedding and a \piiso, then for all $o\in\CP$ the map
\(
\spec{F}_o\TO\colim_\CP\spec{F}
\)
is a \piiso.
To prove this claim consider a subgroup $H$ of~$G$ and $n\geq 0$.
Then we have:
\begin{align*}
\pi^H_n\spec{F}_o
&\cong
\colim_{p\in\CP}\pi^H_n\spec{F}_p
&\text{since $\spec{F}_p\TO\spec{F}_q$ is a \piiso,}
\\
&=
\colim_{p\in\CP}\colim_{U\in\CP(\CU)}\pi_n\Bigl(\bigl(\Omega^U\spec{F}_p(U)\bigr)^H\Bigr)
&\text{by definition,}
\\
&\cong
\colim_{U\in\CP(\CU)}\colim_{p\in\CP}\pi_n\Bigl(\bigl(\Omega^U\spec{F}_p(U)\bigr)^H\Bigr)
&\text{by commuting the colimits,}
\\
&\cong
\colim_{U\in\CP(\CU)}\pi_n\Bigl(\colim_{p\in\CP}\bigl(\Omega^U\spec{F}_p(U)\bigr)^H\Bigr)
&\one
\\
&\cong
\colim_{U\in\CP(\CU)}\pi_n\Bigl(\Bigl(\colim_{p\in\CP}\Omega^U\spec{F}_p(U)\Bigr)^H\Bigr)
&\two
\\
&\cong
\colim_{U\in\CP(\CU)}\pi_n\Bigl(\Bigl(\Omega^U\colim_{p\in\CP}\spec{F}_p(U)\Bigr)^H\Bigr)
\mathrlap{\,=\pi^H_n\Bigl(\colim_{p\in\CP}\spec{F}_p\Bigr)}
&\three
% \\
% &=
% \pi^H_n\Bigl(\colim_{p\in\CP}\spec{F}_p\Bigr)
% &\text{by definition.}
\end{align*}
For the isomorphisms $\one$, $\two$, and~$\three$ we use the fact that~$\CP$ contains a cofinal sequence.
$\three$~follows from Fact~\ref{facts:clemb}\ref{i:colim-clemb};
$\one$~follows from the same fact applied to the functor
\(
p\mapsto\bigl(\Omega^U\spec{F}_p(U)\bigr)^H
\),
which also sends any $p\leq q$ to a closed embedding by Facts~\ref{facts:clemb}\ref{i:sma-map-clemb} and~\ref{i:fix-clemb};
and finally $\two$ follows from Fact~\ref{facts:clemb}\ref{i:fix-colim-clemb}.
The case $n<0$ is proved analogously.
\end{proof}

\begin{proof}[Proof of Theorem~\ref{thm:Q}\ref{i:Omega}]
We begin with a general observation.
Let $\CP$ be a poset containing a cofinal sequence and let $\MOR{\spec{F}}{\CP}{\Sp^G}$ be a functor such that for all $p\leq q$ in~$\CP$ the map $\spec{F}_p\TO\spec{F}_q$ is a levelwise closed embedding.
For any \fd\ $G$-representation~$V$, the adjoint~\eqref{eq:sigma-adj} of the spectrum level structure map for $\colim_\CP\spec{F}$ factors as
\[
\begin{tikzcd}[column sep=huge]
\ds\colim_\CP\spec{F}
\arrow{r}[swap]{\colim\widetilde{\sigma}_V}
\arrow[rounded corners,
       to path={    (\tikztostart.north)
                 |- +(4,.4) [at end]\tikztonodes
                 -| (\tikztotarget.north)}
      ]{rr}{\widetilde{\sigma}_V}
&
\ds\colim_\CP \Q{V}(\spec{F})
\arrow{r}[description]{\cong}
&
\ds\Q{V}\bigl(\colim_\CP\spec{F}\bigr)
\,,
\end{tikzcd}
\]
where the isomorphism on the right follows from Fact~\ref{facts:clemb}\ref{i:colim-clemb}.
Therefore $\colim_\CP\spec{F}$ is an orthogonal $G$-$\Omega$-spectrum if and only if the map $\colim\widetilde{\sigma}_V$ is a level equivalence for all~$V$.

We now apply this to $\CP=\CP(\CU)$ and $F=\Q{(-)}(\spec{X})$.
So we need to show that the top horizontal map in the following diagram is a level equivalence.
\begin{equation}
\label{eq:Q-is-omega}
\begin{tikzcd}[row sep=small, column sep=-2em]
\ds\Q{\CU}(\spec{X})\cong
\colim_{U\in\CP(\CU)}\Q{U}(\spec{X})
\arrow{rr}{\colim\widetilde{\sigma}_V}
\arrow[equal]{d}{\wr}
&&
\ds\colim_{U\in\CP(\CU)}\Q{V}\Q{U}(\spec{X})
\arrow[equal]{d}{\wr}
\\
\ds\colim_{U\in\CP(\CU)}\Q{U\oplus 0}(\spec{X})
\arrow{rr}{\ds\three}
\arrow{rdd}[swap]{\ds\one}
&&
\ds\colim_{U\in\CP(\CU)}\Q{U\oplus V}(\spec{X})
\arrow{ldd}{\ds\two}
\\
{}
\\
&
\ds\colim_{W\in\CP(\CU\oplus V)}\Q{W}(\spec{X})
\cong
\Q{\CU\oplus V}(\spec{X})
\end{tikzcd}
\end{equation}
The vertical maps in the square are induced from the isomorphisms in Lemma~\ref{lem:iso-bifun}, and the same lemma also implies that the square exists and commutes.
The maps $\one$ and~$\two$ are induced by the functors $\bone$ and~$\btwo$
\[
\begin{tikzcd}[column sep=small]
U
\arrow[mapsto]{d}
&
\CP(\CU)
\arrow[xshift=-.8ex, swap]{d}{\ds\bone}
\arrow[xshift=+.8ex      ]{d}{\ds\btwo}
&
U
\arrow[mapsto]{d}
\\
U\oplus 0
&
\CP(\CU\oplus V)
&
U\oplus V
\end{tikzcd}
\]
and the map~$\three$ is induced by the natural transformation $U\oplus 0\subseteq U\oplus V$ from $\bone$~to~$\btwo$.

The functor~$\btwo$ is cofinal, since for any $W\in\CP(\CU\oplus V)$ the comma category $W\downarrow\btwo$ has an initial object given by the image of~$W$ under $\MOR{\pr_1}{\CU\oplus V}{\CU}$.
Therefore the map~$\two$ is a $G$-isomorphism for all~$V$.

Notice that the functor~$\bone$ is not cofinal: if $V\neq0$ then $(0\oplus V)\downarrow\bone=\emptyset$.
But we argue that the composition from $\Q{\CU}(\spec{X})$ to $\Q{\CU\oplus V}(\spec{X})$ along the left side of~\eqref{eq:Q-is-omega} is $G$-homotopic to an isomorphism, and so in particular it is a level equivalence.
In fact, this composition is induced by the $G$-equivariant isometry $\MOR{f}{\CU}{\CU\oplus V}$ given by the inclusion of the first summand.
Since $\CU\oplus V$ is again a complete $G$-universe, the result then follows from Corollary~\ref{cor:Q-2-universes}.
\end{proof}

Notice that in the proofs of Theorem~\ref{thm:Q}\ref{i:repl} and~\ref{i:Omega} we only use the assumption that $\spec{X}$ is good to conclude that for all \fd\ $G$-representations $U\subseteq V$ the map $\MOR{\tau_{U\subseteq V}}{\Q{U}(\spec{X})}{\Q{V}(\spec{X})}$ is a levelwise closed embedding.
Since this is true also for almost good spectra by Lemma~\ref{lem:good}\ref{i:good-tau}, our proofs yield the following slightly stronger statement, which is needed in the proof of Key Lemma~\ref{lem:key}.

\begin{addendum}
\label{add:almost}
Parts \ref{i:repl} and~\ref{i:Omega} of Theorem~\ref{thm:Q} remain true if $\spec{X}$ is only assumed to be almost good.
\end{addendum}

\begin{proof}[Proof of Theorem~\ref{thm:Q}\ref{i:res-Q}]
Let $\MOR{\alpha}{\Gamma}{G}$ be a group homomorphism.
By Theorem~\ref{thm:naive} it is enough to show that there is a natural $\Gamma$-isomorphism between the underlying naive orthogonal $\Gamma$-spectra of $\res_\alpha\Q{\CU}(\spec{X})$ and $\Q{\res_\alpha\CU}(\spec{X})$.
Let $T$ be a \fd\ inner product space, viewed as a trivial $\Gamma$-representation.
Then
\[
\bigl(\res_\alpha\Q{\CU}(\spec{X})\bigr)(T)
=
\res_\alpha\colim_{U\in\CP(\CU)}\bigl(\Q{U}(\spec{X})(T)\bigr)
=
\colim_{U\in\CP(\CU)}\res_\alpha\bigl(\Q{U}(\spec{X})(T)\bigr)
\,.
\]
Moreover
\[
\res_\alpha\bigl(\Q{U}(\spec{X})(T)\bigr)
=
\res_\alpha\bigl(\Omega^{U}\spec{X}(U\oplus T)\bigr)
\cong
\Omega^{\res_\alpha U}\res_\alpha\bigl(\spec{X}(U\oplus T)\bigr)
\]
and by~\eqref{eq:res-res} we have that
\begin{multline*}
\res_\alpha\bigl(\spec{X}(U\oplus T)\bigr)
\cong
(\res_\alpha\spec{X})(\res_\alpha(U\oplus T))
\cong
(\res_\alpha\spec{X})((\res_\alpha U)\oplus T)
\\
=
\bigl(\shift^{\res_\alpha U}(\res_\alpha\spec{X})\bigr)(T)
\end{multline*}
where in the last two terms $T$ is viewed as a trivial $G$-representation.
Therefore
\(
\res_\alpha\bigl(\Q{U}(\spec{X})(T)\bigr)
\cong
\Q{\res_\alpha U}(\res_\alpha\spec{X})(T)
\).
Since for any $V\in\CP(\res_\alpha\CU)$ there is a $U\in\CP(\CU)$ such that $V \subseteq \res_\alpha U$, the poset $\SET{\res_\alpha U}{U \in \CP(\CU)}$ is cofinal in $\CP(\res_\alpha\CU)$, and the result follows.
\end{proof}

We close this section with another consequence of Theorem~\ref{thm:Q}.

\begin{corollary}
\label{cor:pi(Q)}
If $\spec{X}$ is a good orthogonal $G$-spectrum and $\CU$ is a complete $G$-universe, then for all subgroups $H$ of~$G$ and all integers~$n$ there are isomorphisms
\[
\pi_n^H(\spec{X})
\cong
\pi_n^1\bigl(\res_1\Q[h]{\CU}(\spec{X})^H\bigr)
.
\]
\end{corollary}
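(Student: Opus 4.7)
The plan is to combine three ingredients from Theorem~\ref{thm:Q} and Lemma~\ref{lem:Omega-fix}. First, since $\spec{X}$ is good, Theorem~\ref{thm:Q}\ref{i:repl} gives that the natural map $\MOR{\fr}{\spec{X}}{\Q[h]{\CU}(\spec{X})}$ is a \piiso, so in particular
\[
\pi_n^H(\spec{X}) \cong \pi_n^H\bigl(\Q[h]{\CU}(\spec{X})\bigr)
\]
for all subgroups $H \leq G$ and integers $n$. Second, since $\spec{X}$ is good and $\CU$ is a complete $G$-universe, Theorem~\ref{thm:Q}\ref{i:Omega} tells us that $\Q[h]{\CU}(\spec{X})$ is a $G$-$\Omega$-spectrum. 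Third, for any $G$-$\Omega$-spectrum, Lemma~\ref{lem:Omega-fix}\ref{i:pi-Omega-fix} identifies the equivariant homotopy groups with the non-equivariant homotopy groups of the underlying fixed-point spectrum, namely
\[
\pi_n^H\bigl(\Q[h]{\CU}(\spec{X})\bigr) \cong \pi_n^1\bigl(\res_1 \Q[h]{\CU}(\spec{X})^H\bigr).
\]
Concatenating these two isomorphisms yields the claim. There is no real obstacle here; the corollary is essentially a bookkeeping consequence of the three parts of Theorem~\ref{thm:Q} together with the homotopical behavior of naive fixed points on $G$-$\Omega$-spectra recorded in Lemma~\ref{lem:Omega-fix}. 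The only thing worth verifying explicitly is that the assumptions of those results are actually met: goodness of $\spec{X}$ is an hypothesis of the corollary, and completeness of~$\CU$ is what makes $\Q[h]{\CU}(\spec{X})$ a $G$-$\Omega$-spectrum rather than merely a weak replacement.
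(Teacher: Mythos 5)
Your proposal is correct and matches the paper's own proof exactly: compose the isomorphism $\pi_n^H(\spec{X}) \cong \pi_n^H(\Q[h]{\CU}(\spec{X}))$ from Theorem~\ref{thm:Q}\ref{i:repl} with the identification of $\pi_n^H$ of a $G$-$\Omega$-spectrum with $\pi_n^1$ of the naive $H$-fixed points from Lemma~\ref{lem:Omega-fix}\ref{i:pi-Omega-fix}, using Theorem~\ref{thm:Q}\ref{i:Omega} to see that $\Q[h]{\CU}(\spec{X})$ is a $G$-$\Omega$-spectrum. Nothing to add.
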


\begin{proof}
We have
\[
\pi_n^H(\spec{X})
\cong
\pi_n^H(\Q[h]{\CU}(\spec{X}))
\cong
\pi_n^1\bigl(\res_1\Q[h]{\CU}(\spec{X})^H\bigr),
\]
where the first isomorphism follows from Theorem~\ref{thm:Q}\ref{i:repl} and the second from Lemma~\ref{lem:Omega-fix}\ref{i:pi-Omega} since $\Q[h]{\CU}(\spec{X})$ is a $G$-$\Omega$-spectrum by Theorem~\ref{thm:Q}\ref{i:Omega}.
\end{proof}

%%%%%%%%%%%%%%%%%%%%%%%%%%%%%%%%%%%%%%%%%%%%%%%%%%%%%%%%%%%%%%%%%%%%%%%%%%%%%

\section{Assembly and coassembly maps}
\label{sec:asbl-coasbl}

In this section we prove the following consequence of Theorem~\ref{thm:Q}.

\begin{proposition}
\label{prop:asbl-coasbl}
Let $H$ be a subgroup of~$G$, $\CU$ a $G$-respresentation, $\spec{X}$ and~$\spec{Y}$ orthogonal $G$ and $H$-spectra, respectively, and $Z$ a pointed space.
Then there are natural $G$-maps
\begin{align*}
\MOR{\asbl}%
{&Z\sma\Q{\CU}(\spec{X})}%
{\Q{\CU}(Z\sma\spec{X})}
\,,
\\
\MOR{\coasbl}%
{&\Q{\CU}\bigl(\coind_{H\leq G}\spec{Y}\bigr)}%
{\coind_{H\leq G}\Q{\res_{H\leq G}\CU}(\spec{Y})}
\,,
\end{align*}
called \emph{assembly} and \emph{coassembly map}.
If $\spec{X}$ is good and $Z$ is a pointed CW-complex, then $\asbl$ is a \piiso.
If $\spec{Y}$ is good, then $\coasbl$ is a \piiso.
\end{proposition}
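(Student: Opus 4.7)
The plan is to build both maps one finite-dimensional representation at a time, use the identification $\Q{\CU}(\spec{X})\cong\colim_{U\in\CP(\CU)}\Q{U}(\spec{X})$ from Proposition~\ref{prop:colim->Lan}, and then deduce the homotopical statements from Theorem~\ref{thm:Q}\ref{i:repl} by a two-out-of-three argument.

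For the assembly map, I first note that for any finite-dimensional $G$-representation~$U$ and any pointed $G$-space~$Z$ there is a canonical space-level assembly $Z\sma\Omega^U(-)\to\Omega^U(Z\sma-)$, while shifts commute strictly with smashing from the left: $\shift^U(Z\sma\spec{X})=Z\sma\shift^U\spec{X}$ by inspection of Definition~\ref{def:slsQ}. Composing these two gives natural $G$-maps $Z\sma\Q{U}(\spec{X})\to\Q{U}(Z\sma\spec{X})$, and by tracing through the construction in~\eqref{eq:phi-adj} one checks that they are compatible with the $\LL[fin]{G}$\=/functoriality supplied by Proposition~\ref{prop:fun-in-U-fin}, so in particular with the structure maps of the diagram over~$\CP(\CU)$. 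Since smashing with~$Z$ is a left adjoint and hence commutes with colimits, passing to the colimit over $\CP(\CU)$ yields the desired map $\asbl$.

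For the coassembly map, the key observation is that $\coind_{H\leq G}$ is a right adjoint and therefore commutes with $\Omega^U=\map(S^U,-)$, while~\eqref{eq:indco} gives a natural $G$-isomorphism $\shift^U\coind_{H\leq G}\spec{Y}\cong\coind_{H\leq G}\shift^{\res_{H\leq G}U}\spec{Y}$ for every finite-dimensional $G$-representation~$U$. Combining these produces, for each $U\in\CP(\CU)$, natural $G$-isomorphisms $\Q{U}(\coind_{H\leq G}\spec{Y})\cong\coind_{H\leq G}\Q{\res_{H\leq G}U}(\spec{Y})$; I then take the colimit over $\CP(\CU)$ and compose with the canonical map from a colimit of coinductions into the coinduction of the colimit. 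The cofinality argument used at the end of the proof of Theorem~\ref{thm:Q}\ref{i:res-Q}, namely that $\SET{\res_{H\leq G}U}{U\in\CP(\CU)}$ is cofinal in $\CP(\res_{H\leq G}\CU)$, identifies the right-hand colimit with $\Q{\res_{H\leq G}\CU}(\spec{Y})$, producing~$\coasbl$.

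For the $\piiso$ statements, both $\asbl$ and $\coasbl$ fit by construction into commutative triangles with the natural maps~$\fr$ of Theorem~\ref{thm:Q}. In the assembly case, $Z\sma\spec{X}\to Z\sma\Q{\CU}(\spec{X})$ is a $\piiso$ by Theorem~\ref{thm:Q}\ref{i:repl} combined with Lemma~\ref{lem:well-known}\ref{i:sma-pi-iso}, while $Z\sma\spec{X}\to\Q{\CU}(Z\sma\spec{X})$ is a $\piiso$ because $Z\sma\spec{X}$ is good by Lemma~\ref{lem:good}\ref{i:sma-map-good}, so two-out-of-three gives that $\asbl$ is a $\piiso$. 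In the coassembly case, $\coind_{H\leq G}\spec{Y}$ is good by Theorem~\ref{thm:H-in-G}\ref{i:co-ind-good}, so $\coind_{H\leq G}\spec{Y}\to\Q{\CU}(\coind_{H\leq G}\spec{Y})$ is a $\piiso$ by Theorem~\ref{thm:Q}\ref{i:repl}, while $\coind_{H\leq G}\spec{Y}\to\coind_{H\leq G}\Q{\res_{H\leq G}\CU}(\spec{Y})$ is a $\piiso$ because coinduction preserves $\piiso$s by Theorem~\ref{thm:H-in-G}\ref{i:res-co-ind-piiso}, and again two-out-of-three applies. The main technical obstacle is the verification that the pointwise maps $Z\sma\Q{U}(\spec{X})\to\Q{U}(Z\sma\spec{X})$ and $\Q{U}(\coind_{H\leq G}\spec{Y})\to\coind_{H\leq G}\Q{\res_{H\leq G}U}(\spec{Y})$ are natural in $U\in\LL[fin]{G}$ in the sense of Proposition~\ref{prop:fun-in-U-fin}; once this is in place the passage to colimits and the homotopical conclusions are essentially formal.
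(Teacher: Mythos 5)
Your construction of the two maps is more explicit than the paper's: you build them representation-by-representation over $\CP(\CU)$ and pass to colimits, whereas the paper obtains both maps at one stroke from a categorical blueprint (Construction~\ref{constr:asbl}), which produces the commutative triangles with the replacement maps~$\fr$ automatically from general nonsense. Your route is viable, but, as you acknowledge, it pushes the work into verifying compatibility of the pointwise maps with the $\LL[fin]{G}$-functoriality; the paper's abstract construction sidesteps this entirely. Your two-out-of-three argument for the $\piiso$ statements is the same as the paper's.

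There is, however, a genuine gap in the coassembly argument. You claim that $\coind_{H\leq G}\spec{Y}$ is good by Theorem~\ref{thm:H-in-G}\ref{i:co-ind-good}. That theorem says something weaker: \emph{induction} preserves good spectra, but \emph{coinduction} only preserves \emph{almost} good spectra. So from $\spec{Y}$ good you can only conclude that $\coind_{H\leq G}\spec{Y}$ is almost good. Since Theorem~\ref{thm:Q}\ref{i:repl} is stated for good spectra, you cannot apply it to $\coind_{H\leq G}\spec{Y}$ directly to conclude that $\fr_{\coind\spec{Y}}$ is a $\piiso$. This is precisely the point of Addendum~\ref{add:almost}, which extends Theorem~\ref{thm:Q}\ref{i:repl} and~\ref{i:Omega} to almost good spectra; you need to invoke it here. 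The conclusion you want is true, but your citation chain does not support it, and this is the one place in the argument where the good/almost-good distinction actually matters.
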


We first explain the definition of the assembly and coassembly maps.
It is based on the following categorical observation, that allows us to treat the examples we need in a unified way.

\begin{construction}
\label{constr:asbl}
Consider the following not necessarily commutative diagram of categories and functors
\begin{equation*}
\begin{tikzcd}[column sep=huge]
\CC
\arrow{r}{E}
\arrow[draw=none]{d}[description]{\dashv}
\arrow[xshift=-1ex]{d}[swap]{L}
&
\CC'
\arrow[draw=none]{d}[description]{\dashv}
\arrow[xshift=-1ex]{d}[swap]{L'}
\\
\CD
\arrow{r}[swap]{F}
\arrow[xshift=+1ex]{u}[swap]{R}
&
\CD'
\arrow[xshift=+1ex]{u}[swap]{R'}
\end{tikzcd}
\end{equation*}
where the vertical functors form two adjoint pairs.
Then there are inverse bijections
\begin{equation}
\label{eq:ASBL}
\begin{tikzcd}[column sep=large]
\nat(ER,R'F)
\arrow[yshift=+1ex]{r}{\ASBL}
&
\nat(L'E,FL)
\arrow[yshift=-1ex]{l}{\COASBL}
\end{tikzcd}
\end{equation}
between the indicated sets of natural transformations; pictorially:
\[
\begin{tikzcd}[row sep=small, column sep=small]
{}
\arrow[Rightarrow, shorten <=1ex, shorten >=1.5ex]{rd}
&
\phantom{\cdot}
&
{}
\arrow[draw=none]{d}[description]{\ds\leftrightarrow}
&
{}
\phantom{\cdot}
\arrow[to path=|- (\tikztotarget)]{dr}
\arrow[to path=-| (\tikztotarget)]{dr}
&
{}
\arrow[Rightarrow, shorten <=1ex, shorten >=1.5ex]{ld}
\\
\phantom{\cdot}
\arrow[to path=|- (\tikztotarget)]{ur}
\arrow[to path=-| (\tikztotarget)]{ur}
&
{}
&
{}
&
{}
&
\phantom{\cdot}
\end{tikzcd}
\]
In the examples that we are interested in, there is a distinguished natural transformation on one side of~\eqref{eq:ASBL} and we then consider the corresponding natural transformation on the other side, and we refer to it as the corresponding assembly or coassembly map.

In order to describe the bijection~\eqref{eq:ASBL}, let $c$ and $d$ be objects in $\CC$ and~$\CD$, respectively.
Given a natural transformation $\MOR{\nu_d}{ER(d)}{R'F(d)}$ we define $\MOR{\ASBL(\nu)_c}{L'E(c)}{FL(c)}$ as the adjoint of the composition
\[
E(c)\xrightarrow{E\mu_c} ERL(c) \xrightarrow{\nu_{L(c)}} R'FL(c)
\,,
\]
where the first map is induced by the unit~$\mu$ of the adjunction $L\dashv R$ and the second one is induced by~$\nu$.
Coassembly is defined dually using the counit of the adjunction.
These constructions also work for topological categories and continuous functors.
Here are two examples.
\end{construction}

\begin{example}[Assembly]
\label{eg:assembly}
Let $\Gamma$ and $G$ be groups and let $\MOR{F}{\Sp^\Gamma}{\Sp^G}$ be a continuous functor. Given an orthogonal $\Gamma$-spectrum $\spec{X}$ consider the following diagram.
\begin{equation*}
\begin{tikzcd}[column sep=8em]
\CT
\arrow{r}{\id}
\arrow[draw=none]{d}[description]{\dashv}
\arrow[xshift=-1ex]{d}[swap]{-\sma\spec{X}}
&
\CT
\arrow[draw=none]{d}[description]{\dashv}
\arrow[xshift=-1ex]{d}[swap]{-\sma F(\spec{X})}
\\
\Sp^\Gamma
\arrow{r}[swap]{F}
\arrow[xshift=+1ex]{u}[swap]{\Sp^\Gamma(\spec{X},-)}
&
\Sp^G
\arrow[xshift=+1ex]{u}[swap]{\Sp^\Gamma(F(\spec{X}),-)}
\end{tikzcd}
\end{equation*}
A continuous natural transformation
\(
\Sp^\Gamma(\spec{X},-)\TO\Sp^G(F(\spec{X}),F(-))
\)
is induced by the functor $F$ itself.
The corresponding natural transformation on the other side of~\eqref{eq:ASBL} evaluated at $Z$ yields
\(
\MOR{\asbl_Z}{Z \sma F(\spec{X})}{F(Z \sma\spec{X})}
\), a natural $G$-map of orthogonal $G$-spectra.
Pictorially:
\[
\MOR{F}{\Sp^\Gamma(\spec{X},-)}{\Sp^G(F(\spec{X}),F(-))}
\quad\leftrightarrow\quad
\MOR{\asbl}{-\sma F(\spec{X})}{F(-\sma\spec{X})}
\,.
\]
Notice that if $\MOR{\tau}{F}{F'}$ is a continuous natural transformation then the following diagram commutes for any pointed space~$Z$.
\begin{equation}
\label{eq:asbl-natural-in-F}
\begin{tikzcd}[column sep=large]
Z\sma F(\spec{X})\phantom{'}
\arrow{r}{\asbl}
\arrow{d}[swap]{\id_Z\sma\tau_\spec{X}}
&
F(Z\sma\spec{X})
\arrow{d}{\tau_{Z\sma\spec{X}}}
\\
Z\sma F'(\spec{X})
\arrow{r}[swap]{\asbl'}
&
F'(Z\sma\spec{X})
\end{tikzcd}
\end{equation}
\end{example}

\begin{example}[Coassembly]
\label{eg:coind-coassembly}
Let $H$ be a subgroup of~$G$ and let $\CU$ be a $G$\=/representation.
Abbreviate $\res=\res_{H\leq G}$ and $\coind=\coind_{H\leq G}$.
Consider the following diagram.
\begin{equation*}
\begin{tikzcd}[column sep=8em]
\Sp^G
\arrow{r}{\Q[h]{\CU}}
\arrow[draw=none]{d}[description]{\dashv}
\arrow[xshift=-1ex]{d}[swap]{\res}
&
\Sp^G
\arrow[draw=none]{d}[description]{\dashv}
\arrow[xshift=-1ex]{d}[swap]{\res}
\\
\Sp^H
\arrow{r}[swap]{\Q[h]{\res\CU}}
\arrow[xshift=+1ex]{u}[swap]{\coind}
&
\Sp^H
\arrow[xshift=+1ex]{u}[swap]{\coind}
\end{tikzcd}
\end{equation*}
By Theorem~\ref{thm:Q}\ref{i:res-Q} for every orthogonal $G$-spectrum~$\spec{X}$ there is a natural $H$\=/isomorphism
\(
\res\Q[h]{\CU}(\spec{X})\cong\Q[h]{\res\CU}(\res\spec{X})
\).
The corresponding natural transformation on the other side of~\eqref{eq:ASBL} evaluated at an orthogonal $H$-spectrum~$\spec{Y}$ yields
\[
\MOR{\coasbl}{\Q[h]{\CU}(\coind\spec{Y})}{\coind\Q[h]{\res\CU}(\spec{Y})}
\,.
\]
Pictorially:
\[
\MOR{\coasbl}{\Q[h]{\CU}(\coind(-))}{\coind\Q[h]{\res\CU}(-)}
\quad\leftrightarrow\quad
\res\Q[h]{\CU}(-)\cong\Q[h]{\res\CU}(\res(-))
\,.
\]
Similarly, if $W$ is a pointed $G$-space we can consider the functors $W\sma-$ and $(\res W)\sma-$ instead of $\Q{\CU}$ and~$\Q{\res\CU}$, respectively.
Then we get
\[
\MOR{\coasbl}{W\sma\coind(-)}{\coind(\res W\sma-)}
\quad\leftrightarrow\quad
\res(W\sma-)\cong\res W\sma\res(-)
\,,
\]
where the last isomorphism comes from Example~\ref{eg:res-smash}.
\end{example}

\begin{proof}[Proof of Proposition~\ref{prop:asbl-coasbl}]
The assembly map comes from Example~\ref{eg:assembly} applied to the case $\Gamma=G$ and $F=\Q[h]{\CU}(-)$.
Applying~\eqref{eq:asbl-natural-in-F} to the natural transformation~$\MOR{\fr}{\id}{\Q[h]{\CU}(-)}$ from Theorem~\ref{thm:Q} we get the following commutative triangle.
\begin{equation}
\label{eq:triangle-asbl}
\begin{tikzcd}[column sep=0pt]
&Z\sma\spec{X}
\arrow{ld}[swap]{\id\sma\fr_{\spec{X}}}
\arrow{rd}      {\fr_{Z\sma\spec{X}}}
\\
Z\sma\Q[h]{\CU}(\spec{X})
\arrow{rr}[swap]{\asbl}
&&
\Q[h]{\CU}(Z\sma\spec{X})
\end{tikzcd}
\end{equation}
If $\spec{X}$ is good then so is $Z\sma\spec{X}$ by Lemma~\ref{lem:good}\ref{i:sma-map-good}, and therefore the replacement maps~$\fr$ are \piiso s by Theorem~\ref{thm:Q}\ref{i:repl}.
From Lemma~\ref{lem:well-known}\ref{i:sma-pi-iso} it follows that $\asbl$ is a \piiso\ if $Z$ is a pointed CW-complex.

The coassembly map comes from Example~\ref{eg:coind-coassembly}.
Analogously to~\eqref{eq:triangle-asbl} the following triangle commutes.
\begin{equation*}
\label{eq:triangle-coasbl}
\begin{tikzcd}[column sep=0pt]
&\coind\spec{Y}
\arrow{ld}[swap]{\fr_{\coind\spec{Y}}}
\arrow{rd}      {\coind\fr_{\spec{Y}}}
\\
\Q{\CU}(\coind\spec{Y})
\arrow{rr}[swap]{\coasbl}
&&
\coind\Q{\res\CU}(\spec{Y})
\end{tikzcd}
\end{equation*}
Now assume that $\spec{Y}$ is good.
Then $\coind\fr_{\spec{Y}}$ is a \piiso\ by Theorem~\ref{thm:Q}\ref{i:repl} and Theorem~\ref{thm:H-in-G}\ref{i:res-co-ind-piiso}.
Moreover $\coind\spec{Y}$ is almost good by Theorem~\ref{thm:H-in-G}\ref{i:co-ind-good} and so $\fr_{\coind\spec{Y}}$ is a \piiso\ by Addendum~\ref{add:almost}.
Therefore we conclude that $\coasbl$ is a \piiso.
\end{proof}

%%%%%%%%%%%%%%%%%%%%%%%%%%%%%%%%%%%%%%%%%%%%%%%%%%%%%%%%%%%%%%%%%%%%%%%%%%%%%

\section{Replacements using incomplete universes}
\label{sec:free}

Let $N$ be a normal subgroup of~$G$.
Recall from the introduction that $\CF(N)$ is the family of subgroups $H$ of~$G$ such that $H\cap N=1$.
For any family $\CF$ of subgroups of~$G$, we denote by $E\CF$ a universal space for~$\CF$, i.e., a $G$-CW-complex $E\CF$ such that, for any subgroup $H$ of~$G$, $E\CF^H$ is empty if $H\not\in\CF$, and $E\CF^H$ is non-equivariantly contractible if $H\in\CF$.
Notice that $E\CF(G)=EG$ and that $\res_{N\leq G}E\CF(N) = EN$.

The following proposition says that after smashing with~$E\CF(N)$ the replacements with respect to a complete $G$-universe or with respect to a smaller complete $G/N$-universe are equivariantly homotopy equivalent.
Notice that equivariant homotopy equivalence is the strongest kind of equivalence we consider.
It passes immediately to orbits and fixpoints.
This is important in our proof of the Adams isomorphism.
The proposition plays a role analogous to the results summarized under the slogan \emph{``free spectra live in the trivial universe''} in the setup of~\cite{LMS}*{see in particular the last paragraph of page~65}.
We recall that all orthogonal spectra already live in the trivial universe; compare Theorem~\ref{thm:naive}.

\begin{proposition}
\label{prop:univchange}
Let $N$ be a normal subgroup of~$G$, let $\CU$ be a complete $G$-universe, and let $\spec{X}$ be an orthogonal $G$-spectrum.
The inclusion $\MOR{\iota}{\CU^N}{\CU}$ induces a $G$-map
\begin{equation}
\label{eq:id-iota}
\MOR{\id\sma\iota_*}%
{E\CF(N)_+\sma\Q[h]{\CU^N\!}(\spec{X})}%
{E\CF(N)_+\sma\Q[h]{\CU  }(\spec{X})}
\end{equation}
which is a natural $G$-homotopy equivalence, i.e., there is a $G$-map
\begin{equation*}
\MOR{\kappa}%
{E\CF(N)_+\sma\Q[h]{\CU  }(\spec{X})}%
{E\CF(N)_+\sma\Q[h]{\CU^N\!}(\spec{X})}
\,,
\end{equation*}
natural in $\spec{X}$, such that both compositions are $G$-homotopic as maps of orthogonal $G$-spectra to the respective identities.
\end{proposition}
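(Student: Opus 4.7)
The plan is to use the bifunctoriality of the replacement construction from Corollary~\ref{cor:fun-in-U} together with the observation that $\CL(\CU, \CU^N)$, equipped with the conjugation $G$-action, is a model for $E\CF(N)$. For any subgroup $H\leq G$, the fixed-point space $\CL(\CU, \CU^N)^H$ consists of $H$-equivariant linear isometries $\CU \to \CU^N$. If $H\cap N \neq 1$, pick a nontrivial $n\in H\cap N$; it acts trivially on $\CU^N$ but nontrivially on the faithful $G$-representation~$\CU$, so no $H$-equivariant injection $\CU\to\CU^N$ exists, and $\CL(\CU,\CU^N)^H=\emptyset$. If instead $H\in\CF(N)$, then $H$ embeds into $G/N$, and since $\CU^N$ is a complete $G/N$-universe, it is a complete $H$-universe; as $\CU$ is also a complete $H$-universe, Lemma~\ref{lem:LMS-II.1.5} (in its stronger form yielding $H$-contractibility of the space of equivariant isometries between complete $H$-universes) shows that $\CL(\CU,\CU^N)^H$ is non-empty and contractible. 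Consequently there is a $G$-map $\MOR{\widetilde{\iota}}{E\CF(N)}{\CL(\CU, \CU^N)}$, unique up to $G$-homotopy.

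Define $\kappa$ to be the composition
\begin{multline*}
\kappa\colon E\CF(N)_+\sma\Q[h]{\CU}(\spec{X})
\xrightarrow{\Delta\sma\id}
E\CF(N)_+\sma E\CF(N)_+\sma\Q[h]{\CU}(\spec{X})
\\
\xrightarrow{\id\sma\widetilde{\iota}\sma\id}
E\CF(N)_+\sma\CL(\CU,\CU^N)_+\sma\Q[h]{\CU}(\spec{X})
\xrightarrow{\id\sma\Phi_{\CU,\CU^N}}
E\CF(N)_+\sma\Q[h]{\CU^N\!}(\spec{X}),
\end{multline*}
where $\Delta$ is the diagonal of $E\CF(N)$. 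Naturality in $\spec{X}$ is inherited from naturality of~$\Phi$.

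The verification that both compositions are $G$-homotopic to the identity reduces, via the associativity~\eqref{eq:fun-in-U-assoc} and unitality~\eqref{eq:fun-in-U-unit} axioms of~$\Phi$, to comparing certain pairs of $G$-maps landing in spaces of the form $\CL(\CV,\CV)$. The composition $(\id\sma\iota_*)\circ\kappa$ corresponds to the $G$-map $e\mapsto\iota\circ\widetilde{\iota}(e)$ into $\CL(\CU,\CU)$; the identity corresponds to the constant $e\mapsto\id_\CU$. Since $\CU$ is a complete $H$-universe for every $H\leq G$, the space $\CL(\CU,\CU)^H$ is contractible, so any two $G$-maps $E\CF(N)\to\CL(\CU,\CU)$ are $G$-homotopic by the universal property of~$E\CF(N)$. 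Symmetrically, $\kappa\circ(\id\sma\iota_*)$ corresponds to $e\mapsto\widetilde{\iota}(e)\circ\iota$ into $\CL(\CU^N,\CU^N)$, to be compared with $e\mapsto\id_{\CU^N}$; and $\CL(\CU^N,\CU^N)^H$ is contractible for every $H\in\CF(N)$ by the same argument, which again suffices. Applying $\Phi$ to such a space-level $G$-homotopy, combined with the diagonal of $E\CF(N)$ on the outer factor, produces the desired $G$-homotopy of orthogonal $G$-spectra, naturally in~$\spec{X}$.

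The main conceptual step is the identification of $\CL(\CU,\CU^N)$ as a model for $E\CF(N)$, which rests on the crucial fact that $\CU^N$ is a complete $H$-universe whenever $H\in\CF(N)$; once this is in place, the construction of~$\kappa$ and the verification of the homotopies are formal consequences of the bifunctorial structure of the change-of-universe maps and of standard equivariant obstruction theory for universal spaces.
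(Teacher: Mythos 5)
Your proposal is correct and follows essentially the same route as the paper: the homotopy inverse $\kappa$ you construct is literally the paper's map (your $(\id\sma\widetilde{\iota}\sma\id)\circ(\Delta\sma\id)$ is the paper's $\beta\sma\id$ with $\widetilde{\iota}=\cw$), and the homotopy verification likewise reduces via the cocycle/unit conditions for $\Phi$ to the contractibility of the fixed-point spaces of $\CL(\CU,\CU)$ and $\CL(\CU^N,\CU^N)$. The only cosmetic difference is organizational: the paper contracts the $\CL(\CU^N,\CU^N)_+$ factor to the identity point first and then invokes that any self-$G$-map of $E\CF(N)$ is $G$-homotopic to the identity, whereas you apply equivariant obstruction theory directly to the pair of $G$-maps $E\CF(N)\to\CL(\CV,\CV)$; and your direct fixed-point argument that $\CL(\CU,\CU^N)$ models $E\CF(N)$ replaces the paper's citation of the Lewis--May--Steinberger lemmas.
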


The only choice involved in the construction of the homotopy inverse $\kappa$ is a map $\MOR{\cw}{E\CF(N)}{\CL(\CU,\CU^N)}$ which is a $G$-weak equivalence.
The existence of such a map is assured by the following proposition. 

\begin{proposition}
\label{prop:EFN}
Let $N$ be a normal subgroup of~$G$, and let $\CU$ be a complete $G$-universe.
Let $X$ be a $G$-CW-approximation of~$\CL(\CU,\CU^N)$, i.e., let $X$ be a $G$-CW-complex together with a $G$-weak equivalence $\MOR{\cw}{X}{\CL(\CU,\CU^N)}$.
Then $X$ is a model for~$E\CF(N)$.
\end{proposition}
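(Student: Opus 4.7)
Since $X$ is a $G$-CW-complex by assumption, it suffices to verify the two fixed-point conditions in the definition of $E\CF(N)$. Because $\MOR{\cw}{X}{\CL(\CU,\CU^N)}$ is a $G$-weak equivalence, $\cw^H$ is a non-equivariant weak equivalence for every subgroup $H\le G$, so in particular $X^H=\emptyset$ if and only if $\CL(\CU,\CU^N)^H=\emptyset$, and otherwise $X^H$ is contractible if and only if $\CL(\CU,\CU^N)^H$~is. Hence the claim reduces to computing the $H$-fixed points of the isometry space $\CL(\CU,\CU^N)$ for each subgroup $H$ of~$G$.

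First I would treat $H\notin\CF(N)$. Pick a non-trivial $n\in H\cap N$. For any $f\in\CL(\CU,\CU^N)^H$ and any $x\in\CU$, the $H$-equivariance of~$f$ combined with $f(x)\in\CU^N$ gives $f(nx)=nf(x)=f(x)$, and since $f$ is a linear isometry this forces $nx=x$. Thus $n$ acts as the identity on the whole of~$\CU$; but $\CU$ is a complete $G$-universe, so it contains a faithful $G$-representation (for instance the regular representation~$\IR[G]$), contradicting $n\neq 1$. Hence $\CL(\CU,\CU^N)^H=\emptyset$, and by the paragraph above $X^H=\emptyset$.

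Next I would treat $H\in\CF(N)$, so that the projection $G\to G/N$ restricts to an injection $H\hookrightarrow G/N$. The key observation is that both $\res_H\CU$ and $\res_H\CU^N$ are complete $H$-universes: the first because $\CU$ is complete as a $G$-universe and every $H$-representation embeds into $\res_H\ind_{H\le G}W$ for an appropriate $W$; the second because any $H$-representation~$V$ can be induced up along the injection $H\hookrightarrow G/N$ to yield a $G/N$-representation (equivalently, a $G$-representation on which $N$ acts trivially), which embeds in~$\CU$ with image necessarily contained in~$\CU^N$, and then restricting back to $H$ gives the required embedding of~$V$ into $\res_H\CU^N$. The $H$-fixed points $\CL(\CU,\CU^N)^H=\CL^H(\res_H\CU,\res_H\CU^N)$ are then the space of $H$-equivariant linear isometries between two complete $H$-universes, which is contractible by the classical result of May (see \cite{LMS}*{Lemma II.1.5} and \cite{MM}*{Section~V.1}, also recalled as Lemma~\ref{lem:LMS-II.1.5}). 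Consequently $X^H$ is non-equivariantly contractible.

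The main obstacle here is really conceptual rather than technical: one has to identify $\CU^N$, viewed as an $H$-representation via $H\hookrightarrow G/N$, as a complete $H$-universe. Once this is in hand, the two cases fall out from the standard contractibility-of-isometries result and a short argument with faithfulness of~$\CU$.
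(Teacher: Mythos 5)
Your proof is correct and follows essentially the same route as the paper: the paper reduces the statement to Lemmas~\ref{lem:LMS-II.1.5} and~\ref{lem:LMS-II.2.4(ii)} (cited from~\cite{LMS}), and you invoke the first while supplying a direct, self-contained argument for the content of the second (the nonemptiness criterion $\CL(\CU,\CU^N)^H\neq\emptyset\iff H\in\CF(N)$) via the faithfulness of the regular representation in one direction and the completeness of $\res_H\CU^N$ as an $H$-universe in the other.
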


This proposition is a consequence of the following two lemmas.

\begin{lemma}[\cite{LMS}*{Lemma~II.1.5 on page~60}]
\label{lem:LMS-II.1.5}
Let $U$ be a $G$-representation and $\CV$ be a $G$-universe.
Let $H$ be a subgroup of~$G$.
If $\CL(U,\CV)^H\neq\emptyset$ then $\CL(U,\CV)$ is $H$-equivariantly contractible, and hence $\CL(U,\CV)^H$ is contractible.
\end{lemma}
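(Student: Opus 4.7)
Fix $f_0\in\CL(U,\CV)^H$, which exists by hypothesis. The plan is to construct an $H$-equivariant strong deformation retraction of $\CL(U,\CV)$ onto the constant map~$f_0$; restricting to $H$-fixed points will then yield the second assertion at once.

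The basic geometric tool is the \emph{orthogonal-image interpolation}: whenever $g,g'\in\CL(U,\CV)^H$ satisfy $g(U)\perp g'(U)$ in~$\CV$, the formula
\[
t\longmapsto\cos(\pi t/2)\,g+\sin(\pi t/2)\,g'
\]
defines a continuous path in $\CL(U,\CV)^H$ from~$g$ to~$g'$, since orthogonality of the images combined with $\cos^2+\sin^2=1$ makes each convex combination an isometry.

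The preliminary step is to produce an $H$-equivariant isometric isomorphism $\CV\cong\bigoplus_{k\geq 1}\CV$. The $G$-universe axiom implies that every $G$-irreducible appearing in~$\CV$ does so with infinite multiplicity; restricting to~$H$ and decomposing $G$-isotypical components into $H$-isotypical ones, each $H$-irreducible appearing in $\CV|_H$ likewise appears with infinite multiplicity, so the $H$-isotypical decomposition yields $\CV|_H\cong\bigoplus_{k\geq 1}\CV|_H$ as $H$-representations. Let $\MOR{\iota_k}{\CV}{\CV}$, $k\in\IN$, denote the $H$-equivariant isometries given by the inclusions of the summands; these have pairwise orthogonal images.

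The contraction is then built in three concatenated stages:
\begin{enumerate}
\item[(a)] For $t\in[0,1/3]$, deform $f\in\CL(U,\CV)$ to $\iota_1\circ f$ along a path $T_t$ of $H$-equivariant isometric embeddings $\CV\to\CV$ with $T_0=\id_\CV$ and $T_1=\iota_1$.
\item[(b)] For $t\in[1/3,2/3]$, apply the orthogonal-image interpolation to go from $\iota_1\circ f$ to $\iota_2\circ f_0$, using that $\iota_1(\CV)\perp\iota_2(\CV)$; the endpoint is constant in~$f$.
\item[(c)] For $t\in[2/3,1]$, reverse a path analogous to~(a) to deform $\iota_2\circ f_0$ back to~$f_0$.
\end{enumerate}
Stage~(b) is immediate once the decomposition is in hand, and stage~(c) is symmetric to~(a).

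The hard part will be stage~(a): constructing the path $T_t$ from $\id_\CV$ to~$\iota_1$ inside the space of $H$-equivariant isometric embeddings $\CV\to\CV$. A single orthogonal-image interpolation is unavailable because $\id_\CV$ is surjective while~$\iota_1$ has proper image, so no two of their images can be orthogonal. The standard resolution is a Hilbert-hotel swindle that exploits the full infinite family $(\iota_k)$: using orthogonal-image interpolations inside $\CL(\CV,\CV)^H$ one successively swaps $\iota_1$ with $\iota_2$, then $\iota_2$ with $\iota_3$, and so on, concatenated on a shrinking sequence of subintervals of $[0,1]$, with uniform convergence at the endpoint producing the required continuous $H$-equivariant path. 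The careful parameterization to ensure continuity and $H$-equivariance of the telescoping concatenation is the core technical content of the proof.
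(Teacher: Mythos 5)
The paper does not supply its own argument for this lemma; it is quoted directly from Lewis--May--Steinberger (Lemma~II.1.5), so there is no internal proof to compare against. Your plan follows the standard orthogonal-interpolation strategy, and the preliminary decomposition and stage~(b) are correct. Two small simplifications are available: since $\CV$ is a $G$-universe, a $G$-equivariant (not merely $H$-equivariant) isometry $\CV\cong\bigoplus_{k\geq1}\CV$ already exists and can simply be restricted to $H$, so the passage to $H$-isotypes is unnecessary; and stage~(c) is superfluous, because the endpoint $\iota_2\circ f_0$ of stage~(b) is already an $H$-fixed point of $\CL(U,\CV)$, and a contraction onto any $H$-fixed point is all the lemma asks for.

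The genuine gap is in stage~(a), which you rightly identify as the crux, and the swindle you sketch does not work as stated. A concatenation of orthogonal-image interpolations $\iota_1\to\iota_2\to\iota_3\to\cdots$ escapes to infinity rather than returning to the identity: for a fixed $v\in\CV$ the vectors $\iota_k(v)$ are pairwise orthogonal of constant norm, hence converge weakly to zero, and there is no continuous endpoint equal to $\id_\CV$. The correct telescope must instead be anchored to the identity on larger and larger pieces. Writing $\CV\cong\bigoplus_{k\geq1}\CV_k$ with each $\CV_k\cong\CV$, one may take $T_{1-2^{-n}}$ to be the identity on $\CV_1\oplus\cdots\oplus\CV_n$ and the ``even shift'' $\CV_{n+j}\to\CV_{n+2j}$ on the rest; then $T_{1-2^{-n}}$ and $T_{1-2^{-(n+1)}}$ agree on $\CV_1\oplus\cdots\oplus\CV_n$ and have orthogonal images on $\bigoplus_{k>n}\CV_k$, so that an orthogonal-image interpolation joins them there while fixing the first $n$ summands, and the concatenation converges continuously to $\id_\CV$ at $t=1$ precisely because each stage pins down more of $\CV$. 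Note that $T_0$ is then the doubling embedding $\CV_j\to\CV_{2j}$ rather than $\iota_1$, and its odd companion $\CV_j\to\CV_{2j-1}$ should replace $\iota_2$ in stage~(b). Since you explicitly defer this step as ``the core technical content of the proof,'' the proposal is incomplete exactly where completeness was required, and the sketch given would not converge as described.
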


\begin{lemma}[\cite{LMS}*{Lemma~II.2.4(ii) on page~63}]
\label{lem:LMS-II.2.4(ii)}
Let $\CU$ be a complete $G$-universe.
Let $N$ be a normal subgroup of~$G$.
Then, for each subgroup $H$ of~$G$, $\CL(\CU,\CU^N)^H\neq\emptyset$ if and only if $H\in\CF(N)$.
\end{lemma}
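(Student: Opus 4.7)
The plan is to prove the two implications separately. For the easy direction I will exploit that a complete $G$-universe carries a faithful $G$-action; for the harder converse I will show that $\CU^N$ is itself a complete $H$-universe and then invoke the isotypical-decomposition uniqueness for countably infinite-dimensional orthogonal representations of a finite group.

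For $\CL(\CU,\CU^N)^H\neq\emptyset\Rightarrow H\in\CF(N)$, pick $f\in\CL(\CU,\CU^N)^H$ and $h\in H\cap N$. For every $u\in\CU$, $H$-equivariance of $f$ gives $f(hu)=hf(u)$, while $h\in N$ together with $f(u)\in\CU^N$ gives $hf(u)=f(u)$. Since $f$ is a linear isometry, it is injective, so $hu=u$ for all $u\in\CU$. Because $\CU$ is a complete $G$-universe it contains the regular representation $\IR[G]$, on which $G$ acts faithfully, so $h=1$ and thus $H\cap N=1$.

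For the converse, assume $H\cap N=1$, so that the projection $G\twoheadrightarrow G/N$ restricts to an injective homomorphism $H\hookrightarrow G/N$; in what follows I regard $H$ as a subgroup of $G/N$ via this inclusion. The main step is to show that $\CU^N$ is a complete $H$-universe. Given a finite-dimensional $H$-representation~$V$, form $W=\ind_{H\leq G/N}V$ and inflate it to a $G$-representation with trivial $N$-action via $G\twoheadrightarrow G/N$. Completeness of $\CU$ gives a $G$-equivariant isometric embedding $W\hookrightarrow\CU$ whose image lies in $\CU^N$ because $N$ acts trivially on~$W$. Restricting to $H$, the representation $\res_{H\leq G/N}W$ contains $V$ as a direct summand, so $V$ embeds $H$-equivariantly in~$\CU^N$. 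Applying the same argument with $\bigoplus_\IN V$ in place of $V$, which still embeds in $\CU$ by Definition~\ref{def:universe}, shows that $V$ occurs in $\CU^N$ with infinite multiplicity.

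Finally, by the isotypical-decomposition result cited in Definition~\ref{def:universe} (\cite{BtD}*{Proposition~III.1.7 on page~128}), any two countably infinite-dimensional orthogonal $H$-representations in which every irreducible $H$-representation appears with infinite multiplicity are $H$-equivariantly isomorphic. Both $\res_{H\leq G}\CU$ and $\CU^N$ satisfy this property, the former because $\CU$ is a complete $G$-universe and every finite-dimensional $H$-representation embeds in $\ind_{H\leq G}V\hookrightarrow\CU$, and the latter by the previous paragraph. Hence there is an $H$-equivariant linear isomorphism $\res_{H\leq G}\CU\cong\CU^N$, providing the desired element of $\CL(\CU,\CU^N)^H$. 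The one delicate point of the argument is the first step of the converse, namely producing the embedding of $W$ into $\CU^N$; this is precisely where the hypothesis $H\cap N=1$ is essential, as it is what allows $H$ to be viewed as a subgroup of $G/N$ so that $\ind_{H\leq G/N}V$ makes sense as a representation with trivial $N$-action.
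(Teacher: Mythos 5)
Your proof is correct. Note that the paper itself gives no argument for this lemma: it simply cites \cite{LMS}*{Lemma~II.2.4(ii)}, so your write-up supplies a self-contained proof of the cited fact. Your two directions are the standard ones and both check out: the easy implication uses that an $H$-fixed point of $\CL(\CU,\CU^N)$ is an $H$-equivariant isometry, hence injective, so any $h\in H\cap N$ acts trivially on all of $\CU$, and faithfulness of $\IR[G]\subseteq\CU$ forces $h=1$; the converse shows that when $H\cap N=1$ every finite-dimensional $H$-representation $V$ occurs in $\CU^N$ with countably infinite multiplicity, by inducing $V$ up to $G/N$, inflating to a $G$-representation with trivial $N$-action, embedding it (and its countable multiple, using the universe axiom) into $\CU$, and observing the image lands in $\CU^N$ and contains $V$ as an $H$-summand; then the isotypical-decomposition uniqueness already invoked in Definition~\ref{def:universe} gives an $H$-equivariant isometric isomorphism $\res_{H\leq G}\CU\cong\CU^N$, i.e.\ a point of $\CL(\CU,\CU^N)^H$. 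The only place you are slightly terse is the claim that every irreducible $H$-representation also appears with infinite multiplicity in $\res_{H\leq G}\CU$: you spell out only the single embedding $V\subseteq\res_{H\leq G}\ind_{H\leq G}V\subseteq\CU$, but the same universe trick applied to $\bigoplus_\IN\ind_{H\leq G}V$ closes this immediately, so it is a cosmetic rather than a substantive gap.
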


\begin{remark}
In fact, by~\cite{Waner}*{Theorem~4.9(iii) on page~359} and~\cite{LMS}*{Remarks~VI.2.19 on page~322}, the space $\CL(\CU,\CU^N)$ has the $G$-homotopy type of a $G$-CW-complex; see also~\cite{LMS}*{Lemma~II.2.11 on page~66}.
Therefore the map~$\cw$ of Proposition~\ref{prop:EFN} is even a $G$-homotopy equivalence.
\end{remark}

\begin{proof}[Proof of Proposition~\ref{prop:univchange}]
We first explain how the homotopy inverse~$\kappa$ is defined.
Notice that the map~$\id\sma\iota_*$ in~\eqref{eq:id-iota} factors as
\begin{equation}
\label{eq:id-iota-factors}
\begin{tikzcd}
E\CF(N)_+\sma\Q[h]{\CU^N\!}(\spec{X})
\arrow{r}{\alpha\sma\id}
\arrow{dr}[below]{\id\sma\iota_*}
&
E\CF(N)_+\sma\CL(\CU^N,\CU)_+\sma\Q[h]{\CU^N\!}(\spec{X})
\arrow{d}{\id\sma\Phi_{\CU^N,\CU}}
\\
&
E\CF(N)_+\sma\Q[h]{\CU  }(\spec{X})
\mathrlap{\,,}
\end{tikzcd}
\end{equation}
where $\MOR{\alpha}{E\CF(N)_+}{E\CF(N)_+\sma\CL(\CU^N,\CU)}$ is the composition
\[
E\CF(N)_+
\cong
E\CF(N)_+\sma\{\iota\}_+
\xhookrightarrow{\quad}
E\CF(N)_+\sma\CL(\CU^N,\CU)_+
\]
and $\Phi_{\CU^N,\CU}$ is the map from Corollary~\ref{cor:fun-in-U}.

Now define a pointed $G$-map $\MOR{\beta}{E\CF(N)_+}{E\CF(N)_+\sma\CL(\CU,\CU^N)_+}$ as
\[
E\CF(N)_+
\xrightarrow{(\id,\cw)_+}
\bigl(E\CF(N)\times\CL(\CU,\CU^N)\bigr)_+
\cong
E\CF(N)_+\sma\CL(\CU,\CU^N)_+
\,,
\]
where $\MOR{\cw}{E\CF(N)}{\CL(\CU,\CU^N)}$ is the chosen $G$-CW-approximation from Proposition~\ref{prop:EFN}.
Then~$\kappa$ is defined as the composition
\begin{equation}
\label{eq:kappa}
\begin{tikzcd}
E\CF(N)_+\sma\Q[h]{\CU  }(\spec{X})
\arrow{r}{\beta\sma\id}
\arrow{dr}[below]{\kappa}
&
E\CF(N)_+\sma\CL(\CU,\CU^N)_+\sma\Q[h]{\CU}(\spec{X})
\arrow{d}{\id\sma\Phi_{\CU,\CU^N}}
\\
&
E\CF(N)_+\sma\Q[h]{\CU^N\!}(\spec{X})
\mathrlap{\,,}
\end{tikzcd}
\end{equation}
in complete analogy with the factorization of~$\id\sma\iota_*$ in~\eqref{eq:id-iota-factors}.

We now show that the composition $\kappa\circ(\id\sma\iota_*)$ is $G$-homotopic as a map of orthogonal $G$-spectra to the identity.
Consider the following diagram, in which we use the abbreviation $E=E\CF(N)$.
\[
\begin{tikzcd}[column sep=large]
E_+\sma\Q[h]{\CU^N\!}(\spec{X})
\arrow{d}[swap]{\alpha\sma\id}
\arrow[bend left=15]{rd}{\id\sma\iota_*}
\\
E_+\sma\CL(\CU^N,\CU)_+\sma\Q[h]{\CU^N\!}(\spec{X})
\arrow{d}[swap]{\beta\sma\id\sma\id}
\arrow{r}{\id\sma\Phi}
&
E_+\sma\Q[h]{\CU}(\spec{X})
\arrow{d}{\beta\sma\id}
\arrow[rounded corners,
       to path={    (\tikztostart.east)
                 -| +(1.5,-1.5) [at end]\tikztonodes
                 |- (\tikztotarget.east)}
      ]{dd}{\kappa}
\\
E_+\sma\CL(\CU,\CU^N)_+\sma\CL(\CU^N,\CU)_+\sma\Q[h]{\CU^N\!}(\spec{X})
\arrow{d}[swap]{\id\sma\circ\sma\id}
\arrow{r}{\id\sma\id\sma\Phi}
&
E_+\sma\CL(\CU,\CU^N)_+\sma\Q[h]{\CU}(\spec{X})
\arrow{d}{\id\sma\Phi}
\\
E_+\sma\CL(\CU^N,\CU^N)_+\sma\Q[h]{\CU^N\!}(\spec{X})
\arrow[dashed, xshift=1.5ex]{d}
\arrow{r}{\id\sma\Phi}
\arrow[draw=none]{ur}[description]{\framed{A}}
&
E_+\sma\Q[h]{\CU^N\!}(\spec{X})
\\
E_+\sma\{\id_{\CU^N}\}_+\sma\Q[h]{\CU^N\!}(\spec{X})
\arrow{u}{\id\sma\incl\sma\id}
\arrow[bend right=15]{ru}[swap, pos=.6]{\id\sma\text{canonical iso}}[description]{\cong}
\arrow[draw=none, pos=.44]{ur}[description]{\framed{B}}
\end{tikzcd}
\]
The two inner diagrams with solid arrows labeled \framed{A} and~\framed{B} commute because of Corollary~\eqref{cor:fun-in-U}; compare \eqref{eq:fun-in-U-assoc} and~\eqref{eq:fun-in-U-unit}.
The other three remaining inner diagrams commute by definition and direct inspection.

By Lemma~\ref{lem:LMS-II.1.5} the bottom left solid vertical map~$\id\sma\incl\sma\id$ has a $G$-homotopy inverse, indicated by the dashed arrow, and therefore the bottom triangle with the dashed arrow commutes up to $G$-homotopy.
Since the bottom diagonal map is an isomorphism, the composition of it and all the downward left vertical maps is then $G$-homotopic to a $G$-map of the form
\[
\MOR{\gamma\sma\id}%
{E\CF(N)_+\sma\Q[h]{\CU^N\!}(\spec{X})}%
{E\CF(N)_+\sma\Q[h]{\CU^N\!}(\spec{X})}
\,,
\]
and therefore it is $G$-homotopic to the identity since this is the case for every self $G$-map~$\gamma$ of~$E\CF(N)$.

The proof that the other composition $(\id\sma\iota_*)\circ\kappa$ is also $G$-homotopic to the identity is completely analogous: one considers the same diagram but with the roles of $\CU^N$ and~$\CU$, $\alpha$ and~$\beta$, and $\id\sma\iota_*$ and~$\kappa$ interchanged.
\end{proof}

\begin{remark}
\label{rem:kappa}
In the proofs of Proposition~\ref{prop:norm} and Key Lemma~\ref{lem:key} we use that the definition of~$\kappa$ in~\eqref{eq:kappa} implies immediately the following equality: 
\[
\MOR{\Phi_{\CU,\CU^N} \circ (\cw \sma \id) = \pr_2 \circ \kappa}%
{E\CF(N)_+\sma\Q[h]{\CU}(\spec{X})}{\Q[h]{\CU^N\!}(\spec{X})}
\,.
\]
\end{remark}

%%%%%%%%%%%%%%%%%%%%%%%%%%%%%%%%%%%%%%%%%%%%%%%%%%%%%%%%%%%%%%%%%%%%%%%%%%%%%

\section{Transfer maps}
\label{sec:transfer}

In this section we describe transfer maps in the category of equivariant orthogonal spectra.
The bifunctorial replacement construction from Section~\ref{sec:Q} is used in order to ensure uniqueness of transfer maps up to homotopy.
The geometric heart is the Pontryagin-Thom construction, and the corresponding results in the stable homotopy category are of course well-known; see for example
\cite{tD-transf-rep}*{Section~7.6 pages~188--193}
and
\cite{LMS}*{Section~II.5 on pages~84--88}.
On first reading the reader should probably focus on the case where $H$ is trivial and $\CU$ is a complete universe, which is all that is needed for the definition of the Adams map in Construction~\ref{constr:adams}.
The more general case is needed for Key Lemma~\ref{lem:key} in the proof of the Adams isomorphism; see Remark~\ref{rem:not-large}.

First some notation.
A $\Gamma$-$H$-space is a space $A$ equipped with a left $\Gamma$-action and a right $H$-action such that the two actions commute, i.e., $(\gamma a) h = \gamma (a h )$.
For a pointed $\Gamma$-$H$-space $A$ and a pointed left~$(\Gamma \times H)$-space $X$ we write \[ A \sma_H X \] for the left $\Gamma$-space obtained from $A \sma X$ by identifying $ah \sma x$ with $a \sma hx$, and equipped with the diagonal $\Gamma$-action.
We use the same notation if $X$ is a spectrum.

Given a $\Gamma$-$H$-map $\MOR{p}{A}{B}$ of finite $\Gamma$-$H$-sets and an orthogonal $(\Gamma \times H)$-spectrum $\spec{X}$ one obtains a $\Gamma$-map of orthogonal $\Gamma$-spectra
\[
\MOR{p \sma \id}{A_+ \sma_H \spec{X}}{B_+ \sma_H \spec{X}}
\]
which is clearly natural in $\spec{X}$.
In this section we construct and study a natural transformation
\[
\MOR{p^!}{B_+ \sma_H \spec{X}}{\Q[h]{\CU}\bigl(A_+ \sma_H \spec{X}\bigr)}
\]
in the opposite direction, that we call the \emph{transfer map} associated with $p$, $\spec{X}$ and~$\CU$. 
The following condition relating $p$ and $\CU$ will ensure the existence of~$p^!$ for all~$\spec{X}$.

\begin{definition}[Large enough universes]
\label{def:large}
Given a $\Gamma$-$H$-map $\MOR{p}{A}{B}$ between finite $\Gamma$-$H$-sets we say that a (not necessarily complete) $\Gamma$-universe $\CU$ is \emph{large enough for}~$p$ if there exists an injective $\Gamma$-$H$-equivariant map $\MOR{i}{A}{\CU \times B}$ such that
\[
\begin{tikzcd}
\hspace{0ex} &  \CU \times B \arrow{d}{\pr_2}\\
 A \arrow{ur}{i} \arrow[swap]{r}{p} & B 
\end{tikzcd}
\]
commutes.
Here we always equip $\CU$ with the trivial right $H$-action.
\end{definition}

Notice that $\CU$ being large enough for $p$ also imposes a condition on~$p$. 
If there exists a $\Gamma$-universe $\CU$ that is large enough for~$p$, then
\begin{equation}
\label{eq:large}
\text{for all $a\in A$ the restriction~$p_{|{aH}}$ to the right $H$-orbit of $a$ is injective.}
\end{equation}
A complete $\Gamma$-universe is large enough for all~$p$ satisfying~\eqref{eq:large}:
for $i$ use the map $A \TO A/H \times B$, $a \mapsto (aH,p(a))$ together with a $\Gamma$-equivariant embedding of $A/H$ into $\CU$.

\begin{theorem}
\label{thm:transfer}
Suppose that $\MOR{p}{A}{B}$ is a $\Gamma$-$H$-map of finite $\Gamma$-$H$-sets.
Let $\CU$ be a $\Gamma$-universe that is large enough for $p$.
\begin{enumerate}

\item
\label{i:transfer-existence}
\emph{(Existence)}
For every orthogonal $(\Gamma \times H)$-spectrum $\spec{X}$ Construction~\ref{constr:shriek} yields a $\Gamma$-map
\[
\MOR{p^!}{B_+ \sma_H \spec{X}}{\Q[h]{\CU}\bigl(A_+ \sma_H \spec{X}\bigr)} 
\]
which is natural in $\spec{X}$, i.e., $p^!$ is a natural transformation of functors from $\Sp^{\Gamma \times H}$ to $\Sp^{\Gamma}$.

\item
\label{i:transfer-uniqueness}
\emph{(Uniqueness)}
The natural transformation $p^!$ depends on the choice of a certain embedding~$u$, see \eqref{eq:embedding}, and hence $p^! = p^!_{u}$. 
But for different choices $u$ and~$u'$ there is a $\Gamma$-equivariant homotopy between $p^!_{u}$ and $p^!_{u'}$ that is natural in $\spec{X}$.

% \item
% \label{i:transfer-identity}
% If $p$ is the identity $\MOR{\id}{A}{A}$ then one can choose~$u$ such that $\id^!_u$ coincides with the replacement map 
% \[
% \MOR{\fr}{A_+ \sma_H \spec{X}}{\Q[h]{\CU}\bigl(A_+ \sma_H \spec{X}\bigr)}
% \]
% from Theorem~\ref{thm:Q}.

\item
\label{i:transfer-pullback}
\emph{(Pullbacks)}
Suppose 
\[
\begin{tikzcd}
\overline{A}
\arrow{r}{\overline{p}}
\arrow{d}[swap]{a}
& 
\overline{B}
\arrow{d}{b}
\\
A
\arrow{r}[swap]{p}
&
B
\end{tikzcd}
\]
is a pullback diagram of $\Gamma$-$H$-sets.
Then $\CU$ is also large enough for $\overline{p}$, and for suitable choices of $p^!$ and $\overline{p}^!$ the diagram
\[
\begin{tikzcd}
\ds \overline{B}_+ \sma_H \spec{X}
\arrow{r}{\overline{p}^!}
\arrow{d}[swap]{b \sma \id}
&
\ds\Q[h]{\CU}\bigl(\overline{A}_+ \sma_H \spec{X}\bigr)
\arrow{d}{\Q[h]{\CU}(a \sma \id)}
\\
\ds B_+ \sma_H \spec{X}
\arrow{r}[swap]{p^!}
&
\ds\Q[h]{\CU}\bigl(A_+ \sma_H \spec{X}\bigr)
\end{tikzcd}
\]
commutes.

\item
\label{i:transfer-ABC}
\emph{(Products)}
Suppose that $C$ is a third finite $\Gamma$-$H$-set and assume that the right $H$-action on $A$ and $B$ is trivial.
Then $\CU$ is also large enough for $p \times \id_C$, and we can choose the transfer $(p \times \id_C)^!$ associated with 
\[
\MOR{p \times \id_C}{A \times C}{B \times C}
\AND
\spec{X}
\]
such that under the obvious isomorphisms it coincides with the transfer $p^!$ associated with 
\[
\MOR{p}{A}{B}
\AND
C_+ \sma_H \spec{X}
\,.
\]

\item
\label{i:transfer-triangle}
\emph{(Change of universes)}
If $\MOR{f}{\CU}{\CV}$ is a $\Gamma$-equivariant isometry between $\Gamma$-universes, then for suitable choices of transfers the diagram
\[
\begin{tikzcd}[row sep=0ex, column sep=large]
&
\ds\Q[h]{\CU}\bigl(A_+ \sma_H \spec{X}\bigr)
\arrow{dd}{f_*}
\\
\ds B_+ \sma_H \spec{X}
\arrow{ur}{p^!}
\arrow{dr}[swap]{p^!}
\\
&
\ds\Q[h]{\CV}\bigl(A_+ \sma_H \spec{X}\bigr)
\end{tikzcd}
\]
commutes.
Here $f_*$ comes from Theorem~\ref{thm:Q}\ref{i:fun}, and if $\CU$ is complete then $f_*$ is a $\Gamma$-homotopy equivalence by Corollary~\ref{cor:Q-2-universes}.

\item
\label{i:transfer-res}
\emph{(Change of groups)}
If $\MOR{\alpha}{\Gamma'}{\Gamma}$ is a group homomorphism, then for a suitable choice of the transfer $(\res_\alpha p)^!$ associated with the map of $\Gamma'$-$H$-sets
\(
\MOR{\res_\alpha p}{\res_\alpha A}{\res_\alpha B}
\),
the orthogonal $(\Gamma'\times H)$-spectrum $\res_\alpha\spec{X}$, and the $\Gamma'$-universe~$\res_\alpha\CU$, the diagram
\[
\begin{tikzcd}[column sep=large]
\ds\res_\alpha B_+ \sma_H \res_\alpha \spec{X}
\arrow{r}{(\res_\alpha p)^!}
&
\ds\Q[h]{\res_\alpha\CU}\bigl(\res_\alpha A_+ \sma_H \res_\alpha \spec{X}\bigr)
\\
\ds\res_\alpha\bigl(B_+\sma_H\spec{X}\bigr)
\arrow{r}[swap]{\res_\alpha (p^!)}
\arrow{u}{\cong}
&
\ds\res_\alpha\Q[h]{\CU}\bigl(A_+\sma_H\spec{X}\bigr)
\arrow{u}[swap]{\cong}
\end{tikzcd}
\]
commutes.
Both vertical maps are isomorphisms. 
The map on the left is the isomorphism from Example~\ref{eg:res-smash}, and the map on the right is induced from that isomorphism composed with the natural isomorphism from Theorem~\ref{thm:Q}\ref{i:res-Q}.

\end{enumerate}
\end{theorem}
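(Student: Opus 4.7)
For part~\ref{i:transfer-existence} my plan is to construct $p^!$ via a Pontryagin-Thom collapse. Writing the embedding from Definition~\ref{def:large} as $i=(u,p)$ for a $\Gamma$-$H$-equivariant map $\MOR{u}{A}{\CU}$, with $\CU$ carrying the trivial right $H$-action, pick $V\in\CP(\CU)$ containing the finite set $u(A)$. Condition~\eqref{eq:large} guarantees that $i$ is injective on every $H$-orbit, so for each $b\in B$ the points $\SET{u(a)}{a\in p^{-1}(b)}$ are pairwise distinct in~$V$. Choosing small pairwise disjoint open disks around them and performing the standard Pontryagin-Thom collapse yields a $\Gamma$-$H$-equivariant pointed map
\[
\MOR{c=c_V^u}{S^V\sma B_+}{S^V\sma A_+}
\]
which, smashed with~$\spec{X}$, combined with the structure maps of~$\spec{X}$, and pushed through the passage to $H$-orbits, assembles level-by-level into a $\Gamma$-map of orthogonal $\Gamma$-spectra
\[
B_+\sma_H\spec{X}\TO\Q{V}(A_+\sma_H\spec{X})
.
\]
Composing with the natural map~$\Q{V}(-)\TO\Q{\CU}(-)$ provided by Proposition~\ref{prop:colim->Lan} gives~$p^!=p^!_u$, manifestly natural in~$\spec{X}$.

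For uniqueness (part~\ref{i:transfer-uniqueness}), given two admissible choices $u,u'$ one first stabilizes so that both embeddings take values in a common $V'\in\CP(\CU)$ with ample room, then interpolates by a path of $\Gamma$-$H$-equivariant maps $A\TO\CU$ whose composite with $p$ stays injective on $H$-orbits. Such a path exists by a contractibility argument analogous to Lemma~\ref{lem:LMS-II.1.5}, since the relevant $H$-fixed space of equivariant embeddings of the finite set~$A$ into a large enough subrepresentation of~$\CU$ is nonempty and contractible. Each intermediate embedding produces a Pontryagin-Thom collapse, and the continuity of this family in the embedding parameter delivers via the bifunctoriality of Corollary~\ref{cor:fun-in-U} a $\Gamma$-equivariant homotopy between $p^!_u$ and $p^!_{u'}$ natural in~$\spec{X}$. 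I expect this to be the main technical obstacle, as keeping the homotopy simultaneously equivariant, natural in~$\spec{X}$, and compatible with the bifunctorial structure on~$\Q{(-)}$ is precisely what the replacement construction of Section~\ref{sec:Q} was designed to accommodate.

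Parts~\ref{i:transfer-pullback}--\ref{i:transfer-res} are then verified by choosing auxiliary embeddings coherently. For pullbacks, the embedding $\overline{i}=(u\circ a,\overline{p})$ of~$\overline{A}$ into~$\CU\times\overline{B}$ shows that~$\CU$ is large enough for~$\overline{p}$, and the disks around $\overline{A}\subset V\times\overline{B}$ can be taken as preimages under $\id_V\times b$ of the chosen disks around $A\subset V\times B$; the square commutes on the nose at the level of collapse maps, hence after applying $\Q{\CU}$ and smashing with~$\spec{X}$. For products, the embedding $(u\circ\pr_A,p\times\id_C)$ yields a collapse map equal to $c_V^u$ smashed with~$C_+$, producing the claimed identification after passing to $H$-orbits. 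For change of universes (part~\ref{i:transfer-triangle}), the isometry $f$ transports~$u$ to~$f\circ u$, and the triangle commutes by functoriality of~$\Q{(-)}$ in the universe (Theorem~\ref{thm:Q}\ref{i:fun}). For change of groups (part~\ref{i:transfer-res}), the embedding $i$ restricts to a $\Gamma'$-$H$-equivariant embedding $\res_\alpha A\TO\res_\alpha\CU\times\res_\alpha B$, and the Pontryagin-Thom collapse is manifestly compatible with restriction, so the statement follows from the natural isomorphism of Theorem~\ref{thm:Q}\ref{i:res-Q}.
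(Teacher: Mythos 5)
Your construction of $p^!$ via the Pontryagin--Thom collapse, and your verifications of parts~\ref{i:transfer-pullback} through~\ref{i:transfer-res} by choosing coherent embeddings, agree with the paper's proof (Constructions~\ref{constr:pt} and~\ref{constr:shriek} and the arguments that follow). The paper makes the embedding explicit via the $\arctan$-tubular-neighborhood formula~\eqref{eq:embedding}, where you speak of ``small pairwise disjoint disks,'' but this is only a difference in level of detail.

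The uniqueness argument~\ref{i:transfer-uniqueness} is where you diverge, and it contains a gap. You appeal to contractibility of the space of $\Gamma$-$H$-equivariant embeddings of the finite set $A$ into a large subrepresentation of $\CU$, citing an argument ``analogous to Lemma~\ref{lem:LMS-II.1.5}.'' That lemma concerns the space $\CL(U,\CV)$ of \emph{linear isometries}; it does not apply to, and is not obviously analogous to, the space of equivariant embeddings of a finite set into $U\times B$ over $B$. The latter is an open subset of a vector space, and while one can believe it is highly connected after stabilizing, contractibility is a nontrivial claim that you do not establish and that the paper never needs. The paper's argument is more elementary and does not pass through any contractibility statement: it first shows (Fact~\ref{fact:shrieki}) that shrinking $\epsilon$ and (Fact~\ref{fact:shriekii}) replacing $u_{j,\epsilon}$ by $\incl_* u_{j,\epsilon}\simeq u_{\incl\circ j,\epsilon}$ do not change the transfer up to homotopy, so both $j$ and $j'$ may be taken to land in $V=U+U'$; if $U\cap U'=0$ one interpolates linearly, and otherwise one uses the universe axiom to embed a second orthogonal copy of $V$ and homotope one of the $j$'s into it before interpolating. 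You should either carry out this explicit stabilize-and-interpolate argument, or else actually prove the connectivity statement you are relying on --- but the cited lemma will not do it for you. Note also that you need Fact~\ref{fact:shriekii} (compatibility of collapse maps with stabilization $\Q{U}\to\Q{V}\to\Q{\CU}$) for the step where you ``first stabilize,'' and this compatibility deserves an explicit check rather than being absorbed into ``the bifunctoriality of Corollary~\ref{cor:fun-in-U}.''
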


In the remainder of this section, we describe two important examples of Theorem~\ref{thm:transfer}, then we explain the definition of the transfer map~$p^!$ in Constructions~\ref{constr:pt} and~\ref{constr:shriek}, collect some facts about these constructions, and finally complete the proof of Theorem~\ref{thm:transfer}.
But first a remark about functoriality of transfers.

\begin{remark}[Functoriality in the stable homotopy category]
If $\spec{X}$ is good then in the stable homotopy category of $\Gamma$-spectra~$\Ho\Sp^\Gamma$ the replacement maps $\fr$ are invertible by Theorem~\ref{thm:Q}\ref{i:repl}.
If we define $p^{!!} = {\fr}^{-1} \circ p^!$, then using Facts \ref{fact:ptiii} and~\ref{fact:shriekii} it is easy to check that $(q\circ p)^{!!}=p^{!!} \circ q^{!!}$ and $\id^{!!}=\id$ in~$\Ho\Sp^\Gamma$.
% So, if we use a complete $\Gamma$-universe, the assignments $A\mapsto A_+\sma\spec{X}$, $p\mapsto p^{!!}$ define a contravariant functor from the category of finite $\Gamma$-$H$-sets and surjective $\Gamma$-$H$-maps satisfying~\eqref{eq:large} to~$\Ho\Sp^\Gamma$.
\end{remark}

The following two special cases of Theorem~\ref{thm:transfer} are used later in the construction of the Adams map and in the proof of the Adams isomorphism.

\begin{example}
\label{eg:transfer-c}
If $H=1$, $\CU$ is a complete $\Gamma$-universe, $A$ is a finite $\Gamma$-set, and $\MOR{p=c}{A}{\pt}$ is the projection to a point, then we obtain for every orthogonal $\Gamma$-spectrum~$\spec{X}$ a $\Gamma$-map
\[
\MOR{c^!}{\spec{X}}{\Q[h]{\CU}(A_+\sma\spec{X})}
\,.
\]
If $\spec{X}$ is good, then so is~$A_+\sma\spec{X}$ by Lemma~\ref{lem:good}\ref{i:sma-map-good}, and after taking homotopy groups of the underlying non-equivariant spectra the transfer map fits into the following commutative diagram.
\[
\begin{tikzcd}[column sep=large]
\pi_*^1(\spec{X})
\arrow{r}{\pi_*^1(c^!)}
\arrow{d}[swap]{\diag}
&
\pi_*^1\bigl(\Q[h]{\CU}(A_+\sma\spec{X})\bigr)
\\
\ds\bigoplus_A\pi_*^1(\spec{X})
\arrow{r}[description]{\cong}
&
\pi_*^1(A_+\sma\spec{X})
\arrow{u}[description]{\cong}[right]{\ \pi_*^1(\fr)}
\end{tikzcd}
\]
Here the left-hand vertical map is the inclusion of the diagonal, the right-hand vertical isomorphism is induced by the replacement map from Theorem~\ref{thm:Q}, and the bottom isomorphism is induced by the (non-equivariant) inclusions of the wedge summands in $A_+ \sma \spec{X} \cong \bigvee_{a \in A} \spec{X}$.
The commutativity follows easily by inspecting the definition of $c^!$ in Construction~\ref{constr:shriek}, and noticing that, with the notation from Construction~\ref{constr:pt}, for every $a \in A$ the composition 
\[
S^U
\xrightarrow{\coll_u}
S^U\sma A_+
\cong
\bigvee_{a\in A} S^U
\xrightarrow{\ \pr_a\ }
S^U
\]
is non-equivariantly homotopic to the identity.
\end{example}

\begin{example}
\label{eg:transfer-GNH}
Suppose that $N$ is a normal subgroup of $\Gamma=G$, and denote by $\MOR{p}{G}{G/N}$ the projection to the quotient.
For a subgroup $H \leq G$ we consider $p$ as a map
\[
\MOR{p}{G}{p^*G/N}
\]
of $G$-$H$-sets with respect to the actions given by left and right multiplication.
If $H \cap N = 1$, then the restriction of $p$ to each right $H$-orbit is injective, and therefore for any complete $G$-universe $\CU$ and any orthogonal $H$-spectrum $\spec{X}$ we obtain a transfer map
\[
\MOR{p^!}{p^*G/N_+ \sma_H \spec{X}}{\Q[h]{\CU}(G_+ \sma_H \spec{X})}
\,.
\]
\end{example}

We now explain the construction of the transfer map associated with a map of finite $\Gamma$-$H$-sets $\MOR{p}{A}{B}$, an orthogonal $(\Gamma\times H)$-spectrum $\spec{X}$, and a $\Gamma$-universe~$\CU$.
Of course the main ingredient is the Pontryagin-Thom collapse construction.

\begin{construction}[Pontryagin-Thom collapse]
\label{constr:pt}
Let $U$ be a finite dimensional $\Gamma$-representation.
Equip it with the trivial right $H$-action.
Consider the trivial vector bundles with fiber~$U$ over~$A$ and over~$B$.
Suppose that $\MOR{u}{U \times A}{U \times B}$ is a $\Gamma$-$H$-equivariant embedding between the total spaces such that the diagram 
\[
\begin{tikzcd}
U \times A \arrow{r}{u} \arrow{d}[swap]{\pr_2} & U \times B \arrow{d}{\pr_2} \\
A \arrow{r}[swap]{p} & B
\end{tikzcd}
\]
commutes.
Then on the associated Thom spaces we get the usual collapse map
\[
\MOR{\coll_{u}}{S^U \sma B_+}{S^U \sma A_+}
\]
which sends every point in the image of $u$ in 
\[
U \times B \subseteq \Th (U \times B \TO B )\cong S^U \sma B_+
\]
to its preimage and all other points to the basepoint.
\end{construction}

We need the following facts about collapse maps.

\begin{fact}[Homotopy]
\label{fact:pti}
A $\Gamma$-$H$-equivariant homotopy $u_t$ of embeddings leads to an embedding $U \times A \times I \TO U \times B \times I$ and hence to a pointed $\Gamma$-$H$-equivariant homotopy between $\coll_{u_0}$ and $\coll_{u_1}$.
\end{fact}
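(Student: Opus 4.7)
The plan is to package the family $(u_t)_{t \in I}$ into a single embedding $\tilde{u}$ over the parameter-enlarged base $B \times I$, apply the Pontryagin-Thom construction of Construction \ref{constr:pt} to $\tilde{u}$, and recognize the resulting collapse as the desired homotopy.

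Concretely, first I would set $\tilde{u}(v, a, t) = (u_t(v, a), t)$, which is a $\Gamma$-$H$-equivariant closed embedding $U \times A \times I \to U \times B \times I$ (with trivial actions on the $I$-coordinate) lying over $p \times \id_I \colon A \times I \to B \times I$. Applying the collapse construction with $A \times I$ and $B \times I$ in the roles of $A$ and $B$ yields a pointed $\Gamma$-$H$-equivariant map
\[
\coll_{\tilde{u}} \colon S^U \sma (B \times I)_+ \TO S^U \sma (A \times I)_+ .
\]
Since $\tilde{u}$ preserves the $I$-coordinate, so does $\coll_{\tilde{u}}$. Under the identification $(B \times I)_+ \cong B_+ \sma I_+$ (and similarly for $A$), composing $\coll_{\tilde{u}}$ with the projection $S^U \sma A_+ \sma I_+ \to S^U \sma A_+$ and rearranging smash factors produces a map $H \colon I_+ \sma S^U \sma B_+ \to S^U \sma A_+$. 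By construction, the restriction of $H$ to each $\{t\}_+ \sma S^U \sma B_+$ is exactly $\coll_{u_t}$, so $H$ is the sought homotopy, and equivariance is inherited from the equivariance of $\coll_{\tilde{u}}$.

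The one formal point I would verify is that Construction \ref{constr:pt} was stated for finite $\Gamma$-$H$-sets, whereas $B \times I$ is not finite. This is harmless: the collapse construction only requires that the fibres of the embedding over points of the base be finite, and that the image of $\tilde{u}$ be an open subset with well-behaved (closed) complement so that points outside it can be sent to the basepoint. Both properties hold here because $\tilde{u}$ is built fibrewise over $I$ from embeddings $u_t$ into a trivial bundle with finite base $B$; compactness of $I$ together with $H$-equivariance on each slice suffices. So no genuine obstacle arises, and the argument reduces to the bookkeeping verification described above.
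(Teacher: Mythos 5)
Your proof is correct and is exactly the argument the paper intends: the paper states the Fact without further proof, and its phrasing already suggests packaging the homotopy $u_t$ into the single embedding $\tilde{u}(v,a,t)=(u_t(v,a),t)$ over $B\times I$ and applying the Pontryagin--Thom collapse fiberwise over the interval. Your closing remark about $B\times I$ not being a finite set is a reasonable piece of diligence, and your resolution (the collapse only needs an equivariant open embedding of trivial $U$-bundles over a compact base, which holds here) is the right one.
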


\begin{fact}[Stabilization]
\label{fact:ptii}
Let $V$ be another \fd\ $\Gamma$-representation that contains $U$ and let $\incl$ denote the inclusion $U \subseteq V$.
If $\MOR{\incl_* u}{V \times A}{V \times B}$ is the embedding that via the obvious isomorphism $(V-U) \times U \cong V$ corresponds to $\id_{V-U} \times u$,
then $\incl_* u$ is again an embedding and the associated collapse map 
\[
\MOR{\coll_{\incl_*u}}{S^V \sma B_+ }{S^V \sma A_+}
\]
corresponds under the obvious isomorphism $S^{V-U} \sma S^U \cong S^V$ to 
\[
\MOR{\id_{S^{V-U}} \sma \coll_u}{S^{V-U}\sma S^U \sma B_+ }{S^{V-U}\sma S^U \sma A_+}
\,.
\]
\end{fact}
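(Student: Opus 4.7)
The plan is a direct unpacking of definitions; there is no deep content in this fact. First I would verify that $\incl_*u$ really is a $\Gamma$-$H$-equivariant embedding. Under the evident identification $V\cong(V-U)\oplus U$, the map $\incl_*u$ is by construction the product $\id_{V-U}\times u$ from $(V-U)\times U\times A$ to $(V-U)\times U\times B$; since $u$ is a $\Gamma$-$H$-equivariant embedding, since $V-U$ inherits a $\Gamma$-action (with trivial $H$-action, matching the conventions of Construction~\ref{constr:pt}), and since products of equivariant embeddings are equivariant embeddings, this is immediate. Compatibility with the projection to $B$ follows from the corresponding property of~$u$, so $\incl_*u$ satisfies the hypotheses required to form a collapse map.

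Next I would describe $\coll_{\incl_*u}$ pointwise and compare it with $\id_{S^{V-U}}\sma\coll_u$. Writing a point of $V\times B$ as $(w,u',b)$ with $w\in V-U$ and $u'\in U$, such a point lies in the image of $\incl_*u$ if and only if there exist $w_0\in V-U$, $u_0\in U$, and $a\in A$ with $\incl_*u(w_0,u_0,a)=(w,u',b)$. Unravelling the definition of $\incl_*u$, this forces $w_0=w$ and $u(u_0,a)=(u',b)$. Hence $(w,u',b)$ lies in the image of $\incl_*u$ if and only if $(u',b)$ lies in the image of~$u$, and in that case its preimage is $(w,u_0,a)$, where $(u_0,a)$ is the (unique, since $u$ is an embedding) preimage of $(u',b)$ under~$u$.

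By the definition of the collapse map recalled in Construction~\ref{constr:pt}, this yields
\[
\coll_{\incl_*u}(w,u',b)=\begin{cases}(w,\coll_u(u',b))&\text{if }(u',b)\in u(U\times A),\\ \ast&\text{otherwise,}\end{cases}
\]
which under the isomorphism $S^V\sma B_+\cong S^{V-U}\sma S^U\sma B_+$ is precisely $\id_{S^{V-U}}\sma\coll_u$. The only mild subtlety is bookkeeping: one must check that the isomorphism $(V-U)\oplus U\cong V$ implicit in the definition of $\incl_*u$ is compatible with the isomorphism $S^{V-U}\sma S^U\cong S^V$ used in the statement, which is just the standard exponential-law identification for one-point compactifications. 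No step presents a real obstacle.
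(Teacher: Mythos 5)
Your proof is correct. The paper states this Fact without proof, treating it as evident from Construction~\ref{constr:pt}; your direct pointwise unpacking supplies exactly the verification the paper leaves implicit, and there is no other approach to compare against.
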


\begin{fact}[Composition]
\label{fact:ptiii}
If $q:B \to C$ is another $\Gamma$-$H$-map and $\MOR{v}{U \times B}{U \times C}$ is an equivariant embedding with $\pr_2 \circ v = q \circ \pr_2$, then 
$\coll_{v \circ u} = \coll_u \circ \coll_v$.
\end{fact}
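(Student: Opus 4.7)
The plan is to verify the equality pointwise, using the explicit description of the collapse map given in Construction~\ref{constr:pt}. Since the maps $u$ and $v$ are $\Gamma$-$H$-equivariant embeddings compatible with $p$ and $q$ respectively, so is their composition $v\circ u$, and it satisfies $\pr_2\circ(v\circ u)=(q\circ p)\circ\pr_2$. Hence the collapse map $\coll_{v\circ u}\colon S^U\sma C_+\to S^U\sma A_+$ is defined, and the assertion makes sense.

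First I would pick an arbitrary point $y$ in $S^U\sma C_+$ and distinguish the relevant cases. If $y$ lies in the image of $v\circ u$, write $y=v(u(s,a))$; then by definition $\coll_{v\circ u}(y)=(s,a)$. On the other hand, $\coll_v(y)=v^{-1}(y)=u(s,a)$, which lies in the image of~$u$, and therefore $\coll_u(\coll_v(y))=u^{-1}(u(s,a))=(s,a)$, so the two outputs coincide. If $y$ is not in the image of $v$, then $\coll_v(y)=\ast$ and hence $\coll_u\circ\coll_v$ sends $y$ to the basepoint; and since $\mathrm{im}(v\circ u)\subseteq\mathrm{im}(v)$, also $\coll_{v\circ u}(y)=\ast$. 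If instead $y\in\mathrm{im}(v)$ but $v^{-1}(y)\notin\mathrm{im}(u)$, then $\coll_v(y)=v^{-1}(y)$ and $\coll_u(v^{-1}(y))=\ast$ because $v^{-1}(y)\notin\mathrm{im}(u)$; likewise $y\notin\mathrm{im}(v\circ u)$ since any preimage under $v\circ u$ would give a preimage of $v^{-1}(y)$ under $u$. So again both sides agree.

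This exhausts all possibilities, so $\coll_{v\circ u}$ and $\coll_u\circ\coll_v$ agree as set-theoretic pointed maps; continuity is automatic since both are defined from the Thom space construction. I do not anticipate any real obstacle here: the only care needed is to keep track of which points get sent to the basepoint, and the equivariant structure plays no role in the pointwise verification beyond ensuring both sides are legitimate $\Gamma$-$H$-maps in the first place.
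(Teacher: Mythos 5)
Your verification is correct and complete. The paper states this Fact without proof, and your direct pointwise check of the three cases ($y\in\mathrm{im}(v\circ u)$; $y\in\mathrm{im}(v)$ but $v^{-1}(y)\notin\mathrm{im}(u)$; $y\notin\mathrm{im}(v)$, plus trivially the basepoint) is exactly the intended elementary argument, and each case is handled correctly.
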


\begin{construction}[Transfer]
\label{constr:shriek}
Suppose that the $\Gamma$-universe $\CU$ is large enough for $\MOR{p}{A}{B}$ in the sense of Definition~\ref{def:large}.
Since $A$ is finite, there exists a \fd\ $\Gamma$-subrepresentation $U \subset \CU$ and a $\Gamma$-$H$-map $\MOR{j}{A}{U}$ such that 
\[
\MOR{(j , p)}{A}{U\times B}
,\quad
a \mapsto (j(a),p(a))
\] 
is injective.
For suitable $\epsilon>0$ the $\Gamma$-$H$-map 
\begin{equation}
\label{eq:embedding}
\MOR{u=u_{j,\epsilon}}{U \times A}{U \times B}
,\quad
\bigl(x,a\bigr) \mapsto \bigl( j(a) + \epsilon\arctan(|x|)\, x/|x| ,\, p(a) \bigr)
\end{equation}
is then an embedding, and so Construction~\ref{constr:pt} produces a collapse map
\[
\MOR{\coll_{u}}{S^U \sma B_+}{S^U \sma A_+}
\,.
\]
The transfer map $p^!=p^!_u$ will depend on $u$ and hence on the choice of $U$, $j$ and $\epsilon$.

Now we apply the functor $ - \sma_H \spec{X}$ to $\coll_u$ and get a $\Gamma$-map
\begin{equation}
\label{eq:coll-sma-id}
S^U \sma \bigl( B_+ \sma_H \spec{X} \bigr)
\cong
(S^U \sma B_+) \sma_H \spec{X}
\xrightarrow{\coll_{u}\sma\id}
(S^U \sma A_+) \sma_H \spec{X}
\cong
S^U \sma \bigl( A_+ \sma_H \spec{X} \bigr)
\,,
\end{equation}
where for the isomorphisms in the source and target it is important that the right $H$-action on $S^U$ is trivial.
The desired transfer map $p^!_{u}$ is defined as the composition
\[
\begin{tikzcd}[row sep=scriptsize]
\ds B_+ \sma_H \spec{X}
\arrow{d}[swap]{\ds\one}{\ \widetilde{\coll_{u}\sma\id}}
\\
\ds \Omega^U\bigl(S^U \sma \bigl(A_+ \sma_H \spec{X}\bigr)\bigr)
\arrow{d}[swap]{\ds\two}
\\
\ds \Omega^U\shift^U\bigl( A_+ \sma_H \spec{X}\bigr)
=
\ds \Q{U} \bigl( A_+ \sma_H \spec{X} \bigr)
\arrow{d}[swap]{\ds\three}
\\
\ds \Q[h]{\CU}\bigl( A_+ \sma_H \spec{X}\bigr)
\end{tikzcd}
\]
where
$\one$ is the adjoint of the map in~\eqref{eq:coll-sma-id},
$\two$ is induced by the natural transformation $\sigma_U$ from $S^U \sma - $ to $\shift^U (-)$ in~\eqref{eq:sigma},
and $\three$ is the natural map induced by the inclusion~$U\subset\CU$.
It is clear that $p^!_u$ is natural in $\spec{X}$.
\end{construction}

The following facts follow from Facts~\ref{fact:pti} and~\ref{fact:ptii}.

\begin{fact}[Homotopy]
\label{fact:shrieki}
A $\Gamma$-$H$-equivariant homotopy $u_t$ of embeddings leads to a $\Gamma$-homotopy between $p^!_{u_0}$ and $p^!_{u_1}$ that is natural in $\spec{X}$.
\end{fact}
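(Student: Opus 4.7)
The plan is to trace Construction~\ref{constr:shriek} with an extra parameter $t \in I$ and check that each step is compatible with smashing with $I_+$, so that the homotopy of collapse maps provided by Fact~\ref{fact:pti} propagates to a homotopy of transfers.

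Fact~\ref{fact:pti} applied to $u_t$ yields a pointed $\Gamma$-$H$-equivariant map
\[
\MOR{\coll}{I_+ \sma S^U \sma B_+}{S^U \sma A_+}
\]
whose restriction along $\{t\}_+ \hookrightarrow I_+$ is $\coll_{u_t}$ for $t = 0$ and $t = 1$. Since $I$ carries trivial $\Gamma$- and $H$-actions, smashing with $I_+$ on the left commutes with the functor $-\sma_H\spec{X}$, so applying $-\sma_H\spec{X}$ to $\coll$ produces a $\Gamma$-map
\[
I_+ \sma S^U \sma (B_+ \sma_H \spec{X}) \TO S^U \sma (A_+ \sma_H \spec{X})
\]
natural in~$\spec{X}$.

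Taking the $(\Sigma^U, \Omega^U)$-adjoint, then postcomposing with $\Omega^U\sigma_U$ and with the natural map $\Q{U}(-) \TO \Q[h]{\CU}(-)$ induced by the inclusion $U \subseteq \CU$, I obtain a $\Gamma$-map
\[
I_+ \sma (B_+ \sma_H \spec{X}) \TO \Q[h]{\CU}(A_+ \sma_H \spec{X})
\]
natural in~$\spec{X}$, which is a $\Gamma$-equivariant homotopy in the sense of Definition~\ref{def:homotopy}. I would then verify that restricting this map along $\{t\}_+ \hookrightarrow I_+$ for $t = 0, 1$ reproduces $p^!_{u_t}$, and this is immediate from the corresponding property of $\coll$ together with the fact that adjunction, $\Omega^U\sigma_U$, and the natural map $\Q{U}(-) \TO \Q[h]{\CU}(-)$ are natural transformations of topological endofunctors of~$\Sp^\Gamma$, and hence commute with such restrictions.

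The only subtle point is the commutation of $I_+ \sma -$ with $-\sma_H\spec{X}$ used above. Otherwise the argument is entirely formal bookkeeping, since all of the geometric content has already been absorbed into Fact~\ref{fact:pti}; consequently I do not expect any serious obstacle.
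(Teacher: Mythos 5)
Your proposal is correct and is the natural spelling-out of the paper's one-line justification (the paper merely asserts that Facts~\ref{fact:shrieki} and~\ref{fact:shriekii} follow from Facts~\ref{fact:pti} and~\ref{fact:ptii}, giving no details). You take the same route: use Fact~\ref{fact:pti} to get a homotopy at the level of collapse maps and then push it through the remaining steps of Construction~\ref{constr:shriek}, all of which are natural in the space/spectrum variable and commute with $I_+\sma-$.
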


\begin{fact}[Stabilization]
\label{fact:shriekii}
If $U \subseteq V \subseteq \CU$ are $\Gamma$-subrepresentations, then associated with the embeddings $u$ and $\incl_* u$ from Fact~\ref{fact:ptii} we have the following diagram.
\[
\begin{tikzcd}[row sep=tiny, column sep=large]
& 
\ds\Q{U}\bigl(A_+ \sma_H \spec{X}\bigr) 
\arrow{dd}{\tau_{U \subseteq V}} 
\arrow{dr}{\three}
& \\
\ds B_+ \sma_H \spec{X} 
\arrow{ur}{\two \circ \widetilde{\coll_u \ssma \id}} 
\arrow{dr}[swap]{\two \circ \widetilde{\coll_{\incl_* u} \ssma \id}} 
& & 
\ds\Q[h]{\CU}\bigl(A_+ \sma_H \spec{X}\bigr)\\
& \ds\Q{V}\bigl(A_+ \sma_H \spec{X}\bigr) 
\arrow{ur}[swap]{\three}
&
\end{tikzcd}
\]
The triangle on the left commutes by inspection.
The triangle on the right commutes by functoriality of~$\Q{(-)}(A_+\sma_H\spec{X})$.
The two horizontal compositions are the transfer maps associated with $u$ and $\incl_* u$, respectively.
\end{fact}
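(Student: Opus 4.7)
The plan is to verify the two triangles separately and then identify the two horizontal composites with the claimed transfer maps. Throughout, write $\spec{Y} = A_+ \sma_H \spec{X}$.

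For the right triangle, I would invoke the functoriality of $\Q{(-)}(\spec{Y})$ from Proposition~\ref{prop:fun-in-U-fin}. Under the natural $G$-isomorphism $\Q[h]{\CU}(\spec{Y}) \cong \colim_{W \in \CP(\CU)} \Q{W}(\spec{Y})$ provided by Proposition~\ref{prop:colim->Lan}, the maps labeled $\three$ are precisely the structural maps from $\Q{U}(\spec{Y})$ and $\Q{V}(\spec{Y})$ into the colimit. Since $\tau_{U \subseteq V}$ is by definition the value of the functor $\Q{(-)}(\spec{Y})$ on the inclusion $U \subseteq V$, the right triangle commutes by the universal property of the colimit.

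For the left triangle, my plan is to pass to adjoints under $\Sigma^V \dashv \Omega^V$ so that both composites become $G$-maps $S^V \sma (B_+ \sma_H \spec{X}) \to \shift^V \spec{Y}$. For the adjoint of the lower composite, $\two_V \circ \widetilde{\coll_{\incl_* u} \sma \id}$, the transpose is simply $\sigma_V^{\spec{Y}} \circ (\coll_{\incl_* u} \sma \id)$. For the adjoint of the upper composite followed by $\tau_{U \subseteq V}$, I would first use that, by definition~\eqref{eq:tau}, $\tau_{U \subseteq V}$ is $\Omega^U \widetilde{\sigma}_{U \subseteq V}$ followed by the isomorphism $\Omega^U \Omega^{V-U} \cong \Omega^V$; then the adjoint of $\tau_{U \subseteq V} \circ \two_U \circ \widetilde{\coll_u \sma \id}$ becomes $\sigma_{U \subseteq V} \circ (\id_{S^{V-U}} \sma \sigma_U^{\spec{Y}}) \circ (\id_{S^{V-U}} \sma \coll_u \sma \id)$. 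The associativity axiom~\eqref{eq:spec-assoc}, applied to the morphisms $S^{V-U}$ and $S^U$ in $\Th_G$, identifies $\sigma_{U \subseteq V} \circ (\id_{S^{V-U}} \sma \sigma_U^{\spec{Y}})$ with $\sigma_V^{\spec{Y}}$ under the canonical isomorphism $S^{V-U} \sma S^U \cong S^V$. The stabilization property Fact~\ref{fact:ptii}, which identifies $\coll_{\incl_* u}$ with $\id_{S^{V-U}} \sma \coll_u$ under the same isomorphism, then yields the equality of the two adjoints, hence of the original maps.

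To finish, I would unwind Construction~\ref{constr:shriek}: by definition, the top horizontal composite $\three \circ \two_U \circ \widetilde{\coll_u \sma \id}$ is precisely $p^!_u$, and similarly the bottom composite is $p^!_{\incl_* u}$, so the commutativity of the outer diagram recovers the desired compatibility of transfer maps. The main potential obstacle is bookkeeping in the left triangle: one must check carefully that the canonical isomorphism $\Omega^U \Omega^{V-U} \cong \Omega^V$ used to define $\tau_{U \subseteq V}$ is compatible with the isomorphism $S^{V-U} \sma S^U \cong S^V$ used in Fact~\ref{fact:ptii}, so that the associativity identity can indeed be applied to merge $\sigma_{U \subseteq V}$ and $\id \sma \sigma_U^{\spec{Y}}$ into $\sigma_V^{\spec{Y}}$. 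This is a routine adjunction compatibility, but it is where the care is needed.
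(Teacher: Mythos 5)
Your argument is correct and follows essentially the same route the paper intends: the right triangle by functoriality (via the colimit description of $\Q[h]{\CU}$), the left triangle by passing to adjoints under $\Sigma^V\dashv\Omega^V$ and combining Fact~\ref{fact:ptii} with the associativity axiom for the spectrum structure maps, and the identification with $p^!_u$ and $p^!_{\incl_*u}$ by unwinding Construction~\ref{constr:shriek}. The paper compresses the left triangle to ``by inspection''; your computation is precisely what that inspection amounts to, and your concluding caution about the compatibility of $\Omega^U\Omega^{V-U}\cong\Omega^V$ with $S^{V-U}\sma S^U\cong S^V$ is the right point to watch, and it does check out.
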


\begin{proof}[Proof of Theorem~\ref{thm:transfer}]

\ref{i:transfer-existence}
The existence of $p^!$ and its naturality in $\spec{X}$ is discussed in Construction~\ref{constr:shriek}.

\ref{i:transfer-uniqueness}
We use Facts \ref{fact:shrieki} and \ref{fact:shriekii} repeatedly. 
Consider $u=u_{j,\epsilon}$ with $\MOR{j}{A}{U}$ as in~\eqref{eq:embedding}. 
Clearly shrinking $\epsilon$ yields a homotopy through embeddings.
So we assume $\epsilon$ sufficiently small for all constructions and fixed in the following. 
With the notation from Facts~\ref{fact:ptii} and \ref{fact:shriekii}, there exists a homotopy between $\incl_* u_{j,\epsilon}$ and $u_{\incl \circ j, \epsilon}$ and hence between the associated transfer maps. 
If $u'=u_{j',\epsilon}$ is a second embedding with $\MOR{j'}{A}{U'}$ we can hence always enlarge the representations and assume that $j$ and $j'$ both map to $V=U + U'$.
Now if $U \cap U' = \{ 0 \}$ the linear homotopy from $j$ to $j'$ induces a homotopy between the associated transfer maps. 
If not, we can arrange this: use that $\CU$ is a universe in order to enlarge the representation further so that it contains another copy of $V$ orthogonal to the original one, and then homotope one of the $j$'s into the new copy of $V$.

% \ref{i:transfer-identity}
% Use $u=\id_A$ in Construction~\ref{constr:shriek}.

\ref{i:transfer-pullback}
Consider $u=u_{j,\epsilon}$ as in \eqref{eq:embedding}.
With $\overline{j} = j \circ a$ also $\overline{u}=u_{\overline{j},\epsilon}$ is an embedding, and the square
\[
\begin{tikzcd}
U \times \overline{A}
\arrow{d}[swap]{\id \times a}
\arrow{r}{\overline{u}}
& 
U \times \overline{B}
\arrow{d}{\id \times b}
\\
U \times A
\arrow{r}[swap]{u}
&
U \times B
\end{tikzcd}
\]
commutes.
Using that by assumption $\overline{A}$ is the pullback one checks that 
\[
(\id \times b)^{-1}( \im u ) = \im \overline{u}
\,.
\]
This is needed to verify that the associated square with the collapse maps (horizontal arrows reversed) also commutes. 
Applying the remaining steps in Construction~\ref{constr:shriek} clearly leads to the desired commutative diagram.

\ref{i:transfer-ABC}
To construct $(p \times \id_C)^!$ use $(a,c) \mapsto j(a)$ instead of $j$ and hence $u \times \id_C$ instead of $u$.
Since $\coll_{u \times \id_C}$ corresponds to $\coll_u \sma \id_{C_+}$, the claim follows.

\ref{i:transfer-triangle}
Define $f_*u$ such that  
\[
\begin{tikzcd}
U \times A
\arrow{d}[swap]{ f| \times \id}{\cong}
\arrow{r}{u}
& 
U \times B
\arrow{d}[swap]{\cong}{f| \times \id }
\\
f(U) \times A
\arrow{r}[swap]{f_* u}
&
f(U) \times B
\end{tikzcd}
\]
commutes.
This leads to the following commutative diagram.
\[
\begin{tikzcd}[row sep=tiny, column sep=large]
& 
\ds\Q{U}\bigl(A_+ \sma_H \spec{X}\bigr) 
\arrow{dd}{f|_*} 
\arrow{r}{\three}
& 
\ds\Q[h]{\CU}\bigl(A_+ \sma_H \spec{X}\bigr)
\arrow{dd}{f_*}
\\
\ds B_+ \sma_H \spec{X} 
\arrow{ur}{\two \circ \widetilde{\coll_u \ssma \id}} 
\arrow{dr}[swap]{\two \circ \widetilde{\coll_{f_* u} \ssma \id}} 
& & 
\\
& \ds\Q{f(U)}\bigl(A_+ \sma_H \spec{X}\bigr) 
\arrow{r}[swap]{\three}
&
\ds\Q[h]{\CV}\bigl(A_+ \sma_H \spec{X}\bigr)
\end{tikzcd}
\]

\ref{i:transfer-res}
To construct $(\res_\alpha p )^!$ start with $\res_\alpha u$ instead of $u$ and inspect the definitions.
\end{proof}

%%%%%%%%%%%%%%%%%%%%%%%%%%%%%%%%%%%%%%%%%%%%%%%%%%%%%%%%%%%%%%%%%%%%%%%%%%%%%

\section{Wirthm\"uller isomorphism and transfer}
\label{sec:wirth}

This section is devoted to the following theorem, which says that under the Wirthm\"uller isomorphisms from Theorem~\ref{thm:H-in-G}\ref{i:Wirth} the transfer map $p^!$ of Example~\ref{eg:transfer-GNH} corresponds to the usual functoriality of $\map(-,\spec{X})^H$.

\begin{theorem}
\label{thm:transfer-Wirth}
Let $N$ be a normal subgroup of $G$ and let $\MOR{p}{G}{G/N}$ denote the quotient map.
Let $H$ be a subgroup of~$G$ with $H \cap N = 1$, and consider $H$ also as a subgroup of~$G/N$.
Let $\CU$ be a complete $G$-universe.
Then the following diagram of orthogonal $G$-spectra commutes up to a $G$-equivariant homotopy that is natural in~$\spec{X}$.
\begin{equation}\label{eq:Wirth-shriek}
\begin{tikzcd}[column sep=large]
\ds p^*G/N_+\sma_H\spec{X}
\arrow{r}{p^!}
\arrow{d}[swap]{p^*W^{H\leq G/N}}
&
\ds\Q[h]{\CU}\bigl(G_+\sma_H\spec{X}\bigr)
\arrow{d}{\Q[h]{\CU}(W^{H\leq G})}
\\
\ds p^*\map(G/N_+,\spec{X})^H
\arrow{d}[swap]{\fr_*}
&
\ds\Q[h]{\CU}\bigl(\map(G_+,\spec{X})^H\bigr)
\arrow{d}{\coasbl}
\\
\ds p^*\map\bigl(G/N_+,\Q[h]{\res\CU}(\spec{X})\bigr)^H
\arrow{r}[swap]{p^*}
&
\ds\map\bigl(G_+,\Q[h]{\res\CU}(\spec{X})\bigr)^H
\end{tikzcd}
\end{equation}
\end{theorem}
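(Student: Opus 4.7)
The plan is to verify the homotopy commutativity of~\eqref{eq:Wirth-shriek} in three steps: an adjoint reformulation, an explicit computation for a carefully chosen embedding in the definition of~$p^!$, and an elementary $H$-equivariant homotopy in the sphere spectrum.

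I first reformulate via the adjunction $\res_{H\leq G}\dashv\coind_{H\leq G}$. Since $\map(G_+,-)^H=\coind_{H\leq G}$, a $G$-map into $\map(G_+,\Q[h]{\res\CU}(\spec{X}))^H$ is uniquely determined by its $H$-adjoint under the counit $\ev_1$. By Example~\ref{eg:coind-coassembly}, the coassembly map~$\coasbl$ is by construction the adjoint of $\Q[h]{\res\CU}(\ev_1)$, and the precomposition-with-$p$ map in the left column corresponds under the adjunction to the evaluation~$\ev_{p(1)}=\ev_{1N}$. It thus suffices to show that the two $H$-maps
\[
(G/N)_+\sma_H\res_{H\leq G}\spec{X}\cong\res_{H\leq G}(p^*G/N_+\sma_H\spec{X})\longrightarrow\Q[h]{\res\CU}(\spec{X})
\]
arising from the two compositions in~\eqref{eq:Wirth-shriek} are $H$-homotopic, naturally in~$\spec{X}$.

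Next, I carry out the computation. By Theorem~\ref{thm:transfer}\ref{i:transfer-uniqueness} we are free to choose the embedding used in~$p^!$. Since~$\CU$ is complete, there is a $G$-equivariant embedding $\widetilde{j}\colon G/H\hookrightarrow U$ into a finite-dimensional $G$-subrepresentation $U\subset\CU$, and I set $j=\widetilde{j}\circ\pi_H$, where $\pi_H\colon G\to G/H$ is the projection; then~$j$ is right-$H$-invariant and $G$-equivariant, and $s_0:=\widetilde{j}(H)$ lies in~$U^H$. Using the structure map of $\ind_{H\leq G}\spec{X}$ in the form $s\sma g\sma_H x\mapsto g\sma_H\sigma^{\spec{X}}(g^{-1}s\sma x)$ (forced by $G$-equivariance together with the identification $(\ind\spec{X})(U)\cong G_+\sma_H\spec{X}(\res U)$), together with Construction~\ref{constr:shriek} and the Wirthm\"uller formula~\eqref{eq:preWirth}, both $H$-adjoints applied to $gN\sma_H x$ vanish unless there exists a (necessarily unique) $h\in H$ with $hN=gN$; in the non-trivial case the left composition yields $\fr(hx)$, which corresponds via $\tau_{0\subseteq U}$ to the loop $\omega_L(s)=\sigma^{\spec{X}}(s\sma hx)$, while the right composition yields the loop $\omega_R(s)=\sigma^{\spec{X}}(x'(s)\sma hx)$, where $x'(s)=\tan(|s-s_0|/\epsilon)(s-s_0)/|s-s_0|$ near~$s_0$. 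Crucially, the $h$ entering the spectrum structure via the $g^{-1}$-twist gets absorbed by the $H$-equivariance of $\sigma^{\spec{X}}$ into the second factor, leaving the first factor $H$-untwisted.

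Finally, I conclude by homotopy. Thus $\omega_R=\omega_L\circ x'$, where $x'\colon S^U\to S^U$ is $H$-equivariant since $s_0\in U^H$ and $H$ acts by isometries on~$U$. The decomposition $x'=\chi\circ\tau_{s_0}$ with $\tau_{s_0}(s)=s-s_0$ and $\chi(s)=\tan(|s|/\epsilon)s/|s|$ exhibits~$x'$ as the composition of two maps each $H$-homotopic to the identity of~$S^U$ through $H$-equivariant pointed self-maps: $\tau_{s_0}$ via the straight-line homotopy $s\mapsto s-ts_0$ (which is $H$-equivariant because $s_0$ is $H$-fixed), and~$\chi$ via a continuous deformation of its radial function through $H$-equivariant increasing proper radial bijections. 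Precomposing~$\omega_L$ with the resulting $H$-equivariant homotopy from~$x'$ to~$\id_{S^U}$ yields the required natural $H$-homotopy between $\omega_R$ and~$\omega_L$, hence by adjunction the natural $G$-homotopy commutativity of~\eqref{eq:Wirth-shriek}. The main obstacle in the proof is the bookkeeping in the middle step: one must correctly identify the $G$-equivariant form of the structure map of the induced spectrum to ensure that the $h$-twist appearing in the transfer via $g^{-1}s$ is exactly cancelled by the $H$-equivariance of the Wirthm\"uller map, producing the $H$-trivial shear~$x'$ rather than an $H$-twisted variant to which no elementary homotopy argument would apply.
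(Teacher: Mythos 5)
Your proposal is correct and follows essentially the same strategy as the paper's proof: reduce via the $\res_{H\leq G}\dashv\coind_{H\leq G}$ adjunction and the counit $\ev_1$ to a comparison of two $H$-maps, identify these (via the explicit formula for the Wirthm\"uller map and the structure map of an induced spectrum) as differing by a Pontryagin-Thom collapse self-map of $S^U$ centered at the $H$-fixed point $s_0=\widetilde{j}(H)$, and then produce an $H$-equivariant homotopy of that collapse to the identity. The paper packages the final step as a homotopy through embeddings between $u_0(y,h)=(eH+\epsilon\arctan(|y|)y/|y|,hN)$ and $u_1(y,h)=(y,hN)$ (invoking Fact~\ref{fact:pti}) and isolates the core computation in a space-level Lemma~\ref{lem:Wirth} before lifting via a diagram chase, whereas you work directly at the spectrum level and construct the homotopy explicitly as $x'=\chi\circ\tau_{s_0}$; these are the same geometric idea.
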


The map $\fr_*$ is induced by the replacement map from Theorem~\ref{thm:Q}, the coassembly map $\coasbl$ comes from Proposition~\ref{prop:asbl-coasbl}, and the Wirthm\"uller maps~$W$ are defined explicitly in the proof of Theorem~\ref{thm:H-in-G}\ref{i:Wirth}.
For the definition of the transfer map~$p^!$, consider $U=\IR[G/H]$.
Since $H \cap N =1$, the $G$-$H$-equivariant map
\[
\MOR{u}{U \times G}{U \times G/N}
,\quad
\bigl(y,g\bigr) \mapsto \bigl(gH + \epsilon \arctan(|y|)\, y/|y| ,\, gN \bigr)
\]
is an embedding for sufficiently small $\epsilon >0$.
The associated collapse map from Construction~\ref{constr:pt} can then be used to define $p^!=p^!_u$ as in Construction~\ref{constr:shriek}.

The main ingredient in the proof of Theorem~\ref{thm:transfer-Wirth}
is the following lemma.

\begin{lemma}
\label{lem:Wirth}
For every pointed $H$-space $X$ the two ways through the following diagram are $G$-equivariantly homotopic.
The homotopy is natural in $X$.
\[
\begin{tikzcd}[column sep=large]
\ds S^U \sma p^*G/N_+\sma_H X
\arrow{r}{\coll_u \sma \id}
\arrow{d}[swap]{\id \sma W^{H \leq G/N}}
&
\ds S^U \sma G_+\sma_H X
\arrow{d}{\id \sma W^{H \leq G}}
\\
\ds S^U \sma \map(p^*G/N_+, X )^H
\arrow{r}{\id \sma p^*}
&
\ds S^U \sma \map(G_+, X )^H
\arrow{d}{\coasbl}
\\
&
\ds \map\bigl(G_+,\res_{H\leq G}S^U \sma X)^H
\end{tikzcd}
\]
\end{lemma}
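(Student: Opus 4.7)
We establish the homotopy by unfolding both compositions at the element level and then constructing the interpolation via a one-parameter family of Pontryagin--Thom embeddings.

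\textbf{Step 1: Element-level unfolding.} Trace a point $y\sma gN\sma x \in S^U\sma p^*G/N_+\sma_H X$ through both compositions. Using the explicit formula~\eqref{eq:preWirth} for the Wirthm\"uller maps, the formula for $\coasbl$ (which sends $s\sma f$ to the function $\gamma\mapsto \gamma s\sma f(\gamma)$), and the explicit description of~$\coll_u$ from Construction~\ref{constr:pt}, both compositions yield $H$-equivariant functions $G\to\res_{H\leq G}(S^U\sma X)$ that vanish at~$\gamma$ unless $\gamma g\in HN$. Because $H\cap N=1$, every element of $HN$ has a unique decomposition $hn$ with $h\in H$ and $n\in N$. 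For such~$\gamma$, the down-then-right composition produces the value $\gamma y\sma hx$. The right-then-down composition produces $\gamma v\sma hx$ provided that $y$ lies in the $\epsilon\pi/2$-ball around the basis vector $gn^{-1}H$ of $U=\IR[G/H]$, where $v$ is uniquely determined by $y = gn^{-1}H + \epsilon\arctan(|v|)v/|v|$; otherwise the value is the basepoint.

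\textbf{Step 2: Interpolation of embeddings.} Define a continuous family $\{u_s\}_{s\in[0,1]}$ of $G$-$H$-equivariant maps $U\times G \to U\times G/N$ by replacing the radial profile $r\mapsto\epsilon\arctan(r)$ of~\eqref{eq:embedding} with a continuous family $\delta_s$ of profiles, arranged so that $\delta_1(r)=\epsilon\arctan(r)$ (giving $u_1=u$) and so that the ``tubes'' $\delta_s(|v|)v/|v|$ around each basis vector $\eta H\in U$ grow monotonically as $s$ decreases. For $s$ in a range $[s_0,1]$ on which $u_s$ remains an embedding, Fact~\ref{fact:pti} produces a natural $G$-equivariant homotopy between the collapse maps $\coll_{u_s}$, which yields the required interpolation of the compositions. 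For $s\in[0,s_0]$, where the tubes around distinct $\eta\in gN$ overlap, define the interpolation directly by a formula that, summing $G$-equivariantly over $n\in N$, reproduces the down-then-right composition in the limit $s\to 0$ (the limiting formula $v=y-\eta H$ then gives $\gamma v=\gamma y$ after the degenerate tube covers all of $U$).

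\textbf{Step 3: Naturality.} Every ingredient in the construction (the Wirthm\"uller maps, $\coll_u$, $\coasbl$, and the embeddings $u_s$) is functorial in the pointed $H$-space~$X$, so the resulting homotopy is automatically natural in~$X$.

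\textbf{Main obstacle.} The core difficulty is the ``$N$-sheet'' support discrepancy: the down-then-right composition is nonzero on the full set $\{\gamma\in G:\gamma g\in HN\}$, of cardinality $|H||N|$, while the right-then-down composition, for a fixed~$y$ in one specific $\epsilon$-tube, is nonzero on only a single $|H|$-coset determined by the choice of~$n$. Reconciling these two supports through a $G$-equivariant interpolation, while coping with the loss of the embedding property once the tubes overlap, is the key technical point; it is handled by explicitly matching the ``sheet-by-sheet'' contributions in Step~2 and verifying $G$-$H$-equivariance of the formula for every $s$.
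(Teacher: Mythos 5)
Your diagnosis of the ``main obstacle'' --- the $N$-sheet discrepancy between the supports of the two compositions --- is exactly right, but your proposed resolution in Step~2 does not actually work, and this is a genuine gap. Once the tubes around distinct points of $gN\subseteq G/H$ begin to overlap, the map $u_s$ is no longer an embedding, so Fact~\ref{fact:pti} no longer applies and there is no collapse map $\coll_{u_s}$ to speak of. You cannot repair this by ``summing $G$-equivariantly over $n\in N$'': the target $S^U\sma G_+\sma_H X$ (and everything downstream of it in the diagram) is a pointed space, not an abelian group, so there is simply no addition available to combine contributions from overlapping sheets. For a fixed $y$ covered by several overlapping tubes there is no single well-defined value, and no continuous $G$-$H$-equivariant interpolation of the kind you describe exists.

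The paper avoids the obstacle entirely with an adjunction trick. It observes that it suffices to show the two compositions are $H$-homotopic after applying $\res_{H\leq G}$ and postcomposing with the counit $\ev_1$ (evaluation at the unit) of the adjunction $\res_{H\leq G}\dashv\map(G_+,-)^H$; a $G$-homotopy of the original maps corresponds to an $H$-homotopy of these adjoints. After this reduction the target is just $\res_{H\leq G}S^U\sma X$, and both compositions become collapse maps $\coll_{u_0}\sma\id$ and $\coll_{u_1}\sma\id$ associated with $H$-$H$-equivariant embeddings $\res U\times H\TO\res U\times\res p^*G/N$, namely $u_0(y,h)=(eH+\epsilon\arctan(|y|)\,y/|y|,\,hN)$ and $u_1(y,h)=(y,hN)$. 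Because the source is $\res U\times H$ rather than $U\times G$, only the identity $H$-coset is in play, the sheet-overlap problem never arises, and there is an evident $H$-$H$-equivariant homotopy through embeddings from $u_0$ to $u_1$ to which Fact~\ref{fact:pti} applies directly. Without this change of equivariance category the argument cannot be completed by Pontryagin--Thom methods alone.
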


\begin{proof}
It suffices to show that the adjoints of the two maps under the adjunction
between $\res=\res_{H \leq G}$ and $\map(G_+ , - )^H$ are $H$-homotopic. 
Hence we apply $\res$ to the diagram and postcompose with the counit of the adjunction, which is given by evaluation at the unit element and denoted $\ev_1$.
This leads to the left half of the following diagram.
\[
\hspace{-.25em}\adjustbox{scale=.94}{\begin{tikzcd}[column sep=small]
\ds \res \bigl( S^U \sma p^*G/N_+\sma_H X \bigr)
\arrow{r}[swap]{\ds\one}{\coll_u \ssma \id}
\arrow{d}[swap]{\ds\four}{\id \ssma W^{H \leq G/N}}
\arrow[rounded corners,
       to path={    (\tikztostart.north)
                 |- +(4.8,.4) [at end]\tikztonodes
                 -| (\tikztotarget.north)}
      ]{rr}{\coll_{u_t}\ssma \id}
&
\ds \res \bigl( S^U \sma G_+\sma_H X \bigr)
\arrow{d}{\id \ssma W^{H \leq G}}
\arrow{r}[swap]{\ds\two}{\id \ssma \pr_H \ssma \id}
&
\ds \res S^U \sma H_+\sma_H X 
\arrow{d}[swap]{\ds\three}{\id \ssma W^{H \leq H}}
\\
\ds \res \bigl( S^U \ssma \map(p^*G/N_+, X )^H \bigr)
\arrow{r}[swap]{\ds\five}{\id \ssma p^*}
&
\ds \res \bigl( S^U \sma \map(G_+, X )^H \bigr)
\arrow{d}{\coasbl}
\arrow{r}[swap]{\ds\six}{\id \ssma i_H^*}
&
\ds \res S^U \sma \map(H_+, X )^H 
\arrow{d}{\id \ssma \ev_1}
\\
&
\ds \res \map(G_+,\res S^{U} \sma X)^H
\arrow{r}{\ev_1}
&
\ds \res S^{U} \sma X
\end{tikzcd}}\hspace{-.26em}
\]
The map $\six$ is induced from the $H$-equivariant inclusion $\MOR{i_H}{H_+}{G_+}$, and the map $\two$ from the projection $\MOR{p_H}{G_+}{H_+}$ that leaves $H$ fixed and sends $G-H$ to the base point.
Both squares on the right commute.
Let $u_0$ and $u_1$ be the $H$-$H$-equivariant embeddings 
\(
\res U \times H \TO \res U \times \res p^*G/N 
\)
given by 
\[
u_0( y,h) = (eH + \epsilon \arctan(|y|)\, y/|y| ,\, hN)
\AND
u_1(y,h)=(y,hN)
\,.
\]
A computation shows that for the associated collapse maps we have 
\[
\two \circ \one = \coll_{u_0} \sma \id
\AND
\six \circ \five \circ \four = \three \circ (\coll_{u_1} \sma \id)
\,.
\] 
Clearly there exists an $H$-$H$-equivariant homotopy through embeddings between $u_0$ and $u_1$.
With Fact~\ref{fact:pti} the claim follows.
\end{proof}

The rest of the proof of Theorem~\ref{thm:transfer-Wirth} is now formal.
Since we managed twice to produce a proof of this using a non-existing arrow, we give the argument.

\begin{proof}[Proof of Theorem~\ref{thm:transfer-Wirth}]
Consider the diagram~\eqref{eq:huge-Wirth} on page~\pageref{eq:huge-Wirth}, whose boundary is~\eqref{eq:Wirth-shriek}.
The unlabeled parts commute by naturality or definition.
Diagram~\framed{A} commutes by inspection,
\framed{B} by definition of~$\fr$,
and \framed{C} by naturality of coassembly, compare Section~\ref{sec:asbl-coasbl}.
The statement of Lemma~\ref{lem:Wirth} remains true for orthogonal spectra $\spec{X}$ in place of $X$; compare Remark~\ref{rem:naive-not-naive}.
Therefore the two ways through diagram~\framed{L} composed with~$\one$ are naturally homotopic.
From these facts it follows that the two ways along the boundary of the diagram are naturally homotopic.
\end{proof}

\begin{landscape}
\begin{equation}
\label{eq:huge-Wirth}
\end{equation}
\[
\begin{tikzcd}[column sep=small, row sep=3.5ex]
\ds G/N_+\sma_H\spec{X}
\arrow{rr}
\arrow{dd}[swap]{W}
\arrow[rounded corners,
       to path={    (\tikztostart.north)
                 |- +(8.25,.9) [at end]\tikztonodes
                 -| (\tikztotarget.north)}
      ]{rrrrrr}{p^!}
&&
\ds\Omega^U\Sigma^U\bigl(G/N_+\sma_H\spec{X}\bigr)
\arrow{rr}
\arrow{dd}[swap]{\Omega^U\Sigma^U  W }
&&
\ds\Omega^U\Sigma^U\bigl(G_+\sma_H\spec{X}\bigr)
\arrow{rr}
\arrow{dd}{\Omega^U\Sigma^U  W }
&&
\ds\Q[h]{\CU}\bigl(G_+\sma_H\spec{X}\bigr)
\arrow{dd}{\Q[h]{\CU}(W)}
\\
&&&{\framed{L}}
\\
\ds\map(G/N_+,\spec{X})^H
\arrow{rr}
\arrow{dddddd}[swap]{\map(G/N_+,\fr)^H}
\arrow{ddddrr}[swap]{p^*}
&&
\ds\Omega^U\Sigma^U\bigl(\map(G/N_+,\spec{X})^H\bigr)
\arrow{rr}
&&
\ds\Omega^U\Sigma^U\bigl(\map(G_+,\spec{X})^H\bigr)
\arrow{rr}
\arrow{dd}{\ts\one}
&&
\ds\Q[h]{\CU}\bigl(\map(G_+,\spec{X})^H\bigr)
\arrow{dddddd}{\coasbl}
\\
{\phantom{\framed{P}}}
\\
&&
&&
\ds\Omega^U\bigl(\map(G_+,\Sigma^{\res U}\spec{X})^H\bigr)
\arrow{dd}
\\
&&&{\framed{A}}&&{\framed{C}}
\\
&&
\ds\map(G_+,\spec{X})^H
\arrow{uuuurr}
\arrow{rr}
\arrow{ddrrrr}[swap]{\map(G_+,r)^H}
&&
\ds\map(G_+,\Omega^{\res U}\Sigma^{\res U}\spec{X})^H
\arrow{ddrr}
\arrow[draw=none]{dd}[description, pos=.2]{\ds\framed{B}}
\\
{\phantom{\framed{P}}}
\\
\ds\map\bigl(G/N_+,\Q[h]{\res\CU}(\spec{X})\bigr)^H
\arrow{rrrrrr}[swap]{p^*}
&&
&&
{}
&&
\ds\map\bigl(G_+,\Q[h]{\res\CU}(\spec{X})\bigr)^H
\end{tikzcd}
\]
\end{landscape}

%%%%%%%%%%%%%%%%%%%%%%%%%%%%%%%%%%%%%%%%%%%%%%%%%%%%%%%%%%%%%%%%%%%%%%%%%%%%%

\section{Construction of the Adams map}
\label{sec:construction}

In this section we define the Adams map of Main Theorem~\ref{thm:main} and show that it satisfies the naturality condition~\ref{i:adams-natural} in that theorem.
Given a normal subgroup $N$ of~$G$, an orthogonal $G$-spectrum~$\spec{X}$, and a complete $G$-universe~$\CU$, we want to define an equivariant map of orthogonal $G/N$-spectra 
\begin{equation}
\label{eq:adams}
\MOR{\adams}{E\CF(N)_+\sma_N\spec{X}}{\Q[h]{\CU}(\spec{X})^N}
\end{equation}
and show that $A$ is natural in~$\spec{X}$, i.e., $\adams$ is a natural transformation of functors $\Sp^G\TO\Sp^{G/N}$.
The construction of~$\adams$ uses the semidirect product~$N\semi G$ and the following blueprint.

\begin{construction}[Blueprint]
\label{constr:blueprint}
The conjugation action of $G$ on $N$ gives rise to the semidirect product $N\semi G$, the group with underlying set $N\times G$ and multiplication given by 
\[
(n,g)(\overline{n},\overline{g}) = (n g \overline{n} g^{-1} , g \overline{g})
\,.
\]
Define group homomorphisms $\MOR{q,\mu}{N\semi G}{G}$ by $q(n,g)=g$ and $\mu(n,g)= ng$,
and notice that the following square is a pullback.
\begin{equation}
\label{eq:pullback}
\begin{tikzcd}
N\semi G
\arrow{d}[swap]{\mu}
\arrow{r}{q}
&
G
\arrow{d}{p}
\\
G
\arrow{r}[swap]{p}
&
G/N
\end{tikzcd}
\end{equation}
Define $\overline{N}$ to be the set $N$ equipped with the $N\semi G$ action given by $(n,g)\overline{n} = n g \overline{n} g^{-1}$.
Then, for every $G$-spectrum~$\spec{X}$, the action map $N_+\sma\spec{X}\TO\spec{X}$ yields an $(N\semi G)$-equivariant map
\begin{equation}
\label{eq:act}
\MOR{\act}{\overline{N}_+\sma q^*\spec{X}}{\mu^*\spec{X}}
.
\end{equation}
This is obviously true for spaces and so also for naive orthogonal spectra, and therefore it remains true for non-naive orthogonal spectra; compare Remark~\ref{rem:naive-not-naive}.

Observe that $q$~induces an isomorphism $(N\semi G)/(N\semi1)\cong G$, and therefore for any orthogonal $(N\semi G)$-spectrum~$\spec{Z}$ we can consider $\spec{Z}^{N\semi1}$ as an orthogonal $G$\=/spectrum.
There are natural isomorphisms of orthogonal $G/N$-spectra
\begin{equation}
\label{eq:blueprint}
\bigl((  q^*\spec{X})^{N\semi1}\bigr)/N\cong\spec{X}/N
\AND
\bigl((\mu^*\spec{X})^{N\semi1}\bigr)/N\cong\spec{X}^N
.
\end{equation}
So, in order to define a $G/N$-map of orthogonal $G/N$-spectra
\[
\MOR{f}{\spec{X}/N}{\spec{Y}^N}
,
\]
where $\spec{X}$ and~$\spec{Y}$ are orthogonal $G$-spectra, it is enough to construct an $(N\semi G)$-map
\[
\MOR{\widetilde{f}}{q^*\spec{X}}{\mu^*\spec{Y}}
\,,
\]
and then let
\[
\MOR{f = \bigl(\widetilde{f}^{N\semi1}\bigr)/N}%
{\spec{X}/N\cong\bigl((q^*\spec{X})^{N\semi1}\bigr)/N}%
{\bigl((\mu^*\spec{X})^{N\semi1}\bigr)/N\cong\spec{X}^N}
.
\]
Notice that then by definition the diagram
\begin{equation}
\label{eq:blueprint-square}
\begin{tikzcd}
\spec{X} \arrow[two heads]{d} \arrow{r}{\widetilde{f}}
&
\spec{Y}
\\
\spec{X}/N
\arrow{r}[swap]{f}
&
\spec{Y}^N
\arrow[hook]{u}
\end{tikzcd}
\end{equation}
commutes.
\end{construction}

Notice that in the special case $N=G$ Construction~\ref{constr:blueprint} simplifies; see the beginning of Section~\ref{sec:N=G}.

\begin{construction}[Adams map]
\label{constr:adams}
Using Construction~\ref{constr:blueprint}, in order to define the Adams map~$\adams$ in~\eqref{eq:adams} it is enough to construct an equivariant map of orthogonal $(N\semi G)$-spectra
\begin{equation}
\label{eq:preadams}
\MOR{\preadams}{q^*\bigl(E\CF(N)_+\sma\spec{X}\bigr)}{\mu^*\Q[h]{\CU}(\spec{X})}
\,,
\end{equation}
which we call the \emph{pre-Adams map}.
Then we define $\adams$ by taking the $(N\semi1)$-fixed points followed by the $N$-orbits of~$\preadams$, i.e.,
\[
\adams = \bigl(\preadams^{N\semi1}\bigr)/N
\,.
\]

Choose a complete $(N\semi G)$-universe~$\CV$.
Then define the pre-Adams map~$\preadams$ in~\eqref{eq:preadams} as the composition
\[
\begin{tikzcd}[row sep=scriptsize]
q^*\bigl(E\CF(N)_+\sma\spec{X}\bigr)
\arrow{d}[swap]{\ds\one}{\ \cong}
\\
q^*E\CF(N)_+\sma q^*\spec{X}
\arrow{d}[swap]{\ds\two}{\ \id\sma c^!}
\\
q^*E\CF(N)_+\sma\Q[h]{\CV}\bigl(\overline{N}_+ \sma q^*\spec{X}\bigr)
\arrow{d}[swap]{\ds\three}{\ \id\sma\Q[h]{\CV}(\act)}
\\
q^*E\CF(N)_+\sma\Q[h]{\CV}(\mu^*\spec{X})
\arrow{d}[swap]{\ds\four}{\ d\sma\id}
\\
\CL(\CV,\mu^*\CU)_+\sma\Q[h]{\CV}(\mu^*\spec{X})
\arrow{d}[swap]{\ds\five}{\ \Phi}
\\
\Q[h]{\mu^*\CU}(\mu^*\spec{X})
\arrow{d}[swap]{\ds\six}{\ \cong}
\\
\mu^*\Q[h]{\CU}(\spec{X})
\end{tikzcd}
\]
of the maps $\one$ through~$\six$ explained below.

\noindent$\one$
is the natural $(N\semi G)$-isomorphism from Example~\ref{eg:res-smash}.

\noindent$\two$
is induced by the transfer map
$\MOR{c^!}{q^*\spec{X}}{\Q[h]{\CV}\bigl(\overline{N}_+ \sma q^*\spec{X}\bigr)}$
from Theorem~\ref{thm:transfer} associated with the projection~$\MOR{c}{\overline{N}}{\pt}$, a map of~$(N\semi G)$-$1$-sets; see Example~\ref{eg:transfer-c}.

\noindent$\three$
is induced by applying the functor~$\Q[h]{\CV}(-)$ to the $(N\semi G)$-equivariant map~$\act$ in~\eqref{eq:act}.

\noindent$\four$
is induced by the map $\MOR{d}{q^*E\CF(N)}{\CL(\CV,\mu^*\CU)}$ defined as follows.
First of all, $q^*E\CF(N)=q^*E_G\CF(N)=E_{N\semi G}q^*\CF(N)$, where $H\in q^*\CF(N)$ if and only if $q(H)\in\CF(N)$.
By inspecting the definitions we see that $q^*\CF(N)\subseteq\CF(\ker\mu)$.
Therefore there is an $(N\semi G)$-map
\begin{equation}
\label{eq:j}
\MOR{j}
{q^*E_G\CF(N)}
{E_{N\semi G}\CF(\ker\mu)}
\,,
\end{equation}
unique up to $(N\semi G)$-homotopy.
By Proposition~\ref{prop:EFN} there is an $(N\semi G)$-weak equivalence
\[
\MOR{\cw}{E_{N\semi G}\CF(\ker\mu)}{\CL(\CV,\CV^{\ker\mu})}
\,.
\]
Since $\CV^{\ker\mu}$ is a complete $G$-universe, we can choose an $(N\semi G)$-isomorphism $\MOR[\cong]{k}{\CV^{\ker\mu}}{\mu^*\CU}$, and $k$ then induces an $(N\semi G)$-isomorphism
\[
\MOR[\cong]{k_*}{\CL(\CV,\CV^{\ker\mu})}
{\CL(\CV,\mu^*\CU)}
\,.
\]
The map $d$ is defined as the composition $k_* \circ \cw \circ j$ of these three maps.

\noindent$\five$
is the map $\Phi_{\CV,\mu^*\CU}$ expressing the functoriality of $\Q{(-)}(\mu^*\spec{X})$ from Corollary~\ref{cor:fun-in-U}.

\noindent$\six$
is the inverse of the $(N\semi G)$-isomorphism from Theorem~\ref{thm:Q}\ref{i:res-Q}.

This completes the definition of the pre-Adams map~$\preadams$ in~\eqref{eq:preadams} and hence of the Adams map~\eqref{eq:adams}.
% \[
% \MOR{\adams = \bigl(\preadams^{N\semi1}\bigr)/N}%
% {E\CF(N)_+\sma_N\spec{X}}{\Q[h]{\CU}(\spec{X})^N}
% .
% \]

The definition depends on the choice of~$\CV$, on the choice of a transfer map in step~$\two$, and on the choices of $j$, $\cw$, and~$k$ in step~$\four$.
Using the uniqueness of complete universes together with Theorem~\ref{thm:transfer}\ref{i:transfer-uniqueness} and~\ref{i:transfer-triangle}, the uniqueness up to homotopy of universal spaces and CW-approximations, and Lemma~\ref{lem:LMS-II.1.5}, we can easily see that different choices yield equivariantly homotopic maps.
\end{construction}

By inspecting the definition, we see that $\preadams$ and hence~$\adams$ are natural in~$\spec{X}$.
This proves part~\ref{i:adams-natural} of Main Theorem~\ref{thm:main}.

We conclude this section with the following result, which describes the effect of the pre-Adams map on the homotopy groups of the underlying non-equivariant spectra.

\begin{proposition}
\label{prop:norm}
For every good orthogonal $G$-spectrum~$\spec{X}$ the diagram
\[
\begin{tikzcd}[column sep=huge]
\pi_*^1\bigl(E\CF(N)_+\sma\spec{X}\bigr)
\arrow{d}[left]{\pi_*^1(\preadams)}
\arrow{r}[swap]{\cong}{\pi_*^1(\pr_2)}
&
\pi_*^1(\spec{X})
\arrow{d}[right]{\ell_{\sum n}}
\\
\pi_*^1\bigl(\Q[h]{\CU}(\spec{X})\bigr)
&
\pi_*^1(\spec{X})
\arrow{l}[swap]{\cong}{\pi_*^1(\fr)}
\end{tikzcd}
\]
commutes, where the right-hand vertical homomorphism is left multiplication with $\ds\sum_{n\in N} n \in \IZ[N]\leq\IZ[G]$, the norm element of~$N$.
\end{proposition}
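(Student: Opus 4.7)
The plan is to analyze $\pi_*^1(\preadams)$ step by step through the six stages of Construction~\ref{constr:adams}. Since $1\in\CF(N)$, the space $\res_1 E\CF(N)$ is non-equivariantly contractible; together with the goodness hypothesis, this implies that $\pi_*^1(\pr_2)$ is an isomorphism (a choice of basepoint $e\in E\CF(N)$ gives a non-equivariant spectrum-level homotopy inverse). We will identify, via $\pi_*^1(\pr_2)$, the source of $\pi_*^1(\preadams)$ with $\pi_*^1(\spec{X})$ and track the image of a generic element $y\in\pi_*^1(\spec{X})$.

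Step $\one$ is a natural isomorphism and acts trivially on $y$. At step $\two$, Example~\ref{eg:transfer-c} applied to the projection $\MOR{c}{\overline{N}}{\pt}$ shows that on $\pi_*^1$ the transfer $c^!$ sends $y$ to $\pi_*^1(\fr)\bigl(\sum_{n\in N}\iota_n(y)\bigr)$, where $\iota_n$ is the inclusion of the $n$-th wedge summand in the non-equivariant splitting $\overline{N}_+\sma q^*\spec{X}\cong\bigvee_{n\in N}\spec{X}$. At step $\three$, naturality of $\fr$ commutes $\Q[h]{\CV}(\act)$ past $\fr$, and non-equivariantly the action map $\act$ sends the $n$-th wedge summand to $\spec{X}$ by left multiplication by $n$. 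Hence after step $\three$, the image of $y$ has become $\pi_*^1(\fr^\CV)\bigl(\sum_n n\cdot y\bigr)=\pi_*^1(\fr^\CV)(\ell_{\sum n}(y))$, sitting inside $\pi_*^1\bigl(q^*E\CF(N)_+\sma\Q[h]{\CV}(\mu^*\spec{X})\bigr)$.

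For steps $\four$--$\six$, the key observation is that $q^*E\CF(N)$ and $\CL(\CV,\mu^*\CU)$ are both non-equivariantly contractible (the second by Lemma~\ref{lem:LMS-II.1.5}), so on $\pi_*^1$ the map $d$ is a homotopy equivalence between contractible spaces. After using $\pr_2$ to cancel the $q^*E\CF(N)_+$ factor, the composition $\Phi_{\CV,\mu^*\CU}\circ(d\sma\id)$ therefore agrees on $\pi_*^1$ with the change-of-universe map $\MOR{g_*}{\Q[h]{\CV}(\mu^*\spec{X})}{\Q[h]{\mu^*\CU}(\mu^*\spec{X})}$ for any chosen equivariant isometry $\MOR{g}{\CV}{\mu^*\CU}$ (e.g.\ $d$ evaluated at a basepoint, followed by Remark~\ref{rem:kappa}-style identifications). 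By the associativity of $\Phi$ from Corollary~\ref{cor:fun-in-U} applied to the composable pair $0\to\CV\xrightarrow{g}\mu^*\CU$, together with the description of $\fr$ as $\Phi$ applied to the inclusion of the identity, we get $g_*\circ\fr^\CV=\fr^{\mu^*\CU}$. Combined with the isomorphism from Theorem~\ref{thm:Q}\ref{i:res-Q} that is step $\six$, the image of $y$ is $\pi_*^1(\fr^\CU)(\ell_{\sum n}(y))$, as required.

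The main obstacle will be the careful justification of steps $\four$--$\six$: one must use functoriality of $\Q$ in the universe variable to collapse the contractible factor $d\sma\id$ to a single change-of-universe isometry at the level of $\pi_*^1$, and then exploit the compatibility of $\fr$ with $\Phi$ (a direct consequence of diagram~\eqref{eq:fun-in-U-assoc}) to cancel that change of universe against $\fr^\CV$ and reproduce $\fr^\CU$. Everything else is a direct unpacking of the definitions together with Example~\ref{eg:transfer-c}.
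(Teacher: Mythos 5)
Your proposal is correct and proceeds along essentially the same lines as the paper: identify the source with $\pi_*^1(\spec{X})$ via $\pr_2$ and track an element through the six stages of Construction~\ref{constr:adams}, using Example~\ref{eg:transfer-c} for step $\two$, the wedge description of $\overline{N}_+\sma\spec{X}$ for step $\three$, and contractibility for steps $\four$--$\six$. The paper organizes this as a commutative diagram and handles the last step by citing Remark~\ref{rem:kappa} together with the equivariant homotopy $\iota_*\circ\pr_2\circ\kappa\simeq\pr_2$ of Proposition~\ref{prop:univchange}; you argue instead directly from non-equivariant contractibility of $E\CF(N)$ and $\CL(\CV,\mu^*\CU)$ (Lemma~\ref{lem:LMS-II.1.5}) and then cancel via associativity of $\Phi$. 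Both routes rest on the same contractibility input, and yours is a slightly more economical derivation for a statement that only concerns~$\pi_*^1$.

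One point deserves correction: there is no $(N\semi G)$-\emph{equivariant} isometry $\CV\to\mu^*\CU$, since $\CV$ is a complete $(N\semi G)$-universe while $\mu^*\CU$ has trivial $\ker\mu$-action; indeed $\CL(\CV,\mu^*\CU)^{N\semi G}=\emptyset$ by Lemma~\ref{lem:LMS-II.2.4(ii)}. What your argument actually uses (and all it needs) is a \emph{non-equivariant} linear isometry $g=d(e)$ obtained by evaluating $d$ at a basepoint $e\in q^*E\CF(N)$, and the identity $\Phi_{\CV,\mu^*\CU}(g,-)\circ\fr^\CV=\fr^{\mu^*\CU}$ which follows from~\eqref{eq:fun-in-U-assoc} applied to the composite $0\hookrightarrow\CV\xrightarrow{g}\mu^*\CU$ (not the ``inclusion of the identity''; $\fr$ is $\Phi$ evaluated at the inclusion of $0$, by~\eqref{eq:Q0}). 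Since you are computing only $\pi_*^1$, working with a non-equivariant isometry and non-equivariant homotopies is perfectly legitimate, so the argument goes through once the terminology is fixed.
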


\begin{proof}
Consider the following diagram.
\[
\begin{tikzcd}
\pi_*^1\bigl(E\CF(N)_+\sma\spec{X}\bigr)
\arrow{d}[left]{\pi_*^1(\id\sma c^!)}
\arrow{r}{\pi_*^1(\pr_2)}[below]{\cong}
&
\pi_*^1\bigl(\spec{X}\bigr)
\arrow{d}[left]{\pi_*^1(c^!)}
\arrow{r}{\diag}
&
\ds\bigoplus_{N}\pi_*^1(\spec{X})
\arrow[shorten <=-1ex]{d}[description, pos=.4]{\cong}
\\
\pi_*^1\bigl(E\CF(N)_+\sma\Q[h]{\CV}\bigl(\overline{N}_+ \sma q^*\spec{X}\bigr)\bigr)
\arrow{d}[left]{\pi_*^1(\id\sma Q(\act))}
\arrow{r}{\pi_*^1(\pr_2)}[below]{\cong}
&
\pi_*^1\bigl(\Q[h]{\CV}\bigl(\overline{N}_+ \sma q^*\spec{X}\bigr)\bigr)
\arrow{d}[left]{\pi_*^1(Q(\act))}
&
\pi_*^1\bigl(\overline{N}_+ \sma \spec{X}\bigr)
\arrow{d}[left]{\pi_*^1(\act)}
\arrow{l}[above]{\pi_*^1(\fr)}[below]{\cong}
\\
\pi_*^1\bigl(E\CF(N)_+\sma\Q[h]{\CV}(\mu^*\spec{X})\bigr)
\arrow{d}[left]{\pi_*^1(\Phi\circ(d\sma\id))}
\arrow{r}{\pi_*^1(\pr_2)}[below]{\cong}
&
\pi_*^1\bigl(\Q[h]{\CV}(\mu^*\spec{X})\bigr)
&
\pi_*^1(\spec{X})
\arrow[equal]{d}
\arrow{l}[above]{\pi_*^1(\fr)}[below]{\cong}
\\
\pi_*^1\bigl(\Q[h]{\CU}(\spec{X})\bigr)
\arrow[equal]{r}
&
\pi_*^1\bigl(\Q[h]{\CU}(\spec{X})\bigr)
\arrow{u}{\pi_*^1(\iota_* \circ k_*^{-1})}[swap]{\cong}
&
\pi_*^1(\spec{X})
\arrow{l}[above]{\pi_*^1(\fr)}[below]{\cong}
\end{tikzcd}
\]
The leftmost vertical composition is by definition~$\pi_*^1(\preadams)$, and the composition $\pi_*^1(\act)\circ\diag$ is left multiplication by $\sum_{n\in N}n$.
As explained in Example~\ref{eg:transfer-c}, the square in the top-right corner commutes.
The square in the bottom-left corner also commutes.
To see this, recall that $d=k_* \circ \cw \circ j$, and let $\iota$ denote the inclusion $\CV^{\ker\mu} \subseteq \CV$.
By naturality it suffices to show $\iota_* \circ \Phi \circ (\cw \sma \id) \simeq \pr_2$.
This holds because Proposition~\ref{prop:univchange} implies that $\iota_* \circ \pr_2 \circ \kappa \simeq \pr_2$, and by Remark~\ref{rem:kappa} we have that $\Phi \circ (\cw \sma \id)=\pr_2 \circ \kappa$.
All other diagrams commute by naturality.
\end{proof}

%%%%%%%%%%%%%%%%%%%%%%%%%%%%%%%%%%%%%%%%%%%%%%%%%%%%%%%%%%%%%%%%%%%%%%%%%%%%%

\section{Proof of the Adams isomorphism}
\label{sec:proof}

This section is devoted to the proof of part~\ref{i:adams-iso} of Main Theorem~\ref{thm:main}.
We start with an easy observation.

\begin{lemma}
\label{lem:free}
The following two statements are equivalent:
\begin{enumerate}
\item\label{i:X}
For all good and $N$-free orthogonal $G$-spectra~$\spec{X}$, $A_{\spec{X}}$ is a \piiso;
\item\label{i:EF-X}
For all good orthogonal $G$-spectra~$\spec{X}$, $A_{E\CF(N)_+\sma\spec{X}}$ is a \piiso.
\end{enumerate}
\end{lemma}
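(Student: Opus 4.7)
The plan is to prove both implications using the naturality of $\adams$ and standard properties of the functor $\Q[h]{\CU}$ established in Theorem~\ref{thm:Q}, together with the mixing results of Lemmas~\ref{lem:well-known} and~\ref{lem:Omega-fix}. The key observation used in both directions is that if $\spec{X}$ is good then so is $E\CF(N)_+\sma\spec{X}$, by Lemma~\ref{lem:good}\ref{i:sma-map-good}.

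For the implication \ref{i:X}$\Rightarrow$\ref{i:EF-X}, given a good orthogonal $G$-spectrum~$\spec{X}$, I would apply~\ref{i:X} to the spectrum $\spec{Y}:=E\CF(N)_+\sma\spec{X}$, which is good by the remark above. It remains to verify that $\spec{Y}$ is $N$-free. For this I would observe that $E\CF(N)\times E\CF(N)$ has empty $H$-fixed points for $H\notin\CF(N)$ and contractible ones for $H\in\CF(N)$, so it is another $G$-CW model for $E\CF(N)$; hence the projection $E\CF(N)_+\sma E\CF(N)_+\to E\CF(N)_+$ is a $G$-weak equivalence of based $G$-CW-complexes, and so a $G$-homotopy equivalence by the equivariant Whitehead theorem. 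Smashing with~$\spec{X}$ produces a $G$-homotopy equivalence, and in particular a \piiso, of orthogonal $G$-spectra, which is precisely the $N$-freeness of~$\spec{Y}$.

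For the converse implication \ref{i:EF-X}$\Rightarrow$\ref{i:X}, let $\spec{X}$ be good and $N$-free. Applying the naturality of $\adams$, given by part~\ref{i:adams-natural} of Main Theorem~\ref{thm:main}, to the projection $\MOR{\pr}{E\CF(N)_+\sma\spec{X}}{\spec{X}}$ yields a commutative square of orthogonal $G/N$-spectra
\[
\begin{tikzcd}[column sep=large]
E\CF(N)_+\sma_N\bigl(E\CF(N)_+\sma\spec{X}\bigr)
\arrow{r}{\adams}
\arrow{d}[swap]{\id\sma_N\pr}
&
\Q[h]{\CU}\bigl(E\CF(N)_+\sma\spec{X}\bigr)^N
\arrow{d}{\Q[h]{\CU}(\pr)^N}
\\
E\CF(N)_+\sma_N\spec{X}
\arrow{r}[swap]{\adams}
&
\Q[h]{\CU}(\spec{X})^N\,.
\end{tikzcd}
\]
The top horizontal map is a \piiso\ by~\ref{i:EF-X} applied to the good spectrum $E\CF(N)_+\sma\spec{X}$, so by the two-out-of-three property it suffices to show that both vertical maps are \piiso s. For the left vertical map I would invoke Lemma~\ref{lem:well-known}\ref{i:sma-F(N)} with $Z=E\CF(N)_+$ and $f=\pr$: the hypothesis that $\pi_*^H(\pr)$ is an isomorphism for all $H\in\CF(N)$ follows from the $N$-freeness of~$\spec{X}$, and the conclusion is precisely that $\id\sma_N\pr$ is a \piiso\ of $G/N$-spectra.

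For the right vertical map the plan proceeds in two steps. First, in the naturality square for the transformation~$\fr$ of Theorem~\ref{thm:Q} applied to~$\pr$, both horizontal maps~$\fr$ are \piiso s by Theorem~\ref{thm:Q}\ref{i:repl} (their sources and targets are good), and the left vertical map $\pr$ is a \piiso\ by the $N$-freeness of~$\spec{X}$, so two-out-of-three yields that $\MOR{\Q[h]{\CU}(\pr)}{\Q[h]{\CU}(E\CF(N)_+\sma\spec{X})}{\Q[h]{\CU}(\spec{X})}$ is a \piiso\ of orthogonal $G$-spectra. Second, since both $\Q[h]{\CU}(E\CF(N)_+\sma\spec{X})$ and $\Q[h]{\CU}(\spec{X})$ are $G$-$\Omega$-spectra by Theorem~\ref{thm:Q}\ref{i:Omega}, Lemma~\ref{lem:Omega-fix}\ref{i:pi-iso-Omega-fix} then transports this to a \piiso\ $\Q[h]{\CU}(\pr)^N$ of $G/N$-spectra. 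I do not anticipate any genuine obstacle: all the main tools have already been developed in the preceding sections, and what remains is careful bookkeeping across the naturality square.
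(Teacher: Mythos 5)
Your proposal is correct and follows essentially the same route as the paper: the forward direction shows that $E\CF(N)_+\sma\spec{X}$ is good and $N$-free via the $G$-homotopy equivalence $E\CF(N)_+\sma E\CF(N)_+\to E\CF(N)_+$, and the converse uses the same naturality square for~$\adams$ applied to the projection~$\pr$. Two small remarks: where you write that the top horizontal map is a \piiso\ ``by \ref{i:EF-X} applied to the good spectrum $E\CF(N)_+\sma\spec{X}$'', you of course mean applying \ref{i:EF-X} to the good spectrum~$\spec{X}$; and for the left vertical map the paper simply reuses the $G$-homotopy equivalence $\id\sma\pr_2$ from the forward direction (whose $N$-orbits are automatically a $G/N$-homotopy equivalence), whereas you invoke Lemma~\ref{lem:well-known}\ref{i:sma-F(N)} --- both are valid, the paper's route being marginally shorter.
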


\begin{proof}
Let $\spec{X}$ be an orthogonal $G$-spectrum.
By Lemma~\ref{lem:good}\ref{i:sma-map-good}, if $\spec{X}$ is good then so is also $E\CF(N)_+\sma\spec{X}$.

Consider the projection on the second factor
\(
\MOR{\pr_2}{E\CF(N)_+\sma\spec{X}}{\spec{X}}
\).
Since
\begin{equation}
\label{eq:id-sma-pr}
\MOR{\id\sma\pr_2=\pr_1\sma\id}%
{E\CF(N)_+\sma E\CF(N)_+\sma\spec{X}}%
{E\CF(N)_+\sma\spec{X}}
\,,
\end{equation}
and $\MOR{\pr_1}{E\CF(N)_+\sma E\CF(N)_+}{E\CF(N)_+}$ is a $G$-homotopy equivalence, it follows that the map in~\eqref{eq:id-sma-pr} is a $G$-homotopy equivalence and so in particular a \piiso\ of orthogonal $G$-spectra.
Thus $E\CF(N)_+\sma\spec{X}$ is $N$-free, and \ref{i:X}~implies~\ref{i:EF-X}.

Conversely, consider the commutative square
\[
\begin{tikzcd}[column sep=huge]
\bigl(E\CF(N)_+\sma E\CF(N)_+\sma\spec{X}\bigr)/N
\arrow{r}{\adams_{E\CF(N)_+\sma\spec{X}}}
\arrow{d}[swap]{(\id\sma\pr_2)/N}
&
\Q[h]{\CU}(E\CF(N)_+\sma\spec{X})^N
\arrow{d}{\Q[h]{\CU}(\pr_2)^N}
\\
\bigl(E\CF(N)_+\sma\spec{X}\bigr)/N
\arrow{r}[swap]{\adams_{\spec{X}}}
&
\Q[h]{\CU}(\spec{X})^N
\end{tikzcd}
\]
induced by~$\pr_2$ via the naturality of the Adams map.
As observed above, $\id\sma\pr_2$ is a $G$-homotopy equivalence of orthogonal $G$-spectra, and therefore $(\id\sma\pr_2)/N$ is a $G/N$-homotopy equivalence and so a \piiso\ of orthogonal $G/N$-spectra.

Now assume that $\spec{X}$ is $N$-free, i.e., assume that $\pr_2$ is a \piiso.
If~$\spec{X}$ is good, then so is $E\CF(N)_+\sma\spec{X}$ by Lemma~\ref{lem:good}\ref{i:sma-map-good}, and hence by Theorem~\ref{thm:Q}\ref{i:repl} and~\ref{i:Omega} it follows that $\Q[h]{\CU}(\pr_2)$ is a \piiso\ of $G$-$\Omega$-spectra.
Then with Lemma~\ref{lem:Omega-fix}\ref{i:pi-iso-Omega-fix} we conclude that $\Q[h]{\CU}(\pr_2)^N$ is a \piiso.
% Hence \ref{i:EF-X} implies~\ref{i:X}.
\end{proof}

Given a good orthogonal $G$-spectrum $\spec{X}$, in order to show that $\adams_{E\CF(N)_+\sma\spec{X}}$ is a \piiso\ we proceed by induction on the cellular filtration of~$E\CF(N)$.
All the $G$-cells of~$E\CF(N)$ have the form~$D^n\times G/H$ with $H\leq G$ such that $H\cap N=1$.
The induction begin is established in the next key lemma.
This is by far the hardest step in the proof.
In the special case when $N=G$ the induction begin simplifies, as we explain in the next section; see Lemma~\ref{lem:easier}.

\begin{keylemma}
\label{lem:key}
Let $H\leq G$ be such that $H\cap N=1$.
Let $\spec{X}$ be a good orthogonal $G$-spectrum.
Then the Adams map
\[
\MOR{\adams_{G/H_+\sma\spec{X}}}%
{E\CF(N)_+\sma_N(G/H_+\sma\spec{X})}%
{\Q[h]{\CU}(G/H_+\sma\spec{X})^N}
\]
is a \piiso.
\end{keylemma}

\begin{proof}
We use the $G$-equivariant isomorphism
\begin{equation}
\label{eq:assumeinduced}
G/H_+\sma\spec{X}
\cong
G_+\sma_H\res_{H\leq G}\spec{X}
\end{equation}
from Example~\ref{eg:ind-res}.
By Theorem~\ref{thm:H-in-G}\ref{i:res-good}, $\res_{H\leq G}\spec{X}$ is good if and only if $\spec{X}$ is good.
Therefore we need to show that $\adams_{G_+\sma_H\spec{X}}$ is a \piiso\ for every good orthogonal $H$-spectrum~$\spec{X}$ of the form $\spec{X} = \res_{H\leq G}\spec{Y}$.

Consider the following diagram.
\begin{equation}
\label{eq:induction-begin}
\begin{tikzcd}[column sep=large]
\ds
q^*\bigl(E\CF(N)_+\sma G_+\sma_H\spec{X}\bigr)
\arrow{d}[swap]{\ds\one}
\\
\ds
q^*E\CF(N)_+\sma q^*G_+\sma_H\spec{X}
\arrow{r}{\pr_2}
\arrow{d}[swap]{\ds\two}{\id\sma c^!}
\arrow[draw=none, yshift=1ex]{dr}[description]{\framed{A}}
&
\ds
q^*G_+\sma_H\spec{X}
\arrow{d}{q^!}
\\
\ds
q^*E\CF(N)_+\sma\Q[h]{\CV}(\overline{N}_+\sma q^*G_+\sma_H\spec{X})
\arrow{r}{\Phi\circ(d\sma\id)}
\arrow{d}[swap]{\ds\three}{\id\sma\Q[h]{\CV}(\act)}
\arrow[draw=none, yshift=1ex]{dr}[description]{\framed{B}}
&
\ds
\Q[h]{\mu^*\CU}(\overline{N}_+\sma q^*G_+\sma_H\spec{X})
\arrow{d}{\Q[h]{\mu^*\CU}(\act)}
\\
\ds
q^*E\CF(N)_+\sma\Q[h]{\CV}(\mu^*G_+\sma_H\spec{X})
\arrow{r}[swap]{\ds\five\circ\four}{\Phi\circ(d\sma\id)}
&
\ds
\Q[h]{\mu^*\CU}(\mu^*G_+\sma_H\spec{X})
\arrow{d}[swap]{\ds\six}
\\
&
\ds
\mu^*\Q[h]{\CU}(G_+\sma_H\spec{X})
\end{tikzcd}
\end{equation}
The composition of the maps labeled $\one$ through~$\six$ is by definition the pre-Adams map~$\preadams_{G_+\sma_H\spec{X}}$ from Construction~\ref{constr:adams}.
The square labeled \framed{B} commutes by naturality.
We now explain the square~\framed{A} and show that it commutes up to $(N\semi G)$-homotopy.
This is a delicate step in the proof, based on the results from Sections \ref{sec:free} and~\ref{sec:transfer}.
The main point about~\framed{A} is that for induced spectra~$G_+\sma_H\spec{X}$ the Adams map can be described using the transfer~$q^!$ associated with the smaller universe~$\mu^*\CU$, as we proceed to explain.

Consider $\overline{N}$ as a $(N\semi G)$-$H$-set with trivial right $H$-action,
and $\spec{X}$ as an orthogonal $(N\semi G) \times H$-spectrum with trivial $(N\semi G)$-action.
Then by Theorem~\ref{thm:transfer}\ref{i:transfer-ABC} and~\ref{i:transfer-uniqueness} the transfer~$c^!$ associated with
\[
\MOR{c}{\overline{N}}{\pt}
\AND
q^*G_+\sma_H\spec{X}
\]
is $(N\semi G)$-homotopic to the transfer~$q^!$ associated with
\[
\MOR{q}{\overline{N}\times q^*G}{q^* G}
\AND
\spec{X}
\]
where $q$ is the projection onto the second factor.
In symbols,
\begin{equation}
\label{eq:c-is-q}
\MOR{c^!\simeq q^!}%
{q^*G_+\sma_H\spec{X}}%
{\Q[h]{\CV}(\overline{N}_+\sma q^*G_+\sma_H\spec{X})}
\,.
\end{equation}

Recall from step~$\four$ in Construction~\ref{constr:adams} that $k$ denotes the choice of an $(N\semi G)$-equivariant isometric isomorphism $\MOR[\cong]{k}{\CV^{\ker\mu}}{\mu^*\CU}$, and let $\iota$ denote the inclusion $\CV^{\ker \mu} \subseteq \CV$.

Notice that the universes $\mu^*\CU$ and~$\CV^{\ker \mu}$ are large enough for~$q$ in the sense of Definition~\ref{def:large}, because
\[
\overline{N}\times q^*G \TO \mu^* G/H \times q^* G, \quad (\overline{n},g) \mapsto (ngH, g)
\]
is $(N\semi G)$-$H$-equivariant and injective since $H \cap N = 1$ by assumption, and because the complete $G$-universe $\CU$ certainly contains $\IR[G/H]$.
Hence $q^!$ can also be defined using the universes $\mu^* \CU$ or $\CV^{\ker \mu}$.

\begin{remark}
\label{rem:not-large}
Notice that, unless~$N=1$, the transfer~$c^!$ cannot be defined with the universe~$\mu^*\CU$, because $\mu^*\CU$ is not large enough for~$c$ in the sense of Definition~\ref{def:large}.
\end{remark}

We proceed with the proof of Key Lemma~\ref{lem:key}.
For simplicity we introduce the abbreviations
\[
\spec{Y}=q^*G_+\sma_H\spec{X}
\AND
\spec{Z}=\overline{N}_+\sma q^*G_+\sma_H\spec{X}=\overline{N}_+\sma\spec{Y}
\,.
\]
By Theorem~\ref{thm:transfer}\ref{i:transfer-triangle} and~\ref{i:transfer-uniqueness} the following diagram commutes up to $(N\semi G)$-homotopy.
\begin{equation}
\label{eq:triangle}
\begin{tikzcd}
\ds \spec{Y}
\arrow{d}[left]{q^!}
\arrow{rd}[description]{q^!}
\arrow{rrd}{q^!}
\\
\ds\Q[h]{\CV}(\spec{Z})
&
\ds\Q[h]{\CV^{\ker \mu}}(\spec{Z})
\arrow{l}{\iota_*}
\arrow{r}{\cong}[swap]{k_*}
&
\ds\Q[h]{\mu^*\CU}(\spec{Z})
\end{tikzcd}
\end{equation}

\newcommand*{\oldE}{q^* E_G\CF(N)}
\newcommand*{\newE}{E_{N\semi G}\CF(\ker\mu)}

Consider the following diagram, whose top-left triangle is obtained by smashing the left half of \eqref{eq:triangle} with $q^*E_G\CF(N)_+$.
\begin{equation}
\label{eq:induction-begin-1}
\begin{tikzcd}
\oldE_+\sma\spec{Y}
\arrow{d}{\id\sma q^!}
\arrow{rr}{\pr_2}
\arrow{rd}{\id\sma q^!}
&
&
\spec{Y}
\arrow{d}{q^!}
\\
\oldE_+\sma\Q[h]{\CV}(\spec{Z})
\arrow{d}{j\sma\id}
\arrow[draw=none]{dr}[description]{\framed{C}}
&
\oldE_+\sma\Q[h]{\CV^{\ker \mu}}(\spec{Z})
\arrow{l}[swap]{\id\sma\iota_*}
\arrow{r}{k_* \circ \pr_2}
\arrow{d}{j\sma\id}
&
\Q[h]{\mu^*\CU}(\spec{Z})
\arrow[equal]{d}
\\
\newE_+\sma\Q[h]{\CV}(\spec{Z})
\arrow[yshift=-.7ex]{r}[swap]{\kappa}
&
\newE_+\sma\Q[h]{\CV^{\ker \mu}}(\spec{Z})
\arrow[dotted, yshift=+.7ex]{l}[swap]{\id\sma\iota_*}
\arrow{r}[swap]{k_* \circ \pr_2}
&
\Q[h]{\mu^*\CU}(\spec{Z})
\end{tikzcd}
\end{equation}
The upper half of this diagram commutes up to $(N\semi G)$-homotopy, because the same is true for~\eqref{eq:triangle}.
The lower-right square also commutes.
In \framed{C} the map $j$ was introduced in~\eqref{eq:j}, and the map $\kappa$ is the $(N\semi G)$-homotopy inverse of~$\id\sma\iota_*$ from Proposition~\ref{prop:univchange}.
Since \framed{C} obviously commutes when going from right to left, it follows that the entire diagram~\eqref{eq:induction-begin-1} commutes up to $(N\semi G)$-homotopy.

We claim that each of the two ways along the boundary of diagram~\eqref{eq:induction-begin-1} is homotopic to the corresponding way along the boundary of~\framed{A} in~\eqref{eq:induction-begin}, and so also \framed{A} commutes up to $(N\semi G)$-homotopy.
For the composition going through the upper-right corner there is nothing to prove.
For the one going through the lower-left corner we use that
$\id\sma q^! \simeq \id\sma c^!$, as observed in \eqref{eq:c-is-q},
and that $k_* \circ \pr_2 \circ \kappa \circ (j \sma \id) = \Phi \circ (d \sma \id)$;
the last equality follows from Remark~\ref{rem:kappa}, the definition of $d=k \circ \cw \circ j$ in step~$\four$ of Construction~\ref{constr:adams}, and the naturality of~$\Phi$.

Since we have now established that \eqref{eq:induction-begin} commutes up to $(N\semi G)$-homotopy, in order to finish the proof it is enough to show that the composition
\[
\six\circ\Q[h]{\mu^*\CU}(\act)\circ q^!\circ\pr_2\circ\,\one
\]
in diagram~\eqref{eq:induction-begin} induces a \piiso\ after taking $(N\semi1)$-fixed points and then $N$-orbits.

It is clear that
\[
\MOR{\bigl((\pr_2\circ\,\one)^{N\semi1}\bigr)/N=\pr_2/N}%
{\bigl(E\CF(N)_+\sma G_+\sma_H\spec{X}\bigr)/N}%
{\bigl(G_+\sma_H\spec{X}\bigr)/N}
\,;
\]
compare~\eqref{eq:blueprint}.
This map is a $G/N$-homotopy equivalence and hence a \piiso\ because of \eqref{eq:assumeinduced} and the fact that $\MOR{\pr_2}{E\CF(N)_+ \sma G/H_+}{G/H_+}$ is a $G$\=/equivariant homotopy equivalence.

It remains to analyze~$\six \circ \Q[h]{\mu^*\CU}(\act)\circ q^!$.
To this end we consider the pullback square of~$(N\semi G)$-$H$-sets
\[
\begin{tikzcd}
\overline{N}\times q^*G
\arrow{d}[swap]{\act}
\arrow{r}{q}
&
q^*G
\arrow{d}{q^* p}
\\
\mu^* G
\arrow{r}[swap]{\mu^* p}
&
\mu^* p^* G/N
=
  q^* p^* G/N
\end{tikzcd}
\]
coming from the pullback square of groups~\eqref{eq:pullback}.
Then by Theorem~\ref{thm:transfer}\ref{i:transfer-pullback} and~\ref{i:transfer-uniqueness} the diagram
\[
\begin{tikzcd}
\ds q^*G_+\sma_H\spec{X}
\arrow{r}{q^!}
\arrow{d}[swap]{q^*p \sma \id}
&
\ds\Q[h]{\mu^*\CU}\bigl(\overline{N}_+\sma q^*G_+\sma_H\spec{X}\bigr)
\arrow{d}{\Q[h]{\mu^*\CU}(\act)}
\\
\ds\mu^* p^* G/N_+\sma_H\spec{X}
\arrow{r}[swap]{(\mu^* p)^!}
&
\ds\Q[h]{\mu^*\CU}\bigl(\mu^*G_+\sma_H\spec{X}\bigr)
\end{tikzcd}
\]
commutes up to $(N\semi G)$-homotopy, and so
\(
\Q[h]{\mu^*\CU}(\act)\circ q^!\simeq(\mu^*p)^!\circ (q^*p \sma \id)
\).
Using~\eqref{eq:blueprint} we see that
\[
\MOR{\bigl((q^*p\sma \id)^{N\semi1}\bigr)/N=\id}{G/N_+\sma_H\spec{X}}{G/N_+\sma_H\spec{X}}
\,.
\]
Finally, by Theorem~\ref{thm:transfer}\ref{i:transfer-res} and~\ref{i:transfer-uniqueness}, the composition $\six\circ(\mu^*p)^!$ is $(N\semi G)$-homotopic to~$\mu^*(p^!)$, where
\[
\MOR{p^!}{p^* G/N_+\sma_H\spec{X}}{\Q[h]{\CU}(G_+\sma_H\spec{X})}
\]
is the transfer map associated with the projection $\MOR{p}{G}{p^*G/N}$, a map of $G$-$H$-sets; see Example~\ref{eg:transfer-GNH}.
By~\eqref{eq:blueprint}, the proof will be complete once we show that $(p^!)^N$ is a \piiso.

By Theorem~\ref{thm:transfer-Wirth} the following diagram commutes.
\begin{equation}
\label{eq:induction-begin-Wirth}
\begin{tikzcd}[column sep=large]
\ds p^*G/N_+\sma_H\spec{X}
\arrow{r}{p^!}
\arrow{d}[swap]{p^*W^{H\leq G/N}}
&
\ds\Q[h]{\CU}\bigl(G_+\sma_H\spec{X}\bigr)
\arrow{d}{\Q[h]{\CU}\bigl(W^{H\leq G}\bigr)}
\\
\ds p^* \map(G/N_+,\spec{X})^H
\arrow{d}[swap]{\fr_*}
&
\ds\Q[h]{\CU}\bigl(\map(G_+,\spec{X})^H\bigr)
\arrow{d}{\coasbl}
\\
\ds p^* \map\bigl(G/N_+,\Q[h]{\res\CU}(\spec{X})\bigr)^H
\arrow{r}[swap]{p^*}
&
\ds\map\bigl(G_+,\Q[h]{\res\CU}(\spec{X})\bigr)^H
\end{tikzcd}
\end{equation}
On the three terms on the left of this diagram $N$ acts trivially, and therefore taking $N$-fixed points yields
\[
G/N_+\sma_H\spec{X}
\xrightarrow{W^{H\leq G/N}}
\map(G/N_+,\spec{X})^H
\xrightarrow{\ \fr_*\ }
\map\bigl(G/N_+,\Q[h]{\res\CU}(\spec{X})\bigr)^H
.
\]
By the Wirthm\"uller isomorphism, Theorem~\ref{thm:H-in-G}\ref{i:Wirth}, $W^{H\leq G/N}$ is a \piiso.
Since $\spec{X}$ is good, Theorem~\ref{thm:Q}\ref{i:repl} implies that $\MOR{\fr}{\spec{X}}{\Q[h]{\res\CU}(\spec{X})}$ is a \piiso, and so $r_*$ above is also a \piiso\ by Theorem~\ref{thm:H-in-G}\ref{i:res-co-ind-piiso}.

Now consider the right-hand side of~\eqref{eq:induction-begin-Wirth}.
By Theorem~\ref{thm:H-in-G}\ref{i:Wirth} and~\ref{i:co-ind-good}, the Wirthm\"uller map~$W^{H\leq G}$ is a \piiso, $G_+\sma_H\spec{X}$ is good and $\map(G_+,\spec{X})^H$ is almost good.
Therefore Theorem~\ref{thm:Q}\ref{i:repl} and~\ref{i:Omega} and Addendum~\ref{add:almost} imply that $\Q[h]{\CU}(W^{H\leq G})$ is a \piiso\ of orthogonal $G$-$\Omega$-spectra, and therefore also $\Q[h]{\CU}(W^{H\leq G})^N$ is a \piiso\ by Lemma~\ref{lem:Omega-fix}\ref{i:pi-iso-Omega-fix}.
By Proposition~\ref{prop:asbl-coasbl}, $\coasbl$ is a \piiso.
Since we are assuming that $\spec{X}=\res_{H\leq G}\spec{Y}$, Theorem~\ref{thm:Q}\ref{i:res-Q} and Example~\ref{eg:ind-res} give isomorphisms
\[
\map\bigl(G_+,\Q[h]{\res\CU}(\spec{X})\bigr)^H
\cong
\map\bigl(G_+,\res\Q[h]{\CU}(\spec{Y})\bigr)^H
\cong
\map\bigl(H\backslash G_+,\Q[h]{\CU}(\spec{Y})\bigr)
,
\]
and the latter spectrum is a $G$-$\Omega$-spectrum by Lemma~\ref{lem:Omega}\ref{i:map-Omega} and Theorem~\ref{thm:Q}\ref{i:Omega}.
Then it follows from Lemma~\ref{lem:Omega-fix}\ref{i:pi-iso-Omega-fix} that also $\coasbl^N$ is a \piiso.

Finally, since
\[
\Bigl(\map\bigl(G_+,\Q[h]{\res\CU}(\spec{X})\bigr)^H\Bigr)^N
\cong
\map\bigl(G/N_+,\Q[h]{\res\CU}(\spec{X})\bigr)^H
,
\]
we see that after taking $N$-fixed points the bottom horizontal map in~\eqref{eq:induction-begin-Wirth} is just the identity of
$\map(G/N_+,\Q[h]{\res\CU}(\spec{X}))^H$.
Therefore also $(p^!)^N$ is a \piiso, and the result is proved.
\end{proof}

\begin{lemma}
\label{lem:adams-sigma}
For each good orthogonal $G$-spectrum~$\spec{X}$ and each integer~$n\geq0$ there is a commutative diagram
\begin{equation}
\label{eq:adams-sigma}
\begin{tikzcd}[column sep=large]
\ds S^n\sma\bigl(E\CF(N)_+\sma_N\spec{X}\bigr)
\arrow{r}{\id\sma\adams_{\spec{X}}}
\arrow{d}
&
\ds S^n\sma\Q[h]{\CU}(\spec{X})^N
\arrow{d}
\\
\ds E\CF(N)_+\sma_N(S^n\sma\spec{X})
\arrow{r}[swap]{\adams_{S^n\sma\spec{X}}}
&
\ds \Q[h]{\CU}(S^n\sma\spec{X})^N
\end{tikzcd}
\end{equation}
whose vertical maps are \piiso s.
Therefore $\adams_{\spec{X}}$ is a \piiso\ if and only if $\adams_{S^n\sma\spec{X}}$ is one.
\end{lemma}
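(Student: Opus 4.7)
The plan is as follows. First I would define the vertical maps. For the left vertical, I would use the fact that $N$ acts trivially on $S^n$ to obtain the natural $G/N$-isomorphism
\[
S^n\sma\bigl(E\CF(N)_+\sma_N\spec{X}\bigr)
\;\cong\;
E\CF(N)_+\sma_N(S^n\sma\spec{X})
,
\]
which is in particular a \piiso. For the right vertical, I would take the assembly map $\MOR{\asbl}{S^n\sma\Q[h]{\CU}(\spec{X})}{\Q[h]{\CU}(S^n\sma\spec{X})}$ of Proposition~\ref{prop:asbl-coasbl}, apply $(-)^N$, and precompose with the natural identification $S^n\sma\Q[h]{\CU}(\spec{X})^N\cong\bigl(S^n\sma\Q[h]{\CU}(\spec{X})\bigr)^N$ coming from the trivial $N$-action on $S^n$.

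Next I would verify that the square commutes. This should reduce to the naturality of~$\adams$ in~$\spec{X}$ combined with an inspection of Construction~\ref{constr:adams}: each of the six steps defining the pre-Adams map interacts naturally with smash products by pointed spaces with trivial $N$-action. The key compatibility is between smashing with~$S^n$ and~$\asbl$, which is the diagram~\eqref{eq:asbl-natural-in-F} applied to~$F=\Q[h]{\CU}(-)$, together with the evident compatibility of the transfer~$c^!$ in Example~\ref{eg:transfer-c} with smashing with $S^n$ (which in turn follows from Theorem~\ref{thm:transfer}\ref{i:transfer-ABC} applied with $C=S^n$, or equivalently from the naturality of the Pontryagin--Thom collapse).

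The main step is showing that the right vertical map is a \piiso. Fix a subgroup $H$ of~$G$ containing~$N$. Since $\spec{X}$ is good, so is $S^n\sma\spec{X}$ by Lemma~\ref{lem:good}\ref{i:sma-map-good}, and hence both $\Q[h]{\CU}(\spec{X})$ and $\Q[h]{\CU}(S^n\sma\spec{X})$ are $G$-$\Omega$-spectra by Theorem~\ref{thm:Q}\ref{i:Omega}. Then by Lemma~\ref{lem:Omega-fix}\ref{i:Omega-fix} and Theorem~\ref{thm:Q}\ref{i:repl} I would compute
\[
\pi_k^{H/N}\bigl(S^n\sma\Q[h]{\CU}(\spec{X})^N\bigr)
\cong
\pi_{k-n}^{H/N}\bigl(\Q[h]{\CU}(\spec{X})^N\bigr)
\cong
\pi_{k-n}^H\bigl(\Q[h]{\CU}(\spec{X})\bigr)
\cong
\pi_{k-n}^H(\spec{X})
,
\]
and symmetrically $\pi_k^{H/N}\bigl(\Q[h]{\CU}(S^n\sma\spec{X})^N\bigr)\cong\pi_k^H(S^n\sma\spec{X})\cong\pi_{k-n}^H(\spec{X})$. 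The commutative triangle~\eqref{eq:triangle-asbl} then shows that $\asbl^N$ induces the identity on $\pi_{k-n}^H(\spec{X})$ under these identifications, so $\asbl^N$ is a \piiso. Finally, the equivalence between $\adams_\spec{X}$ being a \piiso\ and $\adams_{S^n\sma\spec{X}}$ being a \piiso\ follows by two-out-of-three applied to the commutative square, together with Lemma~\ref{lem:well-known}\ref{i:sma-pi-iso} (and its converse, which is immediate from the shift $\pi_k(S^n\sma-)\cong\pi_{k-n}(-)$) to translate between $\adams_\spec{X}$ and $\id\sma\adams_\spec{X}$. The most delicate point to execute carefully will be the commutativity of the square, since the construction of~$\adams$ involves several choices and the interaction with the assembly map must be checked at each intermediate stage.
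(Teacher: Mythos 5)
Your overall structure matches the paper's: the vertical maps are induced by the assembly maps for the functors $F=q^*(E\CF(N)_+\sma-)$ and $F'=\mu^*\Q[h]{\CU}(-)$, commutativity descends from the pre-Adams level, and the \piiso\ claim for the right vertical map follows from Proposition~\ref{prop:asbl-coasbl}, Lemma~\ref{lem:Omega-fix}, and the degree shift $\pi_*^K(S^n\sma-)\cong\pi_{*-n}^K(-)$. One notable difference: the paper obtains the commutative square in a single stroke by applying diagram~\eqref{eq:asbl-natural-in-F} to the natural transformation $\tau=\preadams$ itself (not to $F=\Q[h]{\CU}(-)$ as you write), whereas you propose to re-inspect each of the six steps of Construction~\ref{constr:adams}. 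Once one remembers that $\preadams$ has already been established as a natural transformation (that was part~\ref{i:adams-natural} of the Main Theorem), the abstract formalism of~\eqref{eq:asbl-natural-in-F} handles the whole square at once, and the step-by-step check is redundant. Also, your citation of Theorem~\ref{thm:transfer}\ref{i:transfer-ABC} with $C=S^n$ does not literally apply since that statement requires $C$ to be a finite $\Gamma$-$H$-set; your parenthetical alternative via naturality of the Pontryagin--Thom collapse would be the way to salvage that sub-argument, but it is best avoided altogether by the single application of~\eqref{eq:asbl-natural-in-F}.
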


\begin{proof}
Since the pre-Adams map is a natural transformation, by \eqref{eq:asbl-natural-in-F} there is a commutative diagram
\[
\begin{tikzcd}[column sep=large]
\ds S^n\sma q^*\bigl(E\CF(N)_+\sma\spec{X}\bigr)
\arrow{r}{\id\sma\preadams_{\spec{X}}}
\arrow{d}[left]{\asbl'}
&
\ds S^n\sma\mu^*\Q[h]{\CU}(\spec{X})
\arrow{d}[right]{\asbl}
\\
\ds q^*\bigl(E\CF(N)_+\sma(S^n\sma\spec{X})\bigr)
\arrow{r}[swap]{\preadams_{S^n\sma\spec{X}}}
&
\ds \mu^*\Q[h]{\CU}(S^n\sma\spec{X})
\end{tikzcd}
\]
where the vertical maps are the assembly maps of Example~\ref{eg:assembly}.

By inspection, the left-hand assembly map~$\asbl'$ is the evident $(N\semi G)$\=/isomorphism.
Since all groups act trivially on~$S^n$, after taking $(N\semi1)$-fixed points and then $N$-orbits the map~$\asbl'$ induces the left-hand vertical map in~\eqref{eq:adams-sigma}, which is thus a $G/N$-isomorphism.

Similarly, after fixed-points and orbits the map~$\asbl$ induces the right-hand vertical map in~\eqref{eq:adams-sigma}.
Since $\spec{X}$ is good, $\asbl$ is a \piiso\ by Proposition~\ref{prop:asbl-coasbl}.
Using Lemma~\ref{lem:Omega-fix} and the fact that $\pi_*^K(S^n\sma\spec{Y})\cong\pi_{*-n}^K(\spec{Y})$, it follows that also the right-hand vertical map in~\eqref{eq:adams-sigma} is a \piiso.
\end{proof}

By combining the previous lemma with Key Lemma~\ref{lem:key} we obtain the following result.

\begin{corollary}
\label{cor:begin}
Let $H\leq G$ be such that $H\cap N=1$.
Let $\spec{X}$ be a good orthogonal $G$-spectrum.
Then for every $n\geq0$ the Adams map
\[
\MOR{\adams_{S^n\sma G/H_+\sma\spec{X}}}%
{E\CF(N)_+\sma_N(S^n\sma G/H_+\sma\spec{X})}%
{\Q[h]{\CU}(S^n\sma G/H_+\sma\spec{X})^N}
\]
is a \piiso.
\end{corollary}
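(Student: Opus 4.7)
The plan is to derive Corollary~\ref{cor:begin} as an immediate combination of Key Lemma~\ref{lem:key} and Lemma~\ref{lem:adams-sigma}, with Lemma~\ref{lem:good}\ref{i:sma-map-good} providing the necessary goodness property. Essentially all the substantive work has already been packaged inside Key Lemma~\ref{lem:key}, so no genuine obstacle remains.

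First I would set $\spec{Y} := G/H_+ \sma \spec{X}$ and observe that since $\spec{X}$ is good and $G/H_+$ is a pointed $G$-CW-complex, the spectrum $\spec{Y}$ is again good by Lemma~\ref{lem:good}\ref{i:sma-map-good}. This is the only preparatory remark.

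Next, since $H \leq G$ satisfies $H \cap N = 1$ and $\spec{X}$ is good, Key Lemma~\ref{lem:key} applies directly and shows that
\[
\MOR{\adams_{\spec{Y}}=\adams_{G/H_+\sma\spec{X}}}{E\CF(N)_+\sma_N\spec{Y}}{\Q[h]{\CU}(\spec{Y})^N}
\]
is a \piiso. This is precisely the $n=0$ case of the corollary.

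Finally, to pass from $n=0$ to arbitrary $n \geq 0$, I would invoke Lemma~\ref{lem:adams-sigma} applied to the good spectrum $\spec{Y}$: it provides a commutative square~\eqref{eq:adams-sigma} whose two vertical arrows are \piiso s, hence in particular asserts that $\adams_{\spec{Y}}$ is a \piiso\ if and only if $\adams_{S^n\sma\spec{Y}}$ is one. Combined with the previous step, this yields that $\adams_{S^n\sma G/H_+\sma\spec{X}}$ is a \piiso, completing the proof. The only caveat worth flagging is to double-check that $\spec{Y}$ is good before applying Lemma~\ref{lem:adams-sigma} — but this has already been arranged in the first step.
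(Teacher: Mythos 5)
Your proposal is correct and follows precisely the paper's own route: the paper itself introduces Corollary~\ref{cor:begin} with the words ``By combining the previous lemma with Key Lemma~\ref{lem:key} we obtain the following result,'' so the intended proof is exactly the combination of Key Lemma~\ref{lem:key} (the $n=0$ case) and Lemma~\ref{lem:adams-sigma} (the suspension step), with Lemma~\ref{lem:good}\ref{i:sma-map-good} guaranteeing that $G/H_+\sma\spec{X}$ is good so that both lemmas apply.
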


To finish the proof that $\adams_{E\CF(N)_+\sma\spec{X}}$ is a \piiso\ by induction on the cellular filtration of~$E\CF(N)$, we consider the Adams map
\[
\MOR{\adams_{Z\sma\spec{X}}}%
{E\CF(N)_+\sma_N(Z\sma\spec{X})}%
{\Q[h]{\CU}(Z\sma\spec{X})^N}
\]
for any pointed $G$-space~$Z$.

\begin{lemma}
\label{lem:cofiber}
For every subgroup $K$ of~$G$ containing~$N$ and every good orthogonal $G$-spectrum~$\spec{X}$, the functors
\begin{align*}
Z\mapsto\ &\pi^{K/N}_*\bigl(E\CF(N)_+\sma_N(Z\sma\spec{X})\bigr)\\
\shortintertext{and}
Z\mapsto\ &\pi^{K/N}_*\bigl(\Q[h]{\CU}(Z\sma\spec{X})^N   \bigr)
\end{align*}
send equivariant cofiber sequences of pointed $G$-spaces to long exact sequences of abelian groups.
\end{lemma}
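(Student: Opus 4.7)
The plan is to reduce both statements to the standard long exact sequence of equivariant stable homotopy groups associated with a cofiber sequence of orthogonal spectra. Given an equivariant cofiber sequence $A \TO B \TO C$ of pointed $G$-spaces, smashing with $\spec{X}$ produces a cofiber sequence
\[
A\sma\spec{X} \TO B\sma\spec{X} \TO C\sma\spec{X}
\]
of orthogonal $G$-spectra, all of whose terms are good by Lemma~\ref{lem:good}\ref{i:sma-map-good}.

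For the first functor, both $E\CF(N)_+\sma(-)$ and the $N$-orbit functor $(-)/N$ are left adjoints (to $\map(E\CF(N)_+,-)$ and to the restriction along the projection $G\TO G/N$, respectively), hence preserve cofiber sequences. Applying them successively to the cofiber sequence above yields a cofiber sequence of orthogonal $G/N$-spectra, to which the standard long exact sequence of $\pi^{K/N}_*$ for a cofiber sequence of orthogonal $G/N$-spectra applies directly.

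For the second functor, the key step is to establish a natural isomorphism
\[
\pi^{K/N}_*\bigl(\Q[h]{\CU}(Z\sma\spec{X})^N\bigr) \cong \pi^K_*(Z\sma\spec{X})
\]
of functors from pointed $G$-spaces to graded abelian groups. Since $Z\sma\spec{X}$ is good, Theorem~\ref{thm:Q}\ref{i:Omega} implies that $\Q[h]{\CU}(Z\sma\spec{X})$ is a $G$-$\Omega$-spectrum, so Lemma~\ref{lem:Omega-fix}\ref{i:Omega-fix} provides a natural isomorphism between the left-hand side and $\pi^K_*\bigl(\Q[h]{\CU}(Z\sma\spec{X})\bigr)$; composing with the natural isomorphism induced by the replacement map $\fr$ of Theorem~\ref{thm:Q}\ref{i:repl} completes the identification. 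The long exact sequence of $\pi^K_*$ applied to the cofiber sequence of good orthogonal $G$-spectra above then transports through this natural isomorphism to a long exact sequence for the second functor. The only point requiring attention is compatibility with the connecting homomorphisms, but since both isomorphisms are induced by natural transformations of functors on $\Sp^G$, they intertwine the Puppe sequence connecting maps automatically.
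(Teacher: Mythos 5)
Your proposal follows the same architecture as the paper's proof: the same identification
\[
\pi^{K/N}_*\bigl(\Q[h]{\CU}(Z\sma\spec{X})^N\bigr)\cong\pi^K_*(Z\sma\spec{X})
\]
via Lemma~\ref{lem:good}\ref{i:sma-map-good}, Theorem~\ref{thm:Q}\ref{i:repl}, \ref{i:Omega}, and Lemma~\ref{lem:Omega-fix}\ref{i:Omega-fix}, followed by reducing both functors to the long exact homotopy sequence of a levelwise cofiber sequence of orthogonal spectra. The treatment of the naturality of the connecting homomorphisms is a reasonable addition.

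There is, however, one step where you claim more than you justify: the assertion that $E\CF(N)_+\sma(-)$ and $(-)/N$ preserve cofiber sequences ``since they are left adjoints.'' Left adjointness gives preservation of colimits, hence of the strict quotient $B/A$. But to apply the long exact sequence of \cite{MM}*{Theorem III.3.5.(vi)} one needs the first map to remain a cofibration (equivalently, that the strict cofiber still computes the homotopy cofiber), and that does not follow from colimit preservation alone. The paper handles exactly this point in the auxiliary lemma immediately following: for $Z\mapsto Z\sma X$ it uses the mapping-cylinder-retraction characterization of cofibrations together with $M(i\sma\id)\cong M(i)\sma X$, and for $Z\mapsto Z/N$ it checks preservation of cofibrations by direct inspection. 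Your argument would be complete once you verify cofibration preservation (or, equivalently, that these functors preserve mapping cones), which is true here for concrete reasons but not a formal consequence of adjointness.
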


\begin{proof}
By Lemma~\ref{lem:good}\ref{i:sma-map-good}, Lemma~\ref{lem:Omega-fix}\ref{i:Omega-fix}, and Theorem~\ref{thm:Q}\ref{i:repl}, we have that
\(
\pi^{K/N}_*(\Q[h]{\CU}(Z\sma\spec{X})^N)
\cong
\pi^{K}  _*           (Z\sma\spec{X})
\).
The lemma below implies that the functor $Z\mapsto Z\sma\spec{X}$ sends equivariant cofiber sequences of pointed $G$-spaces to levelwise cofiber sequences of orthogonal $G$-spectra, and that $Z\mapsto(E\CF(N)_+\sma Z\sma\spec{X})/N$ sends equivariant cofiber sequences of pointed $G$-spaces to levelwise cofiber sequences of orthogonal $G/N$-spectra.
Since equivariant homotopy groups send levelwise cofiber sequences of orthogonal spectra to long exact sequences of abelian groups \cite{MM}*{Theorem III.3.5.(vi) on page~46}, the statement follows.
\end{proof}

\begin{lemma}
\begin{enumerate}
\item 
For each pointed $G$-space $X$ the functor $Z\mapsto Z\sma X$ preserves equivariant cofiber sequences of pointed $G$-spaces.
\item
The $N$-orbits functor $Z\mapsto Z/N$ sends equivariant cofiber sequences of pointed $G$-spaces to equivariant cofiber sequences of pointed $G/N$-spaces.
\end{enumerate}
\end{lemma}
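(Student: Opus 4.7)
The strategy is the same for both parts: for a functor $F$ between categories of pointed equivariant spaces, it suffices to show that (a)~$F$ preserves the pushout defining the cofiber so that $F(B/A)\cong F(B)/F(A)$, and (b)~$F$ sends equivariant cofibrations to equivariant cofibrations in the target category. In both parts the functor under consideration is a left adjoint, so (a)~is automatic; the substantive point is~(b). I will use the equivariant pointed homotopy extension property: $A\hookrightarrow B$ is a pointed $G$-cofibration if and only if there exists a $G$-equivariant retraction
\[
\MOR{r}{B\sma I_+}{(B\sma\{0\}_+)\cup_{A\sma\{0\}_+}(A\sma I_+)}
,
\]
where $I=[0,1]$ carries the trivial $G$-action.

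For the first part, the functor $-\sma X$ is left adjoint to $\map(X,-)$ by Definition~\ref{def:sma-map}, so it preserves all pointed colimits; in particular $(B/A)\sma X\cong (B\sma X)/(A\sma X)$. Smashing the retraction~$r$ with~$X$ and using the same preservation of pushouts, together with the associativity of the smash product, yields a $G$-equivariant retraction
\[
(B\sma X)\sma I_+\TO \bigl((B\sma X)\sma\{0\}_+\bigr)\cup_{(A\sma X)\sma\{0\}_+}\bigl((A\sma X)\sma I_+\bigr),
\]
which exhibits $A\sma X\hookrightarrow B\sma X$ as a $G$-cofibration.

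For the second part, by Construction~\ref{constr:irc-naive} and~\eqref{eq:ind-orb-coind-fix} the $N$-orbits functor $-/N\colon\CT^G\TO\CT^{G/N}$ is the induction $\ind_p$ along the projection $\MOR{p}{G}{G/N}$, hence left adjoint to restriction $p^*$. Thus $-/N$ preserves all colimits, giving $(B/A)/N\cong (B/N)/(A/N)$. Applying $-/N$ to the retraction~$r$ and using the natural $G/N$-homeomorphism $(Y\sma I_+)/N\cong (Y/N)\sma I_+$ for any pointed $G$-space~$Y$---which holds because $N$ acts trivially on $I_+$---produces a $G/N$-equivariant retraction exhibiting $A/N\hookrightarrow B/N$ as a $G/N$-cofibration. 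The only potential obstacle is verifying the point-set compatibilities between $-/N$, smash products, and pushouts; these are standard in the compactly generated weak Hausdorff setting of~$\CT$ fixed in Section~\ref{sec:conventions}.
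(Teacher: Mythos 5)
Your proof is correct and follows essentially the same route as the paper: observe that both functors are left adjoints and hence preserve quotients, then use the mapping-cylinder/HEP retraction characterization of pointed cofibrations, smashing with~$X$ in part~(1) and applying $-/N$ together with $(Y\sma I_+)/N\cong(Y/N)\sma I_+$ in part~(2). The paper merely condenses part~(2) to ``inspect the definition directly,'' whereas you spell out the same verification.
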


\begin{proof}
Both functors preserve all colimits since they are left adjoints, so in particular they preserve quotients.
Hence we only need to show that they preserve cofibrations.
For the first functor, recall that a map $\MOR{i}{Z}{Z'}$ is a cofibration if and only if the natural map ${M(i)}\TO{Z'\sma I_+}$ from the pointed mapping cylinder of~$i$ has a retraction~$r$;
see e.g.\ \cite{May-concise}*{last line of page~56} and \cite{tD-top}*{Proposition~5.1.2 on page~103}.
Since $M(i\sma\id)\cong M(i)\sma X$, $r\sma\id_X$ is the desired retraction.
For the second functor just inspect the definition directly.
\end{proof}

The next lemma says that both the source and the target of the Adams map are compatible with arbitrary coproducts.

\begin{lemma}
\label{lem:coproducts}
Let $\{\spec{X}_j\}_{j\in\CJ}$ be an arbitrary family of good orthogonal $G$-spectra.
Then the natural map
\begin{align*}
\bigvee_{j\in\CJ}\Bigl(E\CF(N)_+\sma_N\spec{X_j}\Bigr)
&\TO
E\CF(N)_+\sma_N\Bigl({\ts\bigvee_{j\in\CJ}\spec{X_j}}\Bigr)
\\
\shortintertext{is an isomorphism, and the natural map}
\bigvee_{j\in\CJ}\Q[h]{\CU}(\spec{X_j})^N
&\TO
\Q[h]{\CU}\Bigl({\ts\bigvee_{j\in\CJ}\spec{X_j}}\Bigr)^N
\end{align*}
is a \piiso\ of orthogonal $G/N$-spectra.
\end{lemma}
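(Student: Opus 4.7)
The plan is to treat the two statements separately. The first is a strict isomorphism obtained by a left-adjoint argument; the second reduces to the compatibility of equivariant homotopy groups with wedges.

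For the first statement, I would factor the functor $E\CF(N)_+\sma_N(-)\colon\Sp^G\to\Sp^{G/N}$ as the composition of $E\CF(N)_+\sma-$, which is left adjoint to $\map(E\CF(N)_+,-)$ by Definition~\ref{def:sma-map}, followed by the $N$-orbits functor $(-)/N$, which is left adjoint to $p^*$ for the projection $\MOR{p}{G}{G/N}$, as recalled at the start of Section~\ref{sec:change-of-groups}. Both factors are left adjoints, so their composition preserves all coproducts; since coproducts in the category of orthogonal spectra are wedges, the natural map in question is a $G/N$-equivariant isomorphism. No goodness or cofibrancy hypothesis is needed for this part.

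For the second statement, I would verify that the natural map induces an isomorphism on $\pi_n^L$ for every subgroup $L\leq G/N$ and every integer~$n$. Every such $L$ has the form $L=K/N$ for a unique subgroup $K\leq G$ containing~$N$. By Lemma~\ref{lem:good}\ref{i:(co)prod-good} the wedge $\bigvee_j\spec{X}_j$ is again good, and by Theorem~\ref{thm:Q}\ref{i:Omega} each of $\Q[h]{\CU}(\spec{X}_j)$ and $\Q[h]{\CU}(\bigvee_j\spec{X}_j)$ is a $G$-$\Omega$-spectrum. Combining Lemma~\ref{lem:Omega-fix}\ref{i:Omega-fix} with Theorem~\ref{thm:Q}\ref{i:repl} gives natural isomorphisms
\[
\pi_n^{K/N}\bigl(\Q[h]{\CU}(\spec{Y})^N\bigr)
\cong
\pi_n^K\bigl(\Q[h]{\CU}(\spec{Y})\bigr)
\cong
\pi_n^K(\spec{Y})
\]
for both $\spec{Y}=\spec{X}_j$ and $\spec{Y}=\bigvee_j\spec{X}_j$. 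It then suffices to invoke the compatibility of $\pi_*^K$ of orthogonal $G$-spectra with arbitrary wedges (compare \cite{MM}*{Theorem~III.3.5}) to identify $\pi_n^L$ of both the source and the target of the natural map with $\bigoplus_{j\in\CJ}\pi_n^K(\spec{X}_j)$; a direct naturality check through the wedge inclusions $\spec{X}_k\hookrightarrow\bigvee_j\spec{X}_j$ shows that the natural map realizes this identification, so it is a \piiso.

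The main step deserving scrutiny is the wedge-sum property for infinite families, since for infinite $\CJ$ the canonical map $\bigvee_j\spec{X}_j\to\prod_j\spec{X}_j$ is not a \piiso. However, because each $\spec{X}_j$ is good, $\bigvee_j\spec{X}_j$ is the filtered colimit along levelwise closed embeddings of its finite subwedges, and $\pi_*^K$ commutes with such filtered colimits by the argument used in the proof of Theorem~\ref{thm:Q}\ref{i:repl} (via Facts~\ref{facts:clemb}\ref{i:colim-clemb} and~\ref{i:fix-colim-clemb}); the finite case is standard. This is the only nontrivial technical point, but it is routine.
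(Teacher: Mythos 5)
Your proposal is correct and takes essentially the same route as the paper: the first statement via the left-adjoint/colimit-preservation argument, the second by combining Lemma~\ref{lem:Omega-fix}\ref{i:Omega-fix} and Theorem~\ref{thm:Q} with the fact that equivariant homotopy groups of orthogonal $G$-spectra commute with arbitrary wedges, which the paper cites directly from \cite{MM}*{Theorem~III.3.5(ii)}. Your closing remark about filtered colimits along closed embeddings is a reasonable sanity check but is not needed, since the cited result already covers arbitrary coproducts.
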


\begin{proof}
The first statement follows from the fact that orbits commute with coproducts, and smash products distribute over coproducts.
The second statement follows by combining Lemma~\ref{lem:Omega-fix}\ref{i:Omega-fix} and Theorem~\ref{thm:Q} with the fact that homotopy groups commute with coproducts \cite{MM}*{Theorem III.3.5.(ii) on page~46}.
\end{proof}

Using Corollary~\ref{cor:begin}, Lemma~\ref{lem:cofiber}, and Lemma~\ref{lem:coproducts}, we conclude by cellular induction that the Adams map~$A_{E\CF(N)_+\sma\spec{X}}$ is a \piiso\ for every good orthogonal $G$\=/spectrum~$\spec{X}$.
By Lemma~\ref{lem:free} this completes the proof of Main Theorem~\ref{thm:main}\ref{i:adams-iso}.

%%%%%%%%%%%%%%%%%%%%%%%%%%%%%%%%%%%%%%%%%%%%%%%%%%%%%%%%%%%%%%%%%%%%%%%%%%%%%

\section{An easier proof in the special case~\texorpdfstring{$N=G$}{N=G}}
\label{sec:N=G}

In this section we explain some simplifications that occur in the proof of the Adams isomorphism in the special case when $N=G$.
Notice that then $\CF(G)$ reduces to the trivial family, and therefore $E\CF(G)=EG$.
The Adams map becomes
\begin{equation*}
\MOR{\adams}{EG_+\sma_G\spec{X}}{\Q[h]{\CU}(\spec{X})^G}
.
\end{equation*}
In the construction of the Adams map in Section~\ref{sec:construction} there is no need to consider the semidirect product $G\semi G$.
In the blueprint in Construction~\ref{constr:blueprint} we can instead work with direct product $G \times G$.
In fact, the map $\MOR{\psi}{G\times G}{G\semi G}$ defined by $\psi(h,g)=(hg^{-1},g)$ is a group isomorphism, and the diagram
\[
\begin{tikzcd}[row sep=scriptsize]
{}
&
G
\\
G\times G
\arrow{rr}{\psi}[swap]{\cong}
\arrow{ur}{\pr_1}
\arrow{dr}[swap]{\pr_2}
&&
G\semi G
\arrow{ul}[swap]{\mu}
\arrow{dl}{q}
\\
&
G
\end{tikzcd}
\]
commutes.
Under this isomorphism $\overline{G}$ is the set $G$ equipped with the $G\times G$ action given by $(h,g)\overline{g} = h \overline{g} g^{-1}$,
and the pullback diagram \eqref{eq:pullback} is replaced by 
\begin{equation*}
\label{eq:pullback-simplified}
\begin{tikzcd}
G \times G
\arrow{d}[swap]{\pr_1}
\arrow{r}{\pr_2}
&
G
\arrow{d}
\\
G
\arrow{r}
&
1
\mathrlap{.}
\end{tikzcd}
\end{equation*}

So the pre-Adams map from Construction~\ref{constr:adams} can be thought of as a $(G\times G)$-map of orthogonal $(G\times G)$-spectra
\[
\MOR{\preadams}{\pr_2^*\bigl(EG_+\sma\spec{X}\bigr)}{\pr_1^*\Q[h]{\CU}(\spec{X})}
.
\]

The structure of the inductive proof of the Adams isomorphism in Section~\ref{sec:proof} remains unchanged.
However, the induction begin becomes much easier when~$N=G$.
Key Lemma~\ref{lem:key} becomes Lemma~\ref{lem:easier} below, whose proof is inspired by~\cite{Dundas}*{proof of Proposition~6.3.2.5 on pages~251--252}.

\begin{lemma}
\label{lem:easier}
Let $\spec{X}$ be a good orthogonal $G$-spectrum.
Then the Adams map
\[
\MOR{\adams_{G_+\sma\spec{X}}}%
{EG_+\sma_G(G_+\sma\spec{X})}%
{\Q[h]{\CU}(G_+\sma\spec{X})^G}
\]
is a \piiso\ of orthogonal spectra.
\end{lemma}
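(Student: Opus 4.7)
The plan is to reduce to a direct $\pi_*$-computation via Proposition~\ref{prop:norm}. Since $G/N = 1$ is trivial, it suffices to show that $\adams_{G_+\sma\spec{X}}$ is a non-equivariant \piiso, i.e.\ that $\pi_*^1(\adams_{G_+\sma\spec{X}})$ is an isomorphism.

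First I will identify both source and target with $\pi_*^1(\spec{X})$. For the source, the natural $G$-isomorphism $EG_+\sma_G G_+\cong EG_+$ (from the free $G$-action on $G$ by multiplication) gives an isomorphism $EG_+\sma_G(G_+\sma\spec{X})\cong EG_+\sma\spec{X}$ of non-equivariant orthogonal spectra, whose $\pi_*^1$ is isomorphic to $\pi_*^1(\spec{X})$ by Lemma~\ref{lem:well-known}\ref{i:sma-pi-iso} since $EG$ is non-equivariantly contractible. For the target, $G_+\sma\spec{X}$ is good by Lemma~\ref{lem:good}\ref{i:sma-map-good}, so by Theorem~\ref{thm:Q}\ref{i:Omega} the replacement $\Q[h]{\CU}(G_+\sma\spec{X})$ is a $G$-$\Omega$-spectrum; then Lemma~\ref{lem:Omega-fix}\ref{i:Omega-fix} yields $\pi_*^1(\Q[h]{\CU}(G_+\sma\spec{X})^G)\cong\pi_*^G(G_+\sma\spec{X})$, and the Wirthm\"uller isomorphism (Theorem~\ref{thm:H-in-G}\ref{i:Wirth}) combined with~\eqref{eq:pi-coind} gives $\pi_*^G(G_+\sma\spec{X})\cong\pi_*^1(\spec{X})$.

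The main step is to check that under these identifications $\pi_*^1(\adams_{G_+\sma\spec{X}})$ is the identity. Proposition~\ref{prop:norm} applied to the good $G$-spectrum $G_+\sma\spec{X}$ says that $\pi_*^1(\preadams_{G_+\sma\spec{X}})$ is left multiplication by the norm element $\sum_{g\in G}g$ on $\pi_*^1(G_+\sma\spec{X})\cong\IZ[G]\otimes_\IZ\pi_*^1(\spec{X})$, which carries the diagonal $G$-action. Taking $(G\times 1)$-fixed points of the target corresponds to factoring through the inclusion $\pi_*^G(G_+\sma\spec{X})\hookrightarrow\pi_*^1(G_+\sma\spec{X})$, identified via Wirthm\"uller with the map $y\mapsto(g\cdot y)_{g\in G}$, and the explicit formula shows that the induced map $\bigoplus_G\pi_*^1(\spec{X})\to\pi_*^1(\spec{X})$ sends $(v_h)_h$ to $\sum_h h^{-1}\cdot v_h$. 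This is precisely the formula for the orbit projection arising from the iso $EG_+\sma_G(G_+\sma\spec{X})\cong EG_+\sma\spec{X}$, so taking $G$-orbits of the source produces a map $\pi_*^1(\adams_{G_+\sma\spec{X}})$ that is the identity on $\pi_*^1(\spec{X})$.

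The main obstacle is to check compatibility of the various natural identifications on the nose, not just abstractly as copies of $\pi_*^1(\spec{X})$; this amounts to unwinding the explicit descriptions of the Wirthm\"uller isomorphism (Theorem~\ref{thm:H-in-G}\ref{i:Wirth}), the inclusion $\pi_*^G\hookrightarrow\pi_*^1$ for the $G$-$\Omega$-spectrum $\Q[h]{\CU}(G_+\sma\spec{X})$, and the iso $\pi_*^G\cong\pi_*^1\circ(-)^G$ from Lemma~\ref{lem:Omega-fix}. A more conceptual alternative is to follow the structure of the proof of Key Lemma~\ref{lem:key} specialized to $H=1$ and $N=G$, replacing the explicit linear-algebraic computation by an application of Theorem~\ref{thm:transfer-Wirth}, which directly identifies the composite of the transfer, the Wirthm\"uller map, and the coassembly map with the replacement map~$\fr$ after passing to $G$-fixed points.
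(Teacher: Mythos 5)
Your proposal is correct and takes essentially the same route as the paper's proof: reduce to $\pi_*^1$, identify source and target with $\pi_*^1(\spec{X})$, and use Proposition~\ref{prop:norm} together with the algebra of the norm element acting on $\IZ[G]\tensor_\IZ\pi_*^1(\spec{X})$ between the augmentation (orbits) and the diagonal (fixed points). The compatibility check you flag as ``the main obstacle'' is precisely what the paper organizes as the subsquare labelled~\framed{D} in its diagram~\eqref{eq:diagram-special}, verified there by unwinding the Wirthm\"uller map, coassembly, and the fixed-point isomorphism --- the same ingredients you list, and your suggested fallback via Theorem~\ref{thm:transfer-Wirth} is also consistent with the paper's treatment of that step.
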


\begin{proof}
Like in the beginning of the proof of Key Lemma~\ref{lem:key}, we use the $G$-equivariant isomorphism
\[
G_+\sma\spec{X}
\cong
G_+\sma\res_{1\leq G}\spec{X}
\]
from Example~\ref{eg:ind-res}, and we can assume that $\spec{X}$ is an orthogonal $1$-spectrum of the form $\spec{X}=\res_{1\leq G}\spec{Y}$.

But now we consider the following diagram.
\begin{equation}
\label{eq:diagram-special}
\hspace{-.5em}
\begin{tikzcd}[column sep=.35em]
\ds \IZ[G]\tensor_\IZ\pi_*^1(\spec{X})
\arrow{rrrr}{\ell_{\sum g}\tensor\id}
\arrow{dddd}[swap]{\varepsilon\tensor\id}
&
&
&
&
\ds \IZ[G]\tensor_\IZ\pi_*^1(\spec{X})
\\
&
\pi_*^1\bigl(G_+\sma\spec{X}\bigr)
\arrow{rr}{\ell_{\sum g}}
\arrow{ul}[description]{\cong}
&
\,
\arrow[draw=none]{d}[description, pos=.4]{\framed{A}}
&
\pi_*^1\bigl(G_+\sma\spec{X}\bigr)
\arrow{d}[description]{\cong}{\ \fr_*}
\arrow{ur}[description]{\cong}
\\
&
\pi_*^1\bigl(EG_+\sma G_+\sma\spec{X}\bigr)
\arrow{rr}{\preadams_*}
\arrow{u}[description]{\cong}{{\pr_2}_*\ }
\arrow{d}
&
\,
\arrow[draw=none]{d}[description]{\framed{B}}
&
\pi_*^1\bigl(\Q[h]{\CU}(G_+\sma\spec{X})\bigr)^{\phantom{G}}
\arrow[draw=none]{r}[description, pos=.3]{\framed{D}}
&
\,
\\
&
\ds \pi_*^1\bigl((EG_+\sma G_+\sma\spec{X})/G\bigr)
\arrow{rr}[swap]{\adams_*}
\arrow{dl}[description]{\cong}
&
\,
\arrow[draw=none]{d}[description]{\framed{C}}
&
\pi_*^1\bigl(\Q[h]{\CU}(G_+\sma\spec{X})^G\bigr)
\arrow{u}
\arrow{dr}[description]{\cong}
\\
\ds \phantom{[G]}\IZ\tensor_\IZ\pi_*^1(\spec{X})
\arrow[shorten >=-1em]{rrrr}[pos=.56, below]{\id}
&
&
\,
&
&
\ds \phantom{[G]}\IZ\tensor_\IZ\pi_*^1(\spec{X})
\arrow{uuuu}[swap]{\sum g\tensor\id}
\end{tikzcd}
\end{equation}
The outer rectangle and the two unlabeled inner diagrams evidently commute.
The diagram labeled~\framed{A} commutes by Proposition~\ref{prop:norm}, and \framed{B} commutes by construction, see~\eqref{eq:blueprint-square}.
So if \framed{D} commutes, then also \framed{C} does, because $\epsilon$ is surjective, and therefore $A$ is a \piiso.

In order to explain the maps in \framed{D} and show that \framed{D} commutes, consider the following diagram, where the map $\one$ is the evident isomorphism and the three other vertical maps are given by inclusion of fixpoints.
\[
\begin{tikzcd}
\res \Q[h]{\CU}(G_+\sma\spec{X})^{\phantom{G}}
\arrow{r}{\!\!\Q[h]{\CU}(W)}
&
\res \Q[h]{\CU}(\map(G_+,\spec{X}))^{\phantom{G}}
\arrow{r}{\!\!\coasbl}
&
\res \map(G_+,\Q[h]{\res \CU}(\spec{X}))^{\phantom{G}}
\\
\Q[h]{\CU}(G_+\sma\spec{X})^G
\arrow{r}{\Q[h]{\CU}(W)^G}
\arrow[hook]{u}
&
\Q[h]{\CU}(\map(G_+,\spec{X}))^G
\arrow{r}{\coasbl^G}
\arrow[hook]{u}
&
\map(G_+,\Q[h]{\res \CU}(\spec{X}))^G
\arrow[hook]{u}[swap]{\ts\two}
\\
&
&
\Q[h]{\res \CU}(\spec{X})
\arrow{u}[swap]{\ts\one}{\cong}
\end{tikzcd}
\]
The Wirthm{\"u}ller map $W=W^{1 \leq G}$ is a \piiso\ by Theorem~\ref{thm:H-in-G}\ref{i:Wirth}.
By Theorem~\ref{thm:H-in-G}\ref{i:co-ind-good}, $G_+\sma\spec{X}$ is good and $\map(G_+,\spec{X})$ is almost good.
Therefore Theorem~\ref{thm:Q}\ref{i:repl} and~\ref{i:Omega} and Addendum~\ref{add:almost} imply that $\Q[h]{\CU}(W)$ is a \piiso\ of orthogonal $G$-$\Omega$-spectra, and so also $\Q[h]{\CU}(W)^G$ is a \piiso\ by Lemma~\ref{lem:Omega-fix}\ref{i:pi-iso-Omega-fix}.
By Proposition~\ref{prop:asbl-coasbl}, $\coasbl$ is a \piiso.
Since we are assuming that $\spec{X}=\res_{1\leq G}\spec{Y}$, Theorem~\ref{thm:Q}\ref{i:res-Q} and Example~\ref{eg:ind-res} give isomorphisms
\[
\map\bigl(G_+,\Q[h]{\res\CU}(\spec{X})\bigr)
\cong
\map\bigl(G_+,\res\Q[h]{\CU}(\spec{Y})\bigr)
\cong
\map\bigl(G_+,\Q[h]{\CU}(\spec{Y})\bigr)
,
\]
and the latter spectrum is a $G$-$\Omega$-spectrum by Lemma~\ref{lem:Omega}\ref{i:map-Omega} and Theorem~\ref{thm:Q}\ref{i:Omega}.
Then it follows from Lemma~\ref{lem:Omega-fix}\ref{i:pi-iso-Omega-fix} that also $\coasbl^G$ is a \piiso.

Using the fact that homotopy groups commute with finite products, one checks that the composition $\two \circ \one$ induces the right vertical map in~\eqref{eq:diagram-special}.
\end{proof}

%%%%%%%%%%%%%%%%%%%%%%%%%%%%%%%%%%%%%%%%%%%%%%%%%%%%%%%%%%%%%%%%%%%%%%%%%%%%%

\begin{bibdiv}
\begin{biblist}

\bib{Adams}{article}{
   author={Adams, John Frank},
   title={Prerequisites (on equivariant stable homotopy) for Carlsson's lecture},
   conference={
      title={Algebraic topology},
      address={Aarhus},
      date={1982},
   },
   book={
      series={Lecture Notes in Math.},
      volume={1051},
      publisher={Springer},
      place={Berlin},
   },
   date={1984},
   pages={483--532},
   review={\MR{764596}},
   doi={\DOI{10.1007/BFb0075584}},
}

\bib{BHM}{article}{
   author={B{\"o}kstedt, Marcel},
   author={Hsiang, Wu Chung},
   author={Madsen, Ib},
   title={The cyclotomic trace and algebraic $K$-theory of spaces},
   journal={Invent. Math.},
   volume={111},
   date={1993},
   number={3},
   pages={465--539},
   issn={0020-9910},
   review={\MR{1202133}},
   doi={\DOI{10.1007/BF01231296}},
}

\bib{BtD}{book}{
   author={Br{\"o}cker, Theodor},
   author={tom Dieck, Tammo},
   title={Representations of compact Lie groups},
   series={Graduate Texts in Mathematics},
   volume={98},
   publisher={Springer},
   place={New York},
   date={1985},
   pages={x+313},
   isbn={0-387-13678-9},
   review={\MR{781344}},
}

\bib{Carlsson}{article}{
   author={Carlsson, Gunnar},
   title={A survey of equivariant stable homotopy theory},
   journal={Topology},
   volume={31},
   date={1992},
   number={1},
   pages={1--27},
   issn={0040-9383},
   review={\MR{1153236}},
   doi={\DOI{10.1016/0040-9383(92)90061-L}},
}

\bib{tD-transf-rep}{book}{
   author={tom Dieck, Tammo},
   title={Transformation groups and representation theory},
   series={Lecture Notes in Mathematics},
   volume={766},
   publisher={Springer},
   place={Berlin},
   date={1979},
   pages={viii+309},
   isbn={3-540-09720-1},
   review={\MR{551743}},
}

\bib{tD-transf}{book}{
   author={tom Dieck, Tammo},
   title={Transformation groups},
   series={de Gruyter Studies in Mathematics},
   volume={8},
   publisher={Walter de Gruyter \& Co.},
   place={Berlin},
   date={1987},
   pages={x+312},
   isbn={3-11-009745-1},
   review={\MR{889050}},
   doi={\DOI{10.1515/9783110858372.312}},
}

\bib{tD-top}{book}{
   author={tom Dieck, Tammo},
   title={Algebraic topology},
   series={EMS Textbooks in Mathematics},
   publisher={European Mathematical Society (EMS)},
   place={Z\"urich},
   date={2008},
   pages={xii+567},
   isbn={978-3-03719-048-7},
   review={\MR{2456045}},
   doi={\DOI{10.4171/048}},
}

\bib{Dundas}{book}{
   author={Dundas, Bj{\o}rn Ian},
   author={Goodwillie, Thomas G.},
   author={McCarthy, Randy},
   title={The local structure of algebraic K-theory},
   series={Algebra and Applications},
   volume={18},
   publisher={Springer}, %-Verlag, London Ltd.},
   place={London},
   date={2013},
   pages={xvi+435},
   isbn={978-1-4471-4392-5},
   isbn={978-1-4471-4393-2},
   review={\MR{3013261}},
}

\bib{HM-top}{article}{
   author={Hesselholt, Lars},
   author={Madsen, Ib},
   title={On the $K$-theory of finite algebras over Witt vectors of perfect
   fields},
   journal={Topology},
   volume={36},
   date={1997},
   number={1},
   pages={29--101},
   issn={0040-9383},
   review={\MR{1410465}},
   doi={\DOI{10.1016/0040-9383(96)00003-1}},
}

\bib{HM-annals}{article}{
   author={Hesselholt, Lars},
   author={Madsen, Ib},
   title={On the $K$-theory of local fields},
   journal={Ann. of Math. (2)},
   volume={158},
   date={2003},
   number={1},
   pages={1--113},
   issn={0003-486X},
   review={\MR{1998478}},
   doi={\DOI{10.4007/annals.2003.158.1}},
}

\bib{HHR}{article}{
   author={Hill, Michael A.},
   author={Hopkins, Michael J.},
   author={Ravenel, Douglas C.},
   title={On the non-existence of elements of Kervaire invariant one},
   date={2015},
   status={preprint, available at \hurl{arxiv.org/abs/0908.3724v4}},
}

\bib{HV}{article}{
   author={Hollender, Jens},
   author={Vogt, Rainer M.},
   title={Modules of topological spaces, applications to homotopy limits and $E_\infty$ structures},
   journal={Arch. Math.},
   volume={59},
   date={1992},
   number={2},
   pages={115--129},
   issn={0003-889X},
   review={\MR{1170635}},
   doi={\DOI{10.1007/BF01190675}},
}

\bib{LMS}{book}{
   author={Lewis, L. Gaunce, Jr.},
   author={May, J. Peter},
   author={Steinberger, Mark},
%  author={McClure, James E.},
   title={Equivariant Stable Homotopy Theory},
   series={Lecture Notes in Mathematics},
   volume={1213},
   note={With contributions by James E. McClure},
   publisher={Springer},
   place={Berlin},
   date={1986},
   pages={x+538},
   isbn={3-540-16820-6},
   review={\MR{866482}},
}

\bib{LR}{article}{
   author={L\"uck, Wolfgang},
   author={Reich, Holger},
   title={The Baum-Connes and the Farrell-Jones conjectures in $K$- and $L$-theory},
   conference={
      title={Handbook of $K$-theory, Vol.~2},
   },
   book={
      publisher={Springer},
      place={Berlin},
   },
   date={2005},
   pages={703--842},
   review={\MR{2181833}},
   note={\hurl{k-theory.org/handbook}, DOI~\href{http://dx.doi.org/10.1007/978-3-540-27855-9_15}{10.1007/978-3-540-27855-9\_15}},
}

\bib{LRRV}{article}{
   author={L\"uck, Wolfgang},
   author={Reich, Holger},
   author={Rognes, John},
   author={Varisco, Marco},
   title={Algebraic $K$-theory of group rings and the cyclotomic trace map},
   date={2015},
   status={preprint, available at \hurl{arxiv.org/abs/1504.03674}},
}

% \bib{LRV}{article}{
%    author={L\"uck, Wolfgang},
%    author={Reich, Holger},
%    author={Varisco, Marco},
%    title={Commuting homotopy limits and smash products},
%    note={Special issue in honor of Hyman Bass on his seventieth birthday.},
%    journal={$K$-Theory},
%    volume={30},
%    date={2003},
%    number={2},
%    pages={137--165},
%    issn={0920-3036},
%    review={\MR{2064237}},
%    doi={\DOI{10.1023/B:KTHE.0000018387.87156.c4}},
% }

\bib{MacLane}{book}{
   author={Mac Lane, Saunders},
   title={Categories for the working mathematician},
   series={Graduate Texts in Mathematics},
   volume={5},
   edition={2},
   publisher={Springer},
   place={New York},
   date={1998},
   pages={xii+314},
   isbn={0-387-98403-8},
   review={\MR{1712872}},
}

\bib{MMSS}{article}{
   author={Mandell, Michael A.},
   author={May, J. Peter},
   author={Schwede, Stefan},
   author={Shipley, Brooke},
   title={Model categories of diagram spectra},
   journal={Proc. London Math. Soc. (3)},
   volume={82},
   date={2001},
   number={2},
   pages={441--512},
   issn={0024-6115},
   review={\MR{1806878}},
   doi={\DOI{10.1112/S0024611501012692}},
}

\bib{MM}{article}{
   author={Mandell, Michael A.},
   author={May, J. Peter},
   title={Equivariant orthogonal spectra and $S$-modules},
   journal={Mem. Amer. Math. Soc.},
   volume={159},
   date={2002},
   number={755},
   pages={x+108},
   issn={0065-9266},
   review={\MR{1922205}},
   doi={\DOI{10.1090/memo/0755}},
}

\bib{May-concise}{book}{
   author={May, J. Peter},
   title={A concise course in algebraic topology},
   series={Chicago Lectures in Mathematics},
   publisher={University of Chicago Press},
%   place={Chicago, IL},
   date={1999},
   pages={x+243},
   isbn={0-226-51182-0},
   isbn={0-226-51183-9},
   review={\MR{1702278}},
}

\bib{Schwede}{article}{
   author={Schwede, Stefan},
   title={Lectures on equivariant stable homotopy theory},
   date={2015-09-02},
   status={preprint, available at \hurl{math.uni-bonn.de/~schwede}},
}

\bib{Strickland}{article}{
   author={Strickland, Neil P.},
   title={The category of CGWH spaces},
   date={2009-08-19},
   status={preprint, available at \hurl{neil-strickland.staff.shef.ac.uk/courses/homotopy}},
}

\bib{Waldhausen}{article}{
   author={Waldhausen, Friedhelm},
   title={Algebraic $K$-theory of spaces},
   conference={
      title={Algebraic and geometric topology},
      address={New Brunswick, N.J.},
      date={1983},
   },
   book={
      series={Lecture Notes in Math.},
      volume={1126},
      publisher={Springer},
      place={Berlin},
   },
   date={1985},
   pages={318--419},
   review={\MR{802796}},
   doi={\DOI{10.1007/BFb0074449}},
%  note={Also available at \hurl{math.uni-bielefeld.de/~fw/}},
}

\bib{Waner}{article}{
   author={Waner, Stefan},
   title={Equivariant homotopy theory and Milnor's theorem},
   journal={Trans. Amer. Math. Soc.},
   volume={258},
   date={1980},
   number={2},
   pages={351--368},
   issn={0002-9947},
   review={\MR{558178}},
   doi={\DOI{10.2307/1998061}},
}

\bib{Wirth}{article}{
   author={Wirthm\"uller, Klaus},
   title={Equivariant homology and duality},
   journal={Manuscripta Math.},
   volume={11},
   date={1974},
   pages={373--390},
   issn={0025-2611},
   review={\MR{0343260}},
   doi={\DOI{10.1007/BF01170239}},
}

\end{biblist}
\end{bibdiv}
\vfill

% \makeatletter
% \providecommand\@dotsep{5}
% \makeatother
% \listoftodos\relax
% \vfill

\end{document}